%        File: hurwitz-stacks-in-the-grothendieck-ring.tex
%     Created: Thu May 02 09:00 PM 2019 P
% Last Change: Thu May 02 09:00 PM 2019 P
%
%%%%%%%%%%%%%%%%%%%%%
%   AMS packages    %
%%%%%%%%%%%%%%%%%%%%%
\documentclass[12 pt]{amsart}

\usepackage{hyperref}
\usepackage{etex}
\usepackage[shortlabels]{enumitem}
\usepackage{amsmath}
\usepackage{amsxtra}
\usepackage{amscd}
\usepackage{amsthm}
\usepackage{amsfonts}
\usepackage{amssymb}
\usepackage{eucal}
\usepackage{stmaryrd}
\usepackage[all]{xy}
\usepackage{graphicx}
\usepackage{tikz-cd}
\usepackage{mathrsfs}
\usepackage{subfiles}
\usepackage{mathpazo}
\usepackage[colorinlistoftodos, textsize=tiny]{todonotes}
\usepackage{morefloats}
\usepackage{pdfpages}
\usepackage{thm-restate}
\usepackage[utf8]{inputenc}
\usepackage{epigraph}
\usepackage{csquotes}
\usepackage{subfiles}
\usepackage[margin=1.2in]{geometry}

\graphicspath{ {images/} }

\RequirePackage{color}
\definecolor{myred}{rgb}{0.75,0,0}
\definecolor{mygreen}{rgb}{0,0.5,0}
\definecolor{myblue}{rgb}{0,0,0.65}

\usepackage{color}

\newcommand{\rboxed}[1]{#1}

\usepackage{hyperref}
\hypersetup{citecolor=blue}
\usepackage{tikz}
\usetikzlibrary{matrix,arrows,decorations.pathmorphing}

\theoremstyle{plain}
\newtheorem{theorem}[subsection]{Theorem}
\newtheorem{proposition}[theorem]{Proposition}
\newtheorem{lemma}[theorem]{Lemma}
\newtheorem{corollary}[theorem]{Corollary}
\theoremstyle{definition}
\newtheorem{definition}[theorem]{Definition}
\newtheorem{remark}[theorem]{Remark}
\newtheorem{example}[theorem]{Example}

\newtheorem{question}[theorem]{Question}
\newtheorem{conjecture}[theorem]{Conjecture} 

\newtheorem{warn}[theorem]{Warning}

\newtheorem{notation}[theorem]{Notation}
\theoremstyle{remark}
\numberwithin{equation}{section}
  
\newcommand\nc{\newcommand}
\nc\renc{\renewcommand}

\newcommand\ba{\mathbb A}

\newcommand\bc{\mathbb C}

\newcommand\bg{\mathbb G}

\newcommand\bl{\mathbb L}

\newcommand\bp{\mathbb P}
\newcommand\bq{\mathbb Q}

\newcommand\bz{\mathbb Z}

\newcommand\fm{{\mathfrak m}}

\newcommand\sca{\mathscr A}

\newcommand\sce{\mathscr E}
\newcommand\scf{\mathscr F}
\newcommand\scg{\mathscr G}
\newcommand\sch{\mathscr H}
\newcommand\sci{\mathscr I}

\newcommand\scl{\mathscr L}
\newcommand\scm{\mathscr M}
\newcommand\scn{\mathscr N}
\newcommand\sco{\mathscr O}

\newcommand\scu{\mathscr U}
\newcommand\scv{\mathscr V}
\newcommand\scw{\mathscr W}
\newcommand\scx{\mathscr X}
\newcommand\scy{\mathscr Y}
\newcommand\scz{\mathscr Z}

\newcommand \ra{\rightarrow}

\DeclareMathOperator\spec{\operatorname{Spec}}
\DeclareMathOperator\proj{\operatorname{Proj}}

\newcommand*{\shom}{\mathscr{H}\kern -.5pt om}
\newcommand*{\stor}{\mathscr{T}\kern -.5pt or}
\newcommand*{\sext}{\mathscr{E}\kern -.5pt xt}

%all macros needed in the paper are commented here
\makeatletter
\providecommand\@dotsep{5}
\renewcommand{\listoftodos}[1][\@todonotes@todolistname]{%
  \@starttoc{tdo}{#1}}
\makeatother

\newcommand{\customlabel}[2]{\protected@write \@auxout {}{\string \newlabel {#1}{{#2}{\thepage}{#2}{#1}{}} }\hypertarget{#1}{#2}}

\newcommand\vect[2]{\mathrm{Vect}^{#1}_{#2}} %stack of vector bundles, arguments: rank, input curve
\newcommand\vecthn[2]{\mathrm{Vect}^{#1}_{#2}} %vector bundles with specified harder narasimhan splitting type, arguments: harder narasimhan type, input curve
\newcommand\bighur[2]{\overline{\mathrm{Hur}}_{#1,#2}} %hurwitz stack of all degree d covers: degree, base scheme
\newcommand\bighurg[3]{\overline{\mathrm{Hur}}_{#1,#2, #3}} %hurwitz stack of all degree d covers: degree, genus, base scheme
\newcommand\bigrhur[4]{\overline{\mathrm{Hur}}_{#1,#2, #3}^{#4}} %hurwitz stack of all degree d covers with ramification data R: degree, genus, base scheme, ramification data
\newcommand\hur[3]{\mathrm{Hur}_{#1,#2,#3}} %hurwitz stack, arguments: degree, genus, base scheme
\newcommand\grouphur[2]{\mathrm{Hur}_{#1,#2}} %hurwitz stack, arguments: group, genus, base scheme
\newcommand\rhur[4]{\mathrm{Hur}_{#1,#2,#3}^{#4}} %hurwitz stack of curves with ramification data arguments: degree, genus, collection of allowable ramification profiles
\newcommand\ce{\mathscr M} %Casnati-Ekedahl strata
\newcommand\rce[1]{\mathscr M^{#1}} %Casnati-Ekedahl strata with specified ramification profile
 %smooth locus in relevant linear system, argument: Casnati Ekedahl strata
\newcommand\ramlocus[2]{\mathrm{U}_{#1}^{#2}} %locus in relevant linear system with restricted ramification data, argument: Casnati Ekedahl strata, collection of allowable ramification profiles
\newcommand\coverstack[1]{\mathrm{Covers}_{d}} %stack parameterizing degree d Gorenstein covers, arguments: degree
\newcommand\auttorsor[1]{\mathrm{T}_{d}} %automorphism torsor over stack parameterizing degree d Gorenstein covers, arguments: degree
\newcommand\gorensteinsections[1]{\mathrm{U}_{d}} %affine cover of stack parameterizing degree d Gorenstein covers, arguments: degree

\newcommand\systemdim[2]{n_{#1,#2}} %dimension of affine space parameterizing sections in the degree 3 4 and 5 cases, arguments: degree, genus
\newcommand\codimbound[2]{r_{#1,#2}} %dimension of affine space parameterizing sections in the degree 3 4 and 5 cases, arguments: degree, genus

 %\stbgrp R d is the Weil restriction from D to spec k of Stab_{\gl_{d-1}}(X), for X a particular curvilinear scheme with ramification profile R
  %\pstbgrp R d is the the Weil restriction from D to spec k of Stab_{\pgl_{d-1}}(X), for X a particular curvilinear scheme with ramification profile R
 %the Weil restriction from D to Spec k of GL_{d-1}
\newcommand\reshilb[2]{\scz_{#1,#2}} %the Weil restriction from D to spec k Hilbert scheme parameterizing curvilinear curvilinear subschemes of \bp^{d-2}_D of degree d with no degree d-1 subscheme of \bp^{d-2}_{\overline{k}} contained in a hyperplane, arguments: ramification profile, degree
\newcommand\ressections[2]{\scy_{#1,#2}} %the Weil restriction from D to spec k of sections defining degree d covers, parameterizing curvilinear curvilinear subschemes of \bp^{d-2}_D of degree d with no degree d-1 subscheme of \bp^{d-2}_{\overline{k}} contained in a hyperplane, arguments: ramification profile, degree
\newcommand\gerbe[2]{\scx_{#1,#2}} %the residual gerbe in the stack of degree d covers parameterizing curvilinear curvilinear covers with ramification profile $R$, arguments: ramification profile, degree

\DeclareMathOperator\res{Res}
\DeclareMathOperator\symm{Symm}

%notation for grothendieck ring
\newcommand\grSpaces{K_0(\mathrm{Spaces}_k)} %grothendieck ring of Spaces
\newcommand\grcSpaces{\widehat{\widetilde{K_0}}(\mathrm{Spaces}_k)} %grothendieck ring of Spaces, completed, with radicial surjective maps inverted
\newcommand\grStacks{K_0(\mathrm{Stacks}_k)} %grothendieck ring of stacks
\newcommand\grcStacks{\widehat{\widetilde{K_0}}(\mathrm{Stacks}_k)} %grothendieck ring of stacks, completed, with radicial surjective maps inverted, this is the same as \grcSpaces
 % universal constant determining codimension bound in genus 5 case; if we work it out, put the number in here

\DeclareMathOperator\id{id}

\renewcommand\hom{\mathrm{Hom}}
\DeclareMathOperator\coker{coker}

\DeclareMathOperator\hilb{Hilb}
\DeclareMathOperator\rk{rk}

\DeclareMathOperator\conf{Conf}
\DeclareMathOperator\codim{codim}

\DeclareMathOperator\im{im}
\DeclareMathOperator\End{End}
\DeclareMathOperator\sym{Sym}
\DeclareMathOperator\pgl{PGL}

\DeclareMathOperator\chr{\operatorname{char}}

\DeclareMathOperator\aut{Aut}
\DeclareMathOperator\gl{GL}

\renewcommand\sl{\mathrm{SL}}

\newcommand\sm{sm}

\DeclareFontFamily{U}{wncy}{}
\DeclareFontShape{U}{wncy}{m}{n}{<->wncyr10}{}
\DeclareSymbolFont{mcy}{U}{wncy}{m}{n}
\DeclareMathSymbol{\Sha}{\mathord}{mcy}{"58}

\setcounter{MaxMatrixCols}{20}
\setlength{\marginparwidth}{2cm}
%\def\listtodoname{List of Todos}
%\def\listoftodos{\@starttoc{tdo}\listtodoname}

%Federico's shortcuts for the appendix

\usepackage[english]{babel}

\newcommand{\Q}{\mathbb Q}
\newcommand{\F}{\mathbb F}
\newcommand{\Z}{\mathbb Z}
\newcommand{\Spec}{\operatorname{Spec}}

\renewcommand{\L}{\mathbb L}

\newcommand{\on}[1]{\operatorname{#1}}

\makeatletter
\let\@wraptoccontribs\wraptoccontribs
\makeatother

\makeatletter
\@namedef{subjclassname@2020}{\textup{2020} Mathematics Subject Classification}
\makeatother

\title{Low degree Hurwitz stacks in the Grothendieck ring}
\subjclass[2020]{
Primary 
14G99; Secondary 
14E20,
14D23}
\keywords{Motivic statistics, Grothendieck Ring, Hurwitz spaces, Casnati-Ekedahl
parameterizations}

\author{Aaron Landesman}
\address{Department of Mathematics, Massachusetts Institute of Technology, Cambridge, MA 02139-4307, USA}

\author{Ravi Vakil}
\address{Department of Mathematics, Stanford University,
450 Jane Stanford Way, Stanford, CA 94305, USA}

\author{Melanie Matchett Wood}
\address{Department of Mathematics, Harvard University,
1 Oxford St, Cambridge, MA 02138, USA}

\contrib[with an appendix by]{Aaron Landesman and Federico Scavia}
%\date{\today \quad (last edit June 3, 2021.)}

\usepackage{microtype}
\begin{document}

\maketitle
\begin{abstract}
	For $2 \leq d \leq 5$, we show that the class of the Hurwitz
	space of smooth degree $d$, genus $g$
	covers of $\mathbb P^1$
	stabilizes in the Grothendieck ring of stacks
	as $g \to \infty$, and we give a formula for the limit.
	We also verify this stabilization when one imposes ramification
	conditions on the covers, and obtain a particularly simple answer for this limit
	when one restricts to simply branched covers.
\end{abstract}

\section{Introduction}

The main results of this paper are Grothendieck ring analogues of 
classical theorems on the density of
discriminants of number fields of degree at most $5$
\cite{Davenport1969,Bhargava2005,Bhargava2010a}.
Let $\rboxed{\hur d g k}$  be the moduli stack of degree $d$
covers 
of $\bp^1$ with Galois group $S_d$ by smooth geometrically connected
genus $g$
curves over a field $k$,  see \autoref{definition:hur}. Let $\rhur d g k s$ be the
open substack of $\hur d g k$ corresponding to simply branched covers, i.e.,
the open subset where the map to $\bp^1$ has geometric fibers with at least $d-1$ points.  
%(Later, we will denote $\rhur d g k s$ by $\rhur d g k {\{(1^d), (2,
%1^{d-2})\}}$, see \autoref{definition:rhur}.)
%We need a minor variant of $\hur d g k$
%in degree $4$:  let ${\hur d g k}' \subset \hur d g k$ be the open subset
%corresponding to covers not factoring through smaller covers.  
%(In our cases of
%interest  $d \leq 5$, ${\hur d g k}' = \hur d g k$ except for $d=4$.)
The main results of this paper are that for each $d \leq 5$, the classes of
these moduli
spaces converge  in the Grothendieck ring as $g \rightarrow \infty$, to
particularly nice limits.
More precisely, we work in a suitably defined
Grothendieck ring of stacks $\grcStacks$, see
\autoref{definition:completed-grothendieck-ring}, where as usual
$\rboxed{\bl} := \{\ba^1\}$
is the class of the affine line.

\begin{theorem}[Theorem A]  \label{t:A}
	Suppose $2 \leq d \leq 5$
	and $k$ is a
	field of
	characteristic not dividing $d!$. In
$\grcStacks$, $$ \lim_{g \rightarrow \infty}  \frac {\{\rhur d g k s\}}
{\bl^{ \dim \hur d g k}} = 1 - \bl^{-2}.  $$  \end{theorem}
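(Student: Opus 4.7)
The plan is to combine the Casnati--Ekedahl parameterizations of low-degree covers with a motivic density calculation for squarefree divisors on $\bp^1$. First, for each $d \in \{2,3,4,5\}$, I would invoke the Casnati--Ekedahl structure theorem to realize $\hur d g k$ as a quotient of an open subscheme of a vector bundle over a base of Tschirnhausen data: a single line bundle for $d=2$; a rank-$(d-1)$ bundle on $\bp^1$ together with explicit structure forms (a cubic for $d=3$, a net of quadrics for $d=4$, a net of $4\times 4$ Pfaffians for $d=5$) for $d \geq 3$. In the paper's notation, one writes $\hur d g k \cong [\ressections{d}{g}/\stbgrp{d}{g}]$, with $\rhur d g k s$ cut out as the open locus on which an explicit discriminant section is squarefree.

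Next, I would stratify $\ressections{d}{g}$ by the splitting type of the Tschirnhausen bundle on $\bp^1$. Since every bundle on $\bp^1$ splits as a sum of line bundles, this yields a locally closed stratification indexed by partitions. Only the balanced stratum is dense and contributes to the limit; all other strata have codimension tending to infinity with $g$ and hence vanish in $\grcStacks$. On the balanced stratum, the simply-branched condition translates into the explicit open condition that the discriminant, a canonical section of $\sco_{\bp^1}(2g + 2d - 2)$ constructed from the Casnati--Ekedahl data, defines a reduced divisor on $\bp^1$ of degree $n := 2g + 2d - 2$.

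The key motivic input is the squarefree density on $\bp^1$: decomposing every effective divisor uniquely as a reduced divisor plus twice an arbitrary divisor gives the zeta-function identity $\zeta_{\bp^1}(t) = \zeta_{\bp^1}^{\mathrm{red}}(t)\cdot\zeta_{\bp^1}(t^2)$, from which one extracts $\{U_n\} = \bl^n - \bl^{n-2}$ for $n \geq 3$, where $U_n \subset \sym^n \bp^1$ is the locus of reduced degree-$n$ divisors. In particular $\{U_n\}/\bl^n = 1 - \bl^{-2}$, matching the predicted limit.

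The main obstacle is then to rigorously pass from this squarefree density on $\sym^n \bp^1$ to the class of the simply-branched locus in $\hur d g k$. Concretely, one needs a motivic fiberwise-Bertini-style argument establishing that, inside $\ressections{d}{g}$, the class of the simply-branched stratum equals $(1 - \bl^{-2})$ times the class of the full stratum up to terms that vanish in $\grcStacks$; after dividing by $\{\stbgrp{d}{g}\}$ and $\bl^{\dim \hur d g k}$ this yields the theorem. Making this factorization precise requires explicit control over the discriminant map as a function of the Casnati--Ekedahl structure data --- which is exactly what restricts the argument to $d \leq 5$ --- together with uniform codimension bounds on the non-balanced splitting strata.
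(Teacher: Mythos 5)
Your proposal has the right skeleton (Casnati--Ekedahl parametrization, stratification by splitting type, the squarefree density $1-\bl^{-2}$ on $\bp^1$), but it contains one step that is simply false and leaves the central analytic step unproved, proposing a route for it that does not match what actually works.

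The false step is the claim that ``only the balanced stratum contributes to the limit; all other strata have codimension tending to infinity with $g$.'' This is not so: for each fixed codimension $c\geq 1$ there are non-balanced splitting types of codimension exactly $c$ for every large $g$ (e.g.\ for $d=3$, $\sce\simeq\sco(a)\oplus\sco(b)$ with $b-a=c+1$ gives a stratum of codimension $c$). Since the limit in $\grcStacks$ is an exact identity in the dimension-completed ring, these strata all contribute, at every codimension, and discarding them would at best prove the statement modulo codimension $1$. The paper's resolution is to sum over \emph{all} strata of expected codimension, writing each as $\{\ramlocus{\ce}{\mathcal R,S_d}\}/\{\aut_\ce\}$, and to observe that the sum over splitting types, weighted by $1/\{\aut^{\sl}_{\sce}\}$ (and $1/\{\aut^{\sl}_{\scf}\}$ for $d=4,5$), combines with part of the local Euler factors to give exactly $\bl^{-\dim\sl_n}$ via the motivic Tamagawa number formula for $\sl_n$ over $\bp^1$ (Lemmas \ref{lemma:sl-tamagawa} and \ref{lemma:pgl-tamagawa}). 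This cancellation is the heart of why the answer is clean; no version of the argument that keeps only the balanced stratum can produce the exact limit.

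The second gap is the one you flag yourself as ``the main obstacle'': converting the squarefree density on $\sym^{n}\bp^1$ into the class of the simply branched locus inside the space of sections. Your proposed route --- equidistribution of the discriminant map to $\sym^n\bp^1$ --- is not how this can be carried out: the discriminant is a highly nonlinear function of the section, and there is no a priori reason its fibers over reduced versus non-reduced divisors have proportional classes. What the paper does instead is apply the Bilu--Howe motivic sieve directly to the linear system $H^0(\bp^1,\sch(\sce,\scf_\bullet))$, with the local (Taylor) condition at each $x\in\bp^1$ being that the restriction of the cover to the dual numbers at $x$ is curvilinear with allowed ramification profile; the class of that local condition is computed (Theorems \ref{theorem:local-aut-class} and \ref{theorem:local-class}) by identifying it with the class of the classifying stack of $\res_{D/k}(\aut_{X_{(R)}/D})$, evaluated using stacky symmetric powers and Ekedahl's $\{BS_n\}=1$. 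This local computation, together with the Tamagawa cancellation above, is the actual content of the proof and is entirely absent from your outline. (You also omit the weeding out of covers with Galois group smaller than $S_d$ --- in particular $D_4$ covers in degree $4$ --- which requires separate codimension bounds.) Your treatment of $d=2$ via $[\ba_{\sm}(1,2g+2)/\bg_m]$ and the class of reduced divisors is, by contrast, exactly the paper's argument for that case.
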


\begin{theorem}[Theorem B]  \label{t:B}Suppose $2 \leq d \leq 5$ and $k$ is a
	field of
	characteristic not dividing $d!$.  In
	$\grcStacks$, \begin{align*} \lim_{g \rightarrow \infty} \; \frac {
		\{\hur d g k\}} {\bl^{ \dim \hur d g k}} = \begin{cases}
			1-\bl^{-2}	 & \text{ if } d = 2 \\ (1+\bl^{-1})
			\left( 1 - \bl^{-3} \right) & \text{ if } d = 3 \\ \frac
			1 {(1-\bl^{-1})} \prod_{x \in \bp^1_k} \left(1+\bl^{-2}
			- \bl^{-3} - \bl^{-4} \right) & \text{ if } d= 4 \\
			\frac 1 {(1-\bl^{-1})} \prod_{x \in \bp^1_k}
			\left(1+\bl^{-2}- \bl^{-4} - \bl^{-5} \right) & \text{
if } d = 5.  \end{cases} \end{align*} \end{theorem}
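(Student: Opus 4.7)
The plan is to apply the Casnati-Ekedahl (CE) structure theorem (as indicated by the keywords) to realize $\hur{d}{g}{k}$ as an open substack of a more tractable moduli of ``resolution data'' over $\bp^1$, with the open condition being Gorenstein-at-each-point, and then analyze the limit using a motivic Euler product combined with a stratification by splitting type. The case $d=2$ is essentially direct: a cover is determined by its smooth branch divisor, so $\hur{2}{g}{k}$ is the stack of squarefree binary forms of degree $2g+2$ modulo $\gl_2$, and a motivic inclusion-exclusion on the branch divisor (the local condition ``$x$ is not a double branch point'' having density $1 - \bl^{-2}$) collapses, in the $g\to\infty$ limit, to $1 - \bl^{-2}$, matching Theorem A.

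For $3 \le d \le 5$, CE represents a degree $d$ cover by a rank $d-1$ vector bundle $E$ on $\bp^1$ together with explicit resolution data (for $d = 3$, essentially a single section of a line bundle built from $E$; for $d = 4, 5$, an auxiliary bundle $F$ and a morphism $F \to \sym^2 E$ satisfying further nondegeneracy conditions). This identifies $\hur{d}{g}{k}$ with an open substack of a stack of $(E, \sigma)$-pairs, where the open condition ``$\sigma$ yields a smooth cover'' is pointwise on $\bp^1$. The proof would then proceed by stratifying by the isomorphism type $E \cong \bigoplus \sco(a_i)$, computing the class of each stratum using that on the stratum the smoothness condition decomposes pointwise on $\bp^1$, and assembling via a motivic Euler product $\prod_{x \in \bp^1_k} P_d(\bl^{-1})$ for an explicit local density $P_d$.

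The asymptotic step comes next: after dividing by $\bl^{\dim \hur{d}{g}{k}} = \bl^{2g + 2d - 2}$, every stratum with non-balanced splitting contributes zero in $\grcStacks$, because $\dim \aut(E)$ strictly exceeds the generic value while the resolution-data space correspondingly shrinks. Only the balanced stratum survives, and its contribution, after the division by $\bl^{\dim}$ and the class of $\aut$ of the balanced bundle, produces the theorem's formulas: $(1 + \bl^{-1})(1 - \bl^{-3})$ for $d = 3$ (where the Euler product telescopes to a closed form using the motivic zeta function of $\bp^1$), and $\frac{1}{1-\bl^{-1}} \prod_{x \in \bp^1_k} P_d(\bl^{-1})$ for $d = 4, 5$, with $P_d$ explicitly computed as the pointwise Gorenstein density on CE data.

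The main obstacle is the asymptotic step: uniformly in $g$ bounding the contribution of unbalanced strata in $\grcStacks$ requires controlling both $\dim \aut(E)$ for all unbalanced $E$ (a growing family as $g \to \infty$) and the dimension of the resolution-data space over them, so that dividing by $\bl^{\dim}$ genuinely annihilates their contribution in the completed ring. A secondary challenge is the computation of the local Gorenstein densities $P_d$ for $d = 4, 5$, which is essentially a motivic analog of Bhargava's local mass formulas for quartic and quintic rings and yields the stated local polynomials $1 + \bl^{-2} - \bl^{-3} - \bl^{-4}$ and $1 + \bl^{-2} - \bl^{-4} - \bl^{-5}$.
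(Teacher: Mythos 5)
Your overall architecture (Casnati--Ekedahl parametrization, stratification by splitting type, a motivic Euler product for the pointwise smoothness condition, a local mass formula for the densities $P_d$) matches the paper's. But your asymptotic step contains a genuine error that would sink the proof. You claim that after dividing by $\bl^{\dim \hur d g k}$, every non-balanced stratum "contributes zero in $\grcStacks$" and only the balanced stratum survives. This is false: an unbalanced splitting type such as $\sce\simeq\sco(a)\oplus\sco(a+2)$ sits in codimension $h^1(\bp^1,\End\sce)=1$ in the Hurwitz stack \emph{uniformly in $g$}, so its normalized contribution is a fixed nonzero codimension-$1$ element of the completed ring and survives in the $g\to\infty$ limit. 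You can check this numerically in degree $3$: the balanced stratum alone contributes $(1-\bl^{-3})/(1-\bl^{-1})=1+\bl^{-1}+\bl^{-2}$, not the stated $(1+\bl^{-1})(1-\bl^{-3})=1+\bl^{-1}-\bl^{-3}-\bl^{-4}$, so the discrepancy already appears at order $\bl^{-2}$. Likewise the factor $\tfrac{1}{1-\bl^{-1}}=1+\bl^{-1}+\bl^{-2}+\cdots$ in the $d=4,5$ answers is exactly the trace of the infinite sum over strata; it could not arise from a single stratum.

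The paper's replacement for this step is the motivic Tamagawa number formula for $\sl_n$ over $\bp^1$ (Lemmas \ref{lemma:sl-tamagawa} and \ref{lemma:pgl-tamagawa}, deduced from Behrend--Dhillon's computation of $\{\mathfrak{Bun}_{\sl_n,\bp^1}\}$): the sum over \emph{all} splitting types $\scv$ of fixed rank and degree of $\frac{1}{\{\aut^{\sl}_{\scv}\}}\prod_{x\in\bp^1_k}\frac{\{\aut^{\sl}_{\scv|_D}\}}{\bl^{\dim\aut^{\sl}_{\scv|_D}}}$ equals $\bl^{-\dim\sl_n}$. Applied to both $\sce$ and $\scf$, this identity collapses the double sum over strata against the local automorphism factors hidden in the Euler product; no stratum is discarded. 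Two further ingredients you omit and would need: (i) one must excise covers whose Galois closure is smaller than $S_d$ (notably $D_4$ covers when $d=4$) and bound the codimension of strata with $H^1(\bp^1,\sch(\sce,\scf_\bullet))\neq 0$, as in \autoref{lemma:unexpected-codimension-bound} and \autoref{lemma:bounding-non-Sd-covers}; and (ii) over an infinite field the local densities cannot be obtained by counting sections over the dual numbers, and are instead computed as classes of classifying stacks $B\res_{D/k}(\aut_{X_{(R)}/D})$, which requires Ekedahl's theorem $\{BS_n\}=1$ and stacky symmetric powers (\autoref{theorem:local-aut-class}).
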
 

The products on the right in
the cases $d=4$ and $d=5$ are motivic Euler products in the sense of Bilu
\cite{bilu:thesis, biluH:motivic-euler-products-in-motivic-statistics}.
Alternatively, these can be viewed as powers in the sense of power structures,
as introduced by Gusein-Zade, Luengo, and Melle–Hern\`{a}ndez \cite{Gusein-Zade2004},
see \autoref{subsection:motivic-euler-products}.  
 
\autoref{t:A} is a special case of
\autoref{corollary:simply-branched-hurwitz-class}
while \autoref{t:B} is a special case of \autoref{corollary:full-hurwitz-class}.
Both are consequences of 
\autoref{theorem:hurwitz-class}, describing the limits of branched
covers with specified ramification, along with rates of convergence. 
These results lead to conjectures in higher degree, see
\autoref{s:conjectures}.

\begin{remark}
	\label{remark:}
	The results \autoref{t:A} and \autoref{t:B} 
of this paper are stated above with the restriction that the
Galois group of the cover is $S_d$. 
These results continue to hold when one removes this restriction, except that
when $d = 4$, covers with Galois
group $D_4$ must be removed. One can deduce these claims from
\autoref{lemma:no-transposition-high-codimension}.
\end{remark}

\subsection{Motivation}

Motivations for \autoref{t:A} and \autoref{t:B} come from 
 number theory, topology, and algebraic geometry.

\subsubsection{Arithmetic motivation}
One can also view results relating to counting
number fields of bounded discriminant as ``point counting analogs''
of the stabilization of Hurwitz spaces.
To spell this out, our main results on stabilization of the classes of Hurwitz
spaces, suggest the number of $\mathbb F_q$ points of these Hurwitz spaces
also stabilize in $g$. (This is not actually implied by our results, because we
work in the dimension filtration of the Grothendieck ring, and so it is possible that high codimension
substacks of these Hurwitz spaces contain many $\mathbb F_q$ points which
could potentially alter the $g \to \infty$ limiting behavior of the $\mathbb
F_q$ point counts.)
In the degree $3$ case, stabilization of the number of $\mathbb F_q$
points was  shown by Datskovsky and Wright in
\cite{Datskovsky1988}.
Their results actually count $S_3$ covers of any global field using Shintani zeta functions. 
However a more geometric proof counting $S_3$ covers of any curve over a finite field has also been
given by Gunther in \cite{guntherCOUNTINGCUBICCURVE}.
These results have also been generalized to work in degrees $4$ and $5$
by Bhargava-Shankar-Wang in \cite{bhargavaSW:geometry-of-numbers-over-global-fields-i}.
Analogs over $\bq$
were known much earlier than  these results over global function fields.
That is, instead of counting $\mathbb F_q$ points of Hurwitz spaces, corresponding to
$S_d$ covers of $\bp^1_{\mathbb F_q}$, the arithmetic analog is to count
$S_d$ extensions of $\mathbb Q$.
When $d = 3$, these counts were carried out by 
Davenport and Heilbronn \cite{Davenport1969, Davenport1971}.
When $d =4$ and $d = 5$, the number field counting was done by Bhargava in
\cite{Bhargava2005,Bhargava2010a,bhargava:the-geometric-sieve}.  Our 
theorems can thus be viewed as Grothendieck ring analogs of these number field counting results.
Indeed, the ``Euler products'' occurring in \autoref{t:B} with $\mathbb{L}$ replaced by $p$
are exactly those that occur in the densities of discriminants of $S_d$-number fields of
degree $d\leq 5$ 
\cite{Davenport1969,Bhargava2005,Bhargava2010a}, which in particular demonstrates the great success of the notion of motivic Euler products.
Similarly to our methods, the methods behind the number field counting results only apply when $d\leq
5$ because they rely on specific
parametrizations  \cite{Delone1964,Bhargava2004b, Bhargava2008a} of low degree covers of $\Spec \Z$.

\subsubsection{Topological motivation}

We now describe topological results demonstrating stabilization of Hurwitz
spaces.
One striking result is due to Ellenberg-Venkatesh-Westerland \cite{EllenbergVW:cohenLenstra}, which has 
deep applications to number theory.
Their result \cite[Thm.\, p. 732]{EllenbergVW:cohenLenstra}
implies that the dimension of the $i$th homology $h_i(\rhur 3 g \bc s, \bq)$
stabilizes as $g \to \infty$.
Unfortunately, although their methods apply in the case of degree $3$ covers,
they already fail to apply when $d= 4$, see the remarks in
\cite[p. 732]{EllenbergVW:cohenLenstra}.

If, instead of working with covers of $\bp^1$, one
works with the full moduli stack of curves with marked points, $\mathcal M_{g,n}$,
then these stacks satisfy certain homological stabilities, due to Harer,
Madsen-Weiss, and others. See, for example,
\cite{madsen-weiss:the-stable-moduli-space-of-riemann-surfaces} and
the survey article
\cite{hatcher:a-short-exposition}.

\subsubsection{Algebro-geometric motivation}

Finally, from an algebraic geometric viewpoint, there are some further related
unirationality results on objects of low degree and genus.
For degrees $d \leq 5$ a simple parametrization of degree $d$ covers
was originally given in \cite[Thm.\ 1.1]{miranda:triple-covers-in-algebraic-geometry},
\cite[ Thm.\ 4.4]{casnatiE:covers-algebraic-varieties}, and \cite[Thm.\
3.8]{casnati:covers-algebraic-varieties-ii},
see also
\autoref{theorem:3-structure},
\autoref{theorem:4-structure}, and
\autoref{theorem:5-structure} (as well as \cite[Prop.\ 5.1]{Poonen2008},
\cite[Thm.\ 1.1]{Wood2011}).

There have also been results proving stabilization 
of algebraic data relating to $\hur d g k$.
When $d = 3$, the rational Chow ring of the simply branched Hurwitz space
is known to stabilize to $\bq$ \cite[Thm.\ C]{patel2015chow}.
It is also known that the rational Picard groups stabilize when
$d \leq 5$, due to Deopurkar-Patel \cite[Thm.\ A]{deopurkarP:the-picard-rank-conjecture}.
More recently, stabilization of the rational Chow groups for $d \leq 5$ 
(removing $D_4$ covers when $d = 4$)
was demonstrated in
\cite[Theorem 1.1]{canningL:chow-rings-of-low-degree-hurwitz-spaces}.

There have also been some related stabilization results working in the
Grothendieck ring.
For example, the class of smooth hypersurfaces of degree $d$ in $\bp^n$
stabilizes as $d \to \infty$
in the Grothendieck ring.
This, and various related results are shown by the second and third authors in \cite{Vakil2015}.
Building on this, Bilu and Howe
prove more general stabilization results for sections of vector bundles in the
Grothendieck ring
\cite[Thm.\ A]{biluH:motivic-euler-products-in-motivic-statistics}.
We will use these results crucially in the present paper.

\subsection{Conjectures and questions motivated by Theorems A and B} \label{s:conjectures}

The most natural question following \autoref{t:A} is whether the pattern continues for higher $d$.  The continuation of analogies of this pattern have been conjectured in several different domains.

\subsection{Arithmetic conjectures}
In the context of counting degree $d$ number fields whose Galois closure has
Galois group $S_d$,
Bhargava \cite[Conj.\ 1.2]{Bhargava2010} has conjectured that an analog of \autoref{t:B} holds for all $d$ (which, as mentioned above, is known for $d\leq 5$). Bhargava has given a specific conjectural expression for the Euler factors.
It is natural to ask whether \autoref{t:B} holds for $d\geq 6$ using the analogous
Euler factors. 
That is, one may ask whether \autoref{theorem:hurwitz-class} holds for $d\geq 6$ when all types of ramification are allowed.  Further, the heuristics of \cite{Bhargava2010} also predict the
analog of \autoref{t:A} in the number field counting setting for all $d$ (which again is a theorem for $d\leq 5$ 
\cite[Thm.\ 1.1]{bhargava:the-geometric-sieve}).
Bhargava's heuristics more generally apply to give a conjecture for counting
$S_d$ degree $d$ fields with various ramification restrictions, and the analogy in the Grothendieck ring setting would be to conjecture that \autoref{theorem:hurwitz-class} holds for $d\geq 6$.

The heuristics above are based on a mass formula proven by Bhargava
\cite[Theorem 1.1]{Bhargava2010}. We
prove an analogous mass formula in the Grothendieck ring in
\autoref{theorem:local-aut-class},
which we now state a consequence of.
To make a precise statement, let $\mathscr X_d$ denote the stack over $k$ whose $T$
points are finite locally free degree $d$ Gorenstein covers $Z$ of $T
\times_{\spec k} \spec
k[\varepsilon]/(\varepsilon^2)$ so that for each geometric point $\spec \kappa
\to T$, $Z \times_{T \times \spec k[\varepsilon]/(\varepsilon^2)} \left(\spec \kappa \times \spec
k[\varepsilon]/(\varepsilon^2) \right)$ has $1$-dimensional Zariski tangent space at
each point.
We write
$R \vdash d$ to mean that $R$ is a partition
of $d$.
Given  $R  \vdash d$ comprised of $t_i$ copies of $r_i$ for $i=1,\dots n$,
we define $r(R) := \sum_{i=1}^n (r_i- 1)t_i$ to be its ramification order.
We can then deduce the following corollary of \autoref{theorem:local-aut-class},
also see \autoref{remark:functorial-gerbe-description},
by summing over partitions of $d$
in the same way that
\cite[Proposition 2.3]{Bhargava2010}
was deduced from 
\cite[Proposition 2.2]{Bhargava2010}.
\begin{corollary}
	\label{corollary:}
	For $d \geq 1$ and $k$ a field of characteristic not dividing $d!$,
	in $\grcStacks$,
	\begin{align*}
	\{\mathscr X_d\} = 
	\sum_{R \vdash d} \bl^{-r(R)} = 
	\sum_{j=0}^{d-1} q(d,d-j) \bl^{-j},
	\end{align*}
	where $q(d,d-j)$ is the number of partitions of $d$ into at  exactly $d-j$
	parts.
\end{corollary}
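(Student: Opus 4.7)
The plan is to decompose $\mathscr{X}_d$ into a disjoint union of locally closed substacks indexed by ramification types, apply the local mass formula \autoref{theorem:local-aut-class} on each piece, and then repackage the sum combinatorially. Since each geometric point of $\mathscr{X}_d$ is a finite degree $d$ Gorenstein cover whose fiber over the closed point of $\Spec k[\varepsilon]/(\varepsilon^2)$ is curvilinear (by the $1$-dimensional Zariski tangent space hypothesis), such a fiber is determined as an Artinian algebra up to isomorphism by the multiset of lengths of its local factors, which is precisely a partition $R \vdash d$. This yields a stratification
\begin{equation*}
\mathscr{X}_d = \bigsqcup_{R \vdash d} \mathscr{X}_{d,R},
\end{equation*}
where $\mathscr{X}_{d,R}$ is the substack on which the closed fiber has ramification type $R$.

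Next I would invoke \autoref{theorem:local-aut-class}, which by its framing as a Grothendieck-ring mass formula should record the class of the stratum corresponding to a single ramification type $R$. In precise analogy with Bhargava's local mass formula at a prime \cite[Prop.\ 2.2]{Bhargava2010}, one expects $\{\mathscr{X}_{d,R}\} = \bl^{-r(R)}$, the factor $\bl^{-r(R)}$ playing the role of the local density $p^{-r(R)}$ at a prime, with the automorphism factor already absorbed into the stacky quotient defining $\mathscr{X}_{d,R}$. Summing over all partitions then produces the first equality
\begin{equation*}
\{\mathscr{X}_d\} = \sum_{R \vdash d} \{\mathscr{X}_{d,R}\} = \sum_{R \vdash d} \bl^{-r(R)}.
\end{equation*}

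For the second equality I would just reindex. If $R$ consists of $t_i$ parts of size $r_i$, then the number of parts is $\sum_i t_i$, and
\begin{equation*}
r(R) = \sum_i (r_i - 1)t_i = \sum_i r_i t_i - \sum_i t_i = d - \#\{\text{parts of } R\}.
\end{equation*}
Hence the partitions $R$ with $r(R) = j$ are exactly those with $d-j$ parts, of which there are $q(d,d-j)$. Collecting terms gives $\sum_{R \vdash d} \bl^{-r(R)} = \sum_{j=0}^{d-1} q(d,d-j)\bl^{-j}$.

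The only real obstacle is the first step, namely extracting from \autoref{theorem:local-aut-class} the clean per-stratum formula $\{\mathscr{X}_{d,R}\} = \bl^{-r(R)}$. Everything else is bookkeeping: the stratification is elementary, and the combinatorial identity between the two displayed sums is immediate once $r(R)$ is rewritten as $d$ minus the number of parts. This is exactly what is indicated by the parenthetical remark pointing to \autoref{remark:functorial-gerbe-description} and to the analogous deduction of \cite[Prop.\ 2.3]{Bhargava2010} from \cite[Prop.\ 2.2]{Bhargava2010}.
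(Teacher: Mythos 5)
Your proposal is correct and is essentially the paper's intended argument: the paper deduces the corollary by stratifying $\mathscr X_d$ by ramification profile and summing the local mass formula, exactly as you do. The one point you flag as an "obstacle" is in fact already settled in the paper: by \autoref{remark:functorial-gerbe-description}, your stratum $\mathscr X_{d,R}$ is precisely the residual gerbe $\gerbe R d$, whose class \autoref{theorem:local-aut-class} computes to be $\bl^{-r(R)}$, so no further identification is needed.
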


The above heuristics can be expanded to make predictions when other finite groups
replace $S_d$. 
These expanded heuristics are often called the Malle-Bhargava Principle (see
\cite{wood:asymptotics-for-number-fields-and-class-groups}),
though in complete generality the predictions are not always correct.  This
principle, as long as one is imposing only geometric local conditions, (i.e. only
local conditions on ramification,) naturally extends to the Grothendieck ring
setting. 
Then, one can ask in what generality the predictions of the principle
hold. 
Moreover, in the field counting setting, one naturally counts extensions of
global fields other than $\Q$ or $\F_q(t)$, and the analog here would be replacing $\bp^1$ with another fixed curve, which is another interesting direction to try to understand.

In addition to the above conjectures on $S_d$ extensions,
there are also many open questions about Grothendieck ring versions of
other extension counting problems.
One particularly accessible problem may be that of counting $D_4$ extensions.
In \cite[Corollary 1.4]{Cohen2002}, the number of $D_4$ extensions of $\mathbb Q$ was computed
when counted by discriminant, though the answer does not appear to have a simple
closed form, and was expressed in terms of a sum over quadratic extensions of
$\mathbb Q$.
However, in \cite[Theorem 1]{altugSVW:the-number-of-quartic-d4-fields-ordered-by-conductor}
these extensions were counted by conductor, and there was a closed form answer,
expressed in terms of an Euler product.
\begin{question}
	\label{question:}
	What is the asymptotic class of the locus of $D_4$ covers of $\mathbb P^1$ in the
	Grothendieck ring $\grcStacks$ when counted by discriminant or conductor?
\end{question}
%In particular, we would conjecture the answer when counted by discriminant to be
%expressable as a sum over genera of intermediate hyperelliptic curves, whereas the
%answer when counted by conductor to be expressable in terms of an Euler product. 

Similarly, it would be interesting to compute the class of abelian covers of
$\mathbb P^1$.
\begin{question}
	\label{question:}
	Fix an abelian group $G$. What is the asymptotic class of the locus of $G$ covers
	of $\mathbb P^1$ in the Grothendieck ring $\grcStacks$ when counted by
	discriminant or conductor?
\end{question}
One way to approach this question could be to use that
the moduli spaces of abelian covers of $\mathbb P^1$ can be described
in terms of certain configuration spaces of (colored) points on $\mathbb P^1$.
The classes of such configuration spaces can be extracted from
\cite[\S5]{Vakil2015}.

\subsection{Error terms and second order terms}

It would be interesting to understand the error terms in
\autoref{theorem:hurwitz-class}.
More precisely, in \autoref{theorem:hurwitz-class}, we show the equalities of 
\autoref{t:A} and \autoref{t:B} hold not just in the limit, but even hold for
any fixed $g$ up to codimension 
$\codimbound d g := \min(\frac{g + c_d}{\kappa_d}, \frac{g +
d-1}{d} - 4^{d-3})$,
for $c_3 = 0, c_4=-2, c_5 = -23, \kappa_3 = 4, \kappa_4 = 12, \kappa_5 = 40$. 
We say two classes of dimension $d$ are equal modulo codimension $r$ in 
$\grcStacks$ if their difference lies in filtered part of 
$\grcStacks$ of dimension at most $d - r$.
Concretely, in degree $3$, a special case of
\autoref{theorem:hurwitz-class} says:
\begin{corollary}
	\label{corollary:}
	Suppose $k$ is a
	field of
	characteristic not dividing $6$. 
	Then
	\begin{align*}
	\frac{\{\hur 3 g k\}} {\bl^{ \dim \hur 3 g k}} \equiv
(1+\bl^{-1})\left( 1 - \bl^{-3} \right)
	\end{align*}
	modulo codimension 
	$\frac{g}{4}$ in
	$\grcStacks$. 
\end{corollary}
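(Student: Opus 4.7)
The plan is to deduce this corollary as an essentially immediate specialization of \autoref{theorem:hurwitz-class}, which upgrades the limit in \autoref{t:B} to an equality modulo codimension
\[
\codimbound{d}{g} = \min\!\left(\tfrac{g + c_d}{\kappa_d},\; \tfrac{g + d - 1}{d} - 4^{d-3}\right)
\]
for each fixed $g$. Substituting $d = 3$ with $c_3 = 0$, $\kappa_3 = 4$, and $4^{d-3} = 1$, this becomes
\[
\codimbound{3}{g} = \min\!\left(\tfrac{g}{4},\; \tfrac{g-1}{3}\right),
\]
which is at least $g/4$ for $g \geq 4$, while the codimension-$g/4$ claim is vacuous for $g \leq 3$. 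The limit value is the $d = 3$ entry of \autoref{t:B}, namely $(1+\bl^{-1})(1-\bl^{-3})$, obtained from \autoref{theorem:hurwitz-class} by summing over ramification types exactly as \autoref{corollary:full-hurwitz-class} is derived. So once \autoref{theorem:hurwitz-class} is in hand, no further work is needed beyond this substitution.

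For context, to sketch the $d = 3$ case of \autoref{theorem:hurwitz-class} itself, I would build on the Casnati-Ekedahl parametrization (\autoref{theorem:3-structure}): a degree $3$, genus $g$ Gorenstein cover of $\bp^1$ is equivalent to a rank two bundle $E$ on $\bp^1$, of fixed degree determined by $g$, together with a section $\eta$ of a canonically associated vector bundle on $\bp^1$ whose vanishing locus in $\bp(E)$ is the cover. One then stratifies the resulting parameter stack by the splitting type $E \cong \sco(a) \oplus \sco(b)$; the stratum where $|a - b| = s$ contributes in codimension growing linearly in $s$, which is the source of the constant $\kappa_3 = 4$. On the balanced open stratum, the class of the locus of sections $\eta$ cutting out a smooth geometrically connected cover with prescribed local ramification can be computed by the motivic Euler product / power structure machinery of \cite{biluH:motivic-euler-products-in-motivic-statistics}, with local factor at each closed point of $\bp^1$ governed by the mass formula of the corollary to \autoref{theorem:local-aut-class}.

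The main obstacle is propagating explicit codimension estimates through this stabilization. One must simultaneously control the codimension contribution of unbalanced splittings of $E$ and the codimension of the discriminant (non-smooth, non-connected) locus in each local section space, and balance the two to produce the clean bound $g/4$. The pleasant feature specific to $d = 3$ is that the local factor, after summing over ramification types, collapses to a polynomial in $\bl^{-1}$ that is divisible by $(1 - \bl^{-1})$, so the motivic Euler product over $\bp^1$ telescopes to the closed form $(1+\bl^{-1})(1-\bl^{-3})$ without leaving any residual product (in contrast with the $d = 4, 5$ cases of \autoref{t:B}).
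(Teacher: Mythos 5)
Your proposal matches the paper exactly: the corollary is stated there as an immediate special case of \autoref{theorem:hurwitz-class}, obtained by setting $d=3$ so that $\codimbound 3 g = \min(\tfrac{g}{4},\tfrac{g-1}{3})$ (which equals $\tfrac{g}{4}$ once $g\geq 4$) and evaluating the Euler product over all ramification types as in \autoref{corollary:full-hurwitz-class} to get $(1+\bl^{-1})(1-\bl^{-3})$. Your contextual sketch of the $d=3$ proof of the main theorem also tracks the paper's actual argument (Casnati--Ekedahl strata, the Bilu--Howe sieve with local factors from \autoref{theorem:local-aut-class}, and the Tamagawa-number cancellation).
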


Focusing on the degree $3$ case, Roberts' conjecture \cite{Roberts2001}
states that the number of degree $3$ field extensions of $\mathbb Q$
of discriminant at most $X$ is $\alpha X + \beta X^{5/6} +
o(X^{5/6})$, for appropriate constants $\alpha, \beta$.
This was proved in \cite{Bhargava2013b} and \cite{Taniguchi2013} independently.
Moreover, the error term was further improved to
$\alpha X + \beta X^{5/6} + O(X^{2/3+\varepsilon})$
in \cite{bhargavaTT:improved-error}.

In the context of function fields, one might similarly expect
$\alpha_q q^{\dim \hur 3 g k} + \beta_q q^{5/6 \dim \hur 3 g k} + o(q^{5/6 \dim \hur
3 g k})$ to count the number of degree 3 extensions of $\mathbb F_q(t)$ of genus $g$, for some constants $\alpha_q,\beta_q$.
Progress towards this was made in 
\cite{zhao:on-sieve-methods-for-varieties-over-finite-fields}.
%A weaker form of this conjecture,
%namely that there are 
%$\alpha q^{\dim \hur d g k} + O(q^{5/6 \dim \hur
%d g k+ \varepsilon})$ such degree $3$ extensions when $\chr \mathbb F_q \geq
%5$,
In the context of the Grothendieck ring, 
as mentioned above, we were able to compute the class of the Hurwitz stack up to
codimension $\codimbound 3 g := \min(\frac{g}{4}, \frac{g + 2}{3}-1) =
\min(\frac{g}{4}, \frac{g-1}{3})$.
Hence, once $g \geq 4$, $\codimbound 3 g  = \frac{g}{4}$.
Since $\dim \hur 3 g k = 2g + 4$, we find
$\frac{5}{6} \dim \hur 3 g k = \dim \hur 3 g k  - \frac{g + 2}{3}$,
and so a weakened form of Roberts' conjecture
is the following:
\begin{conjecture}
	\label{conjecture:second-order}
Suppose $k$ is a
	field of
	characteristic not dividing $6$. 
	Then
	\begin{align*}
	\frac{\{\hur 3 g k\}} {\bl^{ \dim \hur 3 g k}} \equiv
(1+\bl^{-1})\left( 1 - \bl^{-3} \right)
	\end{align*}
	modulo codimension 
	$\frac{g-1}{3}$ in
	$\grcStacks$. 
\end{conjecture}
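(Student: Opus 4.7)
The strategy is to sharpen the codimension bound in the proof of \autoref{theorem:hurwitz-class} for the degree three case. The current bound $\codimbound 3 g = \min(g/4, (g-1)/3)$ is binding at $g/4$ for $g \geq 4$, while the second term $(g-1)/3$ originates from the rate of convergence of the motivic Euler product in the sense of Bilu--Howe. So the goal is to remove the $g/4$ bottleneck. This first bound arises from controlling the ``bad'' locus in Miranda's parameterization --- i.e., sections of $\sym^3 \scf \otimes \det^{-1}\scf$ that fail to define a smooth, geometrically connected $S_3$-cover of genus $g$.

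The plan is to stratify $\hur 3 g k$ according to the splitting type $\scf \cong \sco_{\bp^1}(a) \oplus \sco_{\bp^1}(b)$ of the trace-free pushforward $\pi_* \sco_X / \sco_{\bp^1}$, where $a \leq b$ and $a + b = g + 2$. By Miranda's theorem, each stratum is identified with an open substack of sections of $\sym^3 \scf \otimes \det^{-1}\scf$ for the corresponding $\scf$. The main term comes from the nearly-balanced splittings $b - a \leq 1$. For each unbalanced splitting with $b - a = 2k > 0$, I would compute the class of the corresponding stratum and bound its codimension in the ambient section space that appears in the motivic Euler product. If these codimensions can be shown to exceed $(g-1)/3$ for every $k \geq 1$, the $g/4$ bottleneck disappears and \autoref{conjecture:second-order} follows.

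The hardest step will be obtaining this sharper codimension bound for moderately unbalanced splittings. Current dimension counts yield codimensions that are only linear in $k$ (reflecting the jump $h^1(\End \scf) = 2k - 1$), which after summing across all strata produces a bound of order $g/4$ rather than $(g-1)/3$. To close the gap, one must exploit additional constraints coming from requiring that the discriminant section cuts out a reduced divisor of degree $2g + 4$, together with an inclusion-exclusion over ramification profiles mirroring Zhao's sieve \cite{zhao:on-sieve-methods-for-varieties-over-finite-fields} for counting cubic covers over finite fields. The main obstacle is translating such sieve arguments into the Grothendieck ring: one must control infinitely many local conditions uniformly in the dimension filtration. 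Results of Bilu and Howe on motivic Euler products should provide the needed uniformity, but a careful reorganization of the error analysis --- possibly isolating a secondary main term at codimension $(g+2)/3$ analogous to the $X^{5/6}$ term in Roberts' conjecture --- is likely required.
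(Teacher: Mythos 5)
This statement is a conjecture in the paper, not a theorem: the authors explicitly do not prove it, and in the remark immediately following they identify the single obstruction, namely that the Bilu--Howe sieve applied in \autoref{proposition:ce-strata-class} and \autoref{lemma:smooth-sieve-codimension-bound} only controls the smoothness condition up to codimension roughly $g/4$. Your proposal is likewise not a proof --- you yourself defer the essential step (``the hardest step will be obtaining this sharper codimension bound \dots a careful reorganization of the error analysis \dots is likely required''), so at best this is a research plan, and it cannot be accepted as establishing the statement.

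Beyond that, the plan misdiagnoses where the bottleneck lies, and in fact has the two terms of $\codimbound 3 g = \min(g/4,\ (g-1)/3)$ swapped. The term $(g-1)/3 = \frac{g+2}{3}-1$ comes from \autoref{lemma:unexpected-codimension-bound}, i.e.\ from discarding the strata with $H^1(\bp^1,\sch)\neq 0$ or $H^0(\bp^1,\sce^\vee)\neq 0$; it does not come from the Euler product. The term $g/4$ comes from \autoref{lemma:smooth-sieve-codimension-bound}: for a stratum $\sce\simeq\sco(a)\oplus\sco(b)$ one has $\codim\ce + \lfloor\tfrac{s+1}{2}\rfloor \geq g/4$, where $s$ is the minimal summand of $\sch = \sym^3\sce\otimes\det\sce^\vee$, and the minimum is attained at the \emph{balanced} stratum, where $\codim\ce = 0$ but $s\approx (g+2)/2$ so the sieve error sits at codimension $\approx g/4$. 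Consequently, sharpening the codimension bounds for unbalanced splittings --- the core of your strategy --- cannot remove the $g/4$ bottleneck: the unbalanced strata already contribute only in codimension at least their own $h^1(\End\sce)$, which grows linearly in the imbalance, and the dominant error is produced on the main stratum by the sieve itself. Any genuine proof of \autoref{conjecture:second-order} must improve the convergence guarantee of the motivic sieve (the admissibility/$m$-generation estimate of \cite[Thm.\ 9.3.1]{biluH:motivic-euler-products-in-motivic-statistics} as used in \autoref{proposition:ce-strata-class}) on the balanced stratum, which is exactly the open problem the paper points to.
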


\begin{remark}
	\label{remark:}
	Note that $\frac{g-1}{3}$ is in fact the second term in the minimum
defining $\codimbound 3 g$. 
There is only one step in our proof where the error term we introduce has
codimension less than $\frac{g-1}{3}$, namely when we apply the sieve of
\cite{biluH:motivic-euler-products-in-motivic-statistics} in
\autoref{proposition:ce-strata-class} and
\autoref{lemma:smooth-sieve-codimension-bound}.
So if the sieving machinery could be improved, it may lead to a proof of
\autoref{conjecture:second-order}.
\end{remark}

\begin{remark}
	\label{remark:}
	In the degree $3$ case, it would be quite interesting to actually find the
second order term, instead of just predicting the codimension of the error.
The paper 
\cite{bhargavaTT:improved-error}
improves the error term in Davenport-Heilbronn to $O(X^{2/3+\epsilon})$,
where $X$ is the discriminant of the relevant cubic extension.
Since $X^{2/3} = X \cdot X^{-1/3}$, when one translates this to a codimension
bound in the Hurwitz stack, it suggests 
one might hope to determine an 
asymptotic expression for
$\{\hur 3 g k\}$
up to codimension $\frac{1}{3} \dim \hur 3 g k$.
Such a computation would be extremely interesting to us, and we expect it would 
require tools far beyond those of the current paper.

Additionally, it would be interesting, though likely more difficult, to determine the codimension of the error
and the second order terms in degrees $4$ and $5$.
\end{remark}

\subsection{Topological conjectures}
If $\conf_n$ denotes the configuration space of
points on $\bp^1$, i.e., the open subscheme of $\sym^n_{\bp^1}$ parameterizing
reduced degree $n$ subschemes of $\bp^1$,
then, for $n\geq 3$, we have $\frac{\{\conf_n\}}{\bl^{\dim \conf_n}} = 1 - \bl^{-2}$ 
in the Grothendieck ring of varieties.
This follows from \cite[Lem.\ 5.9(a)]{Vakil2015} as we explain further toward
the end of \autoref{subsection:d=2-proof}.
There is a map $\rhur d g k s \to \conf_{2g-2 + 2d}$ sending a curve to its
branch locus, 
see \cite{fantechiP:stable-maps-and-branch-divisors}.
Using this, \autoref{t:A} and the explicit formula for
$\{\conf_{2g-2 + 2d}\}$
implies that the source and target of
this map have classes in $\grcStacks$,
defined in
\autoref{definition:completed-grothendieck-ring},
which differ only by a class of high
codimension.
\begin{corollary}
	\label{corollary:}
	For $2 \leq d \leq 5$ and $k$ a field of characteristic not dividing
	$d!$,
	\begin{align*}
		\lim_{g \to \infty} \frac{\{\rhur d g k s \}}{\bl^{\dim \rhur d
		g k s}} 
		=\frac{ \{ \conf_{2g-2 + 2d}\}}{\bl^{\dim \rhur d g k s}}
	\end{align*}
	in $\grcStacks$. 
\end{corollary}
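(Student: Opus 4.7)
The plan is to deduce the corollary directly from \autoref{t:A} together with the closed-form formula for the class of $\conf_n$ recalled just before the corollary. The key point is that $\dim \rhur d g k s = \dim \conf_{2g-2+2d}$, so the two factors $\bl^{\dim \rhur d g k s}$ in the displayed equation are literally the same, and it suffices to show that both ratios equal $1 - \bl^{-2}$ in $\grcStacks$.

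First I would verify the dimension count. For a simply branched degree $d$ cover of $\bp^1$ by a smooth genus $g$ curve, Riemann--Hurwitz gives
\[
2g - 2 \;=\; d \cdot (-2) + \#\{\text{branch points}\},
\]
so there are exactly $2g - 2 + 2d$ simple branch points. Hence $\dim \rhur d g k s = 2g - 2 + 2d = \dim \conf_{2g-2+2d}$, where the right-hand equality uses that $\conf_{2g-2+2d}$ is a Zariski open in $\sym^{2g-2+2d}\bp^1 \cong \bp^{2g-2+2d}$.

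Next, I would apply the formula $\{\conf_n\}/\bl^{\dim \conf_n} = 1 - \bl^{-2}$, valid in $\grcStacks$ for every $n \geq 3$ by \cite[Lem.\ 5.9(a)]{Vakil2015}. Since $2g-2+2d \geq 3$ for all $d \geq 2$ and $g \geq 1$, this shows that the right-hand side of the corollary equals $1 - \bl^{-2}$ for every such $g$; in particular it is independent of $g$, so no limit is needed on that side. On the left, \autoref{t:A} gives $\lim_{g \to \infty} \{\rhur d g k s\}/\bl^{\dim \rhur d g k s} = 1 - \bl^{-2}$, and equating the two identifications concludes the argument.

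There is no substantive obstacle: the corollary is a direct repackaging of \autoref{t:A} using the explicit class of the configuration space of branch points. The only care required is to track dimension conventions and to verify Vakil's formula for the relevant range of $n$, both of which are routine. Conceptually, the corollary records the geometric content of \autoref{t:A}: the branch morphism $\rhur d g k s \to \conf_{2g-2+2d}$ of \cite{fantechiP:stable-maps-and-branch-divisors} is close enough to a motivic equivalence that source and target have the same normalized class in $\grcStacks$.
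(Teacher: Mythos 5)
Your proposal is correct and is essentially the argument the paper itself gives (the corollary is presented there as an immediate consequence of Theorem~A together with the formula $\{\conf_n\} = \bl^n - \bl^{n-2}$ from \cite[Lem.\ 5.9(a)]{Vakil2015}, verified at the end of \autoref{subsection:d=2-proof}). Your dimension count via Riemann--Hurwitz and your observation that the right-hand side is independent of $g$ are exactly the points needed, so there is nothing to add.
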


It was also conjectured in
\cite[Conj.\ 1.5]{EllenbergVW:cohenLenstra} that this map
$\rhur d g {\bc} s \to \conf_{2g-2 + 2d}$ induces an isomorphism on $i$th homology
for fixed $d$ and sufficiently large $g$. 
(Technically a slight variant
was conjectured in \cite[Conj.\ 1.5]{EllenbergVW:cohenLenstra}, with $\ba^1$ base in
place of $\bp^1$.)
This is in fact open for $d\geq 3$,
though recent work of Zheng \cite[Thm.\
1.2]{zheng:stable-cohomology-in-the-moduli-space-of-trigonal-curves} proves a closely related result in the $d=3$ case,
by finding the stable cohomology of $\hur 3 g {\bc} $. 
\autoref{t:A} could be seen as an additional motivation for this conjecture, especially for $d\leq 5$.

\subsection{Spelling out some questions}
Despite the numerous parametrizations mentioned above, the question of 
whether there exist
simple parametrizations of covers of degree $6$, or even whether the Hurwitz
stack of genus $g$ degree $6$ covers (for large $g$) is unirational,
remains wide open.

Returning to the simply branched case for simplicity, 
we have now seen several ways in which we could ask whether the spaces $\rhur d g k s$ and 
$\conf_{2g-2 + 2d}$ are similar as $g\ra\infty$. The following questions have
been raised:
\begin{enumerate}
	\item Do they have the same points counts (asymptotically) over $\F_q$?  
	\item Do they have the same cohomology, in some stable limit?  
	\item Do they have the same normalized limit in the Grothendieck ring?  
\end{enumerate}	
We also include: 
\begin{enumerate}
	\item[(4)] Are they piecewise isomorphic up to pieces of codimension going to $\infty$? 
\end{enumerate}
Even though it is not technically about these spaces, in this sequence of
questions one should also include:
\begin{enumerate}
	\item[(1')] Are the asymptotic counts of $S_d$ number fields as
		predicted by Bhargava in \cite{Bhargava2010}?
\end{enumerate}
For $d\geq 6$, it seems progressively harder to believe the questions (1) and
(1'), (2), (3), and (4) could have positive answers, though for $d\leq 5$ the same parametrizations lead to positive answers to (1), (1'), (3), and (4) (and nearly to (2) for $d=3$).

\subsection{Idea of the proof}

The idea of the proof of \autoref{t:A} and \autoref{t:B} is simplest to understand in
the degree $3$ case, so we describe this first.  Miranda \cite{miranda:triple-covers-in-algebraic-geometry}
gave a parametrization of degree $3$ covers of a base scheme, and we explain here how we can apply it
for degree $3$ covers of $\bp^1$.
Any degree $3$ cover of $\bp^1$ has a canonical embedding into a $\bp^1$-bundle
$\bp \sce$ over $\bp^1$.
We can write $\sce \simeq \sco(a) \oplus \sco(b)$ where $a + b = g + 2$ and
$a \leq b$.
We can therefore stratify the Hurwitz space by the isomorphism type of the
bundle $\sce$.
The degree $3$ curves lie in a particular linear series on $\bp \sce$.
The idea is now to compute the locus of smooth curves in this linear system with
particular ramification conditions, and then sum over all splitting types of
bundles $\sce$.
The condition for a degree $3$ cover of $\bp^1$ to be smooth in a fiber over $p$ can be
checked over the preimage of the second order neighborhood of $p$ in $\bp \sce$.
We directly compute the classes of such curves in such an infinitesimal
neighborhood.
Using the notion of motivic Euler products, we can “multiply” these local
classes to obtain the global class of smooth curves in $\bp \sce$
in the relevant linear system, at least up to high codimension.
We then sum these resulting classes over allowed splitting types of $\sce$.
It turns out that we must have $\sce \simeq \sco(a) \oplus \sco(b)$ with 
$a \leq b, 2a \geq b$, and a general member of the relevant linear system on any
such bundle gives a smooth trigonal curve.
Miraculously, in the simply branched case, this motivic Euler product exactly
cancels out with the sum over splitting types of $\bp \sce$, weighted by their
automorphisms. This follows from a motivic Tamagawa number formula for $\sl_2$.

To generalize this idea to the cases of degrees $4$ and $5$ requires substantial
additional work. First, it is no longer the case that curves of degrees $4$ and
$5$ are elements of linear systems on a surface. Rather, there are
parametrizations due to Casnati-Ekedahl \cite{casnatiE:covers-algebraic-varieties,casnati:covers-algebraic-varieties-ii} describing covers of degree
$d$ in terms of pairs of vector bundles $\sce$ and $\scf$, where
$\sce$ has rank $d - 1$ and $\scf \subset \sym^2 \sce$ corresponds to a certain
family of quadrics determined by the curve. In the $d = 4$ case, $\scf$ has
rank $2$, corresponding to $4$ points in $\bp^2$ being a complete intersection of
two quadratics, while in the case $d = 5$, $\scf$ has rank $5$, corresponding to
$5$ points in $\bp^3$ being the vanishing locus of the five $4 \times 4$
Pfaffians of a certain $5 \times 5$ matrix of linear forms.
As in the degree $3$ case, we can then stratify the Hurwitz stack in terms of
the splitting types of $\sce$ and $\scf$, and compute the classes yielding
curves of degree $d$ as sections of a certain vector bundle $\sch(\sce, \scf)$ on $\bp^1$,
depending on $\sce$ and $\scf$.
It is significantly more difficult to calculate the relevant local classes
giving the smoothness conditions in fibers in degrees $4$ and $5$ than it is in
degree $3$.
Nevertheless, we are able to do so by reformulating the question in terms of computing
classes of certain classifying stacks for positive dimensional disconnected
algebraic groups, and applying a number of results of Ekedahl.
The result is Theorem~\ref{theorem:local-aut-class}, which can be viewed as a motivic analog of Bhargava's mass formula \cite{Bhargava2010} counting extensions of local fields in arbitrary degree. 
The specific splitting types of $\sce$ and $\scf$ which appear are not nearly so
simple as in the degree $3$ case, but it turns out that the expressions work out
modulo high codimension. 
For this it is important not to count $D_4$ covers, i.e., degree
$4$ covers which factor through a hyperelliptic curve. 
%These covers form a codimension $2$ substack of the Hurwitz stack, and would
%produce a significantly different answer, were we to include them.
%It is quite unclear whether the class of $D_4$ covers has a simple closed form in
%the Grothendieck ring.
As in the degree $3$ case, it turns out that, at least in the simply branched case, the sum
over splitting types of $\sce$ and $\scf$ cancel out with the local conditions
we impose, again by the Tamagawa number formula.
%Given any point of the Hurwitz stack, corresponding to a cover of $\bp^1$ there
%is a canonical sequence of vector bundles on $\bp^1$ associated to it.
%We stratify the Hurwitz stack by the isomorphism types of this sequence
%of vector bundles.
%We can then express the Hurwitz class as a sum over these strata, and so
%the task remains to compute the class associated with any given fixed strata
%as well as to compute the resulting sum of classes.
%To compute the class on any fixed stratum, 
%we use the parametrizations due to Casnati-Ekedahl (in degrees $3$ and $4$)
%and Casnati (in degree $5$), which relate points in the stratum to sections
%of a certain associated vector bundle.
%When this vector bundle satisfies good properties (such as having vanishing
%$H^1$),
%we can then compute the proportion of these sections which correspond to smooth
%curves, at least up to high codimension, and thereby   obtain a formula
%for the class of any given stratum.
%Finally, we show that vector bundles not having the above mentioned good
%properties contribute negligibly to the sum, and hence we are able to freely sum
%over all sequences of vector bundles.
%We can then compute the resulting sum using the motivic Tamagawa number formula
%for $\sl_n$, and we find that the sum over vector bundles cancels out with the
%local conditions we imposed to ensure smoothness of the curves, simplifying the
%expression to obtain our claimed formulas.

\subsection{Outline of the paper}

The structure of the remainder of the paper is as follows.
In \autoref{section:background}, we give background on the Grothendieck ring of
stacks, setup the precise variant we will work in, and recall the notion of
motivic Euler products.
Then, in \autoref{section:generalizing-ce}
we prove generalizations of parametrizations due to Miranda, Casnati-Ekedahl, and
Casnati regarding Gorenstein covers of degree $d\leq 5$.
In degrees $3$ and $4$, generalizations to arbitrary covers of an arbitrary base have been previous shown by Poonen \cite[Prop.\ 5.1]{Poonen2008} and the third author \cite[Thm.\ 1.1]{Wood2011}, but in degree $5$ we require new arguments, and here we present a (mostly) uniform argument for degrees $3,4$, and $5$.
In
\autoref{section:presentation-finite-covers}
we upgrade the above mentioned parametrizations for $d \leq 5$
to describe simple presentations of the stack of degree $d$ Gorenstein covers as
a global quotient stack.
Having settled the above preliminaries, we define
the Hurwitz stacks we will work with in 
\autoref{section:hurwitz-definition}
and prove they are algebraic.
We then describe natural stratifications of these Hurwitz stacks
that arise from the structure of the parametrizations in
\autoref{section:ce-loci}.
Using these parametrizations, we give descriptions of these strata
as quotient stacks in
\autoref{section:presentations-of-ce}.
We next begin our proof of the main theorem by computing the local conditions in
the Grothendieck ring corresponding to a cover being smooth with specified
ramification conditions in 
\autoref{section:local-class}.
In \autoref{section:codimension-bounds}
we establish bounds on the codimension of the contributions to the Hurwitz stack 
from various  strata, which will enable us to prove
our main result in
\autoref{section:main-proof}.
The proof for the case of degree $2$ is slightly different from that
in degrees $3 \leq d \leq 5$, and we complete this in
\autoref{section:2}.

\subsection{Notation} \label{subsection:notation}

Let $\rboxed{X_Z}$ denote the fibered product $X \times_Y
Z$ of schemes, when $Y$ is clear from context.  Similarly define $\rboxed{X_R} := X
\times_Y \spec R$.
For $X$ an integral variety, we use $K(X)$ to denote its function field.

Recall that for $G$ a group, the wreath product $\rboxed{G \wr S_n}$
is  the semidirect product $G^n \rtimes S_n$ where $S_n$ acts on
$G^n$ by the permutation action on the $n$ copies of $G$.   More generally, for $\sce$ a category, let
$\rboxed{\sce \wr BS_j}$ denote the corresponding wreath product of categories
(see \cite[p. 5]{ekedahlGeometricInvariantFinite2009}) so that in particular,
$BG \wr BS_j = B(G \wr S_j)$.

For $\scx$ a stack, and $G$ a group scheme acting on $\scx$, we use
$\rboxed{[\scx/G]}$
to denote the quotient stack.
To avoid confusion with this notation, for $\scx$ a stack, we use $\{\scx\}$ to
denote its class in the Grothendieck ring of stacks, see
\autoref{definition:completed-grothendieck-ring}.
%If $\scx$ is
%itself a group scheme over $\spec k$, and $G$ is a closed subgroup scheme, then
%$[\scx/G]$ is (equivalent to) a scheme, so we may view such quotients as schemes.

We call an algebraic group $G$ over a field $k$ {\em special} 
if every $G$-torsor over a $k$-scheme $X$ 
is trivial Zariski locally on $X$.
%We note that
%\cite{Ekedahl2009} uses $\mathrm{Zar}_k$ to denote the set of special groups
%over $k$, though we will not use this notation.

When working in $\grcSpaces$, defined in
\autoref{definition:completed-grothendieck-ring},
	we say two classes $A, B \in \grcSpaces$ of
dimension $d$ are {\em equal modulo codimension $n$} to mean $A - B$ lies in the
dimension $d - n$ filtered part of $\grcSpaces$. 

Let $\rboxed{D} := \spec k[\varepsilon]/(\varepsilon^2)$ be the dual
numbers.    For $X$ a
projective scheme over $Y$, let $\rboxed{\hilb_{X/Y}^d}$ denote the Hilbert
scheme parameterizing degree $d$ dimension $0$ subschemes of $X$ over $Y$.

For $X \rightarrow Y$ a finite locally free map, and $Z$ an $X$-scheme, let
$\rboxed{\res_{X/Y}(Z)\rightarrow Y}$ denote the Weil restriction.
Recall (e.g., \cite[\S
7.6]{BoschLR:Neron}) that the Weil
restriction is the functor defined on $T$ points by $\res_{X/Y}(Z)(T) =
Z(T\times_Y X)$.  For $Z$ quasi-projective over $X$, $\res_{X/Y}(Z)$ is
representable \cite[\S 7.6, Thm.\ 4]{BoschLR:Neron}.

\subsection{Acknowledgements}

We thank 
Hannah Larson as well as multiple tremendously meticulous and helpful referees
for carefully reading the paper and offering 
especially detailed comments.
We also thank Enrico Schlesinger
for pointing out an important omission in our version of the Casnati-Ekedahl
structure theorem, see \autoref{remark:ce-improvements}(10).
This was missed in the published version and we have issued an erratum.
We thank
Manjul Bhargava, 
Margaret Bilu,
Samir Canning, 
Gianfranco Casnati, 
Jordan Ellenberg,
Sean Howe,
Nikolas Kuhn, 
Anand Patel, 
Bjorn Poonen,
Will Sawin,
Federico Scavia, 
Enrico Schlesinger,
Craig Westerland,
Takehiko Yasuda,
and Wei Zhang
for helpful discussions related to this paper.  
AL was supported by the National Science
Foundation 
under Award No.
DMS 2102955.
RV was partly supported by NSF grant DMS-1601211.
MMW was partly supported by a Packard Fellowship for Science and Engineering, a NSF Waterman Award DMS-2140043, and NSF CAREER grant DMS-2052036.

\section{Background: the Grothendieck ring of stacks and motivic Euler
products} 
\label{section:background}

In this section, we begin by defining useful variants of the Grothendieck ring.
Ultimately, we will compute the classes of Hurwitz stacks
in a ring we call $\grcSpaces$, 
obtained from the usual Grothendieck ring
of varieties by quotienting by universally bijective (i.e, 
	bijective on topological spaces after any base change,
	or equivalently	radicial surjective) morphisms, inverting $\bl =
\{\ba^1\}$, and then completing with respect to the dimension filtration.
Following this, we recall basic definitions associated to motivic Euler
products,
following \cite{bilu:thesis} and
\cite{biluH:motivic-euler-products-in-motivic-statistics}.
We also prove these Euler products satisfy a multiplicativity property
(\autoref{lemma:evaluated-product}).

\subsection{Variations of the Grothendieck Ring}
\label{subsection:grothendieck-ring-definitions}

Recall that we are working over a fixed field $k$.
We begin by introducing the Grothendieck ring of algebraic spaces.

\begin{definition} \label{definition:grspaces}Let  $\rboxed{\grSpaces}$  denote the {\em
	Grothendieck ring of algebraic spaces over $k$}.  This is the ring
	generated by classes $\{X\}$ of algebraic spaces $X$  of finite type over $k$ with
	relations given by $\{X\} = \{Y\}$ if there is an isomorphism $X \simeq Y$
	over $k$ and $\{X\} = \{Z\} + \{X-Z\}$ for any closed
        sub-algebraic space $Z  \subset X$.  Applying this in the case
        $Z = X^{red}$, we have $\{X\} = \{X^{red}\}$.        
Multiplication is given by $\{X\} \cdot \{Y\} = \{X \times_k Y\}$. 

More generally, if $S$ is a finite type algebraic space over $k$
we can define $K_0(\on{Spaces}_k/S)$
as the free abelian group generated by
classes of morphisms $X \to S$ with relations $\{X/S\} = \{Z/S\} + \{X - Z/S\}$ 
for any closed sub-algebraic space $Z \subset X$,
where the
implicit maps $f|_Z: Z \to S, f|_{X-Z} : X - Z \to S$ are obtained by
restricting the map $f: X \to S$. 
Multiplication is given by $\{X/S\} \cdot \{Y/S\} = \{X \times_S Y\}$. 

We use a {\em $k$-variety} to mean a reduced, separated, finite type
$k$-scheme. We let $\on{Var}_k$ denote the category of $k$-varieties.
One can similarly define 
$K_0(\on{Var}_k),$ see \cite[\S2]{biluH:motivic-euler-products-in-motivic-statistics}. 
Similarly, for $S$ a $k$-variety, one can analogously define
$K_0(\on{Var}_k/S)$, see \cite[\S2]{biluH:motivic-euler-products-in-motivic-statistics}. 
\end{definition}

\begin{proposition} \label{proposition:spaces-versus-schemes} 
	Let $S$ be $k$-variety.
	The natural map
	$\phi: K_0(\on{Var}_k/S) \to K_0(\on{Spaces}_k/S)$,
	sending $\{X/S\}$ viewed as a $k$-variety to the same $\{X/S\}$ viewed
	as a finite type $k$-space,
	is an isomorphism.
\end{proposition}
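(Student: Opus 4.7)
The plan is to construct a two-sided inverse $\psi\colon K_0(\on{Spaces}_k/S) \to K_0(\on{Var}_k/S)$ by stratifying any finite type algebraic space into locally closed subschemes. The crucial geometric input is Knutson's theorem that every finite type algebraic space over a field contains a dense open subscheme; iterating this by Noetherian induction on the complement yields a finite stratification $X = \bigsqcup_i X_i$ of any finite type algebraic space $X/k$ into locally closed subschemes $X_i$. After replacing each $X_i$ by its reduction (which is identified with $X_i$ under the scissor relations anyway) and further stratifying by irreducible components and affine opens, we may arrange that each $X_i$ is a $k$-variety in the sense of \autoref{definition:grspaces}. Restriction along the structure map $X \to S$ makes each $X_i$ a variety over $S$, so surjectivity of $\phi$ is immediate: $\phi\!\left(\sum_i \{X_i/S\}\right) = \{X/S\}$ by the scissor relations in $K_0(\on{Spaces}_k/S)$.

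For injectivity, I would construct $\psi$ on the free abelian group on isomorphism classes of finite type algebraic spaces over $S$ by
\[
\psi(\{X/S\}) := \sum_i \{X_i/S\},
\]
for a chosen stratification $X = \bigsqcup_i X_i$ by $k$-varieties. The first thing to check is that this does not depend on the stratification. Given two such stratifications $X = \bigsqcup_i X_i = \bigsqcup_j Y_j$, the intersections $X_i \cap Y_j$ are locally closed subspaces of the scheme $X_i$, hence are themselves schemes (and may be stratified further into varieties). The scissor relations in $K_0(\on{Var}_k/S)$ then give $\sum_i \{X_i/S\} = \sum_{i,j}\{X_i \cap Y_j/S\} = \sum_j \{Y_j/S\}$, so $\psi$ is well defined on isomorphism classes. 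Independence of isomorphism is clear from the construction.

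Next, I would verify that $\psi$ descends to the quotient by the scissor relations in $K_0(\on{Spaces}_k/S)$: if $Z \subset X$ is a closed subspace with open complement $U$, pick any stratification of $X$ refining the decomposition $X = Z \sqcup U$, then the definition of $\psi$ manifestly gives $\psi(\{X/S\}) = \psi(\{Z/S\}) + \psi(\{U/S\})$. Compatibility with the product structure is similarly immediate: if $X = \bigsqcup X_i$ and $Y = \bigsqcup Y_j$ are stratifications by $S$-varieties, then $X \times_S Y = \bigsqcup X_i \times_S Y_j$ is a stratification by $S$-varieties (after possibly further refining to ensure each factor remains a variety, using reduction and stratification by irreducible components once more). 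The identities $\psi \circ \phi = \mathrm{id}$ and $\phi \circ \psi = \mathrm{id}$ then both reduce to the scissor relations applied to a chosen stratification of the underlying space.

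The main obstacle is the well-definedness of $\psi$, which rests entirely on Knutson's theorem: without the existence of a dense open subscheme in every finite type algebraic space, the stratification $X = \bigsqcup X_i$ would not exist, and there would be no way to define $\psi$ at all. The bookkeeping to replace schemes by varieties (reducing, and refining by affine opens in each irreducible component to obtain separatedness) is routine once stratifiability is established, and the comparison of two stratifications by a common refinement is formal.
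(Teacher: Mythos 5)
Your proposal is correct and follows essentially the same route as the paper: both rest on the fact that a finite type algebraic space over $k$ contains a dense open subscheme, iterated by Noetherian induction to get a stratification by $k$-varieties, which gives surjectivity immediately and injectivity by transporting the scissor relations. The only difference is presentational — you build the inverse $\psi$ explicitly and verify independence of the stratification via common refinement, whereas the paper argues directly that each relation in $K_0(\on{Spaces}_k/S)$ lifts to a sum of relations among varieties; these are the same argument packaged differently, and your version makes the well-definedness check more explicit.
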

%In other words, the ring $\grSpaces$
%agrees with the usual Grothendieck ring of varieties over $k$. 
%
\begin{proof}
	We first show that for any finite type $k$-space $X$, there is a finite
	collection $X_1, \ldots, X_n$
	of locally closed $k$-subspaces isomorphic to schemes, forming a stratification of $X$. Here,
	a stratification means that a $\overline{k}$ point of $X$ factors through some $X_i$.
        The key input we will need is that finite type
	spaces are quasi-separated, and so they contain a dense open isomorphic to a
	scheme \cite[Thm.\  6.4.1]{olsson2016algebraic}.  
	This, together with the facts that
	$\{X/S\} = \{X^{red}/S\}$ and that any finite type $k$-scheme has a
	stratification by separated finite type $k$-schemes shows that
	any finite type $k$-space $X$ has a stratification by locally closed
	subschemes.

	Next, let us show $\phi$ is surjective. For this, if $\{X/S\}$ is any
	finite type algebraic space, we can use the above stratification to
	write
	$\{X/S\} = \sum_{i=1}^n \{X_i/S\}$, for $X_i$ varieties over $S$,
	implying $\phi$ is surjective.

	We conclude the proof by showing that $\phi$ is injective.
	In order to show this, it is enough to show that any single relation in
	$K_0(\on{Spaces}_k/S)$ is expressable in terms of relations from
	$K_0(\on{Var}_k/S)$.
	More precisely, if $\{X/S\} \in K_0(\on{Spaces}_k/S)$ and $Z$ is a
	closed subspace $Z \subset X$, it suffices to show that the relation
	$\{X/S\} = \{Z/S\} + \{X - Z/S\}$ can be expressed as the image under
	$\phi$ of a sum of relations from $K_0(\on{Var}_k)$.
	To see this is the case, write $\{X/S\} = \sum_{i=1}^n \{X_i/S\}$ and
	where
	$X_1, \ldots, X_n$ are $k$-varieties. 
	Then, let $Z_i$ be the reduction of $X_i \times_X Z$.
	Note that $Z_i$ is a scheme from the definition of algebraic space 
	because $X_i$ and $Z$ are both schemes. Also, $Z_i$ is separated since it
	is a
	closed subscheme of the separated scheme $X_i$.
	Hence, $Z_i$ is a variety.
	We can also write $\{Z/S\} + \{X - Z/S\} = \sum_{i=1}^n
\{Z_i/S\} +\sum_{i=1}^n \{X_i-Z_i/S\}$.
	Therefore, it suffices to verify that
	\begin{align*}
	\sum_{i=1}^n \{X_i/S\} = \sum_{i=1}^n \{Z_i/S\} +
\sum_{i=1}^n \{X_i-Z_i/S\}
	\end{align*}
	is the image under $\phi$ of a sum of relations in the Grothendieck ring
	of varieties. Indeed, it is the sum over $i$ of the relations
	$\{X_i/S\} = \{Z_i/S\} + \{X_i-Z_i/S\}$.
\end{proof}

We next introduce the Grothendieck ring of algebraic stacks.
      \begin{definition} \label{definition:grstacks} The
        {\em Grothendieck ring of algebraic stacks} (over
        $k$) is the ring $\rboxed{\grStacks}$ generated by classes of
        algebraic stacks $\{\scx\}$ {\em of finite type over $k$ with affine
        diagonal}, with the three relations:
	\begin{enumerate}
		\item $\{\scx\} = \{\scy\}$ if there is an isomorphism $\scx
			\simeq \scy$ over $k$,
\item $\{\scx\} = \{\scz\} + \{\scu\}$ for any
	closed substack $\scz \subset \scx$ with open complement $\scu \subset
	\scx$,
\item  $\{\spec_\scx (\sym^\bullet_\scx \sce)\} = \{\scx \times_k \ba^n\}$
	for $\sce$ any locally free sheaf on
	$\scx$ of rank $n$.  
	\end{enumerate}
	Multiplication in this ring is given by $\{\scx\} \cdot \{\scy\} = \{\scx \times_k
\scy\}$. 
 \end{definition}

Note that condition $(3)$ above follows
from the first two in the case of schemes, because vector bundles on schemes are
Zariski locally trivial.
However, vector bundles over stacks may
fail to be Zariski locally trivial, as is the case for nontrivial vector bundles
on $BG$.

\begin{remark} \label{remark:ekedahl-grstacks-equivalence} Let  $\rboxed{\bl}
	:= \{\ba^1_k\}$ denote the class of the affine line.  The natural map $\grSpaces \to
	\grStacks$ induces an isomorphism
	$$\grSpaces[\bl^{-1}, (\bl^n-1)^{-1}_{n \geq 1}] \overset \sim
        \longrightarrow
	\grStacks$$
        	\cite[Thm.\  1.2]{Ekedahl2009}.  Here
	$\grSpaces[\bl^{-1}, (\bl^n-1)^{-1}]$ denotes the ring obtained
	from
	$\grSpaces$ by inverting  $\bl$, as well as 
	$\bl^n - 1$ for all positive integers $n$.
	This isomorphism is motivated by 
	\autoref{definition:grstacks}(3)
	and the fact that
	inverting the classes of $\bl$ and $\bl^n -1$ for all $n$
	is equivalent to inverting the classes of $\gl_n$ for all $n$.
\end{remark}

In order to apply the results of
\cite{biluH:motivic-euler-products-in-motivic-statistics} to sieve out smooth
covers from all covers, we will need to work in a slight modification of the
Grothendieck ring of stacks where we invert universally bijective
(i.e., radicial surjective) morphisms and
then complete along the dimension filtration.  

\begin{definition}
	\label{definition:completed-grothendieck-ring} Let $k$ be a field and let $\rboxed{\grSpaces}$
	denote the Grothendieck ring of algebraic spaces over $k$ from
	\autoref{definition:grspaces}.  From
	$\grSpaces$, we will construct another ring, $\rboxed{\grcSpaces}$, in
	three steps.  
	\begin{enumerate}
		\item 	For any universally bijective map $f: X \to Y$ of
	finite type algebraic spaces over $k$, we impose the additional relation
	that $\{X\} = \{Y\}$.   Call the result  (only for the next paragraph)
	$\rboxed{K_0(\mathrm{Spaces}_k)_{\operatorname{RS}}}$.  
\item  Define
	$\rboxed{\widetilde{K}_0(\mathrm{Spaces}_k)} :=
	\rboxed{K_0(\mathrm{Spaces}_k)_{\operatorname{RS}}[\bl^{-1}]}$.   
	Like $\grStacks$,  the ring 
	$\widetilde{K}_0(\mathrm{Spaces}_k)$
has a filtration given by dimension with the $i$th filtered part
	$\rboxed{F^i \widetilde{K}_0(\mathrm{Spaces}_k)} \subset
	\widetilde{K}_0(\mathrm{Spaces}_k)$ denoting the subset of
	$\widetilde{K}_0(\mathrm{Spaces}_k)$ spanned by classes of dimension at
	most $-i$. 
\item  Finally, let $$\rboxed{\grcSpaces} := \varprojlim_{i \geq 0}
	\widetilde{K}_0(\mathrm{Spaces}_k)/ F^i
	\widetilde{K}_0(\mathrm{Spaces}_k)$$ be the completion along the
dimension filtration.  
	\end{enumerate}
Similarly, for $\grStacks$ the Grothendieck ring of algebraic stacks over $k$ of
\autoref{definition:grstacks}, we analogously define $\grcStacks$ in the same
three steps, replacing the word “Spaces” above by “Stacks”:
\begin{enumerate}
	\item We first impose the relation $\left\{ X \right\} = \left\{ Y
\right\}$ for every universally bijective map of algebraic stacks $f: X \to Y$
of finite type with affine diagonal,
and denote the result
$\rboxed{K_0(\mathrm{Stacks}_k)_{\operatorname{RS}}}$.  
	\item Define
	$\rboxed{\widetilde{K}_0(\mathrm{Stacks}_k)} :=
	\rboxed{K_0(\mathrm{Stacks}_k)_{\operatorname{RS}}[\bl^{-1}]}$.   
	Like $\grStacks$,  the ring 
	$\widetilde{K}_0(\mathrm{Stacks}_k)$
has a filtration given by dimension with the $i$th filtered part
	$\rboxed{F^i \widetilde{K}_0(\mathrm{Stacks}_k)} \subset
	\widetilde{K}_0(\mathrm{Stacks}_k)$ denoting the subset of
	$\widetilde{K}_0(\mathrm{Stacks}_k)$ spanned by classes of dimension at
	most $-i$. 
\item  Finally, let $$\rboxed{\grcStacks} := \varprojlim_{i \geq 0}
	\widetilde{K}_0(\mathrm{Stacks}_k)/ F^i
	\widetilde{K}_0(\mathrm{Stacks}_k)$$ be the completion along the
dimension filtration.  
\end{enumerate}
\end{definition} 

\begin{remark} \label{remark:} In characteristic $0$, identifying classes along
	universally bijective  morphisms does not alter the Grothendieck ring.  See
	\cite[Rem.\ 2.0.2, Rem.\
	7.3.2]{biluH:motivic-euler-products-in-motivic-statistics} for some
	justification of why we are inverting universally bijective morphisms.

	But we do not know if  inverting universally bijective
morphisms alters the Grothendieck ring of spaces or stacks in positive
characteristic.  
\end{remark}

Since Hurwitz stacks are not in general algebraic spaces, but
the results of \cite{biluH:motivic-euler-products-in-motivic-statistics}
apply to the completed Grothendieck ring of algebraic spaces
$\grcSpaces$,
it will be useful to know that one can also obtain $\grcSpaces$ from
$\grStacks$ by inverting universally bijective maps and completing
along the dimension filtration, as we next verify.

\begin{lemma}
	\label{lemma:spaces-ring-equals-stacks-ring}
%	Let $\rboxed{\grcStacks}$ denote the ring obtained from $\grStacks$
%	by imposing the relations identifying all universally bijective %morphisms
%	of spaces and then completing along the dimension filtration, %analogously
%	to the construction of \autoref{definition:completed-grothendieck-%ring}.
	The natural map $\grcSpaces \to \grcStacks$ is an isomorphism.
\end{lemma}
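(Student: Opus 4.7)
The plan is to exhibit an inverse to the natural map $\grcSpaces \to \grcStacks$ using Ekedahl's theorem (\autoref{remark:ekedahl-grstacks-equivalence}). Granting that Ekedahl's isomorphism passes through the RS-quotient (see below), one obtains
$$\widetilde{K}_0(\mathrm{Stacks}_k) \;\cong\; \widetilde{K}_0(\mathrm{Spaces}_k)\bigl[(\bl^n-1)^{-1}_{n\geq 1}\bigr].$$
So it suffices to show that each $\bl^n-1$ is already invertible in $\grcSpaces$, that the resulting map $\widetilde{K}_0(\mathrm{Stacks}_k)\to\grcSpaces$ is continuous for the dimension filtration (and hence extends to $\grcStacks\to\grcSpaces$), and that this extension is inverse to the natural map.

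For invertibility, I would factor $\bl^n-1 = \bl^n(1-\bl^{-n})$; since $\bl^{-n}\in F^n\widetilde{K}_0(\mathrm{Spaces}_k)$ is topologically nilpotent in the completion, the geometric series $\sum_{k\geq 0}\bl^{-nk}$ converges in $\grcSpaces$ to an inverse of $1-\bl^{-n}$, yielding
$$(\bl^n-1)^{-1} \;=\; \sum_{k\geq 1}\bl^{-nk} \;\in\; F^n\grcSpaces.$$
By the universal property of localization, the natural map $\widetilde{K}_0(\mathrm{Spaces}_k)\to\grcSpaces$ extends uniquely to a ring map $\Psi\colon\widetilde{K}_0(\mathrm{Stacks}_k)\to\grcSpaces$. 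Any class in $F^i\widetilde{K}_0(\mathrm{Stacks}_k)$ is a sum of terms of the form $a\cdot\prod_j(\bl^{n_j}-1)^{-1}$ with $\dim a - \sum_j n_j \leq -i$; since $\Psi$ sends each $(\bl^{n_j}-1)^{-1}$ into $F^{n_j}\grcSpaces$ and the product filtration is respected by multiplication, $\Psi$ carries $F^i\widetilde{K}_0(\mathrm{Stacks}_k)$ into $F^i\grcSpaces$. Passing to completions yields a continuous ring map $\Phi\colon\grcStacks\to\grcSpaces$.

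Finally, both $\Phi$ and the natural map $\grcSpaces\to\grcStacks$ are continuous, and the two compositions agree with the identity on the dense subrings $\widetilde{K}_0(\mathrm{Spaces}_k)\subset\grcSpaces$ and $\widetilde{K}_0(\mathrm{Stacks}_k)\subset\grcStacks$ respectively (directly on space classes, and on $(\bl^n-1)^{-1}$ via the geometric series identity above), so by density they equal the identity everywhere. The main obstacle is the preliminary step of checking that Ekedahl's theorem remains valid after imposing the universally bijective relations on both sides: one would revisit the stratification arguments of \cite{Ekedahl2009}, verifying that each stratification and each appeal to the vector bundle relation is stable under radicial surjections, so that the proof identifies the RS-variants of the two rings in the same way. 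In characteristic $0$ the RS-quotient is trivial and this is automatic.
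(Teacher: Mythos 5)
Your proof follows essentially the same route as the paper's: both invoke Ekedahl's isomorphism $\grSpaces[\bl^{-1},(\bl^n-1)^{-1}_{n\geq 1}]\simeq\grStacks$ and then observe that each $\bl^n-1$ becomes invertible in the completed ring via the geometric series, so that the inverse map is defined by $(\bl^n-1)^{-1}\mapsto\sum_{i\geq 1}\bl^{-in}$ — exactly the formula the paper writes down. The compatibility with the universally bijective relations that you flag as the main obstacle is dispatched in the paper by the short observation that localization commutes with taking quotients, so your added care about continuity for the dimension filtration and about checking both compositions only makes the argument more complete, not different.
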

\begin{proof}
First note that although we
	constructed $\grcSpaces$ from $\grSpaces$ by first quotienting by
	universally bijective morphisms and then inverting $\bl$, we could have
	equally well first inverted $\bl$ and then inverted universally bijective
	morphisms.  
 Since localization commutes with taking quotients, we obtain the same result by doing these steps in either order.
 %The two operations commute because both correspond to adding
%	relations to the ring $\grSpaces$, and the resulting commutative %ring is independent
%	of the order in which the relations were added.

	Since we can localize and take quotients in any order,  using
	\autoref{remark:ekedahl-grstacks-equivalence}, we can equivalently
	obtain $\grcStacks$ by identifying universally bijective morphisms of
	spaces and then inverting $\bl, \bl^n - 1$ and completing along the
	dimension filtration.  To show $\grcSpaces \to \grcStacks$ is an
	isomorphism, we wish to show that beginning with
	$\widetilde{K}_0(\mathrm{Spaces}_k)$
	and completing along the dimension
	filtration is equivalent to first inverting $\bl^n -1$ for all $n \geq
	1$ and then completing along the dimension filtration.  Indeed, one may
	define a map $\grcStacks \to \grcSpaces$ induced by the map
	$\widetilde{K}_0(\mathrm{Spaces}_k)[(\bl^n-1)^{-1}_{n \geq 0}] \to
	\grcSpaces$ extended by sending the class of $(\bl^n-1)^{-1} \mapsto
	\sum_{i \geq 1} \bl^{-in}$.
	Upon completing along the dimension filtration this defines the desired
	isomorphism $\grcStacks \to \grcSpaces$ inverse to the natural map
	$\grcSpaces \to \grcStacks$ given above.  
\end{proof}

\begin{remark}
	\label{remark:}
	Due to the equivalence of \autoref{lemma:spaces-ring-equals-stacks-ring}, in
	what follows, we will work in $\grcSpaces$.
 %with the understanding that
%	this ring also describes the completion along the dimension %filtration
%	of the Grothendieck ring of stacks with universally bijective %morphisms
%	inverted.  
	In particular, it makes sense to speak of classes of stacks with affine
	diagonal in
$\grcSpaces$ by \autoref{lemma:spaces-ring-equals-stacks-ring}.
\end{remark}

\subsection{Motivic Euler Products} \label{subsection:motivic-euler-products}

We recall the notion of motivic Euler products in the Grothendieck ring, which
is crucial in our proof.  
See \cite{bilu:thesis} for an introduction to motivic Euler products,
and \cite[\S6]{biluH:motivic-euler-products-in-motivic-statistics} for more details.

%We begin by introducing notation to
%give the definition of motivic Euler products in the setting we will need.  Let
%$X \to S$ be a map of finite type algebraic spaces over a field $k$.  
%(In this paper, will be primarily interested in the case $X = \bp^1_k$ and $S = \spec k$.) 
%For $I$ an
%indexing set, we use $\rboxed{\mathcal P(I)}$ to denote the set of {\em
%generalized partitions} which are tuples of nonnegative integers $(m_i)_{i \in I}$ with only finitely
%many $i$ such that $m_i$ is nonzero.  
%For any $\mu =
%(m_i)_{i \in I}$, there is a  finite surjective map $p: \prod_{i \in I} X^{m_i}
%\to \prod_{i \in I} \sym^{m_i} X$.  Let $U$ denote the open subscheme 
%of $\prod_{i \in I} X^{m_i}$
%where
%no two coordinates agree and let $\rboxed{C^\mu_{/S}(X)}$ denote the open
%subscheme $p(U) \subset \prod_{i \in I} \sym^{m_i} X$.  Informally speaking,
%$C^\mu_{/S}(X)$ parameterizes configurations of $\mu$-labeled points on $X$
%mapping to the same point of $S$.  
We begin by introducing notation to
give the definition of motivic Euler products in the setting we will need.  
For a finite multiset $\mu$, with underlying set $I$, we write $\mu=(m_i)_{i \in I}$, where $m_i$ is the number of copies of $i$ in $\mu$. 
Let $X$ be a reduced, quasi-projective 
scheme over a field $k$.  
For any finite multiset $\mu =
(m_i)_{i \in I}$, there is a  finite surjective map $p: \prod_{i \in I} X^{m_i}
\to \prod_{i \in I} \sym^{m_i} X$.  Let $U$ denote the open subscheme 
of $\prod_{i \in I} X^{m_i}$
where
no two coordinates agree and let $\rboxed{C^\mu(X)}$ denote the open
subscheme $p(U) \subset \prod_{i \in I} \sym^{m_i} X$.  Informally speaking,
$C^\mu(X)$ parameterizes configurations of $\mu$-labeled points on $X$.

%More generally, for $\mathcal X = (X_i)_{i
%\in I}$ a collection of varieties $X_i$ over $X$, and $\mu = (m_i)_{i \in I}$ a
%generalized partition, define $C^\mu_{X/S}(\mathcal X)$ as the preimage of
%$C^{\mu}_{/S}(X) \subset \prod_{i \in I} \sym^{m_i}(X)$
%under the projection
%$\prod_{i
%\in I} \sym^{m_i} X_i \to \prod_{i \in I} \sym^{m_i}(X)$.  
%
%As in \cite[Defn.\
%6.1.8]{biluH:motivic-euler-products-in-motivic-statistics}, one can extend the
%above definition to make sense of $C_{X/S}^\mu(\mathcal A)$ as an element of
%$\grSpaces$ where $\mathcal A = (a_i)_{i \in I}$ with $a_i$ in
%$K_0(\on{Spaces}_k/X)$.   
More generally, for $\mathcal X = (X_i)_{i
\in I}$ a collection of reduced, quasi-projective schemes
$X_i$ with morphisms to $X$, and $\mu = (m_i)_{i \in I}$ a
multiset, define $C^\mu_{X}(\mathcal X)$ as the preimage of
$C^{\mu}(X) \subset \prod_{i \in I} \sym^{m_i}X$
under the projection
$\prod_{i
\in I} \sym^{m_i} X_i \to \prod_{i \in I} \sym^{m_i}(X)$.  
As in \cite[Defn.\
6.1.8]{biluH:motivic-euler-products-in-motivic-statistics}, one can extend this
definition to make sense of $C_{X}^\mu(\mathcal A)$ as an element of
$\grSpaces$ where $\mathcal A = (a_i)_{i \in I}$ with $a_i$ in
$K_0(\on{Spaces}_k/X)$.  

%Following 
%\cite[Defn.6.1.11]{biluH:motivic-euler-products-in-motivic-statistics}, 
%for $\mathcal A =
%(a_i)_{i \in I}$ a collection of classes in $K_0(\on{Spaces}_k/X)$, define the
%{\em motivic Euler product} \begin{align} 
%	\label{equation:motivic-euler-product}	
%	\rboxed{	\prod_{x \in X/S} \left(
%	1 + \sum_{i \in I} a_{i,x} t_i \right)} := \sum_{\mu \in \mathcal P(I)}
%	C^\mu_{X/S}(\mathcal A) \mathbf{t}^\mu \in \grSpaces \llbracket
%	t \rrbracket, 
%\end{align} 
%where $\mathbf{t}^\mu =\prod_{i \in I} t_i^{m_i}$ 
%for $\mu = (m_i)_{i \in I}$.  
%Here, $\rboxed{a_{i,x}}$ is notation for the
%	restriction of $a_i$ to the point $x \in X$.
%
 Let $\mathbb N$ denote the positive integers.
 Let $\mathcal P$ be the set of non-empty finite multi-sets of positive
 integers, and for such a multiset $\mu=(m_i)_{i\in \mathbb N}$, let
 $|\mu|:=\sum_i i \cdot m_i$.
 Following 
\cite[Section 2.2.2]{Bilu2022},
for $\mathcal A =
(a_i)_{i \in \mathbb N}$ a collection of classes in $K_0(\on{Spaces}_k/X)$, define the
{\em motivic Euler product} \begin{align} 
	\label{equation:motivic-euler-product}	
	\rboxed{	\prod_{x \in X} \left(
	1 + \sum_{i =1}^{\infty} a_{i,x} t^i \right)} := 1+ \sum_{\mu \in \mathcal P}
	C^\mu_{X}((a_i)_{i\in \mathbb N}) t^{|\mu|} \in \grSpaces \llbracket
	t \rrbracket. 
\end{align}   
Here, $\rboxed{a_{i,x}}$ is formal notation to indicate the $a_i$ on which the definition depends.  When we write a class
$b_i\in K_0(\on{Spaces}_k)$ in place of $a_{i,x}$, it indicates that $a_i=[Y_i \times X]-[Z_i\times X],$ where $Y_i,Z_i$ are algebraic spaces of finite type over $k$ such that 
$b_i=[Y_i]-[Z_i]$ and $Y_i \times X,Z_i\times X$ have the natural projection map to $X$.

%
%	More generally, one can replace the $t_i$ by polynomials
%	$p_i( (t_j)_{j \in J} ),$ some collection of variables, and define
%\begin{align} 
%	\label{equation:motivic-euler-product-generalized}	
%	\prod_{x \in X/S} \left(
%		1 + \sum_{i \in I} a_{i,x} p_i( (t_j)_{j \in J} )
%		\right) := \sum_{\mu \in \mathcal P(I)}
%	C^\mu_{X/S}(\mathcal A) \mathbf{p}^\mu \in \grSpaces \llbracket
%	(t_j)_{j \in J} \rrbracket.
%\end{align} 
%where $\mathbf{p}^\mu =
%\prod_{i \in I} p_i( (t_j)_{j \in J})^{m_i}$ for $\mu = (m_i)_{i \in I}$.  

Let $r\in \mathbb N$, and let $I$ be the set of $r$-tuples of non-negative integers, not all $0$.  Note that $I$ is a semigroup under coordinate-wise addition.  Let $\mathcal{P}_r$ denote the set of non-empty finite multisets of elements of $I$, and for $\mu\in \mathcal{P}_r$, let $|\mu|\in I$
denote the sum of the elements of $\mu$.
More generally, for indeterminates $t_1,\dots, t_r$
one can define, for $\mathcal A =
(a_{\underline{i}})_{\underline{i} \in I}$ a collection of classes in $K_0(\on{Spaces}_k/X)$, 
\begin{align} 
	\label{equation:motivic-euler-product-generalized}	
	\prod_{x \in X} \left(
		1 + \sum_{\underline{i} \in I} a_{\underline{i},x} \underline{t}^{\underline{i}}
		\right) := 1+ \sum_{\mu \in \mathcal P_r}
	C^\mu_{X}((a_{\underline{i}})_{\underline{i} \in I}) 
 \underline{t}^{|\mu|}
 \in \grSpaces \llbracket
	t_1,\dots,t_r \rrbracket.
\end{align} 
where for $\underline{i}=(i_1,\dots,i_r)\in I$, we write $\underline{t}^{\underline{i}}$ for $t_1^{i_1}\cdots t_r^{i_r}$.

\begin{warn}
	\label{warning:}
	The left hand side of \eqref{equation:motivic-euler-product} is
	merely (evocative) notation, and has no intrinsic meaning beyond the
	right hand side. 
\end{warn}

In the special cases that we will use them in,  motivic Euler products are the same as the power structures of
Gusein-Zade, Luengo, and Melle–Hern\`{a}ndez \cite{Gusein-Zade2004}.
  We now specialize to the one variable case.
%Although our
% indexing set only has a single element, to make sense of the motivic Euler
%product when $a$ is not effective, one must pass through motivic Euler products
%with infinite indexing sets, as seen in \cite[Ex. 6.1.12]{biluH:motivic-euler-products-in-motivic-statistics}.

In good circumstances, there is an {\em evaluation map} at $t = 1$ sending a
motivic Euler product, viewed as an element of $\grSpaces \llbracket t
\rrbracket$ to an element of $\grcSpaces,$
as in \cite[Definition 6.4.1 and Notation
6.4.2]{biluH:motivic-euler-products-in-motivic-statistics}.
This makes sense whenever the
motivic Euler product ``converges at $t =1$'', meaning there are only finitely
many terms $\mu$ so that $C^\mu_{X/S}(a)$ is outside any given piece of the
dimension filtration.  

\begin{notation} \label{notation:evaluation-at-1} 
For a
	motivic Euler product 
	$\prod_{x \in X} \left( 1 + a_x t \right)$
	which converges at $t = 1$, we use
	$$\rboxed{\prod_{x \in X} \left( 1 + a_x t \right)|_{t = 1}}$$ to denote the
	evaluation at $t=1$ in $\grcSpaces$.  

	We will often also write
	$\rboxed{\prod_{x \in X} \left( 1 + a_{x} \right)}$ to also denote the
	evaluation of the motivic Euler product $\prod_{x \in X} \left( 1 +
	a_x t \right)$ at $t = 1$ in $\grcSpaces$,
	in order to shorten notation,
	but see
	\autoref{warning:evaluation-notation}.
\end{notation} 

\begin{warn}
	\label{warning:evaluation-notation} Due to the extreme care with which one must handle
	motivic Euler products, we acknowledge that
	\autoref{notation:evaluation-at-1} is not very good notation.  It is
	likely best to think of motivic Euler products as power series in $t$
	which are being evaluated at values of $t$, rather then actual elements
	in $\grcSpaces$, as the manipulations one wants to make have only
	primarily been established in terms of the power series, and not in
	terms of their evaluations in $\grcSpaces$.  We choose to use this
	convention so as to shorten unwieldy formulas.  

	In particular, one must be careful that the two expressions
	$\prod_{x \in X/S} \left(1 + \sum_{i \in I} a_{i,x} p_i( (s_j)_{j \in J} )\right)$
and 
$\prod_{x \in X/S} \left(1 + \sum_{i \in I} a_{i,x} t_i\right)|_{t_i =
p_i(\underline{s})}$
do not necessarily agree. However, when these sets indexing the variables $t_i$
and $s_j$ are finite, and all $p_i((s_j)_{j \in J})$ are monomials, these
two expressions do agree by \cite[Lemma
6.5.1]{biluH:motivic-euler-products-in-motivic-statistics}. 
\end{warn}

An important lemma will be that these Euler products in $\grcSpaces$ are
multiplicative. 
We now verify this, the key input being multiplicativity of motivic Euler
products in 
$\grSpaces \llbracket t_1,t_2 \rrbracket$.

\begin{lemma} \label{lemma:evaluated-product} Suppose $a$ and $b$ are two
	classes in $K_0(\on{Spaces}_k)$  
	such that the Euler products $\prod_{x \in X}
	\left( 1 + a_x t \right)$ and $\prod_{x \in X} \left( 1 + b_x t \right)$
	converge at $t = 1$ in $\grSpaces$.  Then, \begin{align}
		\label{equation:evaluated-product} \prod_{x \in X} \left( 1 +
		a_x \right) \cdot \prod_{x \in X} \left( 1 + b_x \right) =
		\prod_{x \in X} \left( \left( 1 + a_x \right) \left( 1 + b_x
		\right)\right) \end{align} in $\grcSpaces$.  \end{lemma}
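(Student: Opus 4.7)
The plan is to leverage the multiplicativity of motivic Euler products as formal power series in two variables, established in \cite{biluH:motivic-euler-products-in-motivic-statistics}, and then carefully pass to the evaluation at $t=1$ in $\grcSpaces$ using the monomial-substitution compatibility recalled in \autoref{warning:evaluation-notation}.

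First, I would introduce two formal variables $t_1, t_2$ and consider the two-variable Euler product
\[
\prod_{x \in X}\bigl((1 + a_x t_1)(1 + b_x t_2)\bigr) \;\in\; \grSpaces\llbracket t_1, t_2 \rrbracket,
\]
expanded via \eqref{equation:motivic-euler-product-generalized} with $r = 2$ and nonzero coefficients only in bidegrees $(1,0)$, $(0,1)$, $(1,1)$. The established multiplicativity of motivic Euler products as formal power series (see \cite[\S6]{biluH:motivic-euler-products-in-motivic-statistics}) gives the identity
\[
\prod_{x \in X}\bigl((1 + a_x t_1)(1 + b_x t_2)\bigr) = \prod_{x \in X}(1 + a_x t_1)\cdot \prod_{x \in X}(1 + b_x t_2)
\]
in the power series ring $\grSpaces\llbracket t_1, t_2 \rrbracket$.

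Second, I would verify that both sides admit evaluation at $t_1 = t_2 = 1$ in $\grcSpaces$, i.e.\ that only finitely many configuration terms sit outside any fixed level of the dimension filtration. For the right-hand side this follows immediately from the convergence hypotheses on $\prod(1 + a_x t)$ and $\prod(1 + b_x t)$, since a product of two convergent power series in $\grSpaces\llbracket t_1,t_2\rrbracket$ remains convergent at $(1,1)$ after passage to the completion. For the left-hand side I would use the configuration-space description: bounding the dimension of $C^{\mu}_X((a_x, b_x))$ by a sum of dimensions of the terms $C^{\mu_1}_X(a_x)$ and $C^{\mu_2}_X(b_x)$ obtained by projecting $\mu$ to its two coordinates, convergence of the one-variable products then forces convergence of the two-variable product at $(1,1)$.

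Third, I would substitute $t_1 = t_2 = 1$ on both sides and apply \cite[Lemma 6.5.1]{biluH:motivic-euler-products-in-motivic-statistics}, which is precisely designed to handle the case of monomial substitutions with finitely many variables. On the right this gives $\prod_{x \in X}(1 + a_x) \cdot \prod_{x \in X}(1 + b_x)$ by definition of \autoref{notation:evaluation-at-1}. On the left, the monomial substitution lemma identifies the evaluated two-variable Euler product with $\prod_{x \in X}\bigl((1 + a_x)(1 + b_x)\bigr)$ as defined by \autoref{notation:evaluation-at-1} via a single-variable Euler product whose coefficients are the products expanded from $(1 + a_x)(1 + b_x)$. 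Combining the three displays yields \eqref{equation:evaluated-product}.

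The main obstacle I anticipate is the bookkeeping in the third step: the monomial-substitution lemma must be applied so that the two-variable product literally matches a one-variable product after setting $t_1 = t_2 = 1$, and one must confirm that the relevant power series all lie in the regime where the cited lemma applies (finitely many indeterminates being sent to monomials). Once this compatibility is in hand, the equality is formal, but conflating the formal and evaluated products—as cautioned in \autoref{warning:evaluation-notation}—is exactly the kind of subtlety that needs care.
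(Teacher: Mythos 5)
Your proof is correct and follows essentially the same route as the paper's: formal multiplicativity of motivic Euler products together with \cite[Lemma 6.5.1]{biluH:motivic-euler-products-in-motivic-statistics} to reconcile the different monomial labelings of the cross term $a_xb_x$ when evaluating at $1$. The only (cosmetic) difference is that you separate variables by factor ($t_1$ for $a$, $t_2$ for $b$), whereas the paper works with the auxiliary product $\prod_{x \in X}\left(1 + a_x t + b_x t + a_x b_x s\right)$ and compares the substitutions $s = t$ and $s = t^2$.
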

		\begin{proof} 
			We would like to say this follows from
			multiplicativity of Euler products \cite[Prop.\
			3.9.2.4]{bilu:thesis}, but the issue is that when we
			apply
			\cite[Prop.\ 3.9.2.4]{bilu:thesis}
                the left hand side of
			\eqref{equation:evaluated-product} is equal to
			\begin{equation}
			\begin{aligned} \label{equation:motivic-product-commute}
				\left. \left(\prod_{x \in X} \left( 1 + a_x t
					\right) \cdot \prod_{x \in X} \left( 1
				+ b_x t\right) \right) \right \rvert_{t = 1} &=
				\left. \left( \prod_{x \in X} \left( 1 + a_x t
					\right) \cdot \left( 1 + b_x t\right)
				\right) \right \rvert_{t = 1} \\ &= \left.
				\left( \prod_{x \in X} \left( 1 + a_x t  + b_x
			t + a_x b_x t^2 \right) \right) \right \rvert_{t = 1}
		\end{aligned}
	\end{equation}
	%Here, the above expression is defined using \eqref{equation:motivic-euler-product-generalized}.
On the other hand, the right hand side of 
\eqref{equation:evaluated-product}
is by definition
		\begin{align} \label{equation:motivic-final-product} \left.
			\left( \prod_{x \in X} \left( 1 + a_x t  + b_x t + a_x
		b_x t \right) \right) \right \rvert_{t = 1}.  \end{align}

The lemma follows because
  $$\prod_{x \in X} \left( 1 + a_x t  + b_x t
		+ a_x b_x s \right)|_{s=t=1}
  =\prod_{x \in X} \left( 1 + a_x t  + b_x t
		+ a_x b_x t \right)|_{t=1}
  $$
and also 
$$\prod_{x \in X} \left( 1 + a_x t  + b_x t
		+ a_x b_x s \right)|_{s=t=1}
  =\prod_{x \in X} \left( 1 + a_x t  + b_x t
		+ a_x b_x t^2 \right)|_{t=1}
  $$ by \cite[Lemma 6.5.1]{biluH:motivic-euler-products-in-motivic-statistics}.
%		(Technically, here, we are working with motivic Euler products
%		in two variables, but one can make sense of evaluation
%		analogously to the one variable case.)	
%		Both of these operations are
%		given by evaluating $\prod_{x \in X} \left( 1 + a_x t  + b_x t
%		+ a_x b_x s \right)$ at $s = t = 1$, and so they are equal as
%	elements of $\grcSpaces$, using that we can commute evaluation and
%substituting monomials by \cite[Lemma
%6.5.1]{biluH:motivic-euler-products-in-motivic-statistics}. 
\end{proof}

\section{Parametrizations of low degree covers} \label{section:generalizing-ce}

The key to computing the class of Hurwitz stacks of low degree covers of $\bp^1$
is the parametrization of covers of degree $d \leq 5$ of a general base scheme.  
In the case $d=3$, the first such parametrization was given by Miranda \cite[Thm.\ 1.1]{miranda:triple-covers-in-algebraic-geometry}, for arbitrary degree 3 covers of an
 irreducible scheme over an algebraically closed field of characteristic not equal to 2 or 3.
 Pardini \cite{Pardini1989} later generalized Miranda's result to characteristic 3,
 and Casnati and Ekedahl \cite[Thm.\ 3.4]{casnatiE:covers-algebraic-varieties} generalized the result
 to Gorenstein degree 3 covers of an integral noetherian scheme.  Poonen \cite[Prop.\ 5.1]{Poonen2008}  gave a complete parametrization of degree $3$ covers of an arbitrary base scheme
 (see also \cite[Thm. 2.1]{Wood2011}).  When $d=4$, Casnati and Ekedahl
 \cite[Thm.\ 4.4]{casnatiE:covers-algebraic-varieties} gave a parametrization of 
 Gorenstein degree $4$ covers of an integral noetherian scheme.  The third author \cite[Thm.\ 1.1]{Wood2011}
generalized this to a parametrization of arbitrary degree $4$ covers along with the data
of a cubic resolvent cover (which is unique in the Gorenstein case) over an arbitrary base scheme.
When $d=5$, Casnati \cite[Theorem 3.8]{casnati:covers-algebraic-varieties-ii}
gave a parametrization of 
   degree $5$ covers, satisfying a certainly ``regularity'' condition (see Remark~\ref{remark:new-structure-theorems}), of an integral noetherian scheme.
(We also note that
Wright and Yukie \cite{Wright1992} gave
these parametrizations for a covers of a field, 
and Delone and Faddeev \cite{Delone1964}
and Bhargava \cite{Bhargava2004b, Bhargava2008a}  gave these parametrizations for covers of $\Spec \Z$.  Bhargava's parametrizations
require additional resolvent data for non-Gorenstein covers.
Bhargava, Shankar and Wang \cite[Section 3]{bhargavaSW:geometry-of-numbers-over-global-fields-i} have refined Wright and Yukie's work for covers of global fields.)

In this section, we will prove similar parametrizations, but suited for our particular application.
For our purposes, we would like to parametrize only Gorenstein covers, but over an arbitrary base.
For $d=3,4$, such a result could be deduced directly from \cite[Prop.\ 5.1]{Poonen2008} and \cite[Thm.\ 1.1]{Wood2011} by specializing to Gorenstein covers. However, for the case $d=5$ some new arguments are required both to obtain all Gorenstein covers and to generalize to an arbitrary base.  
For uniformity of exposition, we show how all of the parametrizations of Gorenstein covers can be 
obtained from the approach of Casnati and Ekedahl.

Casnati and Ekedahl \cite{casnatiE:covers-algebraic-varieties} 
prove a structure theorem \cite[Thm.\
2.1]{casnatiE:covers-algebraic-varieties} (a reformulation of \cite[Thm.\
1.3]{casnatiE:covers-algebraic-varieties}),
which describes a minimal resolution of covers of arbitrary degree of an integral scheme.
%,
%and then gives concrete descriptions of these covers in degrees $3$ and $4$.
%We give proofs of slight generalizations of these parametrizations in
%\autoref{theorem:3-structure},
%\autoref{theorem:4-structure}, and
%\autoref{theorem:5-structure}.
%A similar description in the degree
%$5$ case is due to Casnati in \cite{casnati:covers-algebraic-varieties-ii}.  
We will need to extend this
structure theorem   from integral  schemes
to arbitrary (including non-reduced) bases.  Essentially the same proof given in \cite[Thm.\
2.1]{casnatiE:covers-algebraic-varieties} applies, suitably replacing  Grauert's theorem with cohomology and base
change.  We thank Gianfranco Casnati for helpful conversations confirming this.  
We  will then apply this structure theorem to obtain our desired parametrizations of covers in degrees $3,4,$ and $5$, analogously to how it was done by Casnati and Ekedahl in  \cite[Thm.\
3.4, Thm.\ 4.4]{casnatiE:covers-algebraic-varieties} and \cite[Thm.\
3.8]{casnati:covers-algebraic-varieties-ii}.

We also upgrade Casnati's result in degree $5$ in an additional way to deal with all
Gorenstein covers, see \autoref{remark:new-structure-theorems}.

\subsection{The main structure theorem from Casnati-Ekedahl}
\label{subsection:ce-main}

We next recall the main structure theorem and give its proof in the more general
setting.  In essence, it says that degree $d$ Gorenstein covers are
classified by linear-algebraic data.  It is convenient to describe
this as saying that a number of moduli stacks are isomorphic.

We first recall some terminology.
We will consider degree $d$ covers which are finite locally free.
A finite locally free degree $d$ cover is {\em Gorenstein} if the scheme-theoretic fiber $X_y$ over
$\kappa(y)$  is Gorenstein for every $y \in Y$.  For $k$ a field,  a
subscheme $X \subset \bp^n_k$ is {\em arithmetically Gorenstein} if the affine
cone over $X$, viewed as a subscheme of $\mathbb A^{n+1}_k$, is Gorenstein.
For $\sce$ a rank $d-1$  locally free sheaf of $\mathscr O_Y$-modules on $Y$,
let $\rboxed{\pi: \bp \sce \ra Y}$ denote the corresponding projective
bundle $\rboxed{\bp \sce} := \proj \sym^\bullet \sce$.  
We use the term {\em projective bundle} to describe the projectivization of a vector
bundle.
For $\scg$ a sheaf of $\mathscr O_Z$-modules on a scheme or stack $Z$, we use $\scg^\vee := \hom_{\mathscr O_Z}(\scg,
\sco_Z)$ to denote its dual.
Finally, for $\kappa$ a field, a subscheme of $\bp^n_\kappa$ is nondegenerate if it is not contained in any
hyperplane $H \subset \bp^n_\kappa$.
% A projective bundle of relative
% dimension $m$ over a scheme $Y$ is a scheme $Z \ra Y$ isomorphic to
% $\bp\sce$ for $\sce$ a  locally free sheaf on $Y$ of rank $m + 1$.
%for which there is a Zariski open cover $U_i \subset Y$ so that there exists an
%isomorphism $Z \times_Y U_i \simeq \bp^m_U$.
%\begin{remark} \label{remark:} 
%In \cite[Thm.\
%2.1]{casnatiE:covers-algebraic-varieties}, the term projective bundle is used to
%mean a scheme $P$ over $Y$ 
%which is, locally on $Y$, a projective bundle in our
%sense of the term.  
%However, they work in the noetherian setting, in which the two definitions
%agree anyway.
%\end{remark}

\begin{theorem}[Generalization of \protect{\cite[Thm.\
	2.1]{casnatiE:covers-algebraic-varieties}}, see also
\protect{\cite[Thm.\ 2.2]{casnatiN:on-some-gorenstein-loci}}]
	\label{theorem:ce-main-generalized}
Let $X$ and $Y$ be schemes and let $\rho: X
\ra Y$ be a finite locally free 
Gorenstein cover of degree $d$, for $d
\geq 3$. Fix a vector bundle $\sce'$ of rank $d - 1$ on $Y$ with corresponding
projective bundle $\pi: \bp := \bp \sce' \to Y$, and fix 
an embedding $i: X \ra \bp$ such that $\rho = \pi \circ i$.
We further require that $\rho^{-1}(y) \subset \pi^{-1}(y) \simeq
\bp^{d-2}_{\kappa(y)}$ is a nondegenerate and arithmetically Gorenstein subscheme
for each point $y \in Y$.  
A bundle $\sce'$ and map $i$ satisfying the above properties exists.
Any two such triples $(\bp, \pi, i)$ and $(\bp_2,
	\pi_2,
i_2)$ are uniquely isomorphic, meaning there is a unique isomorphism $\psi: \bp
\simeq \bp_2$ such that $\pi_2 \circ \psi = \pi$ and $\psi \circ i = i_2$.
Moreover, for any such triple $(\bp, \pi, i)$ with $\rho = \pi \circ i$, the
following properties hold. 
\begin{enumerate} 
	\item[(i)] Let $\rboxed{\rho^\#} :
		\sco_Y \ra \rho_* \sco_X$ denote the  map defining
		$\rho: X \ra Y$ and let $\sce := (\coker \rho^\#)^\vee$.  
		Then, $\bp \simeq \bp \sce$.
	 \item[(ii)] The composition $\phi:
		\rho^* \sce \ra \rho^* \rho_* \omega_{X/Y} \ra \omega_{X/Y}$ is
		surjective, and so induces a map 
		$j: X \to \bp \mathscr E$, and $(\bp \sce, \sigma: \bp \sce \to
		Y, j)$ is a triple satisfying the properties above.
		The ramification divisor $R \subset X$ of $\rho$ satisfies
		$\sco_X(R) \simeq \omega_{X/Y} \simeq j^*
		\sco_{\bp \sce}(1)$.  
	\item[(iii)] There is a sequence
		$\scn_0, \scn_1, \ldots, \scn_{d-2}$ of finite locally free
		$\sco_{\bp \sce'}$ sheaves on $\bp \sce'$
		with $\scn_0 :=\sco_{\bp \sce'}$
		and an exact sequence
		\begin{equation} \label{equation:minimal-resolution}
			\begin{tikzcd} 0 \ar {r} &  \scn_{d-2}(-d) \ar
				{r}{\alpha_{d-2}} & \scn_{d-3}(-d+2) \ar
				{r}{\alpha_{d-3}} & \cdots \\
				& \cdots \ar {r}{\alpha_2} &
				\scn_1(-2) \ar{r}{\alpha_1} & \sco_{\bp \sce'} \ar{r} &
		\sco_X \ar{r}& 0, \end{tikzcd}\end{equation}
				such that
				the restriction of
				\eqref{equation:minimal-resolution} to the fiber
			$\rboxed{(\bp \sce')_y} := \pi^{-1}(y)$ over $y$ is a
				minimal free resolution of the structure sheaf
				of 
				$\rboxed{X_y} := \rho^{-1}(y)$ for every point
				$y \in Y$. 
	Given $\rho, \mathscr E', i$ as above,
			the exact sequence 
			\eqref{equation:minimal-resolution}
			is unique up to unique isomorphism, such that the
			isomorphism restricts to the identity map on final nonzero
			term $\mathscr O_X$,
			among all
			sequences with the above listed properties.
The locally free sheaves $\scf_i := \pi_* \scn_i$ on $Y$ satisfy $\pi^* \scf_i
\simeq \scn_i$.
			Further $\scn_{d-2}$ is
				invertible, and, for $i = 1, \ldots, d-3$, one
				has \begin{align} \label{equation:beta} \rboxed{
				\beta_i} := \rk \scn_i = \rk \scf_i =
			\frac{i(d-2-i)}{d-1}\binom{d}{i+1}.  \end{align}
			Moreover, 
%			$X_y \subset \bp_y$ is a nondegenerate
%			arithmetically Gorenstein subscheme, 
			$\pi^* \pi_* \scn_i \simeq
			\scn_i$ for $0 \leq i \leq d-2$, and
			$\shom_{\sco_{\bp \sce'}} (\scn_\bullet,
			\scn_{d-2}(-d)) \simeq \scn_\bullet$. 
			Additionally, the formation
			of $\pi_* \scn_\bullet$ commutes with base change on
			$Y$.  
		\item[(iv)]
			For $\scn_{d-2}$ as in
			\eqref{equation:minimal-resolution}, we have
			$\sce'\simeq \sce$ if and only if $\scn_{d-2} \simeq \pi^*
			\det \sce'$.
		\item[(v)] The pushforward of the map $\alpha_1: \scn_1(-2) \to
			\sco_{\bp}$ along $\pi$
			induces an injection $\scf_1 \to \sym^2 \sce$ and for $d-3
			\geq i \geq 2$, the pushforward $\alpha_i: \scn_i(-i-1)
			\to \scn_{i-1}(-i)$ along $\pi$
induces an injection $\scf_i \to \scf_{i-1} \otimes \sce$.  
		\item[(vi)] For any point $y \in Y$, no subscheme $X_y' \subset
			X_y$ of degree $d-1$ is sent under $\rho$ to a hyperplane of $\pi^{-1}(y)$.
\end{enumerate}
\end{theorem}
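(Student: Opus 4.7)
The plan is to adapt the proof of Casnati--Ekedahl \cite[Thm.\ 2.1]{casnatiE:covers-algebraic-varieties} essentially line by line, with the key modification being to replace Grauert's theorem (which requires $Y$ reduced) by the cohomology and base change theorem, which is valid for arbitrary $Y$. The fiberwise hypotheses that enable cohomology and base change are supplied by the constancy of the Hilbert function of $X_y \subset \bp^{d-2}_{\kappa(y)}$, which holds because each such fiber is a nondegenerate arithmetically Gorenstein subscheme of degree $d$ and hence has a self-dual minimal free resolution with Betti numbers given by \eqref{equation:beta}.

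First I would handle existence and uniqueness of the triple $(\bp,\pi,i)$, together with parts (i), (ii), and (iv). The natural candidate is $\sce := (\coker \rho^\#)^\vee$, which is locally free of rank $d-1$ since $\rho_* \sco_X$ is locally free of rank $d$ and $\rho^\#$ is locally split. Using relative duality for the finite Gorenstein morphism $\rho$, the composition $\rho^* \sce \to \rho^* \rho_* \omega_{X/Y} \to \omega_{X/Y}$ is surjective, producing a canonical closed embedding $j\colon X \hookrightarrow \bp\sce$ that is nondegenerate on each fiber. For the uniqueness of $(\bp,\pi,i)$, given any other triple $(\bp\sce',\pi_2,i_2)$ with the nondegeneracy property, pushing forward the pullback of $\sco_{\bp\sce'}(1)$ along the embedding identifies $\sce'$ canonically with $\sce$, up to a unique isomorphism; this simultaneously yields (i), (ii), and the characterization in (iv) via the identification of the top term of the resolution with $\pi^* \det \sce$ through relative duality.

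The heart of the theorem is part (iii). The strategy is inductive: starting from $\scn_0 = \sco_\bp \twoheadrightarrow \sco_X$, one takes successive syzygies and proves by induction on $i$ that the $i$-th syzygy sheaf has the form $\scn_i \simeq \pi^*\scf_i \otimes \sco_\bp(-i-1)$, where $\scf_i := \pi_*\bigl(\scn_i(i+1)\bigr)$ is a vector bundle on $Y$ whose formation commutes with base change on $Y$. This last commutation is exactly the output of the cohomology and base change theorem, once one knows the fiberwise Betti numbers are constant, which follows from the arithmetically Gorenstein and nondegeneracy properties of the fibers. The rank computation \eqref{equation:beta} and self-duality come from the classical structure of minimal free resolutions of arithmetically Gorenstein subschemes of $\bp^{d-2}_\kappa$ over a field $\kappa$. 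Uniqueness of the whole resolution up to unique isomorphism restricting to $\id_{\sco_X}$ then follows by Nakayama's lemma, inductively lifting the identity from each syzygy to its predecessor.

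Parts (v) and (vi) are formal consequences. For (v), pushing forward $\alpha_i$ along $\pi$ and using $\pi_* \sco_\bp(-j) = 0$ for $1 \leq j \leq d-2$ together with the projection formula and the identifications $\pi_* \sco_\bp(m) \simeq \sym^m \sce^\vee$ yields the injections $\scf_1 \hookrightarrow \sym^2 \sce$ and $\scf_i \hookrightarrow \scf_{i-1} \otimes \sce$. For (vi), if some degree $d-1$ subscheme $X'_y \subset X_y$ mapped to a hyperplane, then after intersecting with this hyperplane one would obtain a contradiction with the arithmetically Gorenstein structure of $X_y$ revealed by the minimal resolution on the fiber. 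I expect the main obstacle to be the careful bookkeeping in (iii), namely simultaneously producing the global resolution, verifying the splittings $\scn_i \simeq \pi^*\scf_i \otimes \sco_\bp(-i-1)$, and establishing uniqueness, all without any appeal to reducedness of $Y$; once cohomology and base change is invoked in place of Grauert's theorem, the remaining manipulations are routine extensions of those in \cite{casnatiE:covers-algebraic-varieties}.
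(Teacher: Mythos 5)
Your overall strategy is the same as the paper's: adapt the Casnati--Ekedahl argument step by step, substituting cohomology and base change for Grauert's theorem, with the fiberwise arithmetically Gorenstein structure supplying the constant Betti numbers needed to make base change work. However, there are two genuine gaps you would hit if you carried this out.

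First, the fiberwise input (Step A of Casnati--Ekedahl, which produces the minimal free resolution of $X_y \subset \bp^{d-2}_{\kappa(y)}$) is actually \emph{false as proved in the original paper when $\kappa(y)$ is finite}: the argument there chooses an element $a \in \ker\eta$ outside the union of the maximal ideals of the fiber algebra, and over a finite field such an element need not exist (e.g.\ $k = \mathbb F_2$, $A = \mathbb F_2^5$, $\eta$ the sum of coordinates). The fix is not cosmetic: one must reformulate the base-change step so that the fiberwise resolution is only required over \emph{geometric} points, and then note that nondegeneracy, arithmetic Gorensteinness, and minimality of a resolution can all be checked after passage to an algebraic closure. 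Your proposal invokes ``constancy of the Hilbert function of $X_y \subset \bp^{d-2}_{\kappa(y)}$'' at actual points, so as written it inherits the error. Second, the uniqueness of the triple $(\bp,\pi,i)$ up to \emph{unique} isomorphism is new relative to \cite{casnatiE:covers-algebraic-varieties} (which only shows uniqueness of the bundle), and your one-line justification via pushing forward $\sco(1)$ does not suffice. The two embeddings $i_1, i_2$ correspond to two surjections $q_1, q_2 \colon \pi_*\omega_{X/Y} \to \sco_Y$, and one must show $q_1 = q_2 \circ \phi$ up to a unit; this requires reducing to $Y$ local and using precisely part (vi) (no degree $d-1$ subscheme of a fiber lies in a hyperplane) to conclude that the two relative hyperplane sections are nowhere vanishing on $X$ and hence differ by a unit. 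So (vi) is not merely a ``formal consequence'' tacked on at the end --- it is an input to the uniqueness statement. Minor further points: you should also reduce to the noetherian case by spreading out before invoking cohomology and base change and the EGA semicontinuity machinery, and (vi) itself is most cleanly obtained from Schreyer's lemma on the fiberwise resolution rather than from the intersection argument you sketch.
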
 

\begin{remark}
	\label{remark:ce-improvements}
	The statement of 
	\autoref{theorem:ce-main-generalized}
	differs in several ways from the original statement \cite[Thm.
	2.1]{casnatiE:covers-algebraic-varieties} from
	\cite{casnatiE:covers-algebraic-varieties}.

	\begin{enumerate}
		\item As pointed out in \cite[Thm.
			2.2]{casnatiN:on-some-gorenstein-loci}, it is necessary
			to add a nondegenerate hypothesis to the statement (which was an oversight in the original result).
		\item We  do not require our base $Y$ to be noetherian.
		\item We do not require our base $Y$ to be integral.
\item	We show that given any two triples $(\mathbb P, \pi, i)$ there
	is a unique isomorphism between them, as in
	the sense of the statement of \autoref{theorem:ce-main-generalized}. In
	\cite[Thm. 2.1]{casnatiE:covers-algebraic-varieties}, it is only shown
	that the bundle $\mathbb P$ is unique.
\item In (ii), we additionally show that $(\bp \sce, \sigma: \bp \sce \to Y, j)$
	is one of the unique above mentioned triples.
\item In (iii) we show the formation of $\pi_* \mathscr N_\bullet$ commutes with
	base change.
\item In (iii), we include the requirement that the isomorphism is unique among
	isomorphisms restricting to the identity on $\mathscr O_X$. This
	assumption was also needed in
	\cite{casnatiE:covers-algebraic-varieties}, but not explicitly stated
	there.
\item	We have also added property (v).
\item We have added property (vi).
\item	We also need to assume the fibers $\rho^{-1}(y) \subset
			\pi^{-1}(y)$ are arithmetically
			Gorenstein (and not only Gorenstein) to make the
			embedding $X \to \mathbb P \mathscr E$ unique. 
			A counterexample
			to the theorem statement without the arithmetically
			Gorenstein hypothesis was provided to us by
			Enrico Schlesinger:
If one embeds $4$ points in $\mathbb P^2$ so that three lie on a line, the
resulting scheme will not be arithmetically Gorenstein by \cite[Example 4.1.11(c)]{migliore:introduction-to-liason-theory}.
Instead, the theorem yields the arithmetically Gorenstein embedding of $4$
points in $\mathbb P^2$ as the intersection of two conics.
	\end{enumerate}
\end{remark}

\begin{proof}
As a first step, we reduce the proof to the case $X$ and $Y$ are noetherian. 
\subsubsection*{Removing noetherian hypotheses}
In view of
the asserted uniqueness, by Zariski descent, we may reduce to the case that $Y$
is affine.  
Because $\rho: X \to Y$ is locally finitely presented as it is
finite locally free, we will next show we can spread out all of the above data to a finite type
scheme $Y_0$.
More precisely, as a first step,
by \cite[Prop. 8.9.1]{EGAIV.3},
we can find some finite type
schemes $Y_0$ and $X_0$, a map $\rho_0: X_0 \to Y_0$ and a map $Y \to Y_0$ so that
$\rho$ is the base change of $\rho_0$ along $Y \to Y_0$.
By the various spreading out results in \cite[\S8]{EGAIV.3}
after possibly replacing $Y_0$ with another finite type scheme, we may
additionally assume
$\rho_0$ is Gorenstein, $\sce'$ is the pullback of a
vector bundle $\sce'_0$ on $Y_0$,
and the triples $(\bp, \pi, i)$ and $(\bp_2, \pi_2, i_2)$ are base changes
of corresponding triples on $Y_0$.
Nearly all parts of the theorem, except the unique isomorphism of two triples $(\bp, \pi,
i)$ and $(\bp_2, \pi_2, i_2)$ and the unique isomorphism in (iii),
follow from the corresponding statement over $Y_0$.
However, if these isomorphisms are not unique, there will be some noetherian
scheme to which two different such isomorphisms descend, and hence this claim
can be verified after replacing $Y_0$ with another noetherian scheme.
In particular, it suffices to prove the theorem in the case $Y$ and $X$
are noetherian, and even finite type over $\spec \bz$.
This removes the noetherian hypothesis, addressing
\autoref{remark:ce-improvements}(2).

For the remainder of the proof, we assume $X$ and $Y$ are noetherian.
The proof given in
\cite[Thm.\ 2.1]{casnatiE:covers-algebraic-varieties}
is broken up into steps A, B, C, and D. Step A has a minor inaccuracy
which we next address. The only generalization needed occurs in step B, while
steps C and D go through without change.

\subsubsection*{Addressing step A}
We next explain the proof of Step A, though we make the additional assumption
that the field $k$ is
infinite.
Before explaining this proof, we remark on an error in the proof of Step A from
\cite[Step A, p. 443]{casnatiE:covers-algebraic-varieties} when $k$ is finite.
%	The proof of \cite[Thm.\ 2.1]{casnatiE:covers-algebraic-varieties} in
%	the key case that $Y = %\spec k$ is
%	a field is given in \cite[Step A, p.
%	443]{casnatiE:covers-algebraic-varieties}.  
%It appears this has a minor
%	error as we next explain. 
\begin{remark}
	\label{remark:}
	Let $A$ be a finite $k$-algebra $A$ with maximal
	ideals $\mathfrak m_1, \ldots, \mathfrak m_p$. 
	Let $\eta: A \to k$, be a generalized trace map, i.e., a surjection of $k$-vector spaces
	such that the only ideal contained in the kernel is the $0$ ideal. 
	It is then claimed 
 that
 %one	can modify $\eta$, replacing it by $a \cdot \eta$ 
there exists
  $a \in \ker \eta -
	\cup_{i=1}^p \mathfrak m_i$.  

	This is 
    not always true over finite fields, such as when $k= \spec \mathbb F_2$ and $A = \mathbb F_2^5$ and
	$\eta : A \to k$ is the map given by summing the five coordinates.
	Indeed, the oversight in \cite[Step A, p.
	443]{casnatiE:covers-algebraic-varieties} is that while
	over infinite fields, $\ker \eta \subset \cup_{i=1}^p
	\mathfrak m_i$ implies $\ker \eta \subset \mathfrak m_i$ for a single
	$i$,  this does not always hold over finite fields.  It is straightforward 
 to check that
 this claim holds over an infinite field.
	Since we cannot have $\ker \eta \subset \mathfrak m_i$ by the definition of a
	generalized trace map, over infinite fields we conclude that $\ker \eta
	\subsetneq \cup_{i=1}^p
	\mathfrak m_i$.
	%{it seems there may be a minor error in step A - they claim any pairing can be
%adjusted to satisfy $\eta(e_0) = 0$. but this relies on the field being
%infinite. I think it is actually false that it can be adjusted in the case that
%you have $5$ copies of $\spec F_2$ over $k = F_2$. But they could have worked
%over algebraically closed fields and it would be fine.}
\end{remark}

	Having explained the error when $k$ is finite, we now conclude our
	commentary on the proof of Step A.
	As mentioned above, the proof still works correctly in the case $k$ is
	infinite.
%	still construct the embedding $X \to \bp\sce$ over $k$ induced by the
%	relative dualizing sheaf $\omega_{X/Y}$ and construct a resolution which
%	restricts to a minimal free resolution for this embedding locally over
%	each geometric point $y \in Y$. 
%	All the properties appearing in the $Y = \spec k$ case of the
%	theorem may be verified after base change to the algebraic closure of
%	$k$, at which point the proof appearing in \cite[Step A, p.
%	443]{casnatiE:covers-algebraic-varieties} goes through.  
	We also note
	that in the statement of \cite[Lem., p. 119]{schreyer1986syzygies} which
	is cited in \cite[Step A, p. 443]{casnatiE:covers-algebraic-varieties},
	the subscheme $D$ there should have degree $d$ and lie in $\bp^{d-2}$,
	as opposed to degree $d-2$ in $\bp^{d-1}$.  
	Note that in order to apply \cite[Lem., p. 119]{schreyer1986syzygies},
	it is necessary to use the
	hypothesis that $X \subset \bp\sce$ is nondegenerate, a hypothesis
	which was omitted in \cite[Thm.\
	2.1]{casnatiE:covers-algebraic-varieties}, addressing
	\autoref{remark:ce-improvements}(1).
	At this point, (vi) follows from 
	\cite[Lem., p. 119]{schreyer1986syzygies}, addressing
	\autoref{remark:ce-improvements}(8).

\subsubsection*{Addressing step B}
	Having established the result when $Y = \spec \overline{k}$, 
	it remains to carry out the proof for general bases following \cite[Step B, C,
	and D, p. 445-447]{casnatiE:covers-algebraic-varieties}.
	In what follows, we next recapitulate the argument for step B \cite[p.
	445]{casnatiE:covers-algebraic-varieties}, modifying the application of
	Grauert's theorem to one of cohomology and base change, which allows us
	to remove the integrality hypothesis on $Y$, as in
	\autoref{remark:ce-improvements}(3).

	Recall the statement of Step B: 
\begin{enumerate}
\item[Step B:] Suppose there is a factorization $\rho =
	\pi \circ i$, for $\pi: \bp \ra Y$ a projective $\bp^{d-2}$ bundle and
	$i: X \ra \bp$ an embedding with $X_y$ a nondegenerate arithmetically Gorenstein
	subscheme of $\bp_y$ for each $y \in Y$.  Then,
	\eqref{equation:minimal-resolution} exists, is unique up to unique
	isomorphisms,  
	restricts to a minimal free resolution of $\sco_{X_y}$ over each point
	$y \in Y$,
	and $\pi^* \pi_* \scn_\bullet \simeq \scn_\bullet$.
\end{enumerate}

	Note that when it is written the resolution is unique up to unique
	isomorphism in Step $B$, the statement implicitly means such an isomorphism is unique up to
	those restricting to the identity on $\mathscr O_X$, as if such a
	specification were not given, we could compose with multiplication by a
	unit. This is the reason for the modification from
	\autoref{remark:ce-improvements}(7).

	We next observe that it suffices to prove a version of Step $B$ where we
	replace $Y$ with a geometric point over $y$.
	To be more precise, in order to verify Step B, it suffices to verify
	Step B', given as follows.

	\begin{enumerate}
		\item[Step B':] Suppose there is a factorization $\rho =
	\pi \circ i$, for $\pi: \bp \ra Y$ a projective $\bp^{d-2}$ bundle and
	$i: X \ra \bp$ an embedding with $X_y$ a nondegenerate arithmetically Gorenstein
	subscheme of $\bp_y$ for each geometric point $y$ of $Y$.  Then,
	\eqref{equation:minimal-resolution} exists, is unique up to unique
	isomorphisms,  
	restricts to a minimal free resolution of $\sco_{X_y}$ over each
	geometric point $y$ of $Y$
and $\pi^* \pi_* \scn_\bullet \simeq \scn_\bullet$.
\end{enumerate}

	We now explain why Step B' implies Step B.
	Indeed, the conditions
	of $X_y$ being a nondegenerate arithmetically Gorenstein subscheme and
	for a resolution of $\mathscr O_{X_y}$ being a minimal free resolution
	may be verified after replacing $y$ with a geometric point $\overline
	{y}$ mapping to $y$.
	Therefore, Step B' implies Step B.

	We next verify Step B'.
	In what follows, we therefore use $y$ to denote a geometric point of
	$Y$, as opposed to a point of the underlying topological space whose
	with scheme structure given as the spectrum of the residue field at that
	point.

	For the remainder of the verification of Step B', we only handle the case $d \geq 4$. The case $d =
	3$ is quite analogous to the case $d \geq 4$, though significantly
	easier as the resolution has length $2$.
	
	Define maps $j_y, i_y$ as in the diagram \begin{equation}
	\label{equation:} \begin{tikzcd} X \ar {r}{i} \ar {rd}{\rho} & \bp \ar
{d}{\pi} & \bp_y \ar {l}{j_y} \ar{d} \\ & Y & y \ar{l}{i_y}
\end{tikzcd}\end{equation} Letting $\rboxed{\sci}$ denote the ideal sheaf
of $X$ in $\bp$, we claim that $j_y^*\sci$ is the ideal sheaf of $X_y$ in
$\bp_y$.  To see this, we only need to verify that $j_y^* \sci \ra j_y^*
\sco_{\bp} \ra j_y^* \sco_X$ is exact.  Since $\sco_X$ is flat over $Y$, we will
verify more generally that for $\sch, \scg, \scf$ three sheaves on $X$ with
$\scf$ flat over $Y$, and an exact sequence $0 \ra\sch \ra \scg \ra \scf \ra 0$,
the pullback sequence $0 \ra j_y^* \sch \ra j_y^* \scg \ra j_y^* \scf \ra 0$ is
exact.  Indeed,
this holds because $R^1 j_y^* \scf = \stor_1^{\sco_\bp}(\scf, \sco_{\bp_y}) =
\stor_1^{\sco_Y}(\scf, \kappa(y)) = 0$.  Here we are using that $\scf$ is flat
over $Y$ for the final vanishing and $\scf \otimes_{\sco_\bp} \sco_{\bp_y}
\simeq \scf \otimes_{\sco_Y} \kappa(y)$ for the equality of $\stor$ sheaves.
	
	Next, \cite[Step A, p. 443]{casnatiE:covers-algebraic-varieties}
	provides a resolution of $\sci_{X_y/ \bp_y} = j_y^*\sci$ of the form
	\begin{equation} \label{equation:ideal-sheaf-fiber} \begin{tikzcd} 0 \ar
			{r} & \sco_{\bp_y}(-d) \ar {r}{\alpha_{d-2,y}} &
			\sco_{\bp_y}(2-d)^{\oplus \beta_{d-3}} \ar
			{r}{\alpha_{d-3,y}} & \cdots \\
			& \cdots \ar {r}{\alpha_{2,y}} &
			\sco_{\bp_y}(-2) \ar{r}{\alpha_{1,y}} & j_y^* \sci
			\ar{r}&  0.  \end{tikzcd}\end{equation} 
	Note here that we have only verified Step A in the case $y$ is the
	spectrum of an algebraically closed field, but at this point we are
	assuming that $y$ is a geometric point, as we are verifying Step B'.
	
	We claim
			$j_y^* \sci$ is $3$-regular, in the sense of
			Castelnuovo-Mumford regularity, i.e., $H^i( \bp_y, j_y^*
			\sci(3-i)) = 0$ for $i \geq 1$.  To verify this, it
			follows from the definition of regularity that for an
			exact sequence $0 \ra \scf' \ra \scf \ra \scf'' \ra 0$
			of sheaves with $\scf'$ $m+1$-regular and $\scf$
			$m$-regular, $\scf''$ is also $m$ regular.  Using this
			and the fact that $\sco_{\bp_y}(-k)$ is $k$-regular (and
			hence it is also $k+1$ regular by \cite[Lem.\
		5.1(b)]{FantechiGIK:fundamentalAlgebraicGeometry}), it follows
		by induction that $\im \alpha_{d-i,y}$ is $d-i+2$ regular.
		Therefore, $j^*_y \sci= \im \alpha_{1,y}$ is $3$-regular.  By
		\cite[Lem.\ 5.1(b)]{FantechiGIK:fundamentalAlgebraicGeometry},
		we obtain $H^1(\bp_y, j_y^* \sci(n)) = 0$ for $n \geq 2$.
		Hence, by cohomology and base change, $R^1 \pi_* \sci(n) = 0$
		for $n \geq 2$. Note that, often, cohomology and base change is
		only stated in the case $y$ is a point (as opposed to a
		geometric point) but the case that $y$ is a point follows from
		the case that $y$ is a geometric point since the vanishing of
		cohomology groups can be verified after base change to an
		algebraic closure, using flat base change.

	For our next step, we verify that $\pi_* \mathscr I(n)$ commutes with base
	change on $Y$ for $n \geq 2$.
	For $\scf$ a sheaf, let us denote by $\rboxed{\phi^i_y(\scf)} : R^i
	\pi_* \scf \otimes \kappa(y) \ra H^i(X_y, \scf|_{X_y})$ the natural base
	change map.  Then we have
	seen above that, for $n \geq 2$, $\phi^1_y(\sci(n))$ is an isomorphism
	at all $y$. Further, $R^1 \pi_* \sci(n)$ is locally free (and in fact
	equal to $0$) which implies by cohomology and base change that
	$\phi^0_y(\sci(n))$ is an isomorphism for all $n \geq 2$.  In other
	words, the formation of $\pi_* \sci(n)$ then commutes with base change
	on $Y$.  Further, again by cohomology and base change, $\pi_* \sci(n)$
	is a locally free sheaf when $n \geq 2$ (since the condition from the
		theorem on cohomology and base change that
	$\phi^{-1}_y$ be an isomorphism is vacuously satisfied).
			
	Set $\rboxed{\scf_1} := \pi_* \sci(2)$ and $\scn_1 := \pi^* \scf_1$.
	Let $\rboxed{\alpha_1} : \scn_1(-2) \ra \sci$ denote the evaluation map
	coming from the adjunction $\pi^* \pi_* \sci(2) \otimes \sco_\bp(-2) \ra
	\sci(2) \otimes \sco_{\bp}(-2) \ra \sci$.  As we have shown above, the
	formation of $\scf_1$, and hence $\scn_1$, commutes with base change.
	Further, naturality of the map $\alpha_1$, coming from the adjunction,
	also implies $j_y^*(\alpha_1) = \alpha_{1,y}$.  Therefore, $\alpha_1$ is
	surjective, as its cokernel has empty support.

	We next construct sheaves $\scf_i$ and $\scn_i$ inductively, with
	$\scn_i = \pi^* \scf_i$, for $2 \leq i \leq d-3$.  Let $\rboxed{\sca_1}
	:= \sci$.  For $i \geq 2$, 
	assume inductively we have constructed the map $\alpha_{i-1}$ and	
	define $\rboxed{\sca_i} := \ker \alpha_{i-1}$.
	Analogously to the above verification that $j_y^* \sci$ is $3$-regular,
	it follows that $j_y^* \sca_i$ is $i+2$ regular.  Therefore, by
	\cite[Lem.\ 5.1(b)]{FantechiGIK:fundamentalAlgebraicGeometry},
	$H^1(\bp_y, j_y^* \sca_i(k)) = 0$ for $k \geq (i + 2) - 1 = i+1$.
	Analogously to the above case when $i = 1$, it follows from cohomology
	and base change that $R^1 \pi_* \sca_i(k) = 0$ for $k \geq i+1$, $\pi_*
	\sca_i(k)$ is locally free for $k \geq i+1$, and the formation of $\pi_*
	\sca_i(k)$ commutes with base change for $ k \geq i+1$.  Then, set
	$\rboxed{\scf_i} := \pi_* \sca_i(i+1)$ and $\rboxed{\scn_i} := \pi^*
	\scf_i$.  
	
	We next construct the map $\alpha_i: \scn_i \ra \scn_{i-1}$.
	Begin with the inclusion $\sca_i(i+1) \ra \scn_{i-1}(1)$
	(obtained by twisting the inclusion $\sca_i \ra \scn_{i-1}(-i)$, coming
	from the definition of $\sca_i$, by $i+1$).  Apply $\pi^* \pi_*$ to
	obtain a map $\pi^* \pi_* \sca_i(i+1) \ra \pi_* \pi^* \scn_{i-1}(1)$.
	Twist by $-i-1$ which yields the composite map \begin{equation}
		\begin{aligned} \label{equation:adjunction-map} \scn_i(-i-1) &= \left(
			\pi^* \pi_* \sca_i(i+1) \right)(-i-1) \\ &\ra \left(
			\pi^* \pi_* \scn_{i-1}(1) \right)(-i-1) \\ &\simeq
			\left( \scn_{i-1} \otimes \pi^* \pi_* \sco(1) \right)
			(-i-1) \\ &\ra \scn_{i-1} (-i), \end{aligned}
		\end{equation} which we call $\rboxed{\alpha_i}$.  Since
		$\scn_i$ commutes with base change, and this map is obtained
		from adjunction, the formation of $\alpha_i$ also commutes with
		base change.  Also, since pushforward is left exact, we obtain
		condition $(v)$ in the theorem from the above construction of
		$\scf_i$, provided we show the above construction is the unique
		such one as in the statement (which will be done later in the
		proof). This addresses \autoref{remark:ce-improvements}(9).

			Finally, we similarly construct $\scf_{d-2}$,
			$\scn_{d-2}$, and $\alpha_{d-2}$, assuming we have
			constructed $\alpha_{d-3}$.  Let
$\rboxed{\sca_{d-2}} := \ker \alpha_{d-3}$.  By cohomology and base change, we
find $j_y^* \sca_{d-2}$ is in fact $d$-regular (as opposed to only $d-1$
regular, as was the case for $\sca_i$ with $i < d-2$).  Therefore, by cohomology
and base change, we find $R^1 \pi_* \sca_{d-2}(-d) = 0$ and also that $\pi_*
\sca_{d-2}(-d)$ is locally free and commutes with base change.  We set
$\rboxed{\scf_{d-2} }:= \pi_* \sca_{d-2}(-d)$ and $\rboxed{\scn_{d-2}} := \pi^*
\scf_{d-2}$.  Analogously to \eqref{equation:adjunction-map}, there is a canonical map
$\alpha_{d-2} :\scn_{d-2}(-d) \ra \scn_{d-3}(-d+2)$ coming from adjunction which
commutes with base change.  Altogether, we have constructed a complex as in
\eqref{equation:minimal-resolution} which commutes with base change on $Y$ and
restricts to the minimal free resolution \eqref{equation:ideal-sheaf-fiber} on
each fiber $y \in Y$.  It follows from Nakayama's lemma that the complex
\eqref{equation:minimal-resolution} is exact, because it is exact when
restricted to each fiber over $y \in Y$.
%the argument here is that one first checks a composition of 2 maps is 0. This
%can be expressed as the kernel of the composition mapping to the source is an
%isomorphism. To check this, we can check the kernel and cokernel of this map
%are both 0. This can be checked on fibers, by Nakayama's lemma. Similarly, the
%condition to be exact, given a complex is that the image of one map maps
%isomorphically to the kernel of the next. This is again the condition that a
%certain map is an isomorphism, so we can check its kernel and cokernel are both
%0. This can be checked after restricting to fibers, using Nakayama's lemma.

Further, because $\scn_i = \pi^* \scf_i$, it follows from the projection formula
that $\pi_* \scn_i \simeq \pi_*(\sco_\bp \otimes \pi^* \scf_i) \simeq \pi_*
\sco_\bp \otimes \scf_i \simeq \scf_i$, and so $\pi^* \pi_* \scn_i \simeq
\scn_i$.

We next verify uniqueness of our constructed resolution $\scn_\bullet$, up to
unique isomorphism, in the sense claimed in (iii).  Suppose $\scm_\bullet$ is another such resolution which
restricts to a minimal free resolution over each geometric fiber over $y \in Y$.
Over any local scheme $\spec \sco_{y, Y} \subset Y$, there is an
isomorphism $\phi_U: \scn_\bullet|_{\spec \sco_{y,Y}} \simeq
\scm_\bullet|_{\spec \sco_{y,Y}}$ by a sheafified version of \cite[Thm.\ 20.2]{Eisenbud:commutativeAlgebra}.  Such an isomorphism spreads out to an
isomorphism over some affine open $U \subset Y$.  Further, this isomorphism is
unique up to homotopy by a sheafified version of \cite[Lem.\ 20.3]{Eisenbud:commutativeAlgebra}.  We claim there are no nonzero homotopies
$s: \scn_\bullet|_U \ra \scm_\bullet|_U$.  Indeed, such an homotopy would yield
a map $s_i: \scn_i|_U \ra \scm_{i+1}|_U$.  We wish to show this map is $0$. To
check it is $0$, it suffices to show it is $0$ over each $y \in Y$. Over a point
$y \in Y$, this corresponds to a map $\sco_{\bp_y}(a)^{\oplus b} \ra
\sco_{\bp_y}(c)^{\oplus d}$ with $c < a$.  It follows that there are no nonzero
such maps, so the isomorphism $\phi_U$ is unique.  Hence, by this uniqueness, we
obtain via Zariski descent an isomorphism $\phi: \scn_\bullet \simeq
\scm_\bullet$.  This isomorphism is unique because it is unique when restricted
to each member of an open cover.

\subsubsection*{Addressing steps $C$ and $D$}
We have completed the verification of \cite[Thm.\
2.1, Step B]{casnatiE:covers-algebraic-varieties} and now note that
steps C and D given in the proof of \cite[Thm.\
2.1]{casnatiE:covers-algebraic-varieties} go through without change.
Recall that Step D states that the factorization $\rho = \pi \circ i$ exists.
However, the proof shows more: it shows that the triple $(\mathbb P \mathscr E, \sigma, j)$ gives such a
triple, where $\sigma: \mathbb P \mathscr E \to Y$ is the structure map. 
This concludes the verification of part (ii), as mentioned in
\autoref{remark:ce-improvements}(5).

\subsubsection*{Addressing uniqueness of the triples}
At this point, we have proved everything except the uniqueness of the triple
$(\mathbb P, \pi, i)$. We conclude the proof by verifying this statement, which
will complete the verification of the modification noted in \autoref{remark:ce-improvements}(4).
We have shown so far in part (i) that if 
$(\mathbb P_1, \pi_1, i_1)$ and
$(\mathbb P_2, \pi_2, i_2)$ are two triples as in the statement of
\autoref{theorem:ce-main-generalized},
then there is an isomorphism $\mu : \mathbb P_1 \simeq \mathbb
P_2$.
Since $\mu$ is an isomorphism of projective bundles over $Y$, we have $\pi_1 \circ
\mu \simeq \pi_2$.
Using this and (ii), we can reduce to the case that $\mathbb P_1 \simeq \mathbb P_2
\simeq \mathbb P \mathscr E$:
it suffices to find an automorphism $\psi: \mathbb P \to \mathbb P$
over $Y$
so that $\psi \circ i = i_2$, and moreover show this automorphism $\psi$ is the
unique one with
this property.

To verify existence and uniqueness of $\psi$, we first reduce to the case $Y$ is the spectrum of a local
	ring.
We know that both $i_1^* \sco_{\bp (\sce)}(1) \simeq \omega_{X/Y}$ and 
$i_2^* \sco_{\bp (\sce)}(1) \simeq \omega_{X/Y}$,
%	(the line bundle receiving a surjection from
%	$\rho^* \sce$ in the statement of
%\autoref{theorem:ce-main-generalized}(ii)) 
	by \autoref{theorem:ce-main-generalized}(ii).  Hence, we obtain that the
	automorphism $\psi$ is induced by some automorphism $\phi$ of $\pi_*\omega_{X/Y}$,
	determined up to unit.
		The maps $i_1$ and $i_2$ induce two surjections
	$q_1, q_2: \pi_* \omega_{X/Y} \to \sco_Y$ with the maps $i_1$ and $i_2$ 
	coming via the linear subsystems $\ker (q_1)$ and $\ker(q_2)$.
	To show we have an induced map between $\ker(q_1)$ and $\ker(q_2)$,
	which are both abstractly isomorphic to $\sce$, it is enough to show
	that, up to unit, $q_1 = q_2 \circ \phi$.
	We may verify this locally, and hence assume $Y$ is the spectrum of a
	local ring.

	We conclude by verifying existence and uniqueness of $\psi$ in the case $Y$ is the
	spectrum of a local ring.
	Using \autoref{theorem:ce-main-generalized}(vi),
	in both of the maps $i_1$ and $i_2$, 
	there is no subscheme of degree $d - 1$ on the closed fiber contained in
	a hyperplane, and hence the same holds over the whole local scheme $Y$.
	We may rephrase this as the condition that
	the two relative hyperplane sections of $\bp \sce$ associated to $q_1$ and $q_2$ do not meet
	$i_1(X)$ and $i_2(X)$.
	Equivalently the two hyperplane sections associated to $q_1$ and $q_2$ are nowhere vanishing on $X$, and
	therefore related by a unit.
	By modifying $\phi$ by this unit, we may assume $q_1 = q_2 \circ \phi$.
	This verifies that $\phi$ is unique up to unit, and hence that $\psi$ is
	unique.
%To spell this out a bit, 
%	we are using uniqueness of isomorphisms of resolutions in (iii) among
%	those restricting to the identity on $\mathscr O_X$, and one may always
%	modify such an isomorphism $\phi$ by a unit to arrange it is the identity on $\mathscr O_X$.
%Another way to phrase this is as follows:
%	Under duality,
%	the condition that no subscheme is contained in a hyperplane translates
%	to the condition that
%	the sublinear system $\sco_Y \to \pi_* \sco_X$
%	is a basepoint free sub linear system by
%	\cite[Corollary	2.4]{eisenbudP:projective-geometry-gale}.
%Therefore, the two inclusions $\sco_Y \to \pi_* \sco_X$ correspond to
%$1$-dimensional base point free linear subsystems, which means they correspond
%to units, and then reversing the duality gives the two quotients are related by
%a unit
	Under the above identifications, the image of $\sce \to \pi_*
	\omega_{X/Y}$ is identified with the kernel of the natural map $\pi_*
	\omega_{X/Y} \to \sco_X$ dual to $\rho^\#$.  Since this map is also
	fixed by the resulting automorphism $\phi$, the automorphism $\phi$ of $\pi_*
	\omega_{X/Y}$ restricts to an automorphism of $\sce$ which induces the
	desired automorphism $\psi: \mathbb P \mathscr E \to \mathbb P \mathscr E$. 
\end{proof}

The following useful corollary tells us that any two ``canonical embeddings''
of a Gorenstein cover are related by an automorphism of $\bp \sce$ coming from
$\sce$.
A special case of this was stated in 
\cite[Corollary 2.3]{casnatiN:on-some-gorenstein-loci}, though the proof there
seems quite terse, as it omits the verification of uniqueness of the triple
$(\bp \sce', \pi, i)$ which we carry out in
\autoref{theorem:ce-main-generalized}.

\begin{corollary} \label{corollary:automorphism-of-e} With notation as in
	\autoref{theorem:ce-main-generalized}, suppose we are given $\rho: X \to
	Y$ and two embeddings $i_1: X \to \bp \sce$ and $i_2: X \to \bp \sce$ so that
	$\rho = \pi \circ i_1 = \pi \circ i_2$ and $\rho^{-1}(y)$ is
	arithmetically Gorenstein and nondegenerate under both embeddings $i_1$
and $i_2$. 
Then, the unique isomorphism $\psi: \bp \sce \to \bp \sce$ taking
$i_1(X)$ to $i_2(X)$ is induced by an automorphism of $\sce$.  
\end{corollary}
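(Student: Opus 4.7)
The plan is to deduce this directly from the proof of the uniqueness statement in \autoref{theorem:ce-main-generalized}, which already establishes essentially what we need. First, I would observe that both triples $(\bp \sce, \pi, i_1)$ and $(\bp \sce, \pi, i_2)$ satisfy the hypotheses of the theorem: the factorization $\rho = \pi \circ i_k$ is given, and by assumption each fiber $\rho^{-1}(y)$ is arithmetically Gorenstein and nondegenerate under both $i_1$ and $i_2$. Applying the uniqueness portion of the theorem to these two triples produces the unique isomorphism $\psi \colon \bp \sce \to \bp \sce$ over $Y$ with $\psi \circ i_1 = i_2$. The content of the corollary is then to show that this specific $\psi$ lifts from $\aut_Y(\bp \sce)$ to $\aut(\sce)$.

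Next, I would retrace the argument given in the final subsection ``Addressing uniqueness of the triples'' of the theorem's proof. Using part (i), identify $\sce = (\coker \rho^\#)^\vee$ with the kernel of the trace map $\pi_* \omega_{X/Y} \to \sco_Y$ (obtained by dualizing $\rho^\#$). After reducing to the case that $Y$ is the spectrum of a local ring (via Zariski descent together with the global uniqueness of $\psi$), the argument shows that $\psi$ is induced by an automorphism $\phi$ of $\pi_* \omega_{X/Y}$, determined initially only up to a global unit. That scalar ambiguity is then fixed by requiring compatibility with the two surjections $q_1, q_2 \colon \pi_* \omega_{X/Y} \to \sco_Y$ that arise from $i_1$ and $i_2$ via the nondegeneracy in part (vi). Since the resulting $\phi$ preserves the trace map, it restricts to an automorphism of the sub-bundle $\sce$, and this restriction induces $\psi$ on $\bp \sce$.

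The main obstacle I would expect is the globalization step: a priori the construction of $\phi$ is local on $Y$, and one might worry about a $\pic(Y)$ obstruction to lifting an element of $\pgl(\sce)(Y)$ to $\gl(\sce)(Y)$. My plan to resolve this is exactly the mechanism used in the theorem's proof: the trace-compatibility condition $q_1 = q_2 \circ \phi$ pins down the scalar ambiguity \emph{uniquely} on each local piece, so the local automorphisms of $\sce$ glue canonically via Zariski descent to a global automorphism. Because the corollary only asserts that \emph{some} automorphism of $\sce$ induces $\psi$ (rather than uniqueness of the lift up to $\bg_m$), this is enough to conclude.
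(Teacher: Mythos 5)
Your proposal is correct and follows essentially the same route as the paper: the paper's proof of this corollary is a one-line citation of the uniqueness of triples in \autoref{theorem:ce-main-generalized}, whose proof (in the ``Addressing uniqueness of the triples'' step) contains exactly the argument you retrace --- identifying $\sce$ with $\ker(\pi_*\omega_{X/Y}\to\sco_Y)$, pinning down the unit ambiguity via the two surjections $q_1,q_2$ and the nondegeneracy of part (vi) after localizing, and observing that the resulting $\phi$ restricts to an automorphism of $\sce$ inducing $\psi$. Your extra attention to the globalization/$\pgl$-versus-$\gl$ lifting issue is resolved in the same way the paper resolves it, namely by the canonical normalization of the scalar on each local piece.
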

\begin{proof} 
	This is a direct consequence of the uniqueness property for triples
	$(\mathbb P, \pi, i)$ as stated in
	\autoref{theorem:ce-main-generalized}, applied to two triples $(\mathbb
	P \mathscr E, \pi, i_1)$ and $(\mathbb P \mathscr E, \pi, i_2)$.
\end{proof}

\subsection{Low degree parametrizations}
\label{subsection:low-degree-ce-structure-theorems}

We now apply Theorem~\ref{theorem:ce-main-generalized}, as in the work of Casnati and Ekedahl,
to obtain parametrizations of Gorenstein covers of degrees $3$, $4$, and $5$.

%We next recall the parametrizations for covers of degrees $3$, $4$, and $5$
%from \cite{casnatiE:covers-algebraic-varieties} and
%\cite{casnati:covers-algebraic-varieties-ii}.  In these papers, these
%parametrizations
%are stated for degree $d$ finite flat surjective maps $X \ra Y$ with $Y$
%integral and noetherian, and we now develop the generalizations we will need, as
%described in \autoref{remark:new-structure-theorems}.

%Note that a morphism is finite
%locally free if and only if it is finite flat and locally finitely presented
%\cite[\href{https://stacks.math.columbia.edu/tag/02KB}{Tag
%02KB}]{stacks-project}.
\begin{remark}
	\label{remark:new-structure-theorems}
	Our parametrization in degree $5$,
	\autoref{theorem:5-structure},
	%,
	%(and to a lesser extent \autoref{theorem:4-structure}
	%and \autoref{theorem:3-structure})
	is stronger than  previous work in several ways.
	The similar result in degree $5$ proven in \cite[Thm.
	3.8]{casnati:covers-algebraic-varieties-ii}
	has certain additional restrictions on the covers and sections
	that Casnati refers to as being ``regular.''
	This regularity condition amounts to the assumption that
	the map $\wedge^2 \scf^\vee \otimes \det \sce \to \sce$ associated to a
	section $\eta \in \sch(\sce, \scf)$ is surjective.
	Additionally, \cite[Thm.
	3.8]{casnati:covers-algebraic-varieties-ii}
	does not claim there is a bijection between covers and sections up to
	automorphisms of $\sce$ and $\scf$,
	but only gives constructions of maps in both directions.
%	We note similarly that the statements for degrees $3$ and $4$
%	appearing in \cite[Thm. 3.4 and 4.4]{casnatiE:covers-algebraic-varieties}
%	also only give the constructions of the maps, but to not claim
%	the bijective statements.
	Further, \cite[Thm.
	3.8]{casnati:covers-algebraic-varieties-ii} is stated for degree $5$ finite flat 
	surjective maps $X \ra Y$ with $Y$
	integral and noetherian,
	whereas ours hold for arbitrary schemes $Y$.
\end{remark}

To introduce notation simultaneously in the cases of degrees $3,4$, and $5$,
we use the following notation.
\begin{notation}
	\label{notation:h-sheaf}
	Let $d \in \{ 3, 4, 5\}$.  Let $Y$ be a scheme. 
	Fix a locally free sheaf $\sce$ on $Y$ of rank $d-1$. 
	If $d = 4$, let $\scf$ be a locally free sheaf on $Y$ of rank $2$ and if $d
	= 5$, let $\scf$ be a locally free sheaf on $Y$ of rank $5$.  
	We use the tuple $(\sce, \scf_\bullet)$ to denote the pair
	$(\sce,\scf)$ when $d = 4$ or $d = 5$ and to denote $\sce$ when $d = 3$.
	Define the associated sheaf
	\begin{align}
	\label{equation:associate-h} 
	\rboxed{\sch(\sce, \scf_\bullet)} :=
	\begin{cases} \sym^3 \sce \otimes \det \sce^\vee & \text{ if } d=3 \\
		\scf^\vee \otimes \sym^2 \sce  & \text{ if } d=4 \\ \wedge^2
\scf \otimes \sce \otimes \det \sce^\vee & \text{ if } d=5. \\ \end{cases}
\end{align}
\end{notation}

We will often use $\sch$ to denote $\sch(\sce, \scf_\bullet)$ when the data
$(\sce,\scf_\bullet)$ is clear from context.
We will see that sections of the above sheaf $\sch$ define subschemes of $\bp
\sce$. 
When these subschemes have dimension $0$ in fibers, we
will see they induce
degree $d$ locally free covers.
The parametrizations for degrees $3,4$, and $5$ essentially say that the
resulting covers are in bijection with such sections, up to automorphisms of
$(\sce, \scf_\bullet)$.

\subsection{The resolutions in low degree}

In order to state the parametrizations in degrees $3,4$, and $5$,
we now want a way of associating a subscheme of $\bp \sce$ to a section.
We will give a description of this association separately in the cases
that $d = 3,4$, and $5$.

Renaming the sheaf $\mathscr E'$ appearing in \eqref{equation:minimal-resolution}
as $\mathscr E$ and renaming $\mathscr F_1$ as $\mathscr F$,
in the cases $d =3,4$, and $5$,
\eqref{equation:minimal-resolution} becomes respectively
\begin{equation}
\label{equation:degree-3-resolution} \begin{tikzcd} 0 \ar {r} &  \pi^* \det
	\sce(-3) \ar {r}{\sigma} & \sco_{\bp} \ar {r} & \sco_X \ar {r} & 0,
\end{tikzcd}\end{equation}
\begin{equation}
\label{equation:degree-4-resolution} 
\begin{tikzcd} 0 \ar {r} &  \pi^* \det
\sce(-4) \ar {r}{\sigma} & \pi^*\scf(-2) \ar{r} &  \sco_{\bp} \ar {r} & \sco_X
\ar {r} & 0,  \end{tikzcd}\end{equation}
\begin{equation}
	\label{equation:degree-5-resolution} \begin{tikzcd} 0 \ar {r} & 
		\pi^* \det \sce(-5) \ar {r} & \pi^*\scf^\vee\otimes \pi^* \det
		\sce(-3) \ar{r}{\sigma} & \qquad\\
		\ar{r}{\sigma} & \pi^*\scf(-2) \ar{r} &  \sco_{\bp} \ar
		{r} & \sco_X \ar {r} & 0,
\end{tikzcd}\end{equation} 
with the rank of the locally free sheaves $\sce$ and $\scf$ in the degree $3$, $4$, and $5$
cases given in \autoref{notation:h-sheaf}.

\subsection{The maps $\Phi_d$ in low degree}

In the above $3$ cases, corresponding to degrees $3,4,$ and $5$ respectively, we
have isomorphisms
\begin{align}
	\label{equation:phi-3}
	\Phi_3: H^0(Y, \sym^3 \sce \otimes \det \sce^{\vee}) \overset
\sim \longrightarrow  H^0(\bp\sce, \pi^* \det \sce^{\vee}(3)).  \end{align}
\begin{align}
	\label{equation:phi-4}
	\Phi_4: H^0(Y, \sym^2 \sce \otimes \scf^\vee) \overset
  \sim \longrightarrow 
H^0(\bp\sce, \pi^* \scf^\vee(2))	\end{align} 
\begin{align}
	\label{equation:phi-5}
	\Phi_5: H^0(Y, \wedge^2 \scf \otimes \sce \otimes \det
\sce^{\vee}) \overset \sim \longrightarrow  H^0(\bp\sce, \wedge^2  \pi^* \scf \otimes \pi^* \det
\sce^{\vee}(1)).  \end{align}

\subsection{The maps $\Psi_d$ in low degree}
\label{subsection:psid}

For $\rho: X \to Y$ a finite locally free surjective Gorenstein cover of degree
$d$, we will use $\mathscr E^X$ to denote the Tschirnhausen bundle $\coker(
\mathscr O_Y \to \rho_* \mathscr O_X)^\vee$ and $\mathscr F^X$ to denote the
bundle $\mathscr F_1$ in the case we take $\mathscr E'$ in 
\autoref{theorem:ce-main-generalized}(iii)
to be the Tschirnhausen bundle $\mathscr E^X$.

Next, for $3 \leq d \leq 5$, given a section $\eta \in H^0(Y,
\sch(\sce,\scf_\bullet))$, we define an associated scheme $\Psi_d(\eta)$
over $Y$.

When $d = 3$,
we begin with a section 
$\eta \in H^0(Y, \sym^3 \sce \otimes \det \sce^{\vee})$, which, via $\Phi_3$ 
can be viewed as 
an element of $H^0(\bp\sce, \pi^* \det \sce^{\vee}(3))$.
Such a section corresponds
to a map $\sco_{\bp\sce} \ra \pi^* \det \sce^{\vee}(3)$, or equivalently a map
$\pi^* \det \sce(-3) \ra \sco_{\bp\sce}$.  
We let $\Psi_3(\eta)$ denote the support of the cokernel of this map.
That is, we define $\Psi_3(\eta) \subset \bp\sce$ so that on $\bp \sce$ we have an exact sequence
\begin{equation}
	\label{equation:section-3-sequence} \begin{tikzcd} \pi^* \det \sce(-3)
		\ar {r} & \sco_{\bp\sce} \ar {r} & \sco_{\Psi_3(\eta)} \ar {r} & 0.
\end{tikzcd}\end{equation} 
%\begin{definition} \label{definition:right-codim-3}
%	For $Y, \sce, \eta, \Phi_3, X$ as above, we say $\eta$ {\em has the
%	right codimension at $y \in Y$} if $\dim X_y = 0$.  Define
%$\rboxed{\Psi_3(\eta)}$ to be the resulting scheme $X$, considered as a
%subscheme of $\bp\sce$.  \end{definition}

When $d =4$,
given $\eta \in H^0(Y, \scf^\vee
\otimes \sym^2 \sce)$, define 
$\Psi_4(\eta)$ to be the subscheme of $\bp\sce$, 
considered as the support of the cokernel of the map 
$\pi^* \scf(-2) \ra \sco_{\bp (\sce)}$
corresponding to $\Psi_4(\eta)$.
%\begin{definition} \label{definition:right-codim-4} For $Y,
%	\sce, \scf, \Phi_4, \eta$ as above, we say $\eta$ has the {\em right
%	codimension} at $y \in Y$ if $\dim (X_y) = 0$.  Define
%	$\rboxed{\Psi_4(\eta)} := V(\Phi_4(\eta))$, considered as a subscheme of
%	$\bp\sce$.
%	%with $\scf \simeq \pi_* \scn_1$ as in
%	%\eqref{equation:degree-4-resolution}.
%\end{definition}

Finally, when $d = 5$,
given $\eta \in H^0(Y, \wedge^2\scf \otimes \pi^*\det \sce^{\vee} \otimes
\sce)$,
from $\Phi_5(\eta)$ we obtain a corresponding alternating map
$\pi^* \scf^\vee \otimes \pi^* \det \sce(-3) \to \pi^* \scf(-2)$.
The five $4 \times 4$ Pfaffians of this map determine a map of sheaves
$\pi^* \scf(-2) \to \sco_{\bp (\sce)}$,
as may be computed locally.
Define $\Psi_5(\eta)$ as the support of the cokernel of the map
$\pi^* \scf(-2) \to \sco_{\bp (\sce)}$
in $\bp\sce$.

%On the other hand, given any locally free sheaf $\sce$ on $Y$ of rank $4$ and a
%locally free sheaf $\scf$ on $Y$ of rank $5$ with $\rboxed{\scn} := \pi^* \scf$
%for $\pi: \bp (\sce) \ra Y$ the projection, there is a canonical isomorphism
%\begin{align*} \Phi_5: H^0(Y, \wedge^2 \scf \otimes \sce \otimes \det
%\sce^{\vee}) \simeq H^0(\bp\sce, \wedge^2  \scn \otimes \pi^* \det
%\sce^{\vee}(1)).  \end{align*} Given $\eta \in H^0(Y, \wedge^2 \scf \otimes \sce
%\otimes \det \sce^{\vee})$, \{notation for $\delta$ and $\eta$ seems a bit
%inconsistent here} we can identify $\rboxed{\delta} := \Phi_5(\eta)$ with a map
%$\wedge^2 \pi^* \scf^\vee \otimes \pi^* \det \sce \ra \pi^* \sce$, which we
%again denote by $\rboxed{\delta}$.  Define $X \subset \bp\sce$ as the
%vanishing locus of the $3 \times 3$ minors of $\delta$.  
%
%\begin{definition} \label{definition:right-codim-5} \{probably remove/fix
%	this definition} For $Y, \sce, \scf, \eta, \delta, \Phi_5$ as above, we
%	say $\eta \in H^0(Y, \wedge^2 \scf \otimes \sce \otimes \det
%	\sce^{\vee})$, {\em has the right codimension at $y \in Y$} if
%	$\delta_y$ is surjective when viewed as a map $\wedge^2 \pi^*
%	\scf^\vee_y \otimes \pi^*\det \sce_y \ra \pi^*\sce_y$ and the $3 \times
%	3$ minors of $\eta_y$ vanish on a $0$-dimensional subscheme.  Let
%	$\rboxed{\Psi_5(\delta)}$ denote the subscheme of $\bp\sce$ defined as
%	the vanishing locus of the $3 \times 3$ minors of $\delta$.
%\end{definition} 

\begin{definition}
	\label{definition:right-codim}
	Let $d \in \{3,4,5\}$, $Y$ be a scheme, and $(\sce, \scf_\bullet), \sch(\sce,
	\scf_\bullet)$ be sheaves on $Y$ as in \autoref{notation:h-sheaf}.
	We say $\eta \in H^0(Y, \sch(\sce, \scf_\bullet))$ has the {\em right
	Hilbert polynomial} at a point $y \in Y$ if the fiber of $\Psi_d(\eta)$ over $y$
	has dimension $0$ and degree $d$. We say $\eta$ has the {\em right
	Hilbert polynomial} if it has the right Hilbert polynomial at every $y \in Y$.
\end{definition}

Finally, we are ready to state the low degree parametrizations.
The parametrization in degree $3$ is as follows.

\begin{theorem}[Generalization of \protect{\cite[Thm.\
	3.4]{casnatiE:covers-algebraic-varieties}}, Specialization of {\cite[Prop. \ 5.1]{Poonen2008}}] \label{theorem:3-structure}
	Fix a scheme $Y$ and a rank $2$ locally free sheaf $\sce$ on $Y$. 
	The map $\eta \mapsto \Psi_3(\eta)$ induces
	a bijection between 
	\begin{enumerate}
		\item sections $\eta \in H^0(Y, \sym^3 \sce
	\otimes \det \sce^{\vee})$ having the right Hilbert polynomial at every $y \in
	Y$, up to automorphisms of $\sce$, 
		\item and finite locally free Gorenstein covers $\rho: X
			\ra Y$ of degree $3$ such that $\sce^\vee \simeq \coker \rho^\#$.
	\end{enumerate}
	 	%The bijection explicitly sends a section $\eta \in H^0(Y, \sym^3
	%\sce \otimes \det \sce^{\vee})$ to $\Psi_3(\eta) \subset \bp\sce$.
\end{theorem}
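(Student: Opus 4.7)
The plan is to construct the bijection in both directions and use the uniqueness statements in \autoref{theorem:ce-main-generalized} to see they are mutually inverse up to $\aut(\sce)$. For the forward direction, given a finite locally free Gorenstein cover $\rho: X \to Y$ of degree $3$ with $\sce^\vee \simeq \coker \rho^\#$, choose an isomorphism $\sce \simeq (\coker \rho^\#)^\vee$. By \autoref{theorem:ce-main-generalized}(ii), we obtain a canonical embedding $j: X \hookrightarrow \bp \sce$ with $\rho = \pi \circ j$ satisfying the nondegeneracy hypothesis of that theorem, and by (iii) the minimal resolution of $\sco_X$ over $\bp \sce$ is \eqref{equation:degree-3-resolution}. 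The map $\sigma$ corresponds to an element of $H^0(\bp \sce, \pi^*\det\sce^\vee(3))$ which, via the isomorphism $\Phi_3$, comes from a unique section $\eta \in H^0(Y, \sym^3 \sce \otimes \det \sce^\vee)$. A different choice of isomorphism $\sce \simeq (\coker\rho^\#)^\vee$ changes $\eta$ by the corresponding automorphism of $\sce$, and \autoref{corollary:automorphism-of-e} guarantees that any two embeddings of $X$ into $\bp\sce$ with the same image differ by an automorphism of $\sce$; hence the construction descends to a well-defined map on isomorphism classes of covers modulo $\aut(\sce)$.

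For the inverse direction, given $\eta$ with the right Hilbert polynomial, I would set $X := \Psi_3(\eta) \subset \bp \sce$ and verify that $\rho := \pi|_X$ is finite locally free Gorenstein of degree $3$ with $\coker \rho^\# \simeq \sce^\vee$. Over each geometric fiber $y$, the section $\eta_y$ defines a degree $3$ Cartier divisor on $\bp^1_{\kappa(y)}$, so $X_y$ is Gorenstein of degree $3$, and \eqref{equation:section-3-sequence} is exact on the fiber. Since $\pi^*\det\sce(-3)$ is locally free, fiberwise exactness combined with the local criterion of flatness (applied after reducing to a noetherian base by spreading out, as in the proof of \autoref{theorem:ce-main-generalized}) promotes \eqref{equation:section-3-sequence} to a resolution of $\sco_X$ and shows $\sco_X$ is flat over $Y$; properness of $\pi$ together with fiberwise finiteness then gives that $\rho$ is finite locally free of degree $3$, and Gorenstein-ness is a fiberwise property. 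To identify $\coker \rho^\#$, push \eqref{equation:section-3-sequence} forward along $\pi$: one has $\pi_*\sco_{\bp\sce} = \sco_Y$, $R^1\pi_*\sco_{\bp\sce} = 0$, $\pi_*(\pi^*\det\sce(-3)) = 0$, and by relative Serre duality for the $\bp^1$-bundle, $R^1\pi_*(\pi^*\det\sce(-3)) \simeq \sce^\vee$. The resulting long exact sequence collapses to
\begin{equation*}
0 \longrightarrow \sco_Y \longrightarrow \rho_* \sco_X \longrightarrow \sce^\vee \longrightarrow 0,
\end{equation*}
identifying $\coker \rho^\# \simeq \sce^\vee$.

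Finally, I would check that the two constructions are mutually inverse. One composition is handled by the uniqueness of the minimal resolution in \autoref{theorem:ce-main-generalized}(iii): starting from a cover $\rho: X \to Y$, extracting $\eta$ via the structure theorem and then forming $\Psi_3(\eta)$ recovers $X$, because the defining sequence \eqref{equation:section-3-sequence} agrees with the minimal resolution \eqref{equation:degree-3-resolution}. For the other composition, starting from a section $\eta$ with the right Hilbert polynomial, the pair $(\bp \sce, \pi, X \hookrightarrow \bp\sce)$ with $X = \Psi_3(\eta)$ is one of the triples in \autoref{theorem:ce-main-generalized}; applying (iii) to this cover produces a minimal resolution which, by uniqueness up to unique isomorphism restricting to the identity on $\sco_X$, must agree with \eqref{equation:section-3-sequence}, recovering $\eta$ modulo $\aut(\sce)$. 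I expect the main obstacle to be the flatness and Gorenstein verification in the inverse direction over an arbitrary base: since $Y$ is neither assumed noetherian nor reduced, one must first reduce to the noetherian case by standard spreading-out, and then run the fiberwise flatness argument carefully so that the conclusion descends back to $Y$.
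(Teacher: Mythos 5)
Your proposal is correct and follows essentially the same route as the paper: both directions are constructed from \autoref{theorem:ce-main-generalized}, and the mutual-inverse check rests on the uniqueness of the triple and of the minimal resolution, with \autoref{corollary:automorphism-of-e} absorbing the residual unit into $\aut(\sce)$. The only deviations are in two sub-steps, both valid: you verify flatness of $\Psi_3(\eta)$ via fiberwise exactness and the local criterion after spreading out, where the paper simply observes that $X$ is cut out of $\bp^1_Y$ locally by a single degree-$3$ equation not vanishing on any fiber; and you identify $\coker \rho^\# \simeq \sce^\vee$ by explicitly pushing forward \eqref{equation:section-3-sequence} using relative duality (so that $R^1\pi_*(\pi^*\det\sce(-3)) \simeq \sce^\vee$), where the paper instead cites \autoref{theorem:ce-main-generalized}(iii) and (iv).
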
 The following proof extends that given in
\cite[Thm.\  3.4]{casnatiE:covers-algebraic-varieties}.  We note that there the
base is assumed to be reduced and noetherian, and the bijection is not
explicitly stated.  We outline the proof for the reader's convenience.
\begin{proof} 
	We start by constructing the map from $(2)$ to $(1)$.
	Given such a $\rho: X \ra Y$, we obtain from
	\autoref{theorem:ce-main-generalized}, a resolution of
	$\sco_{\bp\sce}$ as in \eqref{equation:degree-3-resolution}, unique up
	to unique isomorphism.  The map $\sigma$ in
	\eqref{equation:degree-3-resolution} 
	can be viewed as a section $\sigma \in H^0(\bp \sce, \pi^* \det
	\sce^\vee(3))$.
	For $\Phi_3$ as defined in \eqref{equation:phi-3}, we obtain a section $\rboxed{\eta }:=
	\Phi_3^{-1}(\sigma) \in H^0(Y, \sym^3 \sce \otimes \det \sce^{\vee})$.
	Note that the resulting $\eta$ has the right Hilbert polynomial at every $y \in
	Y$ because $X \ra Y$ is finite by assumption.

	We next show the map $\eta \mapsto \Psi_3(\eta)$ indeed defines a map
	from $(1)$ to $(2)$.
	Given $\eta$ of the right Hilbert polynomial at every $y \in Y$,
	we obtain a right exact sequence \eqref{equation:section-3-sequence}.
	The assumption that $\eta$ has the right Hilbert polynomial yields that
	the first map in this sequence is injective, and hence $X \to \bp \sce$
	has a resolution of the form \eqref{equation:degree-3-resolution}.
	This resolution shows $X$ is locally finitely presented over $Y$.  Further,
	$X$ is finite as it is locally of finite presentation, proper, and
	quasi-finite \cite[8.11.1]{EGAIV.3}.  Flatness of $X \to Y$ may be
	verified locally, in which case it holds as $X$ is cut out of $\bp^1_Y$
	by a single equation of degree $3$ not vanishing on any fibers.
%In more detail: in a neighborhood of any point, we can write $Y = \spec S$ and
%find a hyperplane missing $X$ so that we can write $X$ as $\spec S[x]/(f)$
%where $f$ has invertible leading coefficient. Then, $S[x]/(f)$ is locally free
%over $S$ as it is generated by the images of $1, x$, and $x^2$.
	Therefore, $X$ is a finite locally free degree $3$ cover of $Y$.
	Finally, exactness of \eqref{equation:degree-3-resolution} implies
	$\sce^\vee \simeq \coker \rho^\#$ from
	\autoref{theorem:ce-main-generalized}(iii) and (iv).

	It remains to see that these two maps we have defined establish a
	bijection. For this, we show the compositions of these maps in both
	orders are equivalent to the identity map. If we begin with a cover $\rho: X \to Y$,
	\eqref{equation:degree-3-resolution} defines a resolution of $X \to \bp
	\sce$ giving $X$ as the vanishing locus $\Psi_3(\eta) \subset \bp \sce$.
	To show the other composition is equivalent to the identity, begin with some $\eta \in
	H^0(Y, \sym^3 \sce \otimes \det \sce^{\vee})$, and let $X$ denote the
	associated cover $\Psi_3(\eta)$.  The Tschirnhausen bundle $\sce^X$ 
	as in \autoref{subsection:psid}
	associated to $X$
	from \autoref{theorem:ce-main-generalized} is then isomorphic to $\sce$
	using \autoref{theorem:ce-main-generalized}(iv), as we may view $\eta$
	as a map $\pi^* \det \sce(-3) \to \sco_{\bp \sce}$.  Upon choosing such
	an isomorphism $\sce \simeq \sce^X$, we obtain a section $\eta^X \in
	H^0(Y, \sym^3 \sce^X \otimes \det (\sce^X)^{\vee}) \simeq H^0(Y, \sym^3
	\sce \otimes \det \sce^{\vee})$.  Using
	\autoref{theorem:ce-main-generalized}(iv), there is an automorphism of
	$\bp \sce$ taking $\Psi_3(\eta)$ to $\Psi_3(\eta^X)$.  
	From \autoref{theorem:ce-main-generalized}(iv) and the fact that the
	leftmost term of the resolution \eqref{equation:degree-3-resolution}
	is $\pi^* \det \sce(-3)$, we find $\sce$ is isomorphic to $\ker(\rho_*
	\omega_{X/Y} \to \sco_Y)$.
	By
	\autoref{corollary:automorphism-of-e}, this automorphism of $\bp \sce$
	is induced by an automorphism of $\sce$.  Hence, after composing with
	the automorphism of $\sce$, we can assume $\eta$ and $\eta^X$ define
	isomorphic subschemes of $\bp \sce$, and so are related via
	multiplication by a global section $s^{-1} \in \sco_Y(Y)$.  By
	composing with an automorphism of $\sce$ multiplying by $s^{-1}$,
	$\eta$ and $\eta^X$ are identified.	
%	Since the central $\bg_m \subset \aut_\sce$ acts on $\sym^3 \sce \otimes
%	\det \sce^{\vee}$ with weight $1$, we can compose with such a central
%	automorphism of $\sce$ to take $\eta$ to $\eta_X$.
\end{proof} 

We next verify the parametrization in degree $4$.

\begin{theorem}[Generalization of \protect{\cite[Thm.\
	4.4]{casnatiE:covers-algebraic-varieties}}, Specialization of {\cite[Thm. \ 1.1]{Wood2011}}] \label{theorem:4-structure}
	Fix a scheme $Y$, a rank $3$ locally free sheaf $\sce$ on $Y$, and a
	rank $2$ locally free sheaf $\scf$ on $Y$ such that there exists an
	unspecified isomorphism $\det \sce \simeq \det \scf$. The map $\eta
	\mapsto \Psi_4(\eta)$ induces a bijection between
	\begin{enumerate}
\item  sections $\eta \in H^0(Y,
	\scf^\vee \otimes \sym^2 \sce)$ having the right Hilbert polynomial at every $y
	\in Y$,
up to automorphisms of $\sce$ and $\scf$,
\item and, finite locally free Gorenstein covers $\rho:X \ra Y$ of degree $4$
	with associated sheaves $\sce^X, \scf^X$ as in \autoref{subsection:psid}
	which are isomorphic to $\sce$ and $\scf$.
	\end{enumerate}
%If $\sce^X$ and $\scf^X$ are sheaves obtained from $\rho: X \to
%	Y$ as above, there is an isomorphism $\det \scf^X \simeq \det \sce^X$.
%	The bijection explicitly sends a section $\eta \in H^0(Y, \scf^\vee
%	\otimes \sym^2 \sce)$ to $\Psi_4(\eta)$, considered as a subscheme of
%	$\bp\sce$.  
\end{theorem}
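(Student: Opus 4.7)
The plan is to mimic the proof of \autoref{theorem:3-structure}, using \autoref{theorem:ce-main-generalized} as the main input, but now tracking the additional rank-$2$ bundle $\scf$ and the determinantal condition $\det \sce \simeq \det \scf$.

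First I would construct the map from (2) to (1). Given a finite locally free Gorenstein cover $\rho: X \to Y$ of degree $4$ with $\sce^X \simeq \sce$ and $\scf^X \simeq \scf$, apply \autoref{theorem:ce-main-generalized} to obtain the minimal resolution \eqref{equation:degree-4-resolution}, unique up to unique isomorphism restricting to the identity on $\sco_X$. The middle map can be viewed as a section $\sigma \in H^0(\bp\sce, \pi^*\scf^\vee(2))$ after using the self-duality property $\shom_{\sco_{\bp}}(\scn_\bullet, \scn_{d-2}(-d)) \simeq \scn_\bullet$, which in this case forces $\scf^\vee \otimes \det\sce \simeq \scf$, giving precisely the condition $\det \sce \simeq \det \scf$. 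Pulling back via $\Phi_4^{-1}$ gives $\eta := \Phi_4^{-1}(\sigma) \in H^0(Y, \scf^\vee \otimes \sym^2 \sce)$. Because $X \to Y$ is finite of degree $4$, this $\eta$ has the right Hilbert polynomial at every $y \in Y$. The choice of isomorphisms $\sce^X \simeq \sce$ and $\scf^X \simeq \scf$ is the source of the $\aut(\sce) \times \aut(\scf)$-ambiguity in $\eta$.

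Next I would show that $\eta \mapsto \Psi_4(\eta)$ defines a map from (1) to (2). Given $\eta$ with the right Hilbert polynomial, the associated map $\pi^*\scf(-2) \to \sco_{\bp\sce}$ cuts out $\Psi_4(\eta)$, locally given by two quadratic equations in $\bp^2$-fibers. The Hilbert polynomial hypothesis implies that on each fiber these two quadrics form a regular sequence, so $\Psi_4(\eta) \to Y$ is a relative complete intersection. This yields: finiteness (proper and quasi-finite, by \cite[8.11.1]{EGAIV.3}), flatness (constant Hilbert polynomial on fibers, verified via the Koszul resolution), and Gorenstein-ness (complete intersections are Gorenstein in every fiber). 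The Koszul resolution of this complete intersection gives precisely a resolution of the form \eqref{equation:degree-4-resolution}; by \autoref{theorem:ce-main-generalized}(i), (iii), (iv), this identifies $\sce^X \simeq \sce$ and $\scf^X \simeq \scf$.

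The main obstacle, as in the degree-$3$ case, is checking that these two constructions are mutually inverse up to the prescribed automorphism ambiguity. Starting from a cover and going back, the uniqueness of the resolution in \autoref{theorem:ce-main-generalized}(iii) makes one direction essentially tautological. The other direction is more delicate: starting with $\eta$ and producing $X = \Psi_4(\eta)$, the Tschirnhausen bundle $\sce^X$ and the associated $\scf^X$ are canonically isomorphic to $\sce, \scf$ only up to a choice, and any two realizations of $X$ inside $\bp\sce$ differ by an automorphism of $\bp\sce$ over $Y$. By \autoref{corollary:automorphism-of-e}, such an automorphism is induced from an automorphism of $\sce$. The analogous modification for $\scf$ comes from the uniqueness of the minimal resolution: any two liftings of $\sco_X$ to a resolution of the form \eqref{equation:degree-4-resolution} differ by an automorphism of the middle term $\pi^*\scf(-2)$ that descends, by $\pi^*\pi_* \simeq \id$ on $\scn_1$, to an automorphism of $\scf$. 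Composing these automorphisms of $\sce$ and $\scf$ with the originally produced $\eta$ recovers the original section, establishing the bijection.
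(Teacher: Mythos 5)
Your proposal follows essentially the same route as the paper: both directions of the bijection are built from \autoref{theorem:ce-main-generalized}, the identification $\sce^X \simeq \sce$, $\scf^X \simeq \scf$ comes from recognizing the Koszul complex of the two quadrics as the minimal resolution (this is where $\det\sce\simeq\det\scf$ enters, exactly as you say), and the mutual-inverse check reduces to \autoref{corollary:automorphism-of-e} for the $\sce$-ambiguity plus the uniqueness of the resolution for the $\scf$-ambiguity. Your variant of the last step (descending an automorphism of $\pi^*\scf(-2)$ via $\pi^*\pi_*\simeq\id$) is a harmless substitute for the paper's use of \autoref{theorem:ce-main-generalized}(v), which instead pins down $\scf$ as a subsheaf of $\sym^2\sce$.

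The one genuine gap is your justification of flatness of $\Psi_4(\eta)$ over $Y$. You invoke ``constant Hilbert polynomial on fibers,'' but that criterion only yields flatness over a \emph{reduced} (locally noetherian) base, and the theorem is asserted for an arbitrary scheme $Y$, including non-reduced ones. The paper circumvents this by working fppf-locally to realize $X\to Y$ as the pullback of the universal family $U^\circ\to Z^\circ$ over the open locus $Z^\circ$ in the (smooth, hence reduced) parameter space of pairs of quadrics in $\bp^2$, verifying flatness there by constancy of degree, and then pulling back. Your parenthetical ``verified via the Koszul resolution'' gestures at an alternative fix --- a fiberwise complete intersection inside a $Y$-flat ambient scheme is flat by the local criterion of flatness, since the Koszul complex is exact on each fiber --- but as written the argument leans on the invalid Hilbert-polynomial criterion, so you need to commit to one of these two repairs. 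You also gloss over the verification that the Koszul complex restricts to a \emph{minimal} free resolution on each fiber (the paper cites the Eisenbud--Buchsbaum acyclicity/minimality statement, using that Gorenstein fibers are Cohen--Macaulay), which is needed before \autoref{theorem:ce-main-generalized}(iii)--(iv) can be applied to identify $\sce^X$ and $\scf^X$.
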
 \begin{proof}
	First we construct the map from $(2)$ to $(1)$.
	Beginning with a cover $X \to Y$, we obtain a resolution
		\eqref{equation:degree-4-resolution}, and, upon choosing
		isomorphisms $\sce^X \simeq \sce$ and $\scf^X \simeq \scf$, we
		obtain a section $\eta \in H^0(Y, \scf^\vee \otimes \sym^2
		\sce)$ having the right Hilbert polynomial at every $y \in Y$.

		To construct the map from $(1)$ to $(2)$, we must show
		$\Psi_4(\eta)$ satisfies the properties listed in $(2)$.
		We first verify
	$\Psi_4(\eta)$ is a finitely presented Gorenstein cover of $Y$.  On
	fibers, $\Psi_4(\eta)$ is described as a dimension $0$ intersection
	of two quadrics. Since $\eta$ has the right Hilbert polynomial at $y \in Y$, it
	has degree $4$ over $y$.  
	(We parenthetically note that by Bezout's theorem, having the right
		Hilbert polynomial is equivalent to having dimension $0$, which
		then matches with Casnati-Ekedahl's notion of having
	“the right codimension” from \cite[Definition
4.2]{casnatiE:covers-algebraic-varieties}.)
	Gorensteinness follows because
	$\Psi_4(\eta)$ is a local complete intersection.  

	We next deduce flatness of $\Psi_4(\eta)$ over $Y$.
	We first explain how to reduce to the case that
	$Y$ is smooth. 
	Let $Z$ denote the moduli space parameterizing pairs of quadrics in
	$\bp^2$ which comes with a universal $\pi: U \to Z$ whose fiber over a pair
	$[(Q_1, Q_2)]$ is $Q_1 \cap Q_2$. There is an open locus $Z^\circ
	\subset Z$ where the intersection of these quadrics is $0$-dimensional,
	and hence has constant degree $4$ by Bezout's theorem.
	Let $U^\circ := \pi^{-1}(Z^\circ)$.
	Since $Z$ is a product of projective spaces, $Z^\circ$ is an open in a
	product of projective spaces, hence, in particular, smooth.
	Working fppf locally on $Y$, we can express $X \to Y$ as an intersection
	of relative quadrics in $\bp^2$, in which case
	$X \to Y$ is pulled back from $U^\circ \to Z^\circ$ via a map $Y \to
	Z^\circ$.
	Hence, it suffices to show that $U^\circ \to Z^\circ$ itself is flat.
	In this case, since $Z^\circ$ is reduced,
	flatness follows from constancy of the degree.
		
	To conclude the construction of the map from $(1)$ to $(2)$,
	we will show it is possible to choose identifications
	$\sce^X \simeq \sce, \scf^X \simeq \scf$ so that we obtain an associated
	section $\eta^X \in H^0(Y,\scf^\vee \otimes \sym^2 \sce) \simeq
	H^0(Y,(\scf^X)^\vee \otimes \sym^2 \sce^X)$. 

	First we show $\sce^X \simeq \sce$. 
%	In order to verify this, we will first show $\det \sce^X \simeq \det \scf^X$.
%	%(In addition to the argument we give next, this can also be deduced using the Koszul complex as in \cite[Thm.\
%	%4.4]{casnatiE:covers-algebraic-varieties}.) 
%	Indeed, using
%	\autoref{lemma:smooth-cover-stack}, any such cover $X \to Y$ is pulled
%	back from the smooth stack $\coverstack d$.  It is enough to demonstrate
%	the isomorphism in this universal setting, where $X \to \coverstack d$ is
%	the universal degree $d$ cover.  The resulting projective bundle $\bp
%	\sce^X$ is also smooth over $\spec \bz$, and therefore the determinant of the codimension
%	$2$ substack $\sco_X \subset \bp \sce^X$ vanishes.  This follows from
%	normality of $\coverstack d$, which implies that the trivialization of
%	$\det(\sco_X)$ away from $X$ extends to the codimension $2$ substack
%	$X \subset \bp \sce^X$.  Given this, we find that the $\det (\pi^*\scf^X(-2)) \simeq (\det
%	\pi^* \sce^X)(-4)$. Since the former has rank $2$ and the latter has
%	rank $1$, it follows that $\det \pi^* \scf^X \simeq \det \pi^* \sce^X$
%	and so $\det \scf^X \simeq \det \sce^X$.	
%
	Indeed, there is a Koszul complex
	\begin{equation}
		\label{equation:koszul-complex}
		\begin{tikzcd}
			0 \ar {r} & \pi^* \det \scf \otimes \sco_{\bp \sce}(-4)
			\ar {r} & \pi^* \scf \otimes \sco_{\bp \sce}(-2) \ar {r} &
			\sco_{\bp \sce} \ar {r}
			& \sco_X.
	\end{tikzcd}\end{equation}
	It also follows from 
	\cite[Theorem 20.15]{Eisenbud:commutativeAlgebra}
	(using the comments on \cite[p. 503]{Eisenbud:commutativeAlgebra}
	and the fact that Gorenstein schemes are Cohen-Macaulay)
	that
	\eqref{equation:koszul-complex} yields a minimal free resolution of
	$X_y$ in $\bp \sce_y$ for every $y \in Y$.
	Because $\det \scf \simeq \det \sce$ by assumption,
	\autoref{theorem:ce-main-generalized}(iv) implies $\sce \simeq \sce^X$.

	Using the isomorphism $\sce \simeq \sce^X$, we also verify $\scf \simeq
	\scf^X$.
	Let $i: X \to \bp \sce$ and $i^X: X \to \bp \sce^X$ denote the two
	embeddings.
	By pushing forward the twist of \eqref{equation:koszul-complex} by
	$\sco_{\bp \sce}(2)$ along $\pi$, we find $\scf^X \simeq
	\ker\left(\sym^2 \sce^X
	\to \pi_* \left(i^X_* \sco_X \otimes \sco_{\bp \sce^X}(2) \right) \right)$.
	Similarly, 
	the analogous resolution from \autoref{theorem:ce-main-generalized}
	for $X$ in terms of $\sce^X$ and $\scf^X$ yields
	$\scf \simeq
	\ker\left(\sym^2 \sce
	\to \pi_* \left(i_* \sco_X \otimes \sco_{\bp \sce}(2) \right) \right)$.
	Hence, the isomorphism
	$\sce \simeq \sce^X$ induces the desired isomorphism $\scf \simeq
	\scf^X$.

	The isomorphism $\sce \simeq \sce^X$ is compatible with the above
	restriction map, and so induces an isomorphism $\scf \simeq \scf^X$.
	This concludes the verification that the map we have produced indeed
	goes from $(1)$ to $(2)$.

	It remains to prove the compositions of the above maps in both
	directions are equivalent to the identity. As in the degree
	$3$ case, if we start with a cover, and produce the associated section
	$\eta^X$, $\Psi_4(\eta^X)$ is isomorphic to $X$ via the construction.
	For showing the reverse composition is equivalent to the identity, start
	with some section $\eta$. Let $X$
	denote the resulting cover $\Psi_4(\eta)$. 
	
	Given the above identifications
	$\sce^X \simeq \sce, \scf^X \simeq \scf$,
	we wish to show $\eta^X$ is related to
	$\eta$ by automorphisms of $\sce$ and $\scf$.  Note also here that any
	automorphism of $\sce$ and $\scf$ sends $\eta$ to another section
	defining an isomorphic cover.  Using
	\autoref{theorem:ce-main-generalized}, there is an automorphism of
	$\bp \sce$ taking the subscheme $\Psi_4(\eta^X)$ to $\Psi_4(\eta)$.  
	From \autoref{theorem:ce-main-generalized}(iv) and the fact that the
	leftmost term of the resolution \eqref{equation:degree-4-resolution}
	is $\pi^* \det \sce(-4)$, we find $\sce$ is isomorphic to $\ker(\rho_*
	\omega_{X/Y} \to \sco_Y)$.
	By
	\autoref{corollary:automorphism-of-e}, the above automorphism of $\bp
	\sce$ is induced by an
	automorphism of $\sce$.  By composing with the inverse of this
	automorphism, we may assume the resulting map is the identity on $\bp
	\sce$, and so the automorphism of $\bp \sce$ is then induced by some
	automorphism of $\sce$ via multiplication by a section $s\in \sco_Y(Y)$. 
	After composing with multiplication by $s^{-1}$, we may reduce
	to the case $s$ is the identity.
	Since $\scf$ is a subsheaf of $\sym^2 \sce$ by
	\autoref{theorem:ce-main-generalized}(v), the image of the induced map
	$\scf \to \sym^2 \sce$ is uniquely determined by $X$, but the precise
	map is only determined up to automorphism of $\scf$.  Upon composing
	with such an automorphism, we may identify not just the images of $\scf$
	in $\sym^2 \sce$, but further we may identify the maps.  Under these
identifications, $\eta$ agrees with $\eta^X$, when viewed as maps $\scf \to
\sym^2 \sce$.  \end{proof}

We next state and prove the analogous parametrization in degree $5$.
As preparation, we will need the following application of the structure theorem
for codimension $3$ Gorenstein algebras due to Buchsbaum-Eisenbud.
\begin{lemma}
	\label{lemma:b-e-resolution}
	Let $Y$ be a scheme, and let $\sce$ and $\scf$ be locally free sheaves
	on $Y$ of ranks $3$ and $5$.
	A finite locally free Gorenstein cover $\rho: X \to Y$ of degree
	$5$, described as $\Psi_5(\eta)$ for
	$\eta\in H^0(Y, \wedge^2 \scf \otimes \sce \otimes \det \sce^{\vee})$, 
	has a resolution of the form
	\begin{equation}
		\label{equation:b-e-resolution}
		\begin{tikzcd}
			0 \ar {r} & \pi^* \det \sce^\vee \otimes \pi^*\det
			\scf(-5) \ar {r}{\beta_3} & \pi^* \det \sce^\vee \otimes \pi^* \scf^\vee(-3) \ar
			{r}{\beta_2} & \qquad \\
		\ar{r} {\beta_2} & \pi^* \scf(-2) \ar {r}{\beta_1}
			& \sco_{\bp \sce} \ar{r} & \sco_X,
	\end{tikzcd}\end{equation}
	which restricts to a minimal free resolution over each $y\in Y$,
	where $\beta_2$ is alternating and $\beta_3$ is identified with the dual
	of $\beta_1$ tensored with $\pi^* \det \sce^\vee \otimes \pi^*\det
	\scf(-5)$.
\end{lemma}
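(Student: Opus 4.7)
The plan is to produce the claimed resolution directly from $\eta$ and match it against the general Casnati-Ekedahl resolution provided by \autoref{theorem:ce-main-generalized}.

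First, from $\eta \in H^0(Y, \wedge^2 \scf \otimes \sce \otimes \det \sce^{\vee})$ I would construct a candidate complex on $\bp \sce$. The section $\Phi_5(\eta)$ yields an alternating map
\[
\beta_2 : \pi^* \scf^\vee \otimes \pi^* \det \sce(-3) \longrightarrow \pi^* \scf(-2),
\]
and, following the definition of $\Psi_5(\eta)$ in \autoref{subsection:psid}, the five $4 \times 4$ Pfaffians of $\beta_2$ assemble into a map $\beta_1 : \pi^* \scf(-2) \to \sco_{\bp \sce}$ whose cokernel is $\sco_X$. Dualizing $\beta_1$ and twisting by $\pi^* \det \sce^\vee \otimes \pi^* \det \scf(-5)$ produces $\beta_3$ with the source and target demanded in \eqref{equation:b-e-resolution}. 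The identities $\beta_1 \circ \beta_2 = 0$ and $\beta_2 \circ \beta_3 = 0$ follow from the standard identities among the sub-Pfaffians of a skew-symmetric matrix, so the result is a complex.

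Second, I would verify exactness by checking it on geometric fibers. Over each geometric point $y \in Y$, the subscheme $X_y \subset \bp \sce_y \simeq \bp^3_{\kappa(y)}$ is zero-dimensional of degree $5$, and is arithmetically Gorenstein of codimension $3$ by \autoref{theorem:ce-main-generalized}(iii). The Buchsbaum-Eisenbud structure theorem for graded codimension $3$ Gorenstein ideals then guarantees that the minimal free resolution of $\sco_{X_y}$ has exactly the alternating-and-Pfaffian shape just constructed, so the restriction of the candidate complex to the fiber coincides (up to isomorphism) with this minimal free resolution. Global exactness follows by applying Nakayama to the coherent cohomology sheaves of the complex, which vanish on every fiber.

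Third, the uniqueness statement in \autoref{theorem:ce-main-generalized}(iii) identifies our candidate resolution, up to unique isomorphism fixing $\sco_X$, with the Casnati-Ekedahl resolution \eqref{equation:degree-5-resolution}. This transfers both the alternating-ness of $\beta_2$ and the identification of $\beta_3$ as the appropriate twisted dual of $\beta_1$ from the candidate complex to the Casnati-Ekedahl resolution, yielding \eqref{equation:b-e-resolution}. I expect the main obstacle to be careful bookkeeping of the twists by line bundles and ensuring that the Buchsbaum-Eisenbud structure genuinely extends from the fiberwise to the relative setting; this is made tractable precisely by the uniqueness clause of \autoref{theorem:ce-main-generalized}(iii), which promotes any globally defined candidate agreeing with the CE resolution on fibers to an actual global identification.
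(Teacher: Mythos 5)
Your overall strategy --- build the complex globally from $\eta$, reduce exactness to geometric fibers, and invoke Buchsbaum--Eisenbud --- is the same as the paper's. But there is a genuine gap in how you establish fiberwise exactness. You appeal to the Buchsbaum--Eisenbud \emph{structure} theorem (every codimension $3$ arithmetically Gorenstein subscheme admits \emph{some} minimal free resolution of Pfaffian shape) and then conclude that your candidate complex ``coincides (up to isomorphism)'' with that minimal free resolution because it has the same shape. That inference is not valid: two complexes with the same terms, an alternating middle map, and the same cokernel need not be isomorphic, and nothing you have said shows that the alternating matrix produced by $\eta$ is the one appearing in the structural resolution. What is actually needed is the \emph{converse} half of Buchsbaum--Eisenbud, namely Theorem 2.1(1) of their paper: any complex of this exact form (alternating $\beta_2$, $\beta_1$ the Pfaffians, $\beta_3$ the twisted dual of $\beta_1$) whose image ideal $J=\im(\beta_1)$ has grade $3$ is automatically exact and is a minimal free resolution. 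This is how the paper proceeds: it trivializes $\det\sce$ locally, passes to a geometric point, translates the complex into one over the local ring $R=\kappa(y)[x_0,\dots,x_3]_{(x_0,\dots,x_3)}$ of the cone, checks that $J$ has grade $3$ (using that $X_y$ has codimension $3$), and then cites exactly that recognition statement. Your proof becomes correct once you replace the ``same shape, hence isomorphic'' step with this citation.

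Two smaller remarks. First, your third step --- invoking the uniqueness clause of \autoref{theorem:ce-main-generalized}(iii) to ``transfer'' the alternating and duality properties to the Casnati--Ekedahl resolution --- is superfluous for this lemma: the complex you construct from $\eta$ already has the form \eqref{equation:b-e-resolution} by construction, and the lemma only asserts the existence of such a resolution restricting to minimal free resolutions on fibers. (That identification with \eqref{equation:degree-5-resolution} is used elsewhere, in the proof of \autoref{theorem:5-structure}, not here.) Second, your claim that $\beta_1\circ\beta_2=0$ and $\beta_2\circ\beta_3=0$ follow from Pfaffian identities is fine, but note it is also subsumed by the Buchsbaum--Eisenbud setup once you cite Theorem 2.1(1).
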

\begin{proof}
	In \eqref{equation:b-e-resolution}, the map $\beta_2$ is obtained from
	$\eta$, interpreted as a section of 
	$H^0(Y, \wedge^2 \scf \otimes \sce \otimes \det \sce^{\vee}) \simeq
	H^0(\bp \sce, \pi^*(\wedge^2 \scf \det \sce^{\vee})(1))$.
	The map $\beta_3$ is obtained by taking five $4 \times 4$ Pfaffians
	of $\beta_2$.
	To make sense of this, one may first construct $\beta_3$ locally upon choosing trivializations of $\scf$
	and $\sce$. 
	One then obtains a global map $\scf(-2) \to \sco_{\bp \sce}$
	because the formation of
	the Pfaffians are compatible with restriction to an open subscheme of
	$Y$.
	Finally, $\beta_1$ is obtained as the dual to $\beta_3$, tensored with 
	 $\pi^* \det \sce^\vee \otimes \pi^*\det
	\scf(-5)$.

	Since we have constructed the maps in \eqref{equation:b-e-resolution}
	globally over $\bp \sce$, it is enough to verify they furnish a
	minimal free
	resolution on geometric fibers.
	To this end, we may work locally on $Y$ and choose a trivialization
	$u: \det \sce \simeq \sco_Y$.
	Upon choosing this trivialization and composing with the isomorphism $u$ for
	the two left nonzero sheaves in \eqref{equation:b-e-resolution},
	we obtain a sequence
	\begin{equation}
		\label{equation:local-b-e-resolution}
		\begin{tikzcd}
			0 \ar {r} & \pi^*\det
			\scf(-5) \ar {r}{\beta_3'} & \pi^* \scf^\vee(-3) \ar
			{r}{\beta_2'} & \pi^* \scf(-2) \ar {r}{\beta_1'}
			& \sco_{\bp \sce} \ar{r} & \sco_X,
	\end{tikzcd}\end{equation}
	where $\beta_2'$ is still alternating, i.e., it corresponds to an
	element of
	$H^0(Y, \wedge^2 \scf \otimes \sce)$, 
	and $\beta_3'$ remains identified with the dual of $\beta_1'$, now
	tensored with 
	$\pi^*\det \scf(-5)$.
	Since the sequence \eqref{equation:local-b-e-resolution} commutes with
	base change on $Y$, we may further restrict to a geometric point $y \in
	Y$,
	and hence assume $Y$ is the spectrum of an algebraically closed field.

	We wish to show \eqref{equation:local-b-e-resolution} is a minimal
	locally free resolution.
	To do so, we wish to apply
	\cite{buchsbaumE:algebra-structures-for-finite-free-resolutions-codimension-3},
	and so we translate the above to the setting of commutative algebra.
%	We first note that 
%	$X \to \bp \sce$
%	is arithmetically Gorenstein 
%	because the resolution \eqref{equation:local-b-e-resolution} gives a
%	minimal free resolution where the leftmost term has rank $1$.
%	By \autoref{theorem:ce-main-generalized}(iii),
%	$X \to \bp \sce$
%	is an arithmetically Gorenstein subscheme.
	Writing $\bp \sce = \proj \kappa(y)[x_0, x_1, x_2, x_3]$,
	the cone over $X$ defines a subscheme of
	$\spec \kappa(y)[x_0, x_1, x_2, x_3]_{(x_0, x_1, x_2, x_3)}$,
	the localization of $\mathbb A^4_{\kappa(y)}$ at the origin.
	Taking $R := \kappa(y)[x_0, x_1, x_2, x_3]_{(x_0, x_1, x_2, x_3)}$,
	we can identify $\pi^* \scf$ with a rank $5$ free $R$-module $F$.
	Let $J$ denote the ideal of the cone over $X$ in $R$.
	The resolution \eqref{equation:local-b-e-resolution} can then be
	reexpressed in the form
	\begin{equation}
		\label{equation:ring-b-e-resolution}
		\begin{tikzcd}
			0 \ar {r} & 
			R \ar{r}{\beta_3''} &
			F^\vee \ar{r}{\beta_2''} &
			F      \ar{r}{\beta_1''} &
			R \ar{r} & R/JR,
	\end{tikzcd}\end{equation}
	with $\beta_2'' \in \wedge^2 F$ alternating and $\beta_3''$ the dual
	of $\beta_1''$.
	By definition of $\Psi_5(\eta)$ this sequence is exact at $R$,
	so $J$ is the image of $\beta_1''$.
	Since $X$ has codimension $3$ in $\bp \sce$ by assumption,
	$J$ is of grade $3$. 
	Hence,
	\eqref{equation:ring-b-e-resolution} satisfies
	the hypotheses of \cite[Theorem
	2.1(1)]{buchsbaumE:algebra-structures-for-finite-free-resolutions-codimension-3}.
	It is stated that any such resolution satisfying these hypotheses is a minimal free resolution
	of $R/JR$ 
	in the bottom paragraph of
	\cite[p. 463]{buchsbaumE:algebra-structures-for-finite-free-resolutions-codimension-3}
	and the proof is given in
	\cite[p. 464]{buchsbaumE:algebra-structures-for-finite-free-resolutions-codimension-3}.
\end{proof}

\begin{theorem}[Generalization of \protect{\cite[Thm.\
	3.8]{casnati:covers-algebraic-varieties-ii}}]
	\label{theorem:5-structure} Fix a scheme $Y$, a rank $4$ locally free
	sheaf $\sce$ on $Y$, and a rank $5$ locally free sheaf $\scf$ on $Y$
	such that there exists an unspecified 
	isomorphism $\det \scf \simeq (\det \sce)^{\otimes 2}$.
	The map $\eta \mapsto \Psi_5(\eta)$ induces a bijection between 
	\begin{enumerate}
\item 		sections $\eta \in H^0(Y, \wedge^2 \scf \otimes \sce \otimes
	\det \sce^{\vee})$ having the right Hilbert polynomial at every $y \in Y$,
up to automorphisms of $\sce$
	and $\scf$,
\item and, finite locally free Gorenstein covers $\rho:X
	\ra Y$ of degree $5$ with associated sheaves $\sce^X, \scf^X$ 
	as in \autoref{subsection:psid}	
	which
	are isomorphic to $\sce$ and $\scf$.
	\end{enumerate}
%	The bijection explicitly sends a section $\eta
%	\in H^0(Y, \wedge^2 \scf \otimes \sce \otimes \det \sce^{\vee})$ to
%$\Psi_5(\eta)$, considered as a subscheme of $\bp\sce$.  
\end{theorem}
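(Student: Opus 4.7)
The plan is to follow the strategy used in the proofs of \autoref{theorem:3-structure} and \autoref{theorem:4-structure}: construct maps in both directions between $(1)$ and $(2)$ and verify both compositions are the identity. The key new input beyond the degree $3$ and $4$ cases is \autoref{lemma:b-e-resolution}, which produces the Buchsbaum-Eisenbud resolution of $\Psi_5(\eta)$ from the Pfaffian structure of an alternating map.

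To construct the map from $(2)$ to $(1)$, given a finite locally free Gorenstein cover $\rho: X \to Y$ of degree $5$ with $\sce^X \simeq \sce$ and $\scf^X \simeq \scf$, I would apply \autoref{theorem:ce-main-generalized} to obtain the resolution \eqref{equation:degree-5-resolution}, unique up to unique isomorphism. The Buchsbaum-Eisenbud structure theorem for codimension $3$ Gorenstein algebras (as invoked in \autoref{lemma:b-e-resolution}) forces the middle differential to be alternating, so it corresponds to a section of $\wedge^2 \pi^* \scf \otimes \pi^* \det \sce^\vee(1)$. Applying $\Phi_5^{-1}$ from \eqref{equation:phi-5} yields the desired $\eta \in H^0(Y, \wedge^2 \scf \otimes \sce \otimes \det \sce^\vee)$, which has the right Hilbert polynomial because $\rho$ is finite locally free of degree $5$.

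For the map from $(1)$ to $(2)$, starting with $\eta$ having the right Hilbert polynomial at every $y \in Y$, \autoref{lemma:b-e-resolution} gives the resolution \eqref{equation:b-e-resolution} restricting to a minimal free resolution on each geometric fiber, showing $\Psi_5(\eta) \to Y$ is locally finitely presented, proper, and quasi-finite, hence finite. For flatness I would mimic the degree $4$ argument by reducing to the universal family over a smooth parameter space of alternating $5 \times 5$ matrices of linear forms on $\bp^3$ whose Pfaffian subscheme is $0$-dimensional; constancy of fiber degree then forces flatness. Gorensteinness on fibers follows from \autoref{lemma:b-e-resolution}. To identify the bundles, the leftmost term of \eqref{equation:b-e-resolution} is $\pi^* \det \sce^\vee \otimes \pi^* \det \scf(-5) \simeq \pi^* \det \sce(-5)$ using the hypothesis $\det \scf \simeq (\det \sce)^{\otimes 2}$, so \autoref{theorem:ce-main-generalized}(iv) yields $\sce \simeq \sce^{\Psi_5(\eta)}$; pushing forward the twist of \eqref{equation:b-e-resolution} by $\sco_{\bp \sce}(2)$ and comparing, as in the degree $4$ case, then identifies $\scf$ with $\scf^{\Psi_5(\eta)}$.

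Finally, for the composition identities, starting from a cover $X$ the associated $\eta$ tautologically recovers $X$ as $\Psi_5(\eta)$ via the very resolution used in its construction. Starting from $\eta$ and forming $X := \Psi_5(\eta)$, the resulting $\eta^X$ differs from $\eta$ only through the ambiguity in the identifications $\sce \simeq \sce^X$ and $\scf \simeq \scf^X$; \autoref{corollary:automorphism-of-e} supplies an automorphism of $\sce$ matching the two embeddings into $\bp \sce$, and the induced map on the canonical embedding $\scf \hookrightarrow \sym^2 \sce$ from \autoref{theorem:ce-main-generalized}(v), together with a further automorphism of $\scf$ absorbing a global unit, matches $\eta$ with $\eta^X$. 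The main obstacle I expect is handling the Pfaffian bookkeeping: unlike the Koszul resolution in degree $4$, the self-dual Buchsbaum-Eisenbud resolution in degree $5$ requires careful verification that the $\scf$ appearing in the middle term genuinely coincides with the intrinsic sheaf $\scf^{\Psi_5(\eta)}$ derived from the cover, and that the flatness reduction to a smooth universal family goes through in the Pfaffian setting.
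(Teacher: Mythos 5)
Your proposal is correct and follows essentially the same route as the paper: both directions are constructed exactly as in the paper (using \autoref{theorem:ce-main-generalized}, the Buchsbaum--Eisenbud structure theorem to force the middle differential to be alternating, \autoref{lemma:b-e-resolution} plus the universal family of alternating $5\times 5$ matrices of linear forms for flatness, and the determinant hypothesis with part (iv) to identify $\sce^X$ and $\scf^X$), and the composition identities are handled with the same ingredients (\autoref{corollary:automorphism-of-e}, the embedding $\scf \hookrightarrow \sym^2\sce$ from part (v), and absorbing a unit). The ``Pfaffian bookkeeping'' you flag is resolved in the paper exactly where you expect, by using the self-duality of the resolution to force the automorphism of the second term to be the identity once the automorphism of $\pi^*\scf(-2)$ is.
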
 
\begin{proof}
	To start, we construct the map from $(2)$ to $(1)$.
	Beginning with a cover $X \to Y$, we obtain a resolution
	\eqref{equation:degree-5-resolution}.  Upon choosing isomorphisms
	$\sce^X \simeq \sce$ and $\scf^X \simeq \scf$ we obtain a section $\eta
	\in H^0(Y, \scf^{\otimes 2} \otimes \sce \otimes \det \sce^\vee)$ having
	the right Hilbert polynomial at every $y \in Y$.  We wish to check next that
	this section actually lies in $H^0(Y, \wedge^2 \scf \otimes \sce \otimes
	\det \sce^\vee)$.  
	Viewing this as a map $\pi^*\scf^\vee \otimes \pi^*
	\det \sce \to \pi^*\scf(1)$ via \eqref{equation:degree-5-resolution}, it
	is enough to verify the map is alternating locally on the base.
	Therefore, for this verification, we may assume $Y$ is the spectrum of a
	local ring and $\sce$ is trivial.  After this reduction, $X \subset \bp
	\sce$ is codimension $3$ and arithmetically Gorenstein, and so the
	Buchsbaum-Eisenbud parametrization for codimension $3$ Gorenstein
	schemes \cite[Thm.\
	2.1(2)]{buchsbaumE:algebra-structures-for-finite-free-resolutions-codimension-3}
	applies. This produces a resolution of $X \subset \bp \sce$ as in
	\eqref{equation:b-e-resolution}
	which by \autoref{theorem:ce-main-generalized} must agree with
	\eqref{equation:degree-5-resolution}. 
	Since the map corresponding to
	$\pi^*\scf^\vee \otimes \pi^* \det \sce \to \pi^* \scf(1)$ is
	alternating in the resolution of \cite[Thm.\
	2.1(2)]{buchsbaumE:algebra-structures-for-finite-free-resolutions-codimension-3}
	it follows that $\pi^* \scf^\vee \otimes \pi^*\det \sce \to \pi^*
	\scf(1)$ is also alternating.  

	We next construct the map from $(1)$ to $(2)$.
	This map will send $\eta$ to $\Psi_5(\eta)$. To show this is indeed a
	well defined map, we wish to verify
	$\Psi_5(\eta)$ is a finitely presented Gorenstein cover of $Y$.  The
	finite presentation condition follows from the resolution given in
	\eqref{equation:degree-5-resolution}.  We may check the remaining
	conditions locally on $Y$, and hence assume $Y$ is the spectrum of a
	local ring.  Observe that $X \to \bp \sce$ is arithmetically Gorenstein
	and of codimension $3$, using the assumption that $\eta$ has the right
	Hilbert polynomial at each $ y \in Y$ and
\cite[Thm.\
	2.1(1)]{buchsbaumE:algebra-structures-for-finite-free-resolutions-codimension-3}.  
%	Using \cite[Thm.\
%	2.1(1)]{buchsbaumE:algebra-structures-for-finite-free-resolutions-codimension-3},
	We find that $X$ is Gorenstein and is cut out scheme theoretically by
	the five $4 \times 4$ Pfaffians associated to $\eta$, thought of as a
	map $\pi^*\scf^\vee \otimes \pi^*\det \sce \to \pi^* \scf(1)$.  On
	fibers, $\Psi_5(\eta)$ is described as the vanishing of the five $4
	\times 4$ Pfaffians of an alternating linear map.  The resolution
	\cite[Thm.\
	2.1(1)]{buchsbaumE:algebra-structures-for-finite-free-resolutions-codimension-3},
	can be identified with one of the form
	\eqref{equation:degree-5-resolution}, from which one may calculate that
	the Hilbert polynomial of every fiber is $5$.  Therefore, the resulting
	scheme $\Phi_5(\eta)$ is finite and each fiber has degree $5$.

	We next deduce flatness of $\Psi_5(\eta)$ over $Y$.
	The idea is to reduce to the universal case, where we can verify
	flatness using constancy of Hilbert polynomial.
	Let $Z \simeq \mathbb A^{40}$ denote the affine space parameterizing
	alternating $5 \times 5$ matrices of linear forms in $\bp^3$. 
	Let $\pi: U \to Z$ denote the universal family of intersections of the
	five $4 \times 4$ Pfaffians of the corresponding matrix, so that the fiber of $\pi$ over a
	point $[M] \in Z$ is the intersection of the five $4 \times 4$ Pfaffians
	of the alternating matrix of linear forms $M$.
	There is an open subset $Z^\circ \subset Z$ parameterizing the locus
	where the fiber of $\pi$ is $0$-dimensional and degree at most $5$.
	One may verify that every fiber of $\pi$ has degree at least $5$, and so
	this open $Z^\circ$ parameterizes subschemes of degree exactly $5$.
	Let $U^\circ := \pi^{-1}(Z^\circ)$.
	Since $Z$ is smooth $Z^\circ$ is as well.
	Working fppf locally on $Y$, we can assume $X \to Y$ is a pullback of
	$U^\circ \to Z^\circ$ along a map $Y \to Z$.
	Hence, it suffices to show that $U^\circ \to Z^\circ$ itself is flat.
	In this case, since $Z$ is reduced,
	flatness follows from constancy of the degree.

	In order to show the above constructed map indeed takes $(1)$ to $(2)$,
	we must demonstrate
	identifications
	$\sce^X \simeq \sce$ and $\scf^X \simeq \scf$.
	To obtain the first identification, we use
	\autoref{lemma:b-e-resolution}.
	Since $\det \sce^{\otimes 2} \simeq \det \scf$, the leftmost nonzero
	term of the resolution in \autoref{lemma:b-e-resolution}
	becomes 
	$\det \pi^* \sce^\vee \otimes \pi^*\det \scf(-5) \simeq \pi^* \det \sce(-5)$.
	Hence, \autoref{theorem:ce-main-generalized}(iv) implies $\sce \simeq
	\sce^X$.
	Let $i : X \to \bp \sce, i^X : X \to \bp \sce^X$ denote the inclusions.
	By twisting \eqref{equation:b-e-resolution} by $\sco_{\bp \sce}(2)$ and
	pushing forward, we find $\scf \simeq \ker(\sym^2 \sce \to \pi_* (i_*
	\sco_X \otimes\sco_{\bp \sce}(2)))$.
	The analogous resolution from \autoref{theorem:ce-main-generalized}
	for $X$ in terms of $\sce^X$ and $\scf^X$ yields
	$\scf^X \simeq \ker(\sym^2 \sce^X \to \pi_*(i^X_* \sco_X \otimes
	\sco_{\bp \sce^X}(2)))$.
	Hence, the isomorphism $\sce \simeq \sce^X$ induces the desired
	identification $\scf
	\simeq \scf^X$.
	This completes the construction of the map from $(1)$ to $(2)$.

	It remains to prove the compositions of the above maps between $(1)$ and
	$(2)$ are equivalent to
	the identity.  As in the degree
	$3$ case, if we start with a cover, produce the associated section
	$\eta^X$, $\Psi_5(\eta^X)$ is isomorphic to $X$ via the construction.

	For the reverse composition, start with some section $\eta$ and let $X$
	denote the resulting cover $\Psi_5(\eta)$. 	
	Now, choose identifications
	$\sce^X \simeq \sce, \scf^X \simeq \scf$ as above
	so that we obtain an associated
	section $\eta^X \in H^0(Y,\wedge^2 \scf^\vee \otimes \det \sce \to \sce)
	\simeq H^0(Y, \wedge^2 (\scf^X)^\vee \otimes \det \sce^X \to \sce^X)$.
	We wish to show $\eta^X$ is related to $\eta$ by automorphisms of $\sce$ and
	$\scf$.  Note also here that any automorphism of $\sce$ and $\scf$ sends
	$\eta$ to another section defining an isomorphic cover.  
	Using
	\autoref{theorem:ce-main-generalized}, there is an automorphism of
	$\bp \sce$ taking $\Psi_5(\eta^X)$ to $\Psi_5(\eta)$.  
	From \autoref{theorem:ce-main-generalized}(iv) and the fact that the
	leftmost term of the resolution \eqref{equation:degree-4-resolution}
	is $\pi^* \det \sce(-5)$, we find $\sce$ is isomorphic to $\ker(\rho_*
	\omega_{X/Y} \to \sco_Y)$.
	By
	\autoref{corollary:automorphism-of-e}, this automorphism of $\bp \sce$ is induced by an
	automorphism of $\sce$.  By composing with the inverse of this
	automorphism, we may assume $\eta$ and $\eta^X$ define the same subscheme of $\bp
	\sce$. Hence we may assume the automorphism of $\bp \sce$ is then
	induced by multiplication by a section $s \in \sco_Y(Y)$.
	After composing with multiplication by $s^{-1}$, we may therefore reduce to
	the case that $s$ is the identity section.
	By \autoref{theorem:ce-main-generalized}(iii), we obtain a unique isomorphism
	between the two resolutions
	of $X$ in $\bp \sce$
	\eqref{equation:degree-5-resolution} determined by $\eta$ and
	$\eta^X$.
	This isomorphism can be specified as a tuple of $5$ maps between the
	nonzero terms
	of \eqref{equation:degree-5-resolution}.

	We next show we can apply an automorphism of $\scf$ so as to assume the
	map $\pi^* \scf(-2) \to \pi^* \scf(-2)$ is the identity.
%	By the above identifying the images $X \subset \bp \sce$
	Since $\scf$ is a subsheaf of $\sym^2 \sce$ by
	\autoref{theorem:ce-main-generalized}(v), the image of the induced map
	$\scf \to \sym^2 \sce$ coming from the Pfaffians associated to $\eta$ is uniquely determined by $X$, 
	but the precise	map is only determined up to automorphism of $\scf$.  
	Upon composing with such an automorphism, 
	we may identify not just the images of $\scf$
	in $\sym^2 \sce$, but further we may identify the maps.  
	Under these
	identifications, $\eta$ agrees with $\eta^X$, when viewed as maps $\scf
	\to \sym^2 \sce$.

	So far, we have constructed a map of the two resolutions
	\eqref{equation:degree-5-resolution} associated to $\eta$ and $\eta_X$.
	Upon choosing identifications $\sce \simeq \sce^X$ and
	$\scf \simeq \scf^X$ as above, we have enforced that the map of
	resolutions is given by the identity on the terms
	$\sco_X \to \sco_X, \sco_{\bp} \to \sco_{\bp}$, and $\pi^* \scf(-2) \to
	\pi^* \scf(-2)$. 
	When we write the second nonzero term of
	\eqref{equation:degree-5-resolution} as $\pi^* \scf^\vee \otimes \pi^*
	\det \sce(-3)$, we have identified it via Grothendieck duality
	as pairing with the third nonzero term $\pi^* \scf(-3)$ into $\pi^*
	\sce(-5)$, and therefore the induced automorphism of
	$\pi^* \scf^\vee \otimes \pi^* \det \sce(-3)$ must respect this
	duality. In particular, since we have reduced to the case where the automorphism of $\pi^* \scf(-2)$
	is the identity, we also obtain the induced automorphism of 
	$\pi^* \scf^\vee \otimes \pi^* \det \sce(-3)$ is the identity.
	Using \autoref{theorem:ce-main-generalized}(v)
	to guarantee that the maps $\eta$ and $\eta^X$ from $\scf^\vee \otimes
	\det \sce \to \scf \otimes \sce$ are injective, we obtain the desired
	identification of $\eta$ with $\eta^X$.
\end{proof}

%Recall \autoref{definition:right-codim-3}, \autoref{definition:right-codim-4},
%and \autoref{definition:right-codim-5} for the notions of sections of various
%bundles having the ``right codimension,'' and also the corresponding definition
%of $\Psi_d$ for $3 \leq d \leq 5$.  
Finally, we recall a rather elementary criterion for when
$\Psi_d(\eta)$ is geometrically connected.
\begin{theorem}[Part of \protect{\cite[Thm.\
	3.6]{casnatiE:covers-algebraic-varieties}, \cite[Thm.\
	4.5]{casnatiE:covers-algebraic-varieties}, \cite[Thm.\
	4.4]{casnati:covers-algebraic-varieties-ii}}]
	\label{theorem:bertini-3-4-5}
	Keeping notation as in \autoref{notation:h-sheaf},
	assume that
	$Y$ is a 
	geometrically connected and geometrically reduced projective scheme over
	a field $k$.
	If $h^0(Y, \sce^\vee) =
0$, then $\Psi_d(\eta)$ is geometrically connected.  
\end{theorem}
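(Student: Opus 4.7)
The plan is to show that $H^0(X_{\bar k}, \mathscr O_{X_{\bar k}}) = \bar k$, where $X := \Psi_d(\eta)$ with structure map $\rho \colon X \to Y$; since $X$ is proper over $k$, this forces $X_{\bar k}$ to be connected and hence $X$ to be geometrically connected. This reduces the problem to a cohomological computation that is essentially formal once one has in hand a short exact sequence relating $\rho_* \mathscr O_X$ to $\mathscr E^\vee$.

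The main input is the short exact sequence of $\mathscr O_Y$-modules
\[
0 \longrightarrow \mathscr O_Y \xrightarrow{\rho^{\#}} \rho_* \mathscr O_X \longrightarrow \mathscr E^\vee \longrightarrow 0,
\]
which comes essentially for free from the definition $\mathscr E := (\coker \rho^{\#})^\vee$ in \autoref{theorem:ce-main-generalized}(i), provided one verifies that $\coker \rho^{\#}$ is locally free of rank $d-1$ (and hence canonically isomorphic to its double dual $\mathscr E^\vee$). To verify this, I would work locally: $\rho_* \mathscr O_X$ is free of rank $d$, and in any local trivialization $\rho^{\#}(1)$ corresponds to a vector $(a_1, \ldots, a_d)$. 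Reducing modulo any maximal ideal of $\mathscr O_Y$ yields the unit of a nonzero fiber algebra, so the $a_i$ generate the unit ideal; hence $\rho^{\#}$ is a split injection of locally free sheaves with locally free cokernel of rank $d-1$, and the claimed sequence follows.

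Pushing this sequence forward along $Y \to \spec k$ and taking the associated long exact sequence in cohomology yields
\[
0 \to H^0(Y, \mathscr O_Y) \to H^0(X, \mathscr O_X) \to H^0(Y, \mathscr E^\vee) \to \cdots.
\]
The hypothesis $h^0(Y, \mathscr E^\vee) = 0$ collapses this to an isomorphism $H^0(Y, \mathscr O_Y) \simeq H^0(X, \mathscr O_X)$, and since $Y$ is proper, geometrically connected, and geometrically reduced, $H^0(Y, \mathscr O_Y) = k$; thus $H^0(X, \mathscr O_X) = k$.

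To upgrade this to \emph{geometric} connectedness, I would rerun the entire argument after base change to $\bar k$. Flat base change preserves the short exact sequence, and the vanishing $H^0(Y_{\bar k}, \mathscr E^\vee_{\bar k}) = H^0(Y, \mathscr E^\vee) \otimes_k \bar k = 0$ is automatic, as is $H^0(Y_{\bar k}, \mathscr O_{Y_{\bar k}}) = \bar k$ by the standing hypotheses on $Y$. The resulting equality $H^0(X_{\bar k}, \mathscr O_{X_{\bar k}}) = \bar k$, combined with the properness of $X_{\bar k}$, gives the desired connectedness. The only point requiring any care is the short exact sequence at the outset; once it is established, the rest is a routine cohomological manipulation, so I do not anticipate any serious obstacle beyond verifying that $\coker \rho^{\#}$ is a locally free $\mathscr O_Y$-module of the expected rank.
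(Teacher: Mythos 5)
Your proposal is correct and follows essentially the same route as the paper: both start from the short exact sequence $0 \to \sco_Y \to \rho_* \sco_X \to \sce^\vee \to 0$, use $h^0(Y,\sce^\vee)=0$ to conclude $H^0(X,\sco_X) \simeq H^0(Y,\sco_Y) = k$, and deduce geometric connectedness (the paper cites this sequence as coming from \autoref{theorem:ce-main-generalized}, whereas you verify local splitness of $\rho^\#$ directly, which is a harmless extra check). No gaps.
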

\begin{proof} The proof is essentially given in \cite[Thm.\
	3.6]{casnatiE:covers-algebraic-varieties}, and we repeat it for the
	reader's convenience.  
	Let $X := \Psi_d(\eta)$.	
	If $h^0(Y, \sce^\vee) = 0$
	the exact sequence \begin{equation} \label{equation:} \begin{tikzcd} 0
	\ar {r} & \sco_Y \ar {r} & \rho_* \sco_X \ar {r} & \sce^\vee \ar {r} & 0
	\end{tikzcd}\end{equation} induces an isomorphism $H^0(Y, \sco_Y) \simeq
	H^0(Y, \rho_* \sco_X) = H^0(X, \sco_X)$.  Since $Y$ is geometrically
	connected and geometrically reduced, we have $h^0(Y, \sco_Y) = 1$.  From
	this we find $H^0(X, \sco_X) = 1$ as well, and therefore $X$ is
	necessarily geometrically connected.  \end{proof} 

\section{Describing  stacks of low-degree covers as quotients}
\label{section:presentation-finite-covers}

In this section, we give a description of the stack of degree $d$ Gorenstein
covers as a global quotient stack for
$3 \leq d \leq 5$. We now introduce the groups we will be quotienting by.  
Since the Hurwitz stack is closely related to the Weil restriction of
the stack of degree $d$ covers along $\bp^1 \to \spec k$, we will simultaneously
define these automorphism groups along Weil restrictions.

\begin{remark}
	\label{remark:intuitive-aut}
	We are about to define an automorphism sheaf $\aut_{\sce,
\scf_\bullet}^{Y/B}$ for $Y \to B$ a morphism of schemes and $\sce, \mathscr
F_\bullet$ locally free sheaves on $Y$.
Before giving the formal definition, we give an intuitive description.

Consider first the case that $d = 4,$ or $d = 5$, $Y = B = \spec k$,
and additionally assume there is an isomorphism 
$\det \sce^{\otimes d-3} \simeq \det \scf_1$.
Then, the points of 
$\aut_{\sce,\scf_\bullet}^{\spec k/\spec k}$
corresponds to automorphisms of $\sce$ and $\scf_1$ which
preserve the above isomorphism.
However, in what follows we do not require such an isomorphism 
$\det \sce^{\otimes d-3} \simeq \det \scf_1$
exists, and so the definition we give is
somewhat more general.
Namely, we instead work with automorphisms $(M, N) \in \aut_\sce \times
\aut_{\scf_1}$ so that $\det M^{d-3} = \det N$.

Another important case is that where $Y = D, B = \spec k$, and again $d= 4$ or
$5$. 
If additionally, there is an isomorphism
$\det (\sce^{\otimes d-3})_D \simeq (\det \scf_1)_D$,
$\aut_{\sce,\scf_\bullet}^{D/\spec k}$ can be thought of as parameterizing automorphisms
of $\sce$ and $\scf_1$ over $D$ which preserve the isomorphism
$\det (\sce^{\otimes d-3})_D \simeq (\det \scf_1)_D$.
Again, we have the caveat that this is only correct when such an isomorphism exists.
\end{remark}

\begin{definition} \label{definition:automorphism} Given a scheme $Y$ over a
	base $B$ and an integer $d$, let {\em resolution data} for $Y$ and $d$
	denote a tuple of locally free sheaves $(\sce, \scf_\bullet)$ on $Y$,
	where $\sce$ is a locally free sheaf of rank $d-1$ and
	$\rboxed{\scf_\bullet}$ denotes the sequence $\scf_1, \ldots,
	\scf_{\lfloor \frac{d-2}{2} \rfloor}$ where $\rk \scf_i = \beta_i$ as in \eqref{equation:beta}.
		Let $3 \leq d \leq 5$, fix a scheme $Y$ over a field, and fix resolution
	data $(\sce, \scf_\bullet)$ for a degree $d$ cover of $Y$.  For $\scg$ a
	locally free sheaf on $Y$, let $\rboxed{\Delta_\scg^{Y/B}} := \bg_m \to
	\res_{Y/B}(\aut_{\scg/Y})$ denote the map adjoint to the central
	inclusion $(\bg_m \times_B Y)  \to \aut_{\scg/Y}$ on $Y$.
	We denote by $\left(\Delta_\scg^{Y/B} \right)^i$ the composition $\bg_m \xrightarrow{x
	\mapsto x^i} \bg_m \xrightarrow{\Delta_\scg^{Y/B}} \res_{Y/B}(\aut_{\scg/Y})$
	and define
	\begin{align*}
		\left( \left(\Delta_\sce^{Y/B}\right)^i,
	\left(\Delta_{\scf_1}^{Y/B}\right)^{j}\right) : \bg_m & \rightarrow \res_{Y/B}(\aut_{\sce/Y})
	\times
\res_{Y/B}(\aut_{\scf_1/Y})\\
x & \mapsto \left(\Delta_{\sce}^{Y/B}(x^i), \Delta_{\sce}^{Y/B}(x^j) \right).
	\end{align*}
	Finally, we use 
	\begin{align*}
	\coker\left(\left(\Delta_\sce^{Y/B}\right)^i,
\left(\Delta_{\scf_1}^{Y/B}\right)^{j} \right) := \frac{\left( \res_{Y/B}(\aut_{\sce/Y})
	\times
\res_{Y/B}(\aut_{\scf_1/Y}) \right) }{ \left(\left(\Delta_\sce^{Y/B}\right)^i,
\left(\Delta_{\scf_1}^{Y/B}\right)^{j}\right)(\bg_m)}.
	\end{align*}

	Then, define the {\em automorphism sheaf} of this resolution data to be
	the $B$-scheme \begin{align} \label{equation:aut-groups}
		\rboxed{\aut_{\sce, \scf_\bullet}^{Y/B}} := \begin{cases}
			\res_{Y/B}(\aut_{\sce/Y}) & \text{ if } d = 3 \\
			\coker(\Delta_\sce^{Y/B}, (\Delta_{\scf_1}^{Y/B})^{2}) &
			\text{ if } d= 4 \\ 
			\coker((\Delta_\sce^{Y/B})^2,
	(\Delta_{\scf_1}^{Y/B})^3) & \text{ if } d= 5.  \end{cases} 
\end{align} In
	the case $Y = B$, we notate $\aut_{\sce, \scf_\bullet}^{B/B}$ simply by
	$\aut_{\sce, \scf_\bullet}$.
	When $d =4$ or $5$, we will often denote $\scf_1$ by $\scf$.
\end{definition}
Throughout much of the remainder of the paper, we will typically work over the
base $B = \spec k$ for $k$ a field. 
There are notable exceptions, such as
\autoref{proposition:ce-as-quotient-stack}, where we take $B = \spec \bz$.

\begin{remark}
	\label{remark:sheaf-ranks}
	Concretely, $\sce$ and $\scf_\bullet$ in
	\autoref{definition:automorphism}
	are (sequences of) sheaves of the
	following ranks.
	For $d = 3$, $\sce$ is locally free of rank $2$ and $\scf_\bullet$ is trivial
	(i.e., the sequence of sheaves has length $0$).
	When $d = 4$, $\sce$ is locally free of rank $3$ and
$\scf_\bullet = \scf$ is locally free of rank $2$. 
When $d = 5$, $\sce$ is locally free of
rank $4$ and $\scf_\bullet = \scf$ is locally free of rank $5$.  
\end{remark}

In order be able to calculate the class of quotients by the groups of
\autoref{definition:automorphism} in the
Grothendieck ring, it will be useful to know these groups are often special.
The following description of these
quotients will allow us later, in \autoref{lemma:special-auts}, to easily
deduce these groups are special.

\begin{lemma}
	\label{lemma:aut-as-sub} Maintaining the notation of
	\autoref{definition:automorphism}, we have an isomorphism of functors
	\begin{align}
		\label{equation:aut-subs} \aut_{\sce, \scf_\bullet}^{Y/B} \simeq
		\begin{cases} \ker(\det,\det^{-1}): \res_{Y/B}(\aut_{\sce/Y})
			\times \res_{Y/B}(\aut_{\scf/Y}) \ra \res_{Y/B} (\bg_m) & \text{ if }
			d= 4 \\ \ker(\det^2,\det^{-1}):
			\res_{Y/B}(\aut_{\sce/Y}) \times
			\res_{Y/B}(\aut_{\scf/Y}) \ra \res_{Y/B}(\bg_m) & \text{ if
} d= 5 \\ \end{cases} \end{align} Here, by determinant we mean the map adjoint
to the corresponding determinant map on $Y$.  \end{lemma}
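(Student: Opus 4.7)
The plan is to exhibit a natural homomorphism $\Phi : \ker(\psi) \to \aut_{\sce,\scf_\bullet}^{Y/B}$ from the displayed kernel into the cokernel defining $\aut_{\sce,\scf_\bullet}^{Y/B}$, and then verify it is an isomorphism of sheaves of groups by comparing kernels and checking surjectivity after fppf base change.

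The setup is as follows. Set $G := \res_{Y/B}(\aut_{\sce/Y}) \times \res_{Y/B}(\aut_{\scf/Y})$, let $\iota := ((\Delta_\sce^{Y/B})^i, (\Delta_{\scf_1}^{Y/B})^j) : \bg_m \to G$, with $(i,j) = (1,2)$ when $d=4$ and $(i,j) = (2,3)$ when $d=5$, and let $\psi : G \to \res_{Y/B}(\bg_m)$ denote $(M,N) \mapsto \det(M) \det(N)^{-1}$ when $d=4$, and $(M,N) \mapsto \det(M)^2 \det(N)^{-1}$ when $d=5$. By \autoref{definition:automorphism}, the left side of \eqref{equation:aut-subs} is the fppf quotient $G/\iota(\bg_m)$, while the right side is $\ker(\psi)$. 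The natural morphism $\Phi$ is the composition $\ker(\psi) \hookrightarrow G \twoheadrightarrow G/\iota(\bg_m)$.

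The key computation is of the composition $\chi := \psi \circ \iota : \bg_m \to \res_{Y/B}(\bg_m)$. Since $\Delta_{\mathcal G}^{Y/B}$ sends $\lambda$ to the scalar automorphism $\lambda \cdot \id_{\mathcal G}$ (a constant section, hence factoring through the structural inclusion $\bg_m \hookrightarrow \res_{Y/B}(\bg_m)$), and $\det(\lambda \cdot \id_{\mathcal G}) = \lambda^{\rk \mathcal G}$, one finds $\chi(\lambda) = \lambda^{3 - 4} = \lambda^{-1}$ in the $d=4$ case (ranks $3$ and $2$, exponents $1$ and $2$), and $\chi(\lambda) = \lambda^{2 \cdot 4 \cdot 2 - 3 \cdot 5} = \lambda$ in the $d=5$ case (ranks $4$ and $5$, exponents $2$ and $3$, together with the outer square on the first determinant). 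In particular, $\chi$ is an isomorphism of $\bg_m$ onto the structural subgroup $\bg_m \subset \res_{Y/B}(\bg_m)$.

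Finally, I will verify $\Phi$ is an isomorphism of fppf sheaves of groups. For injectivity, $\ker \Phi = \ker(\psi) \cap \iota(\bg_m) = \iota(\ker \chi)$, which is trivial by the previous step. For surjectivity as a map of fppf sheaves, given a $T$-valued section $(M, N)$ of $G$, one seeks a section $\lambda$ of $\bg_m$, defined after some fppf base change, such that $\chi(\lambda) = \psi(M, N)^{-1}$; the resulting element $\iota(\lambda) \cdot (M, N)$ then lands in $\ker(\psi)$ and represents the same class as $(M, N)$ in $G/\iota(\bg_m)$. The main obstacle is this last fppf-local solvability, which exploits that $\chi$ identifies $\bg_m$ with the structural subgroup of $\res_{Y/B}(\bg_m)$ together with the sheaf-theoretic interpretation of the quotient $G/\iota(\bg_m)$.
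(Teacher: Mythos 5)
Your overall strategy---showing that the composite $\ker(\psi)\hookrightarrow G \twoheadrightarrow G/\iota(\bg_m)$ is an isomorphism of sheaves---is essentially the route the paper takes, phrased in the opposite direction (the paper constructs the inverse as an explicit section of the quotient map). The injectivity half is fine. The genuine gap is precisely the step you flag as ``the main obstacle'': surjectivity, and under your stated reading of $\Delta_{\scg}^{Y/B}$ it cannot be repaired. If $\iota$ has source $\bg_{m,B}$, so that $\chi=\psi\circ\iota$ lands in the structural subgroup $\bg_m\subset\res_{Y/B}(\bg_m)$ of constant sections, then the equation $\chi(\lambda)=\psi(M,N)^{-1}$ is not solvable even after fppf base change on $T$: the right-hand side is an arbitrary section of $\res_{Y/B}(\bg_m)$ (e.g.\ $\det M\cdot(\det N)^{-1}$ is a general unit on $T\times_B Y$), while the left-hand side is always pulled back from $T$, and no base change on $T$ turns a genuinely non-constant unit on $T\times_B Y$ into a constant one (take $Y=D$, $T=\spec k$, $\det M\cdot(\det N)^{-1}=1+\varepsilon$). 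In fact, with that reading the statement itself fails: for $Y=D$, $B=\spec k$, $d=4$, the cokernel has dimension $26-1=25$, whereas $\psi$ surjects onto the two-dimensional group $\res_{D/k}(\bg_m)$, so $\ker(\psi)$ has dimension $24$.

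The resolution is that the subgroup being quotiented by in \autoref{definition:automorphism} must be the image of $\res_{Y/B}(\bg_m)$, acting by $\lambda\mapsto(\lambda^i\id_{\sce},\lambda^j\id_{\scf})$ for $\lambda$ an arbitrary unit on $T\times_B Y$; this is the reading forced by the dimension count above, by the later use of the lemma in \eqref{equation:aut-class-over-dual-numbers}, and implicitly by the paper's own proof, which normalizes $(M,N)$ by $\lambda=\det M\cdot(\det N)^{-1}$, a section of $\res_{Y/B}(\bg_m)$ rather than of $\bg_m$. With this reading your argument closes up with no fppf localization at all: $\chi$ becomes the automorphism $\lambda\mapsto\lambda^{\mp 1}$ of $\res_{Y/B}(\bg_m)$, so $\chi(\lambda)=\psi(M,N)^{-1}$ has the \emph{unique} solution $\lambda=\det M\cdot(\det N)^{-1}$ when $d=4$, resp.\ $\lambda=\det N\cdot(\det M)^{-2}$ when $d=5$. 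This unique solution is exactly the content of the paper's proof: $(M,N)\mapsto(\lambda^i M,\lambda^j N)$ is a retraction of $G$ onto $\ker(\psi)$ that is constant on orbits, i.e.\ a section of $q$ inverse to your $\Phi$. So your outline is sound, but as written the proof both leaves its decisive step unproved and adopts a reading of the definition under which that step is false.
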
 
\begin{proof} We
	produce the claimed isomorphisms by constructing a section to the quotient map $q :
	\res_{Y/B}(\aut_{\sce/Y}) \times \res_{Y/B}(\aut_{\scf/Y}) \to
	\aut_{\sce, \scf_\bullet}^{Y/B}$ defining $\aut_{\sce, \scf_\bullet}^{Y/B}$.

To start, we cover the case $d = 4$.  Given $(M, N) \in
\res_{Y/B}(\aut_{\sce/Y}) \times \res_{Y/B}(\aut_{\scf/Y})$, for $\lambda \in
\bg_m$, we can identify $q(M,N) = q(\lambda M, \lambda^2 N)$.  For any such
$(M,N)$ the key observation is that there is a unique $\lambda \in \bg_m$ such
that $\det (\lambda M) = \det( \lambda^2 N)$.  Indeed, $\det (\lambda M) =
\lambda^3 \det M$ while $\det( \lambda^2 N) = \lambda^4 \det N$ and so the
unique such $\lambda$ is $\lambda = \det M/\det N$.  This gives the desired
splitting realizing $\aut_{\sce, \scf_\bullet}^{Y/B}$ as a subgroup of
$\res_{Y/B}(\aut_{\sce/Y}) \times \res_{Y/B}(\aut_{\scf/Y})$ because the
composition $\ker(\det, \det^{-1}) \to \res_{Y/B}(\aut_{\sce/Y}) \times
\res_{Y/B}(\aut_{\scf/Y}) \to \aut_{\sce, \scf_\bullet}$ is an isomorphism.

The $d = 5$ case is quite similar to the $d = 4$ case.  Namely, in this case,
for $(M, N) \in \res_{Y/B}(\aut_{\sce/Y}) \times \res_{Y/B}(\aut_{\scf/Y})$,
there is again a unique $\lambda \in \bg_m$ so that $(\det (\lambda^2 M))^2 =
\det(\lambda^3 N)$.  Indeed, $(\det (\lambda^2 M))^2 = \lambda^{16} \det M^2$
and $\det(\lambda^3 N) = \lambda^{15} \det N$, so the unique desired $\lambda$
is $\det N/(\det M)^2$.  As in the $d = 4$ case, this provides a section to the
given quotient map realizing $\aut_{\sce, \scf_\bullet}^{Y/B}$ as the subgroup
of $\res_{Y/B}(\aut_{\sce/Y}) \times \res_{Y/B}(\aut_{\scf/Y})$ given as those
$(M,N)$ with $(\det M)^2 = \det N$.  \end{proof}

We next describe a presentation of the stack parameterizing degree $d$
Gorenstein covers for $3 \leq d \leq 5$.  To make our next definition, we will need to know the Gorenstein locus of a
finite locally free map is open.

\begin{lemma}
	\label{lemma:gorenstein-open}
	Let $f: X \to Y$ be a finite locally free morphism of schemes.
	The locus of points of $Y$ on which the fiber of $f$ is Gorenstein is an
	open subscheme of $Y$.
\end{lemma}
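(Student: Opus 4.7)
The plan is to work with the relative dualizing sheaf $\omega_{X/Y}$, which for a finite locally free morphism can be defined concretely as the $\mathscr{O}_X$-module corresponding to the $f_*\mathscr{O}_X$-module $\shom_{\mathscr{O}_Y}(f_*\mathscr{O}_X,\mathscr{O}_Y)$. Since $f_*\mathscr{O}_X$ is locally free of finite rank, so is its $\mathscr{O}_Y$-dual; hence $\omega_{X/Y}$ is coherent and $Y$-flat, and its formation commutes with arbitrary base change. In particular, for $y\in Y$ the restriction $\omega_{X/Y}|_{X_y}$ is the dualizing module $\omega_{X_y/\kappa(y)}$ of the Artinian $\kappa(y)$-algebra $X_y$, and $X_y$ is Gorenstein if and only if this dualizing module is an invertible sheaf on $X_y$.

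Let $W\subset X$ denote the open locus where $\omega_{X/Y}$ is locally free of rank $1$ as an $\mathscr{O}_X$-module. The main claim to prove is that $X_y$ is Gorenstein if and only if $X_y\subset W$. The easy direction is that local invertibility of $\omega_{X/Y}$ near $X_y$ restricts to invertibility of $\omega_{X_y/\kappa(y)}$. The nontrivial direction is that, for $x\in X_y$, if $(\omega_{X/Y}|_{X_y})_x$ is free of rank $1$ over $\mathscr{O}_{X_y,x}$, then $(\omega_{X/Y})_x$ is already free of rank $1$ over $\mathscr{O}_{X,x}$. The argument is to apply Nakayama to write $(\omega_{X/Y})_x = \mathscr{O}_{X,x}/I$, and then tensor the short exact sequence $0\to I \to \mathscr{O}_{X,x}\to \mathscr{O}_{X,x}/I\to 0$ with $\kappa(y)$. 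Because $\omega_{X/Y}$ is $Y$-flat, this remains short exact, and comparing with the hypothesis that the right-hand term becomes $\mathscr{O}_{X_y,x}$ forces $I/\mathfrak{m}_y I=0$. Since $\mathscr{O}_{X,x}$ is finite over $\mathscr{O}_{Y,y}$, the ideal $I$ is finitely generated as an $\mathscr{O}_{Y,y}$-module, and a second application of Nakayama (now over $\mathscr{O}_{Y,y}$) gives $I=0$.

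Given the claim, the Gorenstein locus is $\{y\in Y : X_y\subset W\} = Y\setminus f(X\setminus W)$. Since $X\setminus W$ is closed in $X$ and $f$ is finite, hence proper and closed, $f(X\setminus W)$ is closed in $Y$, so its complement is the desired open subscheme.

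The only real subtlety is the two-step Nakayama argument that upgrades fiberwise invertibility of $\omega_{X/Y}$ to honest local invertibility on $X$; everything else is formal, combining flatness and base-change properties of $\omega_{X/Y}$ with the properness of $f$.
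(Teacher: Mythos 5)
Your overall strategy is the same as the paper's, which restricts to the (automatically open) Cohen--Macaulay locus, invokes the existence of a relative dualizing sheaf, and takes the locus where it is invertible; you simply make this self-contained by constructing $\omega_{X/Y}$ explicitly as $\shom_{\mathscr{O}_Y}(f_*\mathscr{O}_X,\mathscr{O}_Y)$ and by spelling out the two points the paper elides, namely the passage from invertibility of the fiberwise dualizing module to invertibility of $\omega_{X/Y}$ in a neighborhood of $X_y$, and the descent of openness from $X$ to $Y$ via closedness of the finite map $f$. Those additions are correct and genuinely needed for a complete argument.

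There is one false claim in your key step: $\mathscr{O}_{X,x}$ is in general \emph{not} finite over $\mathscr{O}_{Y,y}$. The stalk $\mathscr{O}_{X,x}$ is the localization of the semilocal finite $\mathscr{O}_{Y,y}$-algebra $A=(f_*\mathscr{O}_X)_y$ at one of possibly several maximal ideals over $\mathfrak{m}_y$, and such a localization need not be module-finite (e.g.\ $\mathbb{Z}[i]_{(5)}$ localized at $(2+i)$ over $\mathbb{Z}_{(5)}$). Consequently you cannot conclude that $I\subset\mathscr{O}_{X,x}$ is finitely generated over $\mathscr{O}_{Y,y}$, and the second Nakayama application as written does not go through. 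The repair is easy: since $\omega_{X/Y}$ is finitely presented as an $\mathscr{O}_X$-module (it is finite locally free over $\mathscr{O}_Y$ and $\mathscr{O}_Y\to f_*\mathscr{O}_X$ is finite and finitely presented), the kernel $I$ of the surjection $\mathscr{O}_{X,x}\to(\omega_{X/Y})_x$ is finitely generated over $\mathscr{O}_{X,x}$; your flatness computation gives $I=\mathfrak{m}_yI\subseteq\mathfrak{m}_xI$, so Nakayama over the local ring $\mathscr{O}_{X,x}$ yields $I=0$. (Alternatively, run the whole argument over the semilocal ring $A$, which \emph{is} finite over $\mathscr{O}_{Y,y}$, and deduce $M\cong A$ directly.) With that one correction the proof is complete.
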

\begin{proof}
	First, by 
\cite[\href{https://stacks.math.columbia.edu/tag/00RH}{Tag
00RH}]{stacks-project},
the condition that the fiber be Cohen-Macaulay is an open condition.
After restricting to such an open subscheme,
by
\cite[Thm.\ 3.5.1]{conrad:grothendieck-duality},
a dualizing sheaf exists, and the Gorenstein locus is then the locus where this
dualizing sheaf is locally free, which again defines an open subscheme.
\end{proof}

We are now ready to define the relevant Gorenstein loci.
With notation as in
\autoref{definition:automorphism}, we work over $B = \spec \bz$.  
\begin{definition}
	\label{definition:gorenstein-sections}
	For each $3 \leq d \leq 5$, fix free sheaves on $Y = B = \spec \bz$, $\sce$ and
$\scf_\bullet$ as in \autoref{definition:automorphism} and
\autoref{remark:sheaf-ranks}.
Let $\rboxed{
\gorensteinsections d } \subset \spec \left( \sym^\bullet H^0(\spec \bz, \sch(\sce,
\scf_\bullet))^\vee \right)$ denote the open subscheme 
(using \autoref{lemma:gorenstein-open})
functorially parameterizing those
sections $\eta$ so that $\Psi_d(\eta)$ defines a degree $d$ locally free
Gorenstein cover, for $\Psi_d$ the maps (depending on $3 \leq d \leq 5$) defined
in \autoref{subsection:low-degree-ce-structure-theorems}.  
\end{definition}

In what follows, we use $\coverstack d$ to
denote the fibered category whose $S$ points are finite locally free covers $X
\to S$ of degree $d$ with Gorenstein fibers.

\begin{definition}
	\label{definition:sections-to-coverstack}
	For $3 \leq d \leq 5$, the map $\Psi_d$ over $B = \spec \bz$ induces a map $\mu_d:
	\gorensteinsections d \to \coverstack d$, with $\coverstack d$
	as defined above.
There is a natural
action of $\aut_{\sce, \scf_\bullet}$ on $\gorensteinsections d$, induced by the
action of $\aut_{\sce} \times \aut_{\scf_\bullet}$ on $\gorensteinsections d$.
The map $\mu_d$ is invariant under this action, since the resulting
abstract degree $d$ cover is unchanged by such re-coordinatizations. We now
define the induced map from the quotient stack $\phi_d : [\gorensteinsections
d/\aut_{\sce, \scf_\bullet}] \to \coverstack d$.
\end{definition}

\begin{proposition} \label{proposition:ce-as-quotient-stack} For $3 \leq d \leq
5$, the map $\phi_d$ defined in \autoref{definition:sections-to-coverstack} over
$B = \spec \bz$ is an isomorphism.  \end{proposition}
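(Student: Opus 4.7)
The plan is to check that $\phi_d$ induces an equivalence of fiber categories over every test scheme $S$, with the content essentially being the bijection parts of the structure theorems \autoref{theorem:3-structure}, \autoref{theorem:4-structure}, and \autoref{theorem:5-structure}. Using \autoref{lemma:aut-as-sub} and fpqc descent, an $S$-point of $[\gorensteinsections d / \aut_{\sce, \scf_\bullet}]$ may be unpacked as a pair $(\sce', \scf_\bullet')$ of locally free sheaves on $S$ of the prescribed ranks (with the appropriate determinant identification when $d = 4, 5$), together with a section $\eta \in H^0(S, \sch(\sce', \scf_\bullet'))$ such that $\Psi_d(\eta)$ defines a degree $d$ locally free Gorenstein cover. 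A preliminary check is that the central $\bg_m$ quotiented out in the definition of $\aut_{\sce, \scf_\bullet}$ acts trivially on $\sch(\sce, \scf_\bullet)$: this is a direct weight computation in each of the three cases, so that $\eta$ descends sensibly.

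For essential surjectivity, begin with a degree $d$ Gorenstein cover $\rho \colon X \to S$ and take $\sce^X$ (the Tschirnhausen bundle) together with $\scf^X$ in the cases $d = 4, 5$, as produced by \autoref{theorem:ce-main-generalized}. The same theorem furnishes a section $\eta^X \in H^0(S, \sch(\sce^X, \scf_\bullet^X))$ of the right Hilbert polynomial with $\Psi_d(\eta^X) \simeq X$. After fppf-trivializing $\sce^X$ and $\scf^X$, this realizes $X$ as an $S$-point of $[\gorensteinsections d / \aut_{\sce, \scf_\bullet}]$ whose image under $\phi_d$ is $X$.

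For fully faithfulness, an isomorphism between two $S$-points of $[\gorensteinsections d / \aut_{\sce, \scf_\bullet}]$ amounts, by descent, to an isomorphism of their resolution triples $(\sce', \scf_\bullet', \eta')$ modulo the central $\bg_m$-action; applying $\Psi_d$ produces an isomorphism of the corresponding covers, so $\phi_d$ is faithful. Conversely, given an isomorphism $X_1 \simeq X_2$ of Gorenstein covers, the uniqueness clauses in \autoref{theorem:ce-main-generalized} together with \autoref{corollary:automorphism-of-e} construct a compatible isomorphism of the associated triples, unique up to the central $\bg_m$-scaling. The principal obstacle is to match this residual $\bg_m$-ambiguity in the lift exactly with the center that was quotiented out in forming $\aut_{\sce, \scf_\bullet}$, so that the correspondence is neither under- nor over-counting automorphisms; this amounts to showing that scaling $\sce'$ by $\lambda$ and $\scf'$ by $\lambda^{d-3}$ is precisely the ambiguity introduced when one chooses identifications $\sce' \simeq \sce^{X}$ and $\scf' \simeq \scf^{X}$ compatibly with a fixed isomorphism $X_1 \simeq X_2$, which in turn reduces to the calculation already embedded in the uniqueness statements of the structure theorems.
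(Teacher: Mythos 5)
Your proposal is correct and follows essentially the same route as the paper: the paper packages the argument via the $\aut_{\sce}\times\aut_{\scf_\bullet}$-torsor $\auttorsor d$ over $\coverstack d$ and shows the induced map to $\gorensteinsections d$ is an isomorphism ($d=3$) or a $\bg_m$-torsor ($d=4,5$), which is the torsor-language version of your essential-surjectivity/full-faithfulness check. The two key computations you flag --- the weight calculation showing the central $\bg_m$ acts trivially on $\sch(\sce,\scf_\bullet)$, and the identification of the stabilizer of a section with exactly that $\bg_m$ (via nondegeneracy of $X\subset\bp\sce$ and the injection $\scf\to\sym^2\sce$ from \autoref{theorem:ce-main-generalized}(v)) --- are precisely the steps the paper carries out.
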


When $d=3,4$, Proposition~\ref{proposition:ce-as-quotient-stack} is the specialization of the isomorphisms of moduli stacks given in \cite[Prop.\ 5.1]{Poonen2008} and \cite[Thm.\ 1.1]{Wood2011}
 to Gorenstein covers.

\begin{proof}
	We will construct an inverse map using
	\autoref{theorem:ce-main-generalized}.  Using
	\autoref{theorem:ce-main-generalized}, there is an $\aut_{\sce} \times
	\aut_{\scf_\bullet}$ torsor $\auttorsor d$ over $\coverstack d$
	whose $S$-points parameterize covers $X \to S$ together with specified
	trivializations $\sce^X \simeq \sce, \scf_\bullet^X \simeq \scf_\bullet$
	of the sheaves $\sce^X$ and $\scf^X_\bullet$ associated to $X$ coming
	from \autoref{theorem:ce-main-generalized}.  
	Note here that $\auttorsor d$ maps surjectively to $\coverstack d$ because
	for any $S$ point, there is an open cover of $S$ on which these vector
	bundles become isomorphic to trivial bundles.
	The parametrizations
	\autoref{theorem:3-structure}, \autoref{theorem:4-structure}, and
	\autoref{theorem:5-structure} then give a section $\eta \in \sch(\sce,
\scf_\bullet)$.  This induces a map $\auttorsor d \to \gorensteinsections d$.

	We wish to show this induced map 
	$\auttorsor d \to \gorensteinsections d$
	is an isomorphism in degree $3$ and a
	$\bg_m$ torsor in degrees $4$ and $5$, where $\bg_m$ is the copy of
	$\bg_m \subset \aut_{\sce} \times \aut_{\scf}$ 
	as in \autoref{definition:automorphism}	
	whose quotient yields
	$\aut_{\sce, \scf_\bullet}$.  Once we verify this, the parametrizations
	\autoref{theorem:3-structure}, \autoref{theorem:4-structure}, and
	\autoref{theorem:5-structure} imply that the composition 
	$\auttorsor d \to \gorensteinsections d \to [\gorensteinsections
d/\aut_{\sce, \scf_\bullet}] \xrightarrow{\phi_d} \coverstack d$
is the structure map for the torsor $\auttorsor d \to
	\coverstack d$.  
	From this, it follows that the resulting
	isomorphism
	$[\auttorsor d/
	\aut_\sce \times \aut_{\scf_\bullet}] \to \coverstack d$ 
	factors through an isomorphism
	$[\auttorsor d/	\aut_\sce \times \aut_{\scf_\bullet}] \to
	[\gorensteinsections d/\aut_{\sce, \scf_\bullet}]$, and hence
	$\phi_d: [\gorensteinsections d/\aut_{\sce, \scf_\bullet}] \to
	\coverstack d$
	is an
	isomorphism.

	First, we verify the map $\auttorsor d \to \gorensteinsections d$ is
	invariant under the above mentioned $\bg_m$ action in the cases that $d
	=4$ and $5$.  In the degree $4$ case, scaling $\sce$ by $\lambda$ and
	$\scf$ by $\lambda^2$ scales $\scf^\vee \otimes \sym^2 \sce$ by
	$\lambda^{-2} \cdot \lambda^2 = 1$.  In the degree $5$ case, scaling
	$\sce$ by $\lambda^2$ and $\scf$ by $\lambda^3$ scales $\wedge^2 \scf
	\otimes \sce \otimes \det \sce^\vee$ by $\lambda^6 \cdot \lambda^2 \cdot
	\lambda^{2 \cdot {(-4)}} = 1$.

	Therefore, to conclude the verification, it is enough to show the only
	elements of $\aut_{\sce} \times \aut_{\scf_\bullet}$ fixing a given
	section are trivial when $d = 3$ and lie in $\bg_m$ when $d = 4$ or $5$.
	To start, the map $X \to \bp \sce$ realizes $X$ as a nondegenerate
	subscheme of $\bp \sce$, and therefore only the trivial element of
	$\pgl_{\sce}$ fixes $X$ as a subscheme of $\bp \sce$.  In the degree $3$
	case, scaling by $\lambda$ in the central $\bg_m \subset \aut_{\sce}$
	scales the resulting section by $\lambda$, and so only the identity
	element of $\aut_{\sce}$ preserves the section.  This establishes the
	claim when $d = 3$.

	We now consider the cases $d= 4$ and $d = 5$.  We are seeking
	automorphisms of $\sce$ and $\scf$ preserving a given section
	$\eta \in \gorensteinsections d$.
	We have seen above that any such automorphism must act on $\sce$ by some element $\lambda$ in the
	central $\bg_m \subset \aut_{\sce}$.  Since we are quotienting by a copy
	of $\bg_m \subset \aut_{\sce} \times \aut_{\scf}$ which maps
	surjectively to the central $\bg_m$ in $\aut_{\sce}$, we may modify our
	given automorphism so as to assume it is trivial in $\aut_{\sce}$. Note
	that when $d = 5$, we may have to pass to an fppf cover so as to extract
	a square root of $\lambda$.  We may now assume the automorphism is
	trivial on $\sce$ and wish to show it is also trivial on $\scf$.
	However, the given section $\eta$ induces an injective map $\scf \to
	\sym^2 \sce$, realizing $\scf$ as a subsheaf of $\sym^2 \sce$ by
	\autoref{theorem:ce-main-generalized}(v).  Since we are assuming the
	automorphism acts as the identity on $\sce$ and it preserves this
	inclusion, it must also act as the identity on $\scf$.  
\end{proof}

\section{Defining our Hurwitz stacks}
\label{section:hurwitz-definition}

In this section, we construct and define the Hurwitz spaces we will be working
with.
We will ultimately be interested in the Hurwitz space whose geometric points
parameterize degree $d$ $S_d$ covers of $\bp^1$ which are smooth and connected.
When one restricts to simply branched covers, such a Hurwitz scheme was constructed by Fulton
\cite{Fulton1969}.  
Another good reference is 
\cite[Theorem A]{deopurkar:compactifications-of-hurwitz-spaces}, though this
reference assumes characteristic $0$.
Another excellent reference is \cite[Thm.
6.6.6]{bertinR:champs-de-hurwitz}, which constructs the Hurwitz stacks in the
case that the cover is not Galois, but has a fixed Galois closure $G$, which is
invertible on the base. Although we are ultimately primarily interested in
counting $S_d$ covers, we will do so by realizing them as a certain proportion
of the space of all degree $d$ covers, so this reference again does not quite
suffice for our purposes.
%The relevant proposition for characteristic 0 is Proposition 4.5 (in the arxiv
%version: https://arxiv.org/pdf/1206.4535.pdf).  There is one place where
%characteristic 0 is used: in Lemma 4.6, that the push-forward of a flat sheaf
%to the coarse space is flat.  But in our case, we are looking at a scheme, so
%this issue does not arise, and the proof should go through
We were unable to find a reference that allows arbitrary branching and
non-Galois covers in arbitrary characteristic, and so 
we give the construction here.  
To begin, we define a certain Hurwitz stack parameterizing covers of $\bp^1$
which are not necessarily $S_d$ covers.

\begin{definition} \label{definition:} For $S$ a base scheme, and $d \geq 0$ an
	integer, let
	$\rboxed{\bighur d S}$ denote the category fibered in groups over
	$S$-schemes whose $T$ points over a given map of schemes $T \to S$
	consists of $(T, X, h: X \to T, f: X \to \bp^1_T)$ \begin{equation}
	\label{equation:} \begin{tikzcd} X \ar {rr}{f} \ar {rd}{h} && \bp^1_T
\ar {ld} \\ & T & \end{tikzcd} \end{equation} where $X$ is a scheme, $f$ is a
finite locally free map of degree $d$ and $h$ is a smooth proper relative curve.
A map $(T, X, h, f) \to (T, X', h', f')$ consists of
a $T$-isomorphism $\alpha: X \to X'$ such that
\begin{equation} \label{equation:} \begin{tikzcd} X \ar {rr}{\alpha} \ar {rd}{f}
		\ar[bend right = 0cm, swap]{rdd}{h} && X' \ar[swap]{ld}{f'} \ar[bend
		left = 0 cm]{ldd}{h'}\\ & \bp^1_T \ar{d} &\\ & T & \end{tikzcd}\end{equation} commutes.  For $g \geq 0$
an integer, let
$\rboxed{\bighurg d g S}$ denote the substack parameterizing those $T$-points of
$\bighur d S$ such that $X \to T$ has arithmetic genus $g$.  \end{definition}
	
\begin{lemma} \label{lemma:big-hur-algebraic} For $S$ a scheme, $\bighur d S$ and $\bighurg d g
S$ are algebraic stacks.  \end{lemma}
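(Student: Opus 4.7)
The plan is to realize $\bighur{d}{S}$ as an open substack of a relative Hom-stack built from the moduli stack of smooth proper relative curves, and deduce algebraicity from standard representability results.

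First, I would invoke the fact that the moduli stack $\scm$ of smooth proper relative curves over $\spec \bz$ is algebraic, decomposing as a disjoint union $\scm = \bigsqcup_g \mathscr M_g$ indexed by arithmetic genus (allowing disconnected fibers by enlarging the disjoint union over connected components appropriately). Let $\mathcal C_{\mathrm{univ}} \to \scm$ denote the universal family. I would then form the relative Hom-stack
\begin{equation*}
	\sch := \underline{\mathrm{Hom}}_{\scm}\bigl(\mathcal C_{\mathrm{univ}},\, \bp^1 \times \scm\bigr),
\end{equation*}
whose $T$-points are pairs $(h: X \to T,\; f: X \to \bp^1_T)$ with $h$ a smooth proper relative curve. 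By the standard device of identifying a morphism with its graph, $\sch$ is an open substack of the relative Hilbert stack $\hilb_{(\mathcal C_{\mathrm{univ}} \times_{\scm}(\bp^1 \times \scm))/\scm}$, carved out by the condition that the subscheme projects isomorphically to $\mathcal C_{\mathrm{univ}}$. Since $\mathcal C_{\mathrm{univ}} \times \bp^1 \to \scm$ is proper and Zariski-locally projective, this relative Hilbert stack exists and is algebraic (reducing to the representable case via a smooth atlas of $\scm$ and applying Grothendieck's theorem on Hilbert schemes), and openness of the graph condition gives that $\sch$ is algebraic.

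Next, I would show that $\bighur{d}{S}$ is an open substack of the base change $\sch_S := \sch \times_{\spec \bz} S$. A $T$-point of $\sch_S$ lies in $\bighur{d}{S}$ precisely when the morphism $f: X \to \bp^1_T$ is finite locally free of degree $d$. Because $X \to T$ is proper and hence so is $f$, the locus where $f$ is quasi-finite is open on $\bp^1_T$, and properness then upgrades this to finiteness; openness of flatness (in the finitely presented setting) and local constancy of the degree then pick out the degree $d$ stratum as an open substack. Thus $\bighur{d}{S}$ is algebraic. Finally, $\bighurg{d}{g}{S}$ is the preimage of the open and closed substack $\mathscr M_g \subset \scm$ under the natural structure map $\bighur{d}{S} \to \scm \times_{\spec \bz} S$, hence an open and closed algebraic substack of $\bighur{d}{S}$.

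The main technical obstacle is the construction of the relative Hom-stack as an algebraic stack, since the base $\scm$ is itself only a stack (not a scheme), so one needs the algebraicity of relative Hilbert schemes over an algebraic stack. This is standard via smooth descent from a presentation of $\scm$, but it is the step that requires the most care; the remaining openness claims for finiteness, flatness, and the graph condition are routine consequences of standard results in \cite{EGAIV.3}.
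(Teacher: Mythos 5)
Your proposal is correct in outline, but it takes a genuinely different route from the paper's. The paper constructs $\bighur d S$ as an open substack of the mapping stack $\hom(\bp^1, \mathfrak A_d)$, where $\mathfrak A_d$ is Poonen's algebraic stack of finite locally free degree $d$ covers: there the source of the Hom is the fixed projective scheme $\bp^1$ and the target is an algebraic stack, so the representability input is Aoki's theorem on Hom-stacks together with the extra effectivity condition demanded by its erratum, which the paper verifies using Grothendieck's algebraization theorem. You instead fiber everything over the moduli stack of smooth proper (possibly disconnected) curves and take a Hom-stack in the opposite direction, from the universal curve to $\bp^1$, realized via graphs in a relative Hilbert stack; the representability input you need is the existence of Hilbert/Hom stacks over an algebraic stack base (Olsson's Hom-stacks or Hall--Rydh), plus algebraicity of the stack of all smooth proper curves. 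Correspondingly, the open conditions imposed at the last step are swapped: the paper imposes smoothness of $X \to T$ at the end, while you impose finiteness, flatness, and constancy of degree of $f$ at the end (note that flatness is in fact automatic once $f$ is finite, by miracle flatness on the smooth one-dimensional fibers, which is a cleaner justification than appealing to openness of the flat locus). Both routes work; yours avoids the delicate algebraization check from Aoki's erratum at the cost of requiring Hilbert stacks over stacky bases. Two small imprecisions to tidy up: the stack of all smooth proper relative curves is not literally $\bigsqcup_g \mathscr M_g$ since you must allow disconnected fibers (as you acknowledge), and for $\bighurg d g S$ it is simplest to quote local constancy of the arithmetic genus in proper flat families, as the paper does, rather than identifying it as a preimage of a genus stratum.
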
 \begin{proof} First, we show $\bighur d S$
	is an algebraic stack.  It is enough to establish this in the universal
	case $S = \spec \bz$.  Observe that $\bighur d \bz$ is a stack because
	descent for finite degree $d$ locally free morphisms is effective.  To
	see it is algebraic, we construct it as a hom stack.  Let
	$\rboxed{\mathfrak A_d}$ denote the stack parameterizing finite locally
	free degree $d$ covers, as constructed in \cite[Def.\ 3.2]{Poonen2008}.

	Next, we claim the mapping stack $\hom(\bp^1, \mathfrak A_d)$ is
	algebraic.  This would follow from \cite[Thm.\  1.1]{aoki:hom-stacks},
	except the theorem there is not stated correctly, as mentioned in the
	erratum \cite{aoki:erratum-hom-stacks}.  This erratum asserts that we
	only need verify the additional condition that for any complete local
	noetherian ring $A$ with maximal ideal $\fm$ and $\rboxed{A_n} :=
	A/\fm^n$, a collection of compatible maps $\hom(\bp^1_{A_n}, (\mathfrak
	A_d)_{A_n})$ for each $n$ lifts to a map $\hom(\bp^1_{A}, (\mathfrak
	A_d)_{A})$.  In our setting, this condition is indeed satisfied because
	specifying such maps over $A_n$ corresponds to specifying degree $d$
	locally free covers $X_n \to \bp^1_{A_n}$ over $A_n$ for each $n$. 
	Then, by Grothendieck's algebraization theorem \cite[Thm.\
	8.4.10]{FantechiGIK:fundamentalAlgebraicGeometry} such a family
	algebraizes to a family $X \to \bp^1_A$ over $\spec A$, using the
	pullback of $\sco_{\bp^1}(1)$ to $X$ as the relevant ample line bundle
	on $X$.

	The stack $\bighur d S$ is then the open substack of the mapping stack
	$\hom(\bp^1, \mathfrak A_d)$ corresponding to those finite locally free
	covers $X \to \bp^1$ which are smooth over the base.
%
%	Next, the substack parameterizing smooth $X \to T$ is open, since the
%	locus over $T$ where $X \to T$ is smooth is open, as we now explain.
%	Indeed, the singular locus of a morphism is closed, and hence its image
%	in $T$ under the proper map $X \to T$ is also closed.
	
	Finally, $\bighurg d g S$ is an open and closed substack of $\bighur d
S$ because the genus is locally constant in flat families.  \end{proof}

Having constructed the Hurwitz stack parameterizing all degree $d$ covers of
$\bp^1$, we next construct an open substack parameterizing $S_d$ covers,
over geometric fibers.
For the following definition, recall that $B_n$, the $n$th Bell number, is
the number of ways to partition a set of $n$ elements into subsets.
So, for example, $B_1 = 1, B_2 = 2, B_3 = 5, B_4 = 15$.

\begin{definition} \label{definition:hur} Let $S$ be a scheme with $d!$
		invertible on $S$.  Let $\rboxed{\hur d g S}$ denote the
		substack of $\bighurg d g S$ parameterizing those $(T, X, h: X
		\to T, f: X \to \bp^1_T)$ such that
		$X^d := \underbrace{X \times_{\bp^1_T} X
		\times_{\bp^1_T} \cdots \times_{\bp^1_T} X}_{\text{d times}}$ 
		has $B_d$ irreducible components in each geometric fiber over
		$T$, where $B_d$ is the $d$th Bell number.
\end{definition} 

The above definition is a bit opaque, but the point is that it parameterizes
degree $d$ covers $X \to \bp^1$ so that the Galois closure of $K(X)
\leftarrow
K(\bp^1)$ is an $S_d$ Galois extension, as we now verify.

\begin{lemma}
	\label{lemma:hur-sd-covers}
	The fiber product $X^d$ as in \autoref{definition:hur}
	always has at least $B_d$ irreducible components in each geometric fiber
	over $T$.

	Further, it has exactly $B_d$ components if and only if 
	$X \to \bp^1_T$ is a degree $d$ cover whose Galois closure has Galois
	group $S_d$ on geometric fibers over $T$.
	\end{lemma}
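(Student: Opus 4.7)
The plan is to reduce to a geometric point, identify irreducible components of $X^d$ with orbits of a Galois group on a finite set, and apply a clean combinatorial fact about $S_d$-orbits on $\{1,\ldots,d\}^d$.

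First I would reduce to the case $T = \spec k$ with $k$ algebraically closed, which suffices since the statement is about geometric fibers. Because $d!$ is invertible on $S$ (by the definition of $\hur d g S$), the map $\rho\colon X\to\bp^1_k$ is generically étale. Then $X^d\to\bp^1_k$ is flat (being a fiber product of finite flat morphisms), equidimensional of dimension $1$, and generically étale, hence generically reduced; combined with Cohen--Macaulayness over the smooth base $\bp^1_k$ this forces $X^d$ to be reduced. Consequently, the number of irreducible components of $X^d$ equals the number of irreducible components of its geometric generic fiber, which in turn equals the number of orbits of the absolute Galois group of $K(\bp^1_k)$ acting on the geometric generic fiber of $X^d$.

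The geometric generic fiber of $\rho$ is a $d$-element set $F$, and the Galois action factors through a subgroup $G\subset \operatorname{Sym}(F)\cong S_d$. The generic fiber of $X^d\to\bp^1_k$ is $F^d$ with $G$ acting diagonally, so I am reduced to the combinatorial claim: for any subgroup $G\subset S_d$, the diagonal action on $\{1,\ldots,d\}^d$ has at least $B_d$ orbits, with equality iff $G=S_d$. I would prove this as follows. Two tuples in $\{1,\ldots,d\}^d$ are $S_d$-equivalent iff they share the same kernel partition (the equivalence relation on indices given by equality of coordinates), since any bijection between their images extends to a permutation of $\{1,\ldots,d\}$; thus there are exactly $B_d$ orbits for $S_d$. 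Since $G\subset S_d$, every $G$-orbit sits inside a unique $S_d$-orbit, so the count is at least $B_d$. Equality forces $G$ to act transitively on each $S_d$-orbit; applying this to the orbit of tuples with all distinct entries, which has $d!$ elements and on which $G$ acts by left multiplication, yields $|G|\geq d!$ and hence $G=S_d$.

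Finally I would interpret these conditions on $G$ geometrically. The subgroup $G\subset S_d$ is transitive on $F$ iff $X$ is (geometrically) connected; in that case $G$ is precisely the Galois group of the Galois closure of $K(X)/K(\bp^1_k)$, so $G=S_d$ is the stated Galois condition. If $X$ fails to be connected, $G$ preserves the partition of $F$ by components and is thus a proper subgroup of $S_d$, giving strictly more than $B_d$ orbits. The main technical point is establishing reducedness of $X^d$ to justify counting components via generic points; the combinatorial step, while the heart of the argument, is short once one identifies $S_d$-orbits with kernel partitions.
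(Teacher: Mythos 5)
Your proof is correct, and it takes a genuinely different route from the paper's. The paper works directly with explicit subschemes: for each set partition $U$ of $\{1,\ldots,d\}$ it constructs $X^U\subset X^d$ as the image of a partial diagonal, shows these yield at least $B_d$ components, and then characterizes equality by proving that the degree-$d!$ component $Y$ (the closure of the complement of the fat diagonal) is irreducible iff its function field is the Galois closure, i.e., iff the Galois group is $S_d$. You instead pass to the generic fiber and recast the whole statement as orbit-counting for the monodromy group $G\subset S_d$ acting diagonally on $F^d$, proving the crisp combinatorial lemma that any subgroup of $S_d$ has at least $B_d$ orbits on $\{1,\ldots,d\}^d$ (since $S_d$-orbits are indexed by kernel partitions and $G$-orbits refine them), with equality iff $G=S_d$ (via transitivity on the $d!$ injective tuples). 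These are two packagings of the same underlying decomposition --- the paper's $X^U$ are precisely the closures of your $S_d$-orbit strata --- but your version isolates the group theory cleanly and disposes of both directions of the ``if and only if,'' including the disconnected case, in one stroke, whereas the paper handles the converse direction in a separate final paragraph. The price you pay is the need to justify that irreducible components of $X^d$ biject with monodromy orbits on the geometric generic fiber; your appeal to Cohen--Macaulayness is correct but should be justified (finite flat over the regular one-dimensional base $\bp^1_k$ gives $S_1$, and generic \'etaleness gives $R_0$; the paper proves the CM statement by an explicit local computation in Lemma~\ref{lemma:no-associated-points}). In fact you do not even need reducedness: flatness alone forces every generic point of $X^d$ to lie over the generic point of $\bp^1_k$, which already identifies components with points of the generic fiber.
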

\begin{proof}
	We may reduce to the case $T$ is a geometric point.
	First, we check $X^d$ has at least $B_d$ irreducible components.
	To see this, for any partition $U = \{ S_1, \ldots, S_{\# U}\}$ of $\{1,
	\ldots, d\}$ into $\# U$ many subsets,
	let $X^U \subset X^d$ denote the subscheme of $X^d$ given as the image 
	$X^{\# U} \to X^d$ sending the $i$th copy of $X$ via the identity to
	those copies of $X$ indexed by elements of $S_i$.	
	For each partition $V$ of $\{1, \ldots ,d\}$ such that $U$ refines $V$,
	the closure of $X^U - \cup_{V, U \text{ refines } V} X^V$
	defines a nonempty union of irreducible
components of $X^d$. We have therefore produced $B_d$ irreducible components of
$X_d$, showing there are always at least $B_d$ irreducible components.

Conversely, $X^d$ has exactly $B_d$ geometric components if and only if each of
the $B_d$ subschemes described in the previous paragraph are irreducible.
Let us focus on the subscheme $Y$ corresponding to the partition $U =\{ \{1\},
\{2\}, \ldots, \{d\}\}$ into singletons, which
has degree $d!$ over $\bp^1$ and is the closure of the complement of the ``fat
diagonal'' in $X^d$.
Observe that $X \to \bp^1$ is generically \'etale because $X$ is smooth and we
are assuming the characteristic of $T$ does not divide $d!$.
Therefore, $Y \to \bp^1$ is also generically \'etale, and contains a
component whose function field is the Galois
closure of the extension of function fields $K(X) \leftarrow K(\bp^1)$.
Therefore, $Y$ is irreducible if and only if $K(Y)$ is the Galois
closure of $K(X) \leftarrow K(\bp^1)$. As $Y \to \bp^1$ has degree $d!$, this in turn is equivalent to $X \to \bp^1$
having Galois closure with Galois group $S_d$.
In particular, for any cover $X \to \bp^1$ whose Galois closure is smaller than $S_d$ 
$X^d$
has strictly more than $B_d$ irreducible components.

Finally, we check that for any $S_d$ cover, each of the $B_d$ components described
above are irreducible. As we have shown, even the component $Y$ of degree
$d!$ over $\bp^1$ is irreducible. Because all the other components correspond to
intermediate extensions between $Y$ and $\bp^1$, they are also irreducible.
\end{proof}

We next carry out the surprisingly tricky verification that $\hur d g S$ is an
open substack of $\bighurg d g S$.

\begin{proposition} \label{proposition:hur-algebraic} 
	For any integers $d, g \geq 0$, and $d!$ invertible on $S$, $\hur d g S$
	is an open substack of $\bighurg d g S$, hence an algebraic stack.  Further, if we have a family of curves $X \to
	\bp^1_T \to T$ corresponding to a $T$-point of $\hur d g S$, all fibers
	of $X$ over $T$ are geometrically irreducible.  \end{proposition}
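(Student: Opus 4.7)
The plan is to handle the two assertions of the proposition separately, reducing the openness claim to an upper semicontinuity statement for the fibers of $X^d \to T$.

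For the openness claim, the strategy is to use \autoref{lemma:hur-sd-covers} to rephrase membership in $\hur{d}{g}{S}$ as the condition that the fiber product $X^d := X \times_{\bp^1_T} \cdots \times_{\bp^1_T} X$ have exactly $B_d$ geometric irreducible components on each geometric fiber over $T$. First I would check that $X^d \to T$ is proper and flat: since $X \to \bp^1_T$ is finite locally free of degree $d$, the iterated product $X^d \to \bp^1_T$ is finite (hence proper), and composing with the proper map $\bp^1_T \to T$ yields properness of $X^d \to T$; flatness of $X^d$ over $T$ follows inductively from flatness of $X \to \bp^1_T$ and smoothness of $X \to T$. I would then invoke upper semicontinuity of the number of geometric irreducible components of fibers of a proper flat morphism of finite presentation. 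Combined with the lower bound of $B_d$ from \autoref{lemma:hur-sd-covers}, the locus where the count equals $B_d$ coincides with the open locus where the count is at most $B_d$. With \autoref{lemma:big-hur-algebraic}, this shows $\hur{d}{g}{S}$ is an open, hence algebraic, substack of $\bighurg{d}{g}{S}$.

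The main obstacle I expect is confirming the required semicontinuity in a form that handles the fact that $X^d$ can fail to be geometrically reduced over the branch locus of $X \to \bp^1_T$, which blocks the usual cohomology-and-base-change route via semicontinuity of $h^0(X^d_{\bar t}, \sco)$. A robust remedy is to restrict attention to the étale locus $U \subset \bp^1_T$ of $X \to \bp^1_T$, whose formation commutes with base change on $T$, and to argue via specialization of the étale fundamental group: under specialization, the monodromy image in $S_d$ can only shrink, so equivalently the number of diagonal orbits on $\{1,\ldots,d\}^d$ can only grow, giving the desired upper semicontinuity of the component count. Tameness, which holds by the hypothesis that $d!$ is invertible on $S$, ensures that the component count of $X^d_{\bar t}$ is recovered from its étale part.

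For the claim that fibers of $X \to T$ over a $T$-point of $\hur{d}{g}{S}$ are geometrically irreducible, I fix a geometric point $\bar t$ of $T$. By \autoref{lemma:hur-sd-covers}, the cover $X_{\bar t} \to \bp^1_{\bar t}$ has Galois closure with group $S_d$; since $S_d$ acts transitively on $\{1,\ldots,d\}$, the cover has a unique orbit of sheets, so $X_{\bar t}$ is connected. As $X \to T$ is smooth, $X_{\bar t}$ is smooth, hence connectedness implies irreducibility, giving the desired conclusion.
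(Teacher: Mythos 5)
Your reduction of the openness claim to a semicontinuity statement for the fibers of $X^d \to T$, and your argument for geometric irreducibility of the fibers of $X \to T$, both match the paper's structure. However, the tool you invoke first --- ``upper semicontinuity of the number of geometric irreducible components of fibers of a proper flat morphism of finite presentation'' --- is false as stated. The finite flat family $\spec k[x,t]/(x^2-t) \to \spec k[t]$ has fibers with two components away from $t=0$ and one (non-reduced) component at $t=0$: when components collide the naive count drops, which is exactly the failure mode relevant here, since the components of $X^d$ can collide over loci where the branch points of $X \to \bp^1$ come together. This is why the paper does not count components directly but instead applies \cite[12.2.1(xi)]{EGAIV.3}, which gives upper semicontinuity of the \emph{total multiplicity} (components weighted by geometric multiplicity); that result requires the fibers of $X^d$ to have no embedded points, which is the content of \autoref{lemma:no-associated-points} (Cohen--Macaulayness of $X^d_{\bar t}$), and one must then separately identify total multiplicity with the component count using generic reducedness of $X^d_{\bar t}$, which follows from $d!$ being invertible.

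You do half-recognize the problem and propose a workaround over the \'etale locus $U$ via specialization of monodromy. The reduction to $U$ is sound (finiteness and flatness of $X^d \to \bp^1_T$ force every component of $X^d_{\bar t}$ to dominate $\bp^1_{\bar t}$, so the component count is computed on $U_{\bar t}$, where it becomes an orbit count for the monodromy group acting on $\{1,\dots,d\}^d$). But the crux --- ``under specialization the monodromy image in $S_d$ can only shrink'' --- is precisely the hard step and is asserted without proof. It amounts to surjectivity of a specialization map for tame fundamental groups of the open curves $U_{\bar t}$ in a family whose boundary divisor is not \'etale over $T$ (the deleted points collide), which is a genuinely nontrivial input; moreover, closure under generization alone does not yield openness without also establishing constructibility of the component-count function, which you do not address. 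As written, the proposal therefore replaces one unproven semicontinuity statement with another. Either supply the tame specialization argument (together with constructibility, in the spirit of the paper's proof of \autoref{lemma:restricted-ramification-is-open}), or follow the paper's route through total multiplicity and \autoref{lemma:no-associated-points}.
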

	\begin{proof} 
		It is enough to demonstrate $\hur d g S$ is an open substack of $\bighurg d
		g S$.
Let $X \to \bp^1_T \to T$ be a family of smooth curves, corresponding to a point
of $\bighurg d g S$.  Let $\rboxed{X^d}$ denote the 
$d$-fold fiber product of $X$ over $\bp^1_T$.  
By \autoref{lemma:hur-sd-covers}, 
any such point corresponds to an $S_d$ cover of $\bp^1$ on geometric fibers, and
therefore these geometric fibers are irreducible, verifying the final statement.

	It remains to show that the locus where $X^d$ has $B_d$
	irreducible fibers in geometric fibers is open on $T$.  First, we will see in
	\autoref{lemma:no-associated-points} that the geometric fibers of $X^d$
	over $T$ have no embedded points.
	
	Because the fibers have no embedded points, we may apply
	\cite[12.2.1(xi)]{EGAIV.3}, which says that the total multiplicity (in
	the sense defined in \cite[p. 77]{EGAIV.2}, following
\cite[4.7.4]{EGAIV.2}, where total multiplicity is defined for integral schemes)
is upper semicontinuous.  From this, we conclude that the locus of geometric points in $T$ where the total
multiplicity of $X^d$ is at most $B_d$ is open.
By \autoref{lemma:hur-sd-covers},
the total multiplicity of any geometric fiber is always at least $B_d$, and hence
the locus where the total multiplicity is exactly $B_d$ is also open.
To conclude, it remains to verify the total multiplicity of any
geometric fiber is equal to the number of its irreducible components.
Note that the radicial multiplicity
of any fiber is
$1$ because $X^d$ is generically reduced, since it has a generically separable
map to $\bp^1$ by assumption that $d! \nmid \chr(k)$.  It follows that the total
multiplicity is equal to the separable multiplicity.  By definition, the
separable multiplicity of a finite type scheme over a field is equal to $1$ if
and only if the scheme is geometrically irreducible, as desired.
\end{proof} 

\begin{remark}
	\label{remark:direct-group-hur}
	Later, in \autoref{lemma:no-transposition-high-codimension},
	we will appeal to \cite{wewers:thesis}
	to construct substacks of $\bighurg d g S$ parameterizing covers with
	specified Galois group $G \subset S_d$.
	One can also see using the method of proof of
	\autoref{proposition:hur-algebraic} that these form locally closed
	substacks, with partial ordering given by the partial ordering along
	inclusion of subgroups in $S_d$.
\end{remark}

\begin{lemma}
	\label{lemma:no-associated-points} Let $X \to \bp^1$ be a degree $d$ map
	of smooth proper curves over an algebraically closed field $k$.  If the
	characteristic of $k$ does not divide $d!$, then $\rboxed{X^d} :=
	\underbrace{X \times_{\bp^1} X \times_{\bp^1} \cdots \times_{\bp^1}
X}_{\text{d times}}$ is Cohen-Macaulay, and hence has no embedded points.  \end{lemma}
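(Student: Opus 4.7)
The plan is to show that $X^d \to \bp^1$ is finite flat, and then deduce Cohen-Macaulayness from the fact that flatness over a Cohen-Macaulay base with Cohen-Macaulay fibers implies the total space is Cohen-Macaulay.

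First, I would establish that $X^d \to \bp^1$ is finite flat. Since $X$ and $\bp^1$ are smooth curves, the map $X \to \bp^1$ is finite and dominant, hence finite flat by miracle flatness (a finite surjective map between regular Noetherian schemes of the same dimension is flat). Now for any $\bp^1$-scheme $Y \to \bp^1$, the base change $X \times_{\bp^1} Y \to Y$ remains finite flat. By induction, the composition
\[
X^d \longrightarrow X^{d-1} \longrightarrow \cdots \longrightarrow X \longrightarrow \bp^1
\]
is finite flat, so $X^d \to \bp^1$ is finite flat.

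Next, I would apply the standard criterion that if $f: Y \to S$ is flat with $S$ Cohen-Macaulay and all fibers of $f$ are Cohen-Macaulay, then $Y$ is Cohen-Macaulay (see \cite[Thm.\ 23.3]{Eisenbud:commutativeAlgebra}, or the dimension formula plus additivity of depth in a flat local map). Here $\bp^1$ is smooth, hence Cohen-Macaulay, and the fibers of the finite map $X^d \to \bp^1$ are zero-dimensional, hence automatically Cohen-Macaulay. Therefore $X^d$ is Cohen-Macaulay.

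Finally, a Cohen-Macaulay Noetherian scheme has no embedded associated points: at every point the depth of the local ring equals its dimension, and any embedded prime would force a strict inequality. The main step is really the first one — once flatness of $X^d \to \bp^1$ is known, the rest is formal. I note that the hypothesis on $\chr(k)$ plays no role in this argument; it is simply the standing hypothesis carried over from the surrounding \autoref{proposition:hur-algebraic}, where it was used separately to ensure generic reducedness via generic \'etaleness.
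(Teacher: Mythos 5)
Your proof is correct, but it takes a genuinely different route from the paper's. The paper works \'etale locally on $\bp^1$: it passes to the strict henselization, invokes the classification of tame covers (this is where the hypothesis $\chr(k)\nmid d!$ enters) to write the cover as extraction of roots of a uniformizer, and then exhibits $X^d$ locally as $k[x_1,\ldots,x_m]/(x_1^{s_1}-x_2^{s_2},\ldots,x_1^{s_1}-x_m^{s_m})$, a one-dimensional complete intersection, hence Cohen--Macaulay. You instead observe that $X\to\bp^1$ is finite flat by miracle flatness, that finite flatness is preserved by base change and composition so $X^d\to\bp^1$ is finite flat, and then apply the criterion that a flat morphism with Cohen--Macaulay base and Cohen--Macaulay fibers has Cohen--Macaulay source; the fibers here are Artinian, hence automatically Cohen--Macaulay. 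Your observation that the characteristic hypothesis is not needed is accurate: your argument proves the stronger statement that any iterated fiber power of a finite flat cover of a regular curve is Cohen--Macaulay, with no tameness assumption, whereas the paper's argument genuinely uses tameness to obtain its explicit local equations. What the paper's route buys is a concrete local model for $X^d$ (though that model is not reused elsewhere); what yours buys is brevity and greater generality. One small caveat: the citation \cite[Thm.\ 23.3]{Eisenbud:commutativeAlgebra} appears to be a Matsumura reference rather than one to Eisenbud's book, but the result you invoke is standard and correctly stated.
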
 
\begin{proof}
	It is enough to show $X^d$ is Cohen-Macaulay, as $1$-dimensional
	Cohen-Macaulay schemes have no embedded points.
	To verify $X^d$ is Cohen-Macaulay, we may do so \'etale locally on $\bp^1$, and
		hence we may freely base change to the strict henselization of
		$\bp^1$ at any given closed point.  Using the assumption on the
		characteristic of $k$ and the classification of prime to
		$\chr(k)$ covers of the strict henselization of $k[t]$, we may
		assume our cover is given by extracting roots of the
		uniformizer.  Equivalently, it is enough to verify
		Cohen-Macaulayness 
		in the case $X^d$ is locally described as a localization of
		$k[x_1] \otimes_{k[t]} k[x_2] \otimes_{k[t]} \cdots
		\otimes_{k[t]} k[x_m]$ where the maps $k[t] \to k[x_i]$ are
		given by $t \mapsto x_i^{s_i}$, for $s_i \leq d$.  We can
		equivalently write this tensor product as $k[x_1] \otimes_{k[t]}
		k[x_2] \otimes_{k[t]} \cdots \otimes_{k[t]} k[x_m] \simeq
		k[x_1,x_2, \ldots, x_m]/(x_1^{s_1} - x_2^{s_2}, \ldots,
		x_1^{s_1} - x_m^{s_m}) =: R.$ We wish to verify $R$ is
		Cohen-Macaulay.  
		Observe that $R$ is a $1$ dimensional scheme,
		being a finite cover of $k[t]$.  Since it is defined by $m-1$
		equations in $\ba^m$, it is a complete intersection, and
		therefore Cohen-Macaulay.  
\end{proof}
The following remark will not be used in the remainder of the paper, but may be
nice for the reader to keep in mind.
\begin{remark}
	\label{remark:}
	For $d > 2$ and $g \geq 1$, $\hur d g S$
	is a scheme when $d!$ is invertible on $S$.
	We have seen above it is an algebraic stack.
	In order to see it is a scheme, one may first verify it is an algebraic
	space by checking any degree $d$ cover of $\bp^1$ with Galois group
	$S_d$ for $d > 2$ has no
	nontrivial
	automorphisms \cite[\href{https://stacks.math.columbia.edu/tag/04SZ}{Tag 04SZ}]{stacks-project}.
	Indeed, if such a cover did have automorphisms, it would factor through
	an intermediate cover obtained by quotienting by some such nontrivial
	automorphism, forcing the Galois group to be smaller than $S_d$.

	Having established $\hur d g S$ is an algebraic space, we next wish to
	explain why it is a scheme.
	Observe this Hurwitz space has a map to the symmetric power
	$\sym^{2g-2+2d}_{\bp^1}$ of $2g-2 + 2d$ points on $\bp^1$ given by
	``taking the branch locus.'' This uses that
	$d!$ is invertible on $S$ and Riemann-Hurwitz.
	One may verify this map is separated (for example, using the valuative
	criterion) and quasi-finite (since the inertia data around the branch
	points determines the cover), hence quasi-affine
\cite[\href{https://stacks.math.columbia.edu/tag/082J}{Tag
082J}]{stacks-project}.
Therefore, it is quasi-affine over a scheme, and therefore a scheme.
\end{remark}

\section{Defining the Casnati-Ekedahl strata in Hurwitz stacks} \label{section:ce-loci}

For this section, we now fix a positive integer $d$ and a base field $k$ with $d!$
invertible on $k$. We parenthetically note that much of the following can be
generalized to work
over arbitrary base schemes.
For $T$ a $k$-scheme,
given a Gorenstein finite locally free degree $d$ cover $X \ra \bp^1_T$, from
\autoref{theorem:ce-main-generalized}, we obtain a canonical sequence of vector
bundles $(\sce^X, \scf_1^X, \scf_2^X, \ldots, \scf_{d-2}^X)$ on $\bp^1_T$.
We next aim to define certain locally closed substacks of $\bighurg d g S$
corresponding to those covers $X \to \bp^1_T$ whose associated vector bundles are isomorphic
to some specified sequence $(\sce, \scf_1, \scf_2, \ldots, \scf_{d-2})$.
To define this substack, we first define the corresponding stack of these vector
bundles.

Recall that the 
stack of locally free
rank $n$ sheaves on $\bp^1_k$
is an algebraic stack, as is well known,
see for example
\cite[Prop.\ 4.4.6]{behrend:thesis}.

\begin{definition} \label{definition:splitting-type-stratification} 
	Let $\rboxed{\vect n {\bp^1_k}}$ denote the moduli stack of locally free
	rank $n$ sheaves on $\bp^1_k$. 
	For $\vec{a} = (a_1, a_2, \ldots,
	a_n)$, with $a_i \in \bz$, let $\rboxed{\sco_{\bp^1_k}(\vec{a}) } := \bigoplus_{i=1}^n
	\sco_{\bp^1_k}(a_i)$ and let $\rboxed{\vecthn {\vec{a}} {\bp^1_k}}$
	denote the residual gerbe at the point 
	corresponding to the vector bundle $\sco_{\bp^1_k}(\vec{a})$.
 \end{definition} 
	
\begin{remark}
	\label{remark:}
	Note that this residual gerbe is indeed a locally closed substack by \cite[Thm.\ B.2]{rydh:etale-devissage}.
Alternatively, the residual gerbe is given concretely as the quotient stack
$B(\res_{\bp^1_k/k}(\aut_{\mathscr O_{\bp^1_k}(\vec{a})}))$.
\end{remark}

In order to relate the genus of a cover of $\bp^1$ to the associated vector bundle $\sce$
we need the following standard lemma.
\begin{lemma}
	\label{lemma:degree-of-e}
	Suppose $\rho: X \ra \bp^1_k$ is a degree $d$ Gorenstein finite locally free cover and let
	$\sce := \ker (\rho_* \omega_X \to \sco_{\bp^1_k})$.
If $h^0(X, \sco_X) = 1$, such as in the case that $X$ is smooth and geometrically connected,
	then $\deg(\det \sce) = g + d - 1$.
\end{lemma}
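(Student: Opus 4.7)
The plan is to compute degrees by taking Euler characteristics on $\bp^1_k$ in the defining short exact sequence for $\sce$. Recall from the discussion preceding \autoref{theorem:ce-main-generalized} that $\sce$ is defined so that its dual fits in
\begin{equation*}
0 \to \sco_{\bp^1_k} \to \rho_* \sco_X \to \sce^\vee \to 0,
\end{equation*}
the cokernel of $\rho^\#$. (The description in the statement using $\omega_X$ is the Grothendieck dual of this sequence, since $\rho$ is finite locally free and Gorenstein, so $\rho_* \omega_{X/\bp^1_k} \simeq \shom(\rho_* \sco_X, \sco_{\bp^1_k})$; I will work with the sequence above since it avoids invoking duality.)

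First, I take Euler characteristics. Since $\rho$ is finite, $\chi(\rho_* \sco_X) = \chi(\sco_X)$. By hypothesis, $h^0(X, \sco_X) = 1$, and $g = h^1(X, \sco_X)$ by the definition of arithmetic genus, so $\chi(\sco_X) = 1 - g$. Combined with $\chi(\sco_{\bp^1_k}) = 1$, additivity of $\chi$ in the exact sequence gives
\begin{equation*}
\chi(\sce^\vee) = (1 - g) - 1 = -g.
\end{equation*}

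Next, every locally free sheaf $\scv$ of rank $r$ on $\bp^1_k$ satisfies $\chi(\scv) = \deg(\det \scv) + r$ (for instance by the Grothendieck splitting theorem reducing to the case of line bundles). Applying this to $\sce^\vee$, which has rank $d-1$, yields $\deg(\det \sce^\vee) = -g - (d-1)$, and hence $\deg(\det \sce) = g + d - 1$, as desired.

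The proof is essentially a one-line Euler characteristic computation; there is no substantive obstacle. The only minor subtlety is matching the kernel-of-trace description in the statement with the Tschirnhausen cokernel description used in the computation, which is a standard consequence of relative duality for finite locally free Gorenstein morphisms.
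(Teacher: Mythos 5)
Your proof is correct and follows essentially the same route as the paper's: both reduce to the Tschirnhausen exact sequence $0 \to \sco_{\bp^1_k} \to \rho_*\sco_X \to \sce^\vee \to 0$ and apply Riemann--Roch. The only difference is cosmetic: the paper first splits $\rho_*\sco_X$ as a direct sum of line bundles and computes $h^0$ of a large twist, whereas you use additivity of the Euler characteristic directly, which is slightly cleaner since it avoids the splitting argument altogether.
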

\begin{proof}
	First, we claim $\rho_* \sco_X \simeq \sco_{\bp^1_k} \oplus \sce^\vee$.
	Indeed, by duality, we have a short  exact sequence $\sco_{\bp^1_k} \ra \rho_* \sco_X \ra \sce^\vee.$
	Because all vector bundles on $\bp^1$ split, and $h^0(\bp^1_k, \rho_* \sco_X) = h^0(X, \sco_X) = 1$, we find that $\sce^\vee \simeq \oplus_{i=1}^{d-1} \sco_{\bp^1_k}(-a_i)$ for $a_i > 0$.
	Because there are no extensions of $\sco_{\bp^1_k}(-a_i)$ by $\sco_{\bp^1_k}$, the above exact sequence splits, yielding $\rho_* \sco_X \simeq \sco_{\bp^1_k} \oplus \sce^\vee \simeq \sco_{\bp^1_k} \bigoplus \oplus_{i=1}^{d-1} \sco_{\bp^1_k}(-a_i)$.
	Then, for $n$ sufficiently large and $\scl$ a degree $n$ line bundle on
	$\bp^1_k$, Riemann Roch 
on the curve $X$ implies $h^0(\bp^1_k, \sco_{\bp^1_k}(n) \bigoplus
\oplus_{i=1}^{d-1} \sco_{\bp^1_k}(-a_i + n)) = h^0(\bp^1_k, \rho_* \sco_X \otimes \scl) = h^0(X, \rho^* \scl) = dn -g + 1$.
For $n$ larger than the maximum of the $a_i$, the left hand is equal to $dn + d -\sum_{i=1}^{d-1} a_i$, and so we obtain $-\sum_{i=1}^{d-1} a_i = -g -d + 1$.
Therefore, $\deg(\det \sce) = -\deg(\det \sce^\vee) = \sum_{i=1}^{d-1}a_i = g + d - 1$.
\end{proof}

With the relation between $g$ and $\sce$ of \autoref{lemma:degree-of-e}
established, we are ready to define the Casnati-Ekedahl strata.
For the next definition, we will fix 
vectors 
$\vec{a}^\sce, \vec{a}^{\scf_1}, \ldots, \vec{a}^{\scf_{d-2}}$
and 
vector bundles $\sce, \scf_1, \ldots,
\scf_{\lfloor \frac{d-2}{2} \rfloor}$ on $\bp^1$
given by $\sce \simeq \sco_{\bp^1_k}(\vec{a}^\sce)$ and
$\scf_i \simeq \sco_{\bp^1_k}(\vec{a}^{\scf_i})$.
Note that although $d-2$ vector bundles appear in \autoref{theorem:ce-main-generalized},
the isomorphism classes of vector bundles $\scf_i$ for $i > \lfloor \frac{d-2}{2} \rfloor$ are in fact
determined by those with $i \leq \lfloor \frac{d-2}{2} \rfloor$
because duality enforces the relation $\scf_{d-2} \simeq \det \sce$ and
for $1 \leq i \leq d-3$,
$\scf_{d-2-i} \simeq \det \sce \otimes \scf_i^\vee$.

\begin{definition} \label{definition:ce-locus} 
	Let $k$ be a field with $d!$ invertible on $k$, 
	and fix a tuple of vectors	
	$(\vec a^\sce, \vec a^{\scf_1} , \ldots, \vec a^{\scf_{\lfloor \frac{d-2}{2} \rfloor}})$.
	Let
	$\rboxed{g} := 1 - d + \sum_{i=1}^{d-1} a_i^{\sce}$ and define the
	{\em Casnati-Ekedahl stratum} $\ce(\vec a^\sce, \vec a^{\scf_1} , \ldots, \vec
	a^{\scf_{\lfloor \frac{d-2}{2} \rfloor}} )$ as the locally closed
	substack of $\bighurg d g k$ given as the fiber product \begin{align*}
		\bighurg
		d g k \times_{\vect {d-1} {\bp^1_k} \times \prod_{i=1}^{\lfloor \frac{d-2}{2} \rfloor}
		\vect {\beta_i} {\bp^1_k}} \vecthn {\vec a^\sce } {\bp^1_k}
		\times \prod_{i=1}^{\lfloor \frac{d-2}{2} \rfloor} \vecthn {\vec a^{\scf_i}} {\bp^1_k}.
	\end{align*} 
	Here, $\beta_i$ are as in
	\autoref{theorem:ce-main-generalized}, and the map $\bighurg d g k \to \vect
	{d-1} {\bp^1_k} \times \prod_{i=1}^{\lfloor \frac{d-2}{2} \rfloor} \vect {\beta_i} {\bp^1_k}$ is
	induced by \autoref{theorem:ce-main-generalized}.  In other words,
	$\ce(\vec a^\sce, \vec a^{\scf_1} , \ldots, \vec a^{\scf_{\lfloor \frac{d-2}{2} \rfloor}} )$ is
	the locally closed substack of the Hurwitz stack such that the
	associated morphism $T \ra \vect {d-1} {\bp^1_k} \times
\prod_{i=1}^{\lfloor \frac{d-2}{2} \rfloor} \vect {\beta_i} {\bp^1_k}$ factors through a map $T \ra
\vecthn {\vec a^\sce } {\bp^1_k} \times \prod_{i=1}^{\lfloor \frac{d-2}{2} \rfloor} \vecthn {\vec
a^{\scf_i}} {\bp^1_k}$.  \end{definition} 

\begin{remark} \label{remark:} There is a natural generalization of the
	construction of Casnati-Ekedahl strata of covers of $\bp^1$ 
	to a version for covers of genus $g$ curves $C$ in place of the genus
	$0$ curve $\bp^1$.
	Namely, 
	given a finite locally free cover $C' \to C$ over a base $T$,	
	using \autoref{theorem:ce-main-generalized},
	one can associate 
	a sequence of vector bundles 
	on the relative curve $C \ra T$.
	A given Casnati-Ekedahl stratum would
naturally be defined as the locus where these bundles have specific
Harder-Narasimhan filtration, generalizing the notion of splitting type.
\end{remark}

\begin{remark} \label{remark:ce-loci-stratify} Since the substacks $\vecthn
	{\vec a} {\bp^1_k}$ form a stratification of $\vect n {\bp^1_k}$, it
	follows that the Casnati-Ekedahl strata, varying over all tuples $(\vec
	a^\sce, \vec a^{\scf_1} , \ldots, \vec a^{\scf_{\lfloor \frac{d-2}{2} \rfloor}} )$ form a
	stratification of $\bighurg d g k$.  
	This will enable us
		to write the class of $\bighurg d g k$ in the Grothendieck ring as
		the sum of the classes of $\ce(\vec a^\sce, \vec a^{\scf_1} ,
		\ldots, \vec a^{\scf_{\lfloor \frac{d-2}{2} \rfloor}} )$ for $(\vec a^\sce, \vec
		a^{\scf_1} , \ldots, \vec a^{\scf_{\lfloor \frac{d-2}{2} \rfloor}} )$ varying over all
		integer tuples of vectors.  
	\end{remark}

To conclude this section, we introduce some notation for objects we will
associate with a Casnati-Ekedahl stratum of the Hurwitz stack.
\begin{notation}
	\label{notation:f-e-aut-ce-notation}
	For $3 \leq d \leq 5$, $\ce(\vec a^\sce, \vec a^{\scf_1} , \ldots, \vec
	a^{\scf_{\lfloor \frac{d-2}{2} \rfloor}}) \subset \bighurg d g k$ a
	Casnati-Ekedahl stratum, for
	$\rboxed{\sce} :=\oplus_j \sco(\vec a^\sce_i), \rboxed{\scf_\bullet }:=
	\oplus_j \sco(\vec a^{\scf_\bullet}_j)$, define $\rboxed{\aut_{\ce}} :=
	\aut^{\bp^1_k/k}_{\sce, \scf_\bullet}$, as defined in
	\autoref{definition:automorphism}, depending on
	the value of $d$.  Additionally, for $f: T \ra \bp^1_k$ denote
	$\rboxed{\aut_{f^* \ce}} := \aut^{T/k}_{f^* \sce, f^* \scf_\bullet}$.
	When the map $f$ is understood, we also use $\aut_{\ce|_T}$ as notation
	for $\aut_{f^* \ce}$.
\end{notation}

\begin{remark} \label{remark:}
	The construction $\aut_{\ce|_T}$ at the end of
\autoref{notation:f-e-aut-ce-notation} will primarily be used when $T = D$,
the dual numbers, mapping to a point of $\bp^1_k$.  
Note that, in this case $\sce|_D$ and $\scf_\bullet|_T$ are free vector bundles
because all locally free bundles over $D$ are free.
\end{remark}

We conclude this section with a general discussion about the moduli stack of vector bundles on
$\bp^1_k$. This will be useful in later sections, specifically
in \autoref{lemma:smooth-sieve-codimension-bound}.

\subsection{Discussion of the moduli stack of vector bundles on $\mathbb P^1_k$}
\label{subsection:vector-bundle-closure}

Recall that, for $k$ a field, every vector bundle $\scv$ on $\mathbb P^1_k$ of rank $r$
and degree $\delta$ can be written
as $\scv \simeq \oplus_{i=1}^r \sco_{\bp^1_k}(a_i)$ where $\sum_{i=1}^r a_i =
\delta$.
The moduli stack of vector bundles
of rank $r$ and degree $\delta$ on $\bp^1_k$ is smooth and connected.
The generic point of this moduli stack is given by a balanced bundle. 
Formally,
a vector bundle $\scv$ on $\mathbb P^1$ is
{\em balanced} if it can be written as $\scv \simeq \oplus_{i=1}^r
\sco(a_i)$ with $|a_i - a_j| \leq 1$ for all $1 \leq i \leq j
\leq r$.

We can now describe when one degree $\delta$, rank $r$ vector bundle
$\scv$, viewed
as a point on the moduli stack, lies in the closure of a point corresponding to
another degree $\delta$, rank $r$ vector bundle $\scw$.
See \cite[Theorem 14.7(a)]{eisenbudH:3264-&-all-that} for a proof of the
following description.
Suppose $\scv \simeq \oplus_{i=1}^r \sco_{\bp^1_k}(a_i) = \sco_{\bp^1_k}(\vec{a})$, and there are some $a_i, a_j$ with
$a_i \leq a_j-2$. 
Let $\sigma_{i,j}(\vec{a}):= (a_1, \ldots, a_i +1, \ldots, a_j - 1,
\ldots, a_r)$ so that $\sigma_{i,j}(\vec{a})$ agrees with $\vec{a}$ except in positions $i$
and $j$.
Then $\sco_{\bp^1_k}(\sigma_{i,j}(\vec{a}))$ lies in the closure of $\sco_{\bp^1_k}(\vec{a})$.
Informally, one bundle lies in the closure of another if one can find a sequence
of moves as above relating one to the other.
More precisely, $\sco_{\bp^1_k}(\vec{b})$ lies in the closure of
$\sco_{\bp^1_k}(\vec{a})$ if we can write $\vec{b} = \sigma_{i_1, j_1} \circ
\sigma_{i_2, j_2} \cdots \circ
\sigma_{i_m, j_m}(\vec{a})$ for some non-negative integer $m$ and integers $i_1, \ldots, i_m,
j_1, \ldots, j_m$.
In particular, if one starts with any vector bundle of rank $r$ and degree
$\delta$,
one can sequentially move the entries of $\vec{a}$ closer together, which shows
that a balanced bundle correspond to the generic point of the moduli stack.
%As a final observation, which we will need in what follows, we explain why the
%codimension of a point $[\sce] \in \vect {\rk \sce} {\bp^1_k}$ is $H^1(\bp^1_k,
%\End \sce)$. 
%Indeed, let $\sce^\circ$ denote the balanced vector bundle of the same degree
%and rank as $\sce$, corresponding to the generic point of the connected
%component of the moduli stack in which $[\sce]$ lies.
%Note that $\End \sce$ and $\End \sce^\circ$ have the same
%rank and degree, and
%hence the same Euler characteristic. 
%Additionally, $h^1(\bp^1_k,
%\End \sce^\circ) = 0$ because $\sce^\circ$ is balanced.
%These observations combined yield the calculation
%\begin{align*}
%	\codim_{\vect {\rk \sce} {\bp^1_k}} \sce &= \dim \aut \sce - \dim
%\aut(\sce^\circ) \\
%&= h^0(\bp^1_k, \End \sce) - h^0(\bp^1_k, \End \sce^\circ) 
%\\ 
%&= h^1(\bp^1_k,
%\End \sce) - h^1(\bp^1_k, \End \sce^\circ) \\
%&=  h^1(\bp^1_k, \End \sce^\circ).
%\end{align*}

\section{Presentations of the Casnati-Ekedahl strata}
\label{section:presentations-of-ce}

We next aim to use the parametrizations  from
\autoref{section:generalizing-ce} in order to describe each of the $\ce(\sce,
\scf_\bullet)$ for $3 \leq d \leq 5$ as the quotient of an open in affine space
by an appropriate group action.
Because we will also want to parameterize simply branched covers, it will be
useful to restrict the possible ramification types of these covers.
We now introduce the notion of ramification profile, which describes the
possible ramification types of a finite cover of $\bp^1$ by a smooth curve.

\begin{definition}[Ramification Profile] \label{definition:ramfication-profile}
	Fix a positive integer $d$ and let $R = (r_1^{t_1}, r_2^{t_2}, \ldots, r_n^{t_n})$ denote a partition
	of $d$, i.e., a collection of integers with $t_1, \ldots, t_n \geq 1$
	so that $\sum_{i=1}^n t_i r_i = d$.  Here, we think of $r_i$ as the
	part sizes appearing in the partition and $t_i$ as the corresponding
	multiplicity.  A {\em ramification profile of degree $d$} is a
	partition of $d$.  For $X \ra S$ a scheme, we say $X$ {\em has
	ramification profile $R$} if for every geometric point $\spec k \in S$,
	the base change $X_k := X \times_S \spec k$ is isomorphic to
	$\coprod_{i=1}^n \left( \coprod_{j=1}^{t_i} \spec k[x]/(x^{r_i})
	\right)$.  We let $r(R) := \sum_{i=1}^n (r_i-1)t_i$ denote the
	associated {\em ramification order}.  \end{definition}
One way to think about ramification profiles as defined above is to think of
each
fiber $X_k$ of $X \to S$ having a partition into curvilinear schemes (i.e., schemes with cotangent spaces
	of dimension at most $1$ at every point) of degrees determined by the
	partition $R$.  

We next introduce the notion of an allowable collection of ramification
profiles. The point of allowable collections is that covers of $\bp^1$ whose
ramification profiles lie in an allowable collection define an open substack of
the Hurwitz stack with closed complement of high codimension.
We use the notation $\lambda \vdash n$ to indicate that $\lambda$ is a partition
of $n$.

\begin{definition} \label{definition:allowable} Fix an integer $d$. Let
	$\rboxed{\mathcal R}$ denote a collection of ramification profiles of
	degree $d$.
	We say
	$\mathcal R$ is an {\em allowable} collection of ramification profiles of
	degree $d$ if \begin{enumerate} \item $\mathcal R$ includes $(1^d)$ and
			$(2, 1^{d-2})$.
		\item Whenever $\lambda \vdash d$ lies in
			$\mathcal R$, and $\lambda' \vdash d$ is a partition
			refining $\lambda$,
			then $\lambda'$ also lies in $\mathcal R$.
\end{enumerate} \end{definition}

In the remainder of this section, we first define certain open 
substacks of Hurwitz stacks with restricted
ramification, lying in an allowable collection $\mathcal R$.
Following this, we define a certain space of sections of a vector bundle on
$\bp^1$
parameterizing smooth degree $d$ covers (for $3 \leq d \leq 5$) with specified
ramification profiles in an allowable collection.

\begin{definition} \label{definition:rhur} 
	Suppose $k$ is a field with $d!$ invertible on $k$.
	For $\mathcal R$ an allowable collection
	of ramification profiles of degree $d$, let 
$\bigrhur d g k {\mathcal R} \subset \bighurg d g k$ 
denote the open substack of $\bighurg d g
k$ (we prove it is open in \autoref{lemma:restricted-ramification-is-open}) whose $T$ points parameterize smooth curves $X \to \bp^1_T$ over $T$
	so that for each geometric point $\spec \kappa \to \bp^1_T$,
	$X_{\kappa}$ has ramification profile in $\mathcal R$.
	Let
	$\rboxed{\rhur d g k {\mathcal R}} \subset \hur d g k$
	denote the restriction of $\bigrhur d g k {\mathcal R}$ along
	$\hur d g k \subset \bighurg d g k$.

	Similarly, for $\ce \subset \bighurg d g k$ a Casnati-Ekedahl stratum,
let $\rboxed{ \rce{\mathcal R} } \subset \ce$ denote the open substack $\ce
\times_{\bighurg d g k} \bigrhur d g k {\mathcal R} \subset \ce$.
We use 
$\ce^{\mathcal R, S_d}$ to denote the pullback of $\ce^{\mathcal R}$ along
$\rhur d g k {\mathcal R}
\subset \bigrhur d g k{\mathcal R}$.
\end{definition}
\begin{remark}
	\label{remark:}
	In the case $d = 2$, the only allowable $\mathcal R$ is $\mathcal R =
\{(1^2), (2)\}$ and in this case $\rboxed{\rhur 2 g k {\mathcal R}}
= \hur 2 g k$.  
\end{remark}

We now verify that the locus of 
$\bigrhur d g k {\mathcal R} \subset \bighurg d g k$ 
is an open substack.
\begin{lemma}
	\label{lemma:restricted-ramification-is-open}
	With notation as in \autoref{definition:rhur},
	$\bigrhur d g k {\mathcal R} \subset \bighurg d g k$ is an open substack.
\end{lemma}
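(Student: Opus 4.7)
We reduce the claim to showing, for any scheme $T$ and classifying morphism $T \to \bighurg d g k$ corresponding to a cover $\pi: X \to \bp^1_T$, that the locus $B_T \subset T$ of points $t$ such that some geometric fiber of $\pi_t$ has ramification profile outside $\mathcal R$ is closed. We will produce $B_T$ as the image, under the proper map $\bp^1_T \to T$, of a closed ``bad locus'' in $\bp^1_T$.

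The plan rests on a combinatorial observation about partitions. Place on the set of partitions of $d$ the partial order $\preceq$ under which $\lambda \preceq \mu$ iff $\lambda$ is a refinement of $\mu$; in this order $(1^d)$ is the minimum and $(d)$ is the maximum, and as a family of degree-$d$ finite locally free covers of a curve specializes, fiber partition types can only move upward. The hypothesis that $\mathcal R$ is closed under refinement says precisely that $\mathcal R$ is downward-closed in $\preceq$, so $\mathcal R^c$ is upward-closed and decomposes as the finite union $\bigcup_i \{\nu \vdash d : \nu \succeq \mu_i\}$ over the finite set of minimal elements $\mu_i$ of $\mathcal R^c$. Granting the upper semicontinuity assertion that for each $\mu \vdash d$ the set
\[
Z_\mu := \{p \in \bp^1_T : \text{the partition type of } X_p \text{ satisfies } \succeq \mu\}
\]
is closed in $\bp^1_T$, the bad locus in $\bp^1_T$ is the finite union $\bigcup_i Z_{\mu_i}$, hence closed, and $B_T$ is its image under $\bp^1_T \to T$, hence closed.

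It remains to establish upper semicontinuity of fiber partition type, i.e., that each $Z_\mu$ is closed. Since $d!$ is invertible on $k$, the map $\pi$ is tamely ramified, so \'etale-locally at a geometric point $p \in \bp^1_T$ the cover decomposes into ``principal pieces'' $\Spec \mathcal O_{\bp^1_T,p}^{sh}[t]/(t^{s_i}-f_i)$, and the partition type at nearby fibers is controlled by the orders of vanishing of the $f_i$ in the appropriate fiber, which is upper semicontinuous by coherence of $\pi_* \sco_X$. Equivalently, one can factor the classifying map $\bp^1_T \to \mathfrak A_d$ through the stack of degree-$d$ finite locally free covers and invoke the classical stratification of $\mathfrak A_d$ by partition type, whose closure relations are given by $\preceq$; then $Z_\mu$ is the preimage of a closed substack of $\mathfrak A_d$. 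I expect making this local-to-global structure analysis fully rigorous to be the main obstacle; once it is in hand, the rest of the argument is formal, and the resulting openness of $\bigrhur d g k {\mathcal R}$ in $\bighurg d g k$ follows together with algebraicity from \autoref{lemma:big-hur-algebraic}.
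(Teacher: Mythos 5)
Your strategy is correct and it reaches the conclusion by a genuinely different route from the paper. The paper proves openness by the two-step ``constructible $+$ closed under generization'' criterion applied directly on the moduli stack: constructibility comes from the algebraicity of the auxiliary Hurwitz stacks $(\bighurg d g k)^S$ with fixed multiset $S$ of branch data (via Bertin--Romagny/Wewers, with Riemann--Hurwitz bounding the finitely many relevant $S$), and stability under generization comes from the same combinatorial fact you use, namely that specialization can only coarsen ramification profiles, combined with condition (2) of \autoref{definition:allowable}. You instead work on an atlas $T$, exhibit the complement as the image under the proper map $\bp^1_T \to T$ of the closed locus $\bigcup_i Z_{\mu_i}$ indexed by the minimal elements of $\mathcal R^c$, and push forward. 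The shared heart of both arguments is refinement-closure of $\mathcal R$ plus upper semicontinuity of partition type; the real difference is where the constructibility input lives. Your pointwise version in $\bp^1_T$ is arguably cleaner than the paper's collective statement about multisets of branch data colliding, and it avoids the machinery of branch-data Hurwitz stacks; the price is that you must establish closedness of $Z_\mu$ yourself rather than quoting algebraicity results.

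On the gap you flag: it splits into two halves of unequal difficulty. Stability of $Z_\mu$ under specialization is solid and does not need the tame hypothesis: base-change $X \to \bp^1_T$ along $\spec V \to \bp^1_T$ for a valuation ring $V$ realizing the specialization (or along the henselization at the special point); each connected component of $X_V$ is finite over the henselian local base and hence has a unique point over the closed point, so the connected components of the generic fiber map onto those of the special fiber with degrees adding up, which is exactly coarsening of the partition. (Note the fibers are automatically curvilinear since they sit inside the smooth curves $X_t$, so ``partition type'' is everywhere defined.) The half that genuinely needs work is constructibility of $Z_\mu$. Your proposed reduction to Kummer covers only holds after passing to the strict henselization, so spreading it out to an honest \'etale neighborhood requires an approximation argument of the kind the paper carries out in \autoref{lemma:no-associated-points}; alternatively, a stratification of the curvilinear locus of the stack of degree $d$ covers by partition type would do, but that is precisely the sort of statement the paper sidesteps by quoting algebraicity of $(\bighurg d g k)^S$. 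Either way the argument closes, but you should not present the stratification of $\mathfrak A_d$ as ``classical'' without a reference or a proof.
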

\begin{proof}
	As a first step, we will show this is a constructible subset.
	To do so, we can define certain substacks of
	$\bighurg d g k$ parameterizing covers $f: C \to \bp^1_k$ so that the
	multiset of ramification profiles over the geometric branch points of $f$ is equal
	to some
	fixed multiset $S$, which we call
	$(\bighurg d g k)^S$.
	One can show 
	$(\bighurg d g k)^S$
	is an algebraic stack. (See \cite[Thm.
		6.6.6]{bertinR:champs-de-hurwitz} for a very closely related
	construction.) 
	Therefore, the image of any of these stacks in
	$\bighurg d g k$ is constructible.
	Since the underlying set of $\bigrhur d g k {\mathcal R}$ is a finite
	union $\coprod_S (\bighurg d g k)^S$ for all possible multisets $S$
	producing genus $g$ covers
	which only include ramification lying in $\mathcal R$,
	we obtain that $\bigrhur d g k {\mathcal R}$ is a constructible subset
	of $\bighurg d g k$.

	To conclude, we wish to show this constructible subset is in fact an
	open subset. To do so, we only need show it is closed under
	generization.
	However, if a point of $(\bighurg d g k)^S$ has a generization which is
	a point of $(\bighurg d g k)^{S'}$ then all ramification profiles
	appearing in $S'$ must be refinements of those appearing in $S$.
	Therefore, condition $(2)$ from the definition of allowable 
	collection of ramification profiles,
	\autoref{definition:allowable},
	shows that $\bigrhur d g k {\mathcal R}$ is indeed closed under
	generization, and so defines an open substack of $\bighurg d g k$.
\end{proof}

We next give analogs of the restricted ramification loci above for spaces of
sections.

\begin{definition} \label{definition:smooth-locus} For $3 \leq d \leq 5$, 
	suppose $d!$ is invertible on $k$.
	Fix a choice of Casnati-Ekedahl stratum $\rboxed{\ce} :=
	\ce(\vec a^\sce, \vec a^{\scf_1} , \ldots, \vec a^{\scf_{\lfloor \frac{d-2}{2} \rfloor}})$, with
	associated locally free sheaves on $\bp^1_k$ given by $\rboxed{\sce_\ce}
	:= \oplus \sco(\vec a^\sce_i)$, and, if $4 \leq d \leq 5$,
	$\rboxed{\scf_\ce} := \oplus \sco(\vec a^{\scf_1}_j)$. 
	Let $g := \deg
	\det \sce_\ce - d + 1$.  Let $\rboxed{\sch_\ce}$ denote the
	associated locally free sheaf on $\bp^1$ defined in
	\eqref{equation:associate-h}.  Let $\rboxed{\mathcal R}$ denote an
	allowable collection of ramification profiles.  
	Then, define $\ramlocus \ce {\mathcal R}$ to be the open subscheme (we
		prove openness in
	\autoref{lemma:allowable-section-open}) of $\spec
	\sym^\bullet H^0(\bp^1_k, \sch_\ce)^\vee$ parameterizing $T$-points $\eta$ so that 
	$\Psi_d(\eta)$
	defines a smooth proper curve
	over $T$ with geometrically
	connected fibers such that over each geometric point $\spec \kappa \to
	\bp^1_T$,
	the pullback of $\Phi_d(\eta) \subset \bp \sce \ra
	\bp^1_T$ along $\spec \kappa \ra \bp^1_T$ has ramification profile lying in
	$\mathcal R$. 

	Also, define $\ramlocus {\ce}{\mathcal R, S_d} \subset \ramlocus
	{\ce}{\mathcal R}$ as the open subscheme parameterizing those sections
	$\eta$ for which $\Psi_d(\eta)$
	is a smooth curve $X$ with geometrically connected fibers, such that
	over each fiber, the cover $X \to \bp^1$ of degree $d$
	has Galois closure which is an $S_d$ cover.
	In other words, $\ramlocus {\ce}{\mathcal R, S_d}$ is the subset of
	$\ramlocus
	{\ce}{\mathcal R}$
	for which the map $\Psi_d$
	defines a point of $\hur d g k$.
	\end{definition} 

In the above definition, we claimed $\ramlocus \ce {\mathcal R} \subset \spec
\sym^\bullet H^0(\bp^1_k, \sch_\ce)^\vee$ is an open subscheme. We now justify
this.

\begin{lemma}
	\label{lemma:allowable-section-open}
	The subset 
$\ramlocus \ce {\mathcal R} \subset \spec \sym^\bullet H^0(\bp^1_k, \sch_\ce)^\vee$
naturally has the
structure of an open subscheme.
\end{lemma}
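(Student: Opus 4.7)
The plan is to exhibit $\ramlocus \ce {\mathcal R}$ as the intersection of finitely many open conditions on the affine space $V := \spec \sym^\bullet H^0(\bp^1_k, \sch_\ce)^\vee$. Let $\eta_{\mathrm{univ}}$ denote the tautological section on $\bp^1_V$, and consider the universal construction $\scx := \Psi_d(\eta_{\mathrm{univ}}) \subset \bp(\sce_\ce)_V$ sitting over $\bp^1_V$.

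First I would peel off the easier opens one at a time. The locus $V_1 \subset V$ where $\scx$ has the right Hilbert polynomial at every geometric point of $\bp^1_V$ is open by upper semicontinuity of fiber dimension together with local constancy of degree for finite proper morphisms. On $V_1$, the morphism $\scx \to \bp^1_{V_1}$ is finite locally free of degree $d$, and \autoref{lemma:gorenstein-open} cuts out the further open $V_2 \subset V_1$ where the fibers are Gorenstein. Smoothness of $\scx \to V_2$ is open because the smooth locus of any morphism is open and $\scx \to V_2$ is proper. On this smooth locus, the fibers over $V$ are smooth proper curves, so geometric connectedness of these fibers is an open condition (for proper flat morphisms with geometrically reduced fibers, the number of geometric connected components is lower semicontinuous on the base).

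The remaining, more delicate step is the ramification condition. The scheme-theoretic fiber of $\scx \to \bp^1_V$ at a geometric point $(p,v)$ is a finite length-$d$ Gorenstein $k(p,v)$-algebra which, being a filtered colimit of Artinian Gorenstein algebras of the same length $d$, is always of the form $\coprod_i \spec \kappa[x]/(x^{r_i})$; this yields a partition of $d$ and hence a stratification of $\bp^1_V$ by the finitely many ramification profiles $\lambda \vdash d$. Each stratum is locally closed (being the intersection of conditions on fiber dimensions of various sheaves). Under specialization within $\bp^1_V$, two branch points may collide but cannot split, so profiles can only become coarser; explicitly, the closure of the $\lambda$-stratum is contained in the union of strata indexed by partitions $\lambda'$ such that $\lambda$ refines $\lambda'$. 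Condition (2) of \autoref{definition:allowable} is precisely the contrapositive assertion that $\mathcal R^c$ is closed under coarsening: if $\lambda \notin \mathcal R$ and $\lambda$ refines $\lambda'$, then $\lambda' \notin \mathcal R$. Thus $\bigcup_{\lambda \notin \mathcal R} (\text{stratum of } \lambda)$ is a constructible subset of $\bp^1_V$ closed under specialization, hence closed. Its image under the proper map $\bp^1_V \to V$ is closed, so its complement is open in $V$.

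Intersecting the open subsets produced above yields the desired open structure on $\ramlocus \ce {\mathcal R}$. The main obstacle is Step for the ramification profile, which hinges on correctly identifying the specialization order on partitions with the refinement partial order and then exploiting condition (2) of allowability; the other steps are essentially standard applications of openness results for smoothness, flatness, and connectedness in proper families.
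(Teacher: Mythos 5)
Your argument is correct in outline but takes a genuinely different route from the paper. The paper's proof is a two-line reduction: on the open locus $W_d$ where $\Psi_d(\eta)$ has degree $d$ on all fibers, the construction $\eta \mapsto \Psi_d(\eta)$ gives a map $W_d \to \bighurg d g k$, and $\ramlocus \ce {\mathcal R}$ is the preimage of the open substack $\bigrhur d g k {\mathcal R}$, whose openness is the content of \autoref{lemma:restricted-ramification-is-open}. That lemma is itself proved by a constructibility-plus-stable-under-generization argument carried out on the Hurwitz stack, using the algebraicity of the substacks with prescribed multiset of branch profiles (via Bertin--Romagny/Wewers) and condition (2) of \autoref{definition:allowable}. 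You instead work directly with the universal family over $\bp^1_V$: you stratify $\bp^1_V$ pointwise by ramification profile, observe that specialization inside $\bp^1_V$ only coarsens the profile, conclude via allowability that the bad locus is constructible and stable under specialization hence closed, and push it forward along the proper map $\bp^1_V \to V$. The two arguments hinge on the same combinatorial input (closure of $\mathcal R$ under refinement), but yours trades the citation to the algebraicity of Hurwitz stacks with fixed branch data for a fiberwise analysis; the costs you incur are the local closedness of the profile strata (which you assert somewhat loosely, though it is standard --- compare the paper's use of residual gerbes in \autoref{remark:functorial-gerbe-description}) and the specialization behavior of partitions in finite flat curvilinear families, which you identify correctly.

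One justification in your write-up is genuinely false and should be repaired, even though the step it supports survives. A finite Gorenstein algebra of length $d$ over a field is \emph{not} automatically of the form $\coprod_i \spec \kappa[x]/(x^{r_i})$: the algebra $k[x,y]/(x^2,y^2)$ is a length-$4$ Gorenstein (complete intersection) counterexample, and the ``filtered colimit of Artinian Gorenstein algebras'' reasoning does not establish curvilinearity. The correct reason your fibers are curvilinear is that you have already restricted to the locus where $\scx \to V$ is a smooth relative curve, so each fiber of $\scx \to \bp^1_V$ is a quotient of a one-dimensional regular local ring and hence has cotangent space of dimension at most one; outside that locus the profile need not be defined at all, and those points must be excluded before the stratification argument begins (your ordering of the steps does this, but the stated justification does not). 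Two smaller slips: the number of geometric connected components in a proper flat family with geometrically reduced fibers is \emph{upper} semicontinuous (via $h^0(\sco)$), which is what makes the connected locus open; and in your first step the local constancy of the degree requires flatness of $\Psi_d(\eta_{\mathrm{univ}})$ over $\bp^1_V$, which is not formal for $d=5$ and is established in the paper in the proofs of the parametrization theorems.
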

\begin{proof}
	Let $W_d \subset \spec \sym^\bullet H^0(\bp^1_k, \sch_\ce)^\vee$ denote
	the open subscheme parameterizing those
	sections $\eta$ for which $\Psi_d(\eta)$ has degree $d$ on all fibers.
	There is a map $W_d \to \bighurg d g k$,
	induced by $\Psi_d$ sending $\eta \mapsto \Psi_d(\eta)$.
	Under this map, $\ramlocus\ce {\mathcal R}$ is the preimage of
	$\bigrhur d g k {\mathcal R}$, which is open by
	\autoref{lemma:restricted-ramification-is-open}.
	Hence, $\ramlocus \ce {\mathcal R} \subset \spec \sym^\bullet H^0(\bp^1_k, \sch_\ce)^\vee$
	is open.
\end{proof}

\begin{example}
	\label{example:ramification-loci} If we take $\mathcal R$ in
	\autoref{definition:smooth-locus} to range over all possible
	ramification profiles (i.e., all partitions of $d$) then
	$\ramlocus \ce {\mathcal R}$ corresponds to all sections $\eta$
	as in \autoref{definition:smooth-locus} with $\Phi_d(\eta)$ a
	smooth geometrically connected degree $d$ cover of $\bp^1_k$.

	On the other hand, if we take $\mathcal R$ to be the union of two
	ramification profiles, the first given by $1^d$ and the second given by
	$(2, 1^{d-2})$, we obtain all sections $\eta$ with $\Phi_d(\eta)$
	a smooth geometrically connected curve which is simply branched
	over $\bp^1$.  
\end{example}

\subsection{Writing the class as a sum over Casnati-Ekedahl strata}

Our goal for the remainder of the section is to express the class of the Hurwitz
stack as a sum over the Casnati-Ekedahl strata, which will be somewhat more
manageable due to their descriptions as quotients of opens in affine spaces by
relatively simple algebraic groups.

\begin{proposition} \label{proposition:union-of-ce-strata} For $3 \leq d \leq
	5$, and $\mathcal R$ an allowable collection of ramification profiles of
	degree $d$, we have an equality in $\grStacks$ \begin{align*} \{\rhur d g
		k{\mathcal R}\} = \sum_{\text{ Casnati-Ekedahl strata }\ce}
	\frac{\{\ramlocus \ce {\mathcal R, S_d}\}}{\{\aut_{\ce}\}}.  \end{align*}
\end{proposition}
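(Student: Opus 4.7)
The plan is to combine the Casnati-Ekedahl stratification of $\bighurg d g k$ with the quotient-stack parametrization of Gorenstein covers from \autoref{proposition:ce-as-quotient-stack}, applied relatively over $\bp^1$.

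First, by \autoref{remark:ce-loci-stratify}, the strata $\ce(\vec a^\sce,\vec a^{\scf_1},\dots,\vec a^{\scf_{\lfloor (d-2)/2\rfloor}})$ form a locally closed stratification of $\bighurg d g k$ as the splitting-type tuple varies. Intersecting with the open substack $\rhur d g k{\mathcal R}\subset\bighurg d g k$ stratifies the latter by the pieces $\ce^{\mathcal R,S_d}$. By additivity in the Grothendieck ring,
$$\{\rhur d g k{\mathcal R}\}=\sum_{\ce}\{\ce^{\mathcal R,S_d}\}.$$

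Next I would identify each stratum with an explicit quotient stack. A $T$-point of $\ce^{\mathcal R,S_d}$ is a smooth geometrically connected $S_d$-cover $X\to\bp^1_T$ of degree $d$ with ramification profiles in $\mathcal R$, whose associated Casnati-Ekedahl bundles $(\sce^X,\scf_\bullet^X)$ on $\bp^1_T$ are geometrically-fiberwise isomorphic to the pullback from $\bp^1_k$ of the bundles $(\sce,\scf_\bullet)$ specified by $\ce$. Applying \autoref{proposition:ce-as-quotient-stack} with base $\bp^1_T$ identifies, after fpqc-locally choosing such isomorphisms, the groupoid of such covers with the groupoid of sections $\eta\in H^0(\bp^1_T,\sch(\sce,\scf_\bullet))$ up to the natural action of the relative automorphism sheaf over $\bp^1_T$. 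Pushing this quotient-stack description along $\bp^1_T\to T$ replaces sections by $T$-points of $\spec\sym^\bullet H^0(\bp^1_k,\sch_\ce)^\vee$ and the relative automorphism sheaf by its Weil restriction, which is precisely $\aut_\ce$ by \autoref{notation:f-e-aut-ce-notation}. The open conditions of smoothness, geometric connectedness (using \autoref{theorem:bertini-3-4-5}), $S_d$-Galois closure, and $\mathcal R$-ramification carve out the open subscheme $\ramlocus\ce{\mathcal R,S_d}$, giving the stack isomorphism $\ce^{\mathcal R,S_d}\simeq[\ramlocus\ce{\mathcal R,S_d}/\aut_\ce]$.

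Finally, the general identity $\{[X/G]\}=\{X\}/\{G\}$ in $\grStacks$ (valid because $X\to[X/G]$ is a $G$-torsor and the class of any algebraic group is invertible in $\grStacks$ via \autoref{remark:ekedahl-grstacks-equivalence}) converts this stack isomorphism into $\{\ce^{\mathcal R,S_d}\}=\{\ramlocus\ce{\mathcal R,S_d}\}/\{\aut_\ce\}$, and summing over strata yields the claimed formula. The main obstacle is the middle step: carefully upgrading the absolute parametrization of \autoref{proposition:ce-as-quotient-stack} to the relative setting over $\bp^1_T/T$ and matching the resulting automorphism group, via Weil restriction along $\bp^1_T\to T$, with the group $\aut_\ce$ as defined in \autoref{notation:f-e-aut-ce-notation}, so that the resulting quotient stack coincides with the Hurwitz-theoretic stratum $\ce^{\mathcal R,S_d}$.
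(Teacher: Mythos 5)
Your overall route is the same as the paper's: stratify $\rhur d g k{\mathcal R}$ by Casnati--Ekedahl strata, identify each stratum $\ce^{\mathcal R,S_d}$ with the quotient $[\ramlocus\ce{\mathcal R,S_d}/\aut_\ce]$ (this is exactly \autoref{proposition:ce-presentation}, which the paper obtains by relativizing \autoref{proposition:ce-as-quotient-stack} over $\bp^1$ just as you describe), and then divide by $\{\aut_\ce\}$.

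However, your justification of the final step has a genuine gap. The identity $\{[X/G]\}=\{X\}/\{G\}$ does \emph{not} follow from $X\to[X/G]$ being a $G$-torsor together with invertibility of $\{G\}$: one needs the torsor to be Zariski-locally trivial, i.e.\ one needs $G$ to be \emph{special}. For non-special groups (e.g.\ $\pgl_n$, or finite groups) the identity can fail, which is precisely the subtlety the paper flags in \autoref{remark:hilbert-scheme-variant}. Moreover, it is not true that the class of an arbitrary algebraic group is invertible in $\grStacks$: by \autoref{remark:ekedahl-grstacks-equivalence} only $\bl$ and the $\bl^n-1$ are inverted, so for instance the class of a finite group of order $m$ (which is just the integer $m$) is generally not invertible. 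The missing ingredient is the paper's \autoref{lemma:special-auts}, which shows that $\aut_\ce$ is special --- by exhibiting it, via \autoref{lemma:aut-as-sub}, as an iterated extension of general linear, special linear, and vector groups --- and that its class is a product of powers of $\bl$ and factors of the form $\bl^s-1$, hence invertible. With that lemma in hand, \cite[Prop.\ 1.4(i)]{Ekedahl2009} gives $\{\aut_\ce\}\cdot\{[\ramlocus\ce{\mathcal R,S_d}/\aut_\ce]\}=\{\ramlocus\ce{\mathcal R,S_d}\}$ and the argument closes as you intend.
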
 
\begin{proof}[Proof assuming \autoref{proposition:ce-presentation} and
  \autoref{lemma:special-auts}]  
	We claim \begin{align*} \{\rhur d g
			k{\mathcal R}\} &= \sum_{\text{ Casnati-Ekedahl strata
			}\ce} \{\rce {\mathcal R, S_d}\} \\ &= \sum_{\text{
			Casnati-Ekedahl strata }\ce} \left\{\left[\frac{\ramlocus \ce
			{\mathcal R, S_d}}{\aut_{\ce}}\right] \right\} \\ &= \sum_{\text{
		Casnati-Ekedahl strata }\ce} \frac{\{\ramlocus \ce {\mathcal
	R, S_d}\}}{\{\aut_{\ce}\}}.  \end{align*} The first equality holds because the
	Casnati-Ekedahl strata form a stratification of $\bighurg d g k$ by locally
	closed substacks.  The second holds by
	\autoref{proposition:ce-presentation}.  The final equality holds by
	\autoref{lemma:special-auts}, using both that $\aut_{\ce}$ is special so
	$\{\aut_{\ce}\} \left\{\left[\frac{\ramlocus \ce {\mathcal R,
		S_d}}{\aut_{\ce}}\right]\right	\} = \{\ramlocus \ce {\mathcal R, S_d}\}$ by \cite[Prop.\ 1.4(i)]{Ekedahl2009},
	and that $\{\aut_{\ce}\}$ is invertible.  \end{proof}

To conclude our proof of \autoref{proposition:union-of-ce-strata},
we need to verify \autoref{proposition:ce-presentation}
and \autoref{lemma:special-auts}.
We omit the proof of 
\autoref{proposition:ce-presentation}
since it is analogous to
\autoref{proposition:ce-as-quotient-stack}, where we additionally fix
isomorphisms to fixed bundles $\sce, \scf_\bullet$ on $\bp^1_k$ (as opposed to
trivial bundles on $\spec \bz$) and add in conditions associated to
the ramification profiles in $\mathcal R$ and lying in $\hur d g k$ appropriately.

\begin{proposition} \label{proposition:ce-presentation} For $3 \leq d \leq 5$, 
	fix a choice of Casnati-Ekedahl stratum $\rboxed{\ce} :=
	\ce(\vec a^\sce, \vec a^{\scf_1} , \ldots, \vec a^{\scf_{\lfloor \frac{d-2}{2} \rfloor}})$ with
	associated sheaves $\sce_\ce$ and, if $4 \leq d \leq 5$, $\scf_\ce$ as
	in \autoref{definition:smooth-locus}.  There are isomorphisms
	$[\ramlocus \ce {\mathcal R} / \aut_{\ce}] \simeq \rce {\mathcal R}.$
	and
	$[\ramlocus \ce {\mathcal R, S_d} / \aut_{\ce}] \simeq \rce {\mathcal R,
	S_d}.$
\end{proposition}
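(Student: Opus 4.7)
The plan is to adapt the proof of \autoref{proposition:ce-as-quotient-stack} to the relative setting over $\bp^1_k$ with fixed splitting types, then verify that the relevant ramification and Galois-group conditions cut out compatible open substacks on each side.

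First, I would construct an $\aut_{\sce_\ce/\bp^1_k} \times \prod_i \aut_{\scf_{i,\ce}/\bp^1_k}$-torsor $\mathrm{T}_\ce \to \rce{}$ (dropping ramification decorations temporarily), whose $T$-points parameterize families $f: X \to \bp^1_T$ together with chosen trivializations of the associated bundles $\sce^X \simeq (\sce_\ce)_T$ and $\scf_\bullet^X \simeq (\scf_{\bullet,\ce})_T$ coming from \autoref{theorem:ce-main-generalized}. Passing to the Weil restriction along $\bp^1_T \to T$ exhibits this as an $\aut_\ce$-torsor in the sense of \autoref{notation:f-e-aut-ce-notation}. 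The structure theorems \autoref{theorem:3-structure}, \autoref{theorem:4-structure}, and \autoref{theorem:5-structure}, applied relatively to $\bp^1_T \to T$, then produce a section $\eta \in H^0(\bp^1_T, \sch_{\ce,T})$ with the right Hilbert polynomial, yielding a map $\mathrm{T}_\ce \to \spec \sym^\bullet H^0(\bp^1_k,\sch_\ce)^\vee$. The image lands in the locus where the resulting cover is smooth and geometrically connected, i.e., in $\smoothlocus \ce$ (or its obvious variant before adding ramification constraints).

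Next, I would identify the fibers of $\mathrm{T}_\ce \to \smoothlocus \ce$ with the kernel of the quotient defining $\aut_\ce$, exactly as in the proof of \autoref{proposition:ce-as-quotient-stack}: the section $\eta$ is invariant under the central $\bg_m$ that is quotiented out to form $\aut_\ce$ (by the weight computations $\lambda^{-2}\cdot \lambda^2=1$ in degree $4$ and $\lambda^6\cdot\lambda^2\cdot\lambda^{-8}=1$ in degree $5$), and nothing else stabilizes $\eta$ because the subscheme $X \subset \bp\sce_\ce$ is nondegenerate on fibers and $\scf_\ce \hookrightarrow \sym^2\sce_\ce$ is determined by $X$ via \autoref{theorem:ce-main-generalized}(v). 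Thus the induced morphism $[\smoothlocus \ce / \aut_\ce] \to \rce{}$ is an isomorphism.

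Finally, I would observe that both $\ramlocus \ce {\mathcal R} \subset \smoothlocus \ce$ and $\rce{\mathcal R} \subset \rce{}$ are defined by the same fiberwise ramification condition pulled back from the Hurwitz stack, so the isomorphism descends to give $[\ramlocus \ce {\mathcal R}/\aut_\ce] \simeq \rce{\mathcal R}$. The same argument with the additional open condition of being an $S_d$-cover (as in \autoref{definition:hur}) gives the $S_d$ version. The main obstacle is being careful about the equivariance: one must verify that the $\aut_\ce$-action on sections matches the ``change of trivialization'' action on the torsor, and in particular that the $\bg_m$ one quotients by acts trivially on both $\eta$ and on the cover itself; this is exactly the weight computation from \autoref{proposition:ce-as-quotient-stack} carried out fiberwise, combined with the uniqueness of the triple $(\bp\sce, \pi, i)$ from \autoref{theorem:ce-main-generalized} to ensure no extra automorphisms appear after Weil restriction.
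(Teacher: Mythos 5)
Your proposal is correct and follows exactly the route the paper intends: the paper explicitly omits this proof, stating it is analogous to \autoref{proposition:ce-as-quotient-stack} with trivializations taken to the fixed bundles $\sce_\ce, \scf_{\bullet,\ce}$ on $\bp^1_k$ rather than trivial bundles on $\spec \bz$, and with the ramification and $S_d$ conditions imposed as compatible open conditions on both sides. Your torsor construction, the fiberwise $\bg_m$-weight computations, and the restriction to the open loci are precisely that argument spelled out.
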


We now verify the relevant automorphism groups are special.
Because later we will have to deal with an analogous
construction over the dual numbers $D$, we include that setting in the
following lemma as well.

\begin{lemma} \label{lemma:special-auts} For $\scv$ any vector bundle on $Y$,
	for $Y = \bp^1_k$ or $Y = D$, $\res_{Y/k}(\aut_{\scv})$ and
	$\res_{Y/k}(\ker(\det: \aut_{\sce} \to \bg_m))$ are special and their
	classes are invertible in $\grStacks$.

	When $Y = \bp^1$ or $Y = D$, the three
	group schemes appearing in \eqref{equation:aut-groups} in the cases $d
	= 3, 4$, and $5$ are special.  Further, the classes of these groups are
	invertible in $\grStacks$.  \end{lemma}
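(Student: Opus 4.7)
The plan is to exhibit each group in question as an iterated split extension of groups known to be special: $\gl_n$, $\sl_n$, $\bg_m$, and vector groups. By the classical theorem of Serre and Grothendieck that an extension of a special group by a special group is special, each group will then be special. Invertibility of the class in $\grStacks \simeq \grSpaces[\bl^{-1}, (\bl^n-1)^{-1}_{n\geq 1}]$ will follow at the same time: since each extension splits, the underlying variety is a product, so the class is a product of the invertible classes of $\gl_n$, $\sl_n$, $\bg_m$, and affine spaces.

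First I would handle $\res_{Y/k}(\aut_\scv)$. Decomposing $\scv \simeq \bigoplus_i \sco_{\bp^1}(b_i)^{\oplus m_i}$ with $b_1 < \cdots < b_s$ in the $\bp^1$-case, or $\scv \simeq \sco_D^{\oplus r}$ in the $D$-case, an $R$-point is a block-triangular invertible endomorphism whose $(i,j)$-block lies in $\mat_{m_i \times m_j}\bigl(H^0(\bp^1, \sco(b_i - b_j)) \otimes R\bigr)$ (respectively, an element $A + B\varepsilon$ with $A \in \gl_r(R)$, $B \in \mat_{r \times r}(R)$). Either way this gives a split decomposition $\res_{Y/k}(\aut_\scv) \simeq U \rtimes L$ with $U$ a vector group and $L$ a product of general linear groups. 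Since Weil restriction commutes with kernels, $\res_{Y/k}(\ker(\det: \aut_\scv \to \bg_m))$ equals the kernel of the induced determinant on $\res_{Y/k}(\aut_\scv)$. On the $\bp^1$-side, $U$ lies in this kernel (triangular determinants are block-diagonal) and the kernel on $L$ splits as $\bigl(\prod_i \sl_{m_i}\bigr) \rtimes \bg_m^{s-1}$ via an obvious subtorus section. On the $D$-side, the direct calculation $\det(A + B\varepsilon) = \det A + \det A \cdot \tr(A^{-1}B)\varepsilon$ identifies the kernel with the split extension $\fsl_r \rtimes \sl_r$ via $A \mapsto (A, 0)$.

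Finally, for the three groups of \eqref{equation:aut-groups}, the $d = 3$ case is directly $\res_{Y/k}(\aut_\sce)$, already treated. For $d = 4, 5$, \autoref{lemma:aut-as-sub} identifies $\aut_{\sce, \scf_\bullet}^{Y/k}$ with the kernel of a map $\phi$ from $\res_{Y/k}(\aut_\sce) \times \res_{Y/k}(\aut_\scf)$ to $\res_{Y/k}(\bg_m)$ built from $\det^i$ and $\det^{-1}$; projection onto the first factor then presents this kernel as an extension of $\res_{Y/k}(\aut_\sce)$ by $\res_{Y/k}(\ker(\det : \aut_\scf \to \bg_m))$, both special with invertible class by the preceding paragraph. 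The main bookkeeping hurdle will be the $Y = D$ case, where $\res_{D/k}(\bg_m) \simeq \bg_m \ltimes \bg_a$ imposes both a determinant and a trace condition, so surjectivity of $\phi$ (needed for the short exact sequence) requires producing an explicit algebraic section; the formula for $\det(A + B\varepsilon)$ above reduces this to a routine calculation.
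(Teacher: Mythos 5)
Your proposal is correct and follows essentially the same route as the paper's proof: decompose $\res_{Y/k}(\aut_\scv)$ as a vector group extended by a product of general linear groups, realize the determinant kernel as an extension of a torus by $\sl$'s and vector groups, and for $d=4,5$ use \autoref{lemma:aut-as-sub} together with the surjective projection onto $\res_{Y/k}(\aut_\sce)$ whose kernel is $\res_{Y/k}(\ker(\det:\aut_\scf\to\bg_m))$. The only difference is that you spell out the $Y=D$ case (via $\det(A+B\varepsilon)=\det A\,(1+\tr(A^{-1}B)\varepsilon)$), which the paper dismisses as "analogous but simpler" since all bundles on $D$ are free.
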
 
	\begin{proof} We only explicate
		the proof in the case $Y= \bp^1_k$, since the proof when $Y = D$
		is analogous but simpler (noting that all vector bundles are
		trivial over $D$).  
		
		First we show that for any vector bundle $\scg$
		on $\bp^1_k$, $\res_{\bp^1_k/k}(\aut_{\scg})$ is special.  The
		reason for this is as follows.
		Write $\scg = \oplus_{i=1}^m
		\sco_{\bp^1_k}(a_i)^{n_i}$ with $a_1 \leq \cdots \leq a_m$.
		We can express
		$\res_{\bp^1_k/k}(\aut_{\scg}) \simeq \prod_i \gl_{n_i} \ltimes
		\prod_{i<j} V_{ij}$ where $V_{ij}$ is the vector group $V_{ij} =
		\res_{\bp^1_k/k}(\hom(\sco_{\bp^1_k}(a_i)^{n_i},
		\sco_{\bp^1_k}(a_j)^{n_j})) \simeq \bg_a^{(a_j - a_i+1)n_i
		n_j}$.  It will also be useful to note that $\ker(\det) :
		\res_{\bp^1_k/k}(\aut_{\scg} )\to \res_{\bp^1_k/k}(\bg_m)$ is special, since it
		can be expressed as an extension of a power of $\bg_m$ by
		$\prod_i \sl_{n_i} \ltimes \prod_{i<j} V_{ij}$, both of which are
		special.  These statements imply the first part of the lemma.

	We now check the groups $\aut^{\bp^1_k/k}_{\sce, \scf_\bullet}$
	are special when $d = 3,4$, and $5$.
	The above observations immediately implies the claim when $d = 3.$ To deal with
	the cases $d =4$ and $d =5$, we use \autoref{lemma:aut-as-sub}.  In both
	cases, the composition coming from \autoref{lemma:aut-as-sub}
	$\aut_{\sce, \scf}^{\bp^1_k/k} \to \res_{\bp^1_k/k}(\aut_{\sce/\bp^1_k}) \times
	\res_{\bp^1_k/k}(\aut_{\scf/\bp^1_k}) \to
	\res_{\bp^1_k/k}(\aut_{\sce/\bp^1_k})$ is surjective.
	From the description in \autoref{lemma:aut-as-sub}, the kernel of this
	composition is identified with $\ker (\det) :
	\res_{\bp^1_k/k}(\aut_{\scf/\bp^1_k})
	\to \bg_m$.  As mentioned above, this is special, and so
	$\aut^{\bp^1_k/k}_{\sce, \scf_\bullet}$ is an extension of special group
	schemes, hence special.

By the above explicit description of $\aut^{\bp^1_k/k}_{\sce, \scf_\bullet}$ in
terms of classes of special linear groups, general linear groups, and vector
groups, we conclude that 
$\aut^{\bp^1_k/k}_{\sce, \scf_\bullet}$
has class which is a product of powers of $\bl$, and
expressions of the form $\bl^s - 1$ for varying $s$. 
Therefore,
$\aut^{\bp^1_k/k}_{\sce, \scf_\bullet}$
is invertible in
$\grStacks$.  \end{proof}

\section{Computing the Local Classes} \label{section:local-class}

The goal of this section is to compute the classes of sections over the dual
numbers in \autoref{theorem:local-class}. 
These classes
can be thought of as describing the ``probability'' that a curve is smooth at a
point and has a certain ramification profile.
We will then use these classes to sieve for smoothness and ramification
conditions by employing the work of Bilu and Howe
\cite{biluH:motivic-euler-products-in-motivic-statistics}
in \autoref{proposition:ce-strata-class}.
The condition of smoothness can be rephrased as a local condition over an
infinitesimal neighborhood of the point in $\bp^1$.
We will first prove \autoref{theorem:local-aut-class} which computes this
``probability'' for abstract covers,
and from this deduce \autoref{theorem:local-class}, which
computes this ``probability'' for sections of $\sch(\sce, \scf_\bullet)$.
 \autoref{theorem:local-aut-class} can be thought of as a motivic analog of Bhargava's mass formulas for counting local fields \cite{Bhargava2010}, though we note that the interesting part of \cite{Bhargava2010} is when there is wild ramification, and our hypothesis eliminates that possibility.  On the other hand, it is still interesting to upgrade even the (much easier) tame mass formula to a motivic statement.

The idea for computing these local classes seems one of the main
new insights of this paper.
In the arithmetic analogs of this work, one is able to directly count
the number of sections over $\bz/p^2\bz$, see
\cite[Lem.\ 18]{Bhargava2013b} for the degree 3 case,
\cite[Lem.\ 23]{Bhargava2004b} for the degree $4$ case,
and \cite[Lem.\ 20]{Bhargava2008a} for the degree $5$ case.
In the Grothendieck ring, when working over infinite fields, 
there are infinitely many sections, and so to
determine the relevant class, direct counting is no longer possible.
We relate computing the classes of these sections to computing the classes of
the classifying stacks of abstract automorphism groups of the corresponding schemes.
These classes can in turn be computed using stacky symmetric powers $\symm^n$
(see \autoref{definition:stacky-symmetric-powers})
and the
class of $BS_n$.
An observation
which is the key to the proof of \autoref{theorem:local-class} is that for $G$
a group scheme, we have an isomorphism of stacks $\symm^n (BG) \simeq B(G \wr
S_n)$.

Throughout this section, we fix
$d \in \bz_{\geq 1}$ an integer and let $k$ be a field with $\chr(k) \nmid d!$.
For later explicit calculations, it will be convenient to  work with the
following explicit scheme $X_{(R)}$
over $D$ which has ramification profile $R =(r_1^{t_1}, \ldots, r_n^{t_n})$,
with $\sum_{i=1}^n r_i t_i = d$,
over the closed point of $D$. Define
\begin{align} \label{equation:X} 
	X_{(R)} :=
	\coprod_{i=1}^n \left(\coprod_{j=1}^{t_i} \spec k[x,
	\varepsilon]/(x^{r_i}-\varepsilon, \varepsilon^2) \right).
\end{align} 
So $X_{(R)}$ is a disjoint union of curvilinear schemes flat over the dual
numbers, which have degrees over the dual numbers corresponding to elements of
the partition. In particular, the total degree of $X_{(R)}$ over the dual
numbers is $d$.
We use the parentheses around $R$ in $X_{(R)}$ to distinguish it from
from the base change of $X$ to $R$.

Recall we defined $\coverstack d$ prior to
\autoref{definition:sections-to-coverstack} as the algebraic stack
parameterizing degree $d$ finite
locally free covers over a base field $k$.
\begin{definition}
	\label{definition:}
	We let $\gerbe R d \subset \res_{D/k}(\coverstack d \times_{\spec k} D)$
denote the residual gerbe at the $k$-point of
$\res_{D/k}(\coverstack d \times_{\spec k} D)$
corresponding to the $D$-point of $\coverstack d$
given by $X_{(R)}$.
\end{definition}

\begin{remark}
	\label{remark:residual-gerbe-trivial}
	Since we have an induced monomorphism $B(\res_{D/k}(\aut_{X_{(R)}/D})) \to
\coverstack d$ and an epimorphism $\spec k \to B(\res_{D/k}(\aut_{X_{(R)}/D}))$,
it follows that $\gerbe R d$ is equivalent to $B(\res_{D/k}(\aut_{X_{(R)}/D}))$
from the universal property for residual gerbes.
\end{remark}

Our main result of this section is to compute the class of $\gerbe R d$ in
$\grStacks$,
and we complete the proof at the end of the section in \autoref{subsection:local-class-proof}.
\begin{theorem}
	\label{theorem:local-aut-class} Let $R$ be a ramification profile which is
	a partition of $d$.
	Let $r(R)$ be
	the ramification order associated to the ramification profile $R$, as
	defined in \autoref{definition:ramfication-profile}. Then,
	for $k$ a field with $\chr(k) \nmid d!$, we have
	\begin{align*} \{\gerbe R d\} = \bl^{-r(R)} \end{align*} 
	in $\grStacks$.
\end{theorem}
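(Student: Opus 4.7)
The plan is to decompose $\aut_{X_{(R)}/D}$ along the product structure of $X_{(R)}$, reducing to single-ramification factors, and then compute the corresponding classifying stack classes in $\grStacks$ via explicit stratifications and the extension formula for classifying stacks.

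From $X_{(R)} = \coprod_{i=1}^n Y_{r_i}^{\sqcup t_i}$ with $Y_r := \spec k[x,\varepsilon]/(x^r - \varepsilon, \varepsilon^2)$, there is a natural isomorphism $\aut_{X_{(R)}/D} \simeq \prod_i \aut_{Y_{r_i}/D} \wr S_{t_i}$. Since Weil restriction commutes with products and fixes the constant group $S_{t_i}$, we get $\res_{D/k}(\aut_{X_{(R)}/D}) \simeq \prod_i \res_{D/k}(\aut_{Y_{r_i}/D}) \wr S_{t_i}$; applying $B(G \wr S_n) = \symm^n(BG)$ then yields $\gerbe R d \simeq \prod_i \symm^{t_i}(\gerbe{(r_i)}{r_i})$.

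Next I would parametrize $D$-algebra automorphisms of $Y_r = D[x]/(x^r - \varepsilon)$: such $\phi$ must satisfy $\phi(x)^r = \varepsilon$. Writing $\phi(x) = a_0 + a_1 x + \cdots + a_{r-1} x^{r-1}$ with $a_i \in D$ and using $\chr(k) \nmid r$, one finds inductively that $\phi(x) = b_1 x + \varepsilon(c_1 x + c_2 x^2 + \cdots + c_{r-1} x^{r-1})$ with $b_1 \in \mu_r$ and $c_i \in k$ free. Carrying out this analysis over arbitrary $k$-algebras $B$ yields a short exact sequence of $k$-group schemes
\begin{align*}
1 \to V \to \res_{D/k}(\aut_{Y_r/D}) \to \mu_r \to 1,
\end{align*}
with $V \simeq \bg_a^{r-1}$ a vector group on which $\mu_r$ acts by graded weights.

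For the class computation, I would first show $\{B\mu_r\} = 1$ by stratifying $[\ba^1/\mu_r] = B\mu_r \sqcup [\bg_m/\mu_r]$ (the second piece equals $\bg_m$ via the free action $x \mapsto x^r$) and combining with the identity $\{[\ba^1/\mu_r]\} = \bl \cdot \{B\mu_r\}$ coming from relation~(3) in the definition of $\grStacks$, which gives $(\bl-1)\{B\mu_r\} = \bl-1$. Then, since vector-group torsors are Zariski-locally trivial, the fibration $B(\res_{D/k}(\aut_{Y_r/D})) \to B\mu_r$ with fiber $BV$ yields $\{\gerbe{(r)}{r}\} = \{BV\} \cdot \{B\mu_r\} = \bl^{-(r-1)}$. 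Applying the analogous argument to the sequence $1 \to V^t \to G \wr S_t \to \mu_r \wr S_t \to 1$ (with $G = \res_{D/k}(\aut_{Y_r/D})$), combined with $\{B(\mu_r \wr S_t)\} = 1$ (shown the same way using the faithful action of $\mu_r \wr S_t$ on $\ba^t$ with polynomial quotient $\ba^t = \spec k[e_1(x_i^r), \ldots, e_t(x_i^r)]$), gives $\{\symm^{t}(\gerbe{(r)}{r})\} = \bl^{-t(r-1)}$. Multiplying across $i$ finally yields $\{\gerbe R d\} = \prod_i \bl^{-t_i(r_i-1)} = \bl^{-r(R)}$.

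The main obstacle is rigorously justifying the extension formula $\{B(V \rtimes Q)\} = \{BV\} \cdot \{BQ\}$ for the various extensions above when $Q$ acts nontrivially on $V$, which should follow from the Zariski-local triviality of vector-bundle torsors combined with relation~(3) of $\grStacks$ for vector bundles on classifying stacks, and is where Ekedahl's framework for classifying stack classes is essential.
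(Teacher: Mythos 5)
Your computation of the group scheme $\res_{D/k}(\aut_{X_{(R)}/D})$ — the reduction to wreath products over the parts of the partition and the explicit parametrization of automorphisms of $k[x,\varepsilon]/(x^r-\varepsilon,\varepsilon^2)$, yielding an extension of $\mu_r$ by $\bg_a^{r-1}$ — agrees with the paper's \autoref{lemma:single-component-auts} and \autoref{corollary:curvilinear-auts}, and your derivation of $\{B\mu_r\}=1$ from the two-stratum decomposition of $[\ba^1/\mu_r]$ is correct. You diverge from the paper in how the classes of the classifying stacks are then extracted. The paper never touches $B\mu_r$: it embeds $\bg_a^{r-1}\rtimes\mu_r$ into the \emph{special} group $\bg_a^{r-1}\rtimes\bg_m$ and applies Ekedahl's formula $\{BK\}=\{L/K\}\{BL\}$ (\autoref{lemma:semidirect-class}), which sidesteps the extension formula $\{B(V\rtimes Q)\}=\{BV\}\{BQ\}$ that you correctly flag as your main obstacle; for the wreath products it then uses $B(G\wr S_t)=\symm^t(BG)$ together with Ekedahl's results that $\{\symm^t\scx\}$ depends only on $\{\scx\}$ and is compatible with multiplication by affine spaces, reducing everything to $\{BS_t\}=1$ (\autoref{lemma:single-part-bh}).

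The genuine gap is your claim that $\{B(\mu_r\wr S_t)\}=1$ can be ``shown the same way'' via the faithful action on $\ba^t$ with polynomial invariant ring. The argument that works for $\mu_r$ acting on $\ba^1$ — a free open stratum plus a single residual stratum isomorphic to $B\mu_r$ again — does not transfer: the action of $\mu_r\wr S_t$ on $\ba^t$ has a large non-free locus stratified by many conjugacy classes of stabilizer subgroups (various products of smaller wreath products), and knowing that the \emph{scheme} quotient is $\ba^t$ says nothing directly about the class of the \emph{stack} quotient $[\ba^t/(\mu_r\wr S_t)]$. Carrying out the required induction is precisely the inclusion--exclusion over stabilizer flags developed in the appendix to prove $\{BS_n\}=1$ — and note that the naive form of that inclusion--exclusion is actually false (\autoref{example:ekedahl-counterexample}), so this is delicate. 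In other words, the step you treat as routine contains the hardest input to the theorem; the efficient fix is to invoke $B(\mu_r\wr S_t)=\symm^t(B\mu_r)$, the invariance of $\{\symm^t(-)\}$ under equality of classes, $\{B\mu_r\}=1$, and $\{BS_t\}=1$, rather than redoing the stratification by hand. Secondarily, your extension formula $\{B(V^t\rtimes Q)\}=\{BV^t\}\{BQ\}$ for the non-special quotient $Q=\mu_r\wr S_t$ also needs a genuine proof (torsors under vector bundles over a \emph{stack} need not be trivial, so relation (3) of \autoref{definition:grstacks} does not apply directly); the paper's special-overgroup trick handles the single-component case but does not obviously extend to the wreath product, which is exactly why the paper routes the wreath-product step through the stacky symmetric power machinery instead.
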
 
The plan for the rest of the section is to first use
\autoref{theorem:local-aut-class} to deduce
the local condition for a section of $\sch(\sce,\scf_\bullet)$ to be smooth
in \autoref{theorem:local-class}.
Following this, we devote the remainder of the section to proving
\autoref{theorem:local-aut-class}.
The main idea is to directly compute the automorphism group of $X_{(R)}$
in terms of its combinatorial data
starting in
\autoref{subsection:computing-h}
and culminating in 
\autoref{corollary:curvilinear-auts}.
Using this, we will then be able to compute the class of the classifying stack
of the resulting affine
(but typically quite disconnected)
group scheme
in \autoref{subsection:computing-bh}.
For this, we appeal to a result of Ekedahl on stacky symmetric powers and
another result of Ekedahl showing $\{BS_d\} =1$.
We complete
the proof of \autoref{theorem:local-aut-class} in \autoref{subsection:local-class-proof}.

\begin{remark}
	\label{remark:hilbert-scheme-variant}
	With some additional work, one can also prove a variant of
	\autoref{theorem:local-aut-class} which computes the class of the
	locally closed subscheme $\reshilb R d$
	of the Hilbert scheme $\res_{D/k} (\hilb_{\bp^{d-2}_D/D}^d)$
	parameterizing curvilinear nondegenerate subschemes with ramification
	profile $R$ so that on any geometric fiber, no degree $d - 1$
	subscheme is contained in a hyperplane.
	One can show, $\{\reshilb R d\} = \{\pgl_{d-1}\} \bl^{\dim \pgl_{d-1}-r(R)}$.
	Note there is some subtlety in verifying this because this Hilbert
	scheme is naturally a $\res_{D/k}(\pgl_{d-1})$ torsor over $\coverstack
	d$, and $\pgl_{d-1}$ is not a special group. Nevertheless,
	one may prove this by ``linearizing the action'' so as to construct this as a quotient
	of a $\res_{D/k}(\gl_{d-1})$ torsor by $\res_{D/k}(\bg_m)$,
	both of which are special.
\end{remark}

\subsection{Using \autoref{theorem:local-aut-class} to compute smooth sections}
\label{subsection:smooth-sections}

Before proving \autoref{theorem:local-aut-class}, we will see how it can be used
to determine local conditions for a section in a given Casnati-Ekedahl stratum
to be smooth.
In order to apply \autoref{theorem:local-aut-class} to our problem of computing
the classes of Hurwitz stacks we want to relate it to sections of the sheaf
$\sch(\sce,\scf_\bullet)$ on $D$ (for $\sce$ and $\scf_\bullet$ trivial sheaves
on $D$ of appropriate ranks as in \autoref{notation:h-sheaf}, depending on $d$ with $3 \leq d \leq 5$).
For this we need a generalization of \autoref{proposition:ce-as-quotient-stack}
where we take a Weil restriction from the dual numbers.
More precisely,
for $3 \leq d \leq 5$,
the map $\mu_d: \gorensteinsections d \to \coverstack d$ 
defined in \autoref{definition:sections-to-coverstack}
induces a map $\res_{D/k}(\mu_d) : \res_{D/k}((\gorensteinsections d)_D) \to
\res_{D/k}((\coverstack d)_D).$
Since $\mu_d$ is invariant for the action of $\aut_{\sce, \scf_\bullet}$
as in \autoref{definition:sections-to-coverstack},
we obtain a map
$\phi_d^{D/k}: [\res_{D/k}((\gorensteinsections d)_D) / \res_{D/k}(\aut_{\sce|_D,
\scf_\bullet|_D})]
\to \res_{D/k}((\coverstack d)_D)$
induced by sending a section to its vanishing locus. 
\begin{lemma} \label{lemma:dual-number-ce-quotient-stack} 
	For $3 \leq d \leq 5$, 
	the map 
	$\phi_d^{D/k}: [\res_{D/k}((\gorensteinsections d)_D) /
	\res_{D/k}(\aut_{\sce|_D, \scf_\bullet|_D})]
\to \res_{D/k}((\coverstack d)_D)$
is an isomorphism.
\end{lemma}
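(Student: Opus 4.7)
The plan is to deduce this from \autoref{proposition:ce-as-quotient-stack} by applying Weil restriction along the finite locally free morphism $D \to \spec k$. By \autoref{proposition:ce-as-quotient-stack}, the map $\phi_d : [\gorensteinsections d / \aut_{\sce, \scf_\bullet}] \to \coverstack d$ is an isomorphism of stacks over $\spec \bz$. Base changing to $D$ yields an isomorphism $\phi_{d,D}$ of stacks over $D$, and since $\res_{D/k}$ is a right adjoint to $-\times_k D$, it preserves isomorphisms; hence $\res_{D/k}(\phi_{d,D})$ is an isomorphism of $k$-stacks.

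It remains to identify the source and target of $\res_{D/k}(\phi_{d,D})$ with those appearing in the statement. The target $\res_{D/k}((\coverstack d)_D)$ matches by definition. For the source, we need the two identifications
\[
\res_{D/k}\bigl([(\gorensteinsections d)_D / (\aut_{\sce, \scf_\bullet})_D]\bigr) \simeq \bigl[\res_{D/k}((\gorensteinsections d)_D) / \res_{D/k}((\aut_{\sce, \scf_\bullet})_D)\bigr]
\]
together with $\res_{D/k}((\aut_{\sce, \scf_\bullet})_D) = \res_{D/k}(\aut_{\sce|_D, \scf_\bullet|_D})$. The latter is immediate from \autoref{definition:automorphism}, since $\aut_{\sce, \scf_\bullet}$ is assembled from linear-algebraic data (kernels, cokernels, determinant maps) whose formation commutes with base change. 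The former is the statement that Weil restriction commutes with quotient stacks by affine group schemes under the finite locally free extension $D/k$; it can be verified functorially by observing that both sides, evaluated on a $k$-scheme $T$, compute the groupoid of $\aut_{\sce, \scf_\bullet}$-torsors $P \to T \times_k D$ equipped with an equivariant map to $\gorensteinsections d$. The equivalence sends such a torsor $P$ to its Weil restriction along the finite locally free morphism $T \times_k D \to T$, which is a $\res_{D/k}(\aut_{\sce, \scf_\bullet})\times_k T$-torsor (since $P$ is quasi-affine, being a torsor for the affine group $\aut_{\sce, \scf_\bullet}$, and since torsors trivialize fppf-locally); conversely, a $\res_{D/k}(\aut)$-torsor $Q \to T$ yields an $\aut$-torsor $Q \times_T (T \times_k D)$ with action induced by the counit $\res_{D/k}(\aut)_D \to \aut$ of the adjunction, and one checks these operations are mutually inverse and compatible with the equivariant maps.

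Under these identifications, $\res_{D/k}(\phi_{d,D})$ is precisely the map $\phi_d^{D/k}$ of the statement, so the isomorphism follows. The main technical obstacle is the compatibility of Weil restriction with quotient stacks used above; this is standard, and in our setting is facilitated by the fact that $\aut_{\sce, \scf_\bullet}$ is affine (indeed special, by \autoref{lemma:special-auts}) and $\gorensteinsections d$ is quasi-affine, so all the Weil restrictions in sight are representable.
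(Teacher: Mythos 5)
Your route is genuinely different from the paper's. The paper does not deduce the lemma formally from \autoref{proposition:ce-as-quotient-stack}; it simply reruns that proposition's argument with $T$ replaced by $T\times_k D$ throughout (the only wrinkle it flags being that one trivializes the relevant bundles on $T\times_k D$ after passing to an open cover of $T$). You instead base change the already-established isomorphism to $D$ and apply $\res_{D/k}$, reducing everything to the assertion that Weil restriction along $T\times_k D \to T$ commutes with forming quotient stacks. This is a legitimate and arguably cleaner strategy, and the Shapiro-type equivalence you need between $\aut$-torsors on $T\times_k D$ and $\res_{D/k}(\aut)$-torsors on $T$ does hold in this setting: the groups are special (\autoref{lemma:special-auts}), so torsors are Zariski-locally trivial, and $T\times_k D\to T$ is a homeomorphism, so the Weil restriction of a torsor is fppf- (indeed Zariski-) locally trivial and hence again a torsor. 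What your approach buys is that the structure theorems are invoked only once, in \autoref{proposition:ce-as-quotient-stack}; what the paper's approach buys is that it avoids having to set up the torsor equivalence at all.

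Two points in your sketch need repair. First, the inverse functor is misstated: for a $\res_{D/k}(\aut)$-torsor $Q\to T$, the base change $Q\times_T(T\times_k D)$ is a torsor under $\left(\res_{D/k}(\aut)\right)_D$, not under $\aut$, and the counit $\left(\res_{D/k}(\aut)\right)_D\to \aut$ points the wrong way to induce an $\aut$-action on it. One must push forward along the counit, i.e.\ form the contracted product of $Q\times_T(T\times_k D)$ with $\aut$ over $\left(\res_{D/k}(\aut)\right)_D$; the triangle identities of the adjunction then show the two constructions are mutually inverse and compatible with the equivariant maps. Second, the identification $\res_{D/k}((\aut_{\sce,\scf_\bullet})_D)\simeq \aut_{\sce|_D,\scf_\bullet|_D}$ is not ``immediate'' from \autoref{definition:automorphism}: for $d=4,5$ that group is defined as a cokernel, i.e.\ a colimit, and $\res_{D/k}$ is a right adjoint, so it preserves kernels but not cokernels in general. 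The identification is still true, but it should be routed through \autoref{lemma:aut-as-sub}, which rewrites both sides as the kernel of a determinant map; kernels are preserved by Weil restriction.
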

This is proven via a nearly identical argument to
\autoref{proposition:ce-as-quotient-stack} 
and we omit the proof.
The one minor difference one must note is that,
in order to show $\phi_d^{D/k}$ is surjective,
for any $T \to \spec k$ and any vector bundle on $T \times_k D$,
one may replace $T$ by an open cover which trivializes the bundle.

\begin{definition}
	\label{definition:restriction-sections}
Let $\ressections R d \subset \res_{D/k}((\gorensteinsections d)_D)$
denote the preimage under the composition
\begin{align*}
	\res_{D/k}((\gorensteinsections d)_D) \to [\res_{D/k}((\gorensteinsections d)_D) /
	\res_{D/k}(\aut_{\sce|_D, \scf_\bullet|_D})]
	\xrightarrow{\phi_d^{D/k}} \res_{D/k}((\coverstack d)_D)
\end{align*}
	of $\gerbe R d \subset \res_{D/k}((\coverstack d)_D)$.
\end{definition}

\begin{remark}
	\label{remark:functorial-gerbe-description}
	We will implicitly use the following geometric description
	of the residual gerbe $\gerbe R d$ and its preimage $\ressections R d$
	in $\res_{D/k}((\gorensteinsections d)_D)$.
	As a fibered category, $\gerbe R d$ has $T$ points given by 
	finite locally free degree $d$ Gorenstein covers 
	$Z \to T \times_k D$ satisfying the following properties
\begin{enumerate}
	\item $Z$ has ramification
		profile $R$ over each geometric point $\spec \kappa \ra T_D$,
	\item $Z$ is curvilinear in the sense that for each geometric point
		$\spec \kappa \ra T$, the resulting scheme $Z \times_T \spec
		\kappa$ has $1$-dimensional Zariski tangent space at each
		point. 
\end{enumerate}

Similarly, when $3 \leq d \leq 5$, 
we can describe $\ressections R d$ as those sections $\eta \in
\res_{D/k}((\gorensteinsections d)_D)(T)$
for which the associated degree $d$ cover of $T \times_k D$,
$\Psi_d(\eta)$
(as defined in \autoref{subsection:psid})
has the above properties.
We note that $\ressections R d$ is a locally closed subscheme of 
$\res_{D/k}((\gorensteinsections d)_D)$
since the same holds for the residual gerbe $\gerbe R d$ in
$\res_{D/k}(\coverstack d)$
\cite[Thm.\ B.2]{rydh:etale-devissage}.
One may also deduce this is locally closed directly from the above
functorial description.
\end{remark}

By combining \autoref{theorem:local-aut-class}
with \autoref{lemma:dual-number-ce-quotient-stack},
we can easily deduce the following.
%The main result we will prove at the end of this section in
%\autoref{subsection:local-class-proof} is: 
\begin{theorem}
	\label{theorem:local-class} 
	Let $R$ be a ramification profile which is
	a partition of $d$ and let $\ressections R d$ 
	be the scheme defined in \autoref{definition:restriction-sections}
	(with
	associated free sheaves $\sce, \scf_\bullet$ on $D$).  
	Let $r(R)$
	denote the ramification order associated to the ramification profile $R$, as
	defined in \autoref{definition:ramfication-profile}. Then,
	\begin{align*} 
			\{\ressections R d\} = \{\aut^{D/k}_{\sce,
		\scf_\bullet}\} \bl^{-r(R)}.
\end{align*} \end{theorem}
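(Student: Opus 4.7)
The plan is to deduce \autoref{theorem:local-class} as a direct consequence of \autoref{theorem:local-aut-class} combined with the global quotient description of the stack of Gorenstein covers over the dual numbers.

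First, I would unravel the definition of $\ressections R d$ given in \autoref{definition:restriction-sections}: it is the preimage of the residual gerbe $\gerbe R d \subset \res_{D/k}((\coverstack d)_D)$ under the composition
\begin{align*}
\res_{D/k}((\gorensteinsections d)_D) \longrightarrow [\res_{D/k}((\gorensteinsections d)_D)/\res_{D/k}(\aut_{\sce|_D, \scf_\bullet|_D})] \xrightarrow{\phi_d^{D/k}} \res_{D/k}((\coverstack d)_D).
\end{align*}
Since $\phi_d^{D/k}$ is an isomorphism by \autoref{lemma:dual-number-ce-quotient-stack}, the quotient stack $[\ressections R d / \res_{D/k}(\aut_{\sce|_D, \scf_\bullet|_D})]$ is identified with $\gerbe R d$, which in turn (as noted in \autoref{remark:residual-gerbe-trivial}) is isomorphic to $B(\res_{D/k}(\aut_{X_{(R)}/D}))$. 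In particular, $\ressections R d \to \gerbe R d$ is a torsor under the group scheme $\res_{D/k}(\aut_{\sce|_D, \scf_\bullet|_D})$, which agrees with $\aut^{D/k}_{\sce,\scf_\bullet}$ of \autoref{notation:f-e-aut-ce-notation} (Weil restriction commuting with the $\bg_m$-quotient appearing in \autoref{definition:automorphism}).

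Next, I would invoke \autoref{lemma:special-auts}, which asserts that $\aut^{D/k}_{\sce,\scf_\bullet}$ is a special group with invertible class in $\grStacks$. Consequently, by \cite[Prop.\ 1.4(i)]{Ekedahl2009}, the class of the torsor total space factors as
\begin{align*}
\{\ressections R d\} = \{\aut^{D/k}_{\sce, \scf_\bullet}\} \cdot \{\gerbe R d\}.
\end{align*}
Applying \autoref{theorem:local-aut-class} to substitute $\{\gerbe R d\} = \bl^{-r(R)}$ then yields the desired formula.

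The only real obstacle is bookkeeping the two slightly different notations for the automorphism group (the $\bg_m$-quotient appearing in \autoref{definition:automorphism} versus the unquotiented product implicit in \autoref{lemma:dual-number-ce-quotient-stack}), but since both \autoref{proposition:ce-as-quotient-stack} and \autoref{lemma:dual-number-ce-quotient-stack} are set up with the same conventions, this is merely a matter of chasing definitions. All the genuine content — the computation of $\{\gerbe R d\}$ via the classifying stack description and the stacky symmetric power formalism — is already packaged into \autoref{theorem:local-aut-class}.
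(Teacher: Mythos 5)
Your proposal is correct and follows essentially the same route as the paper: identify $[\ressections R d/\aut^{D/k}_{\sce,\scf_\bullet}]$ with $\gerbe R d \simeq B(\res_{D/k}(\aut_{X_{(R)}/D}))$ via \autoref{lemma:dual-number-ce-quotient-stack} and \autoref{remark:residual-gerbe-trivial}, use speciality and invertibility of the class of $\aut^{D/k}_{\sce,\scf_\bullet}$ from \autoref{lemma:special-auts} together with Ekedahl's results to factor the class of the torsor, and conclude by \autoref{theorem:local-aut-class}. The paper's proof is the same argument in the same order, so no further comment is needed.
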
 
%Let us now deduce this from \autoref{theorem:local-aut-class},
%and subsequently return to prove \autoref{theorem:local-aut-class}.

\begin{proof}[Proof of \autoref{theorem:local-class} assuming
	\autoref{theorem:local-aut-class}] 
%	We first deal with the
%	case of computing $\reshilb R d$.  Note that $\res_{D/k} (\gl_{d-1})
%	\simeq \gl_{d-1} \ltimes \mathfrak{gl}_{d-1}$, where $\mathfrak{gl}_n$
%	denotes the Lie algebra of $\gl_n$.  Observe $\res_{D/k} (\gl_{d-1})$
%	is special, as follows using that $\gl_{d-1}$ and $\mathfrak{gl}_{d-1}$
%	are special, and special groups are closed under extensions
%	\cite[Prop.\ 1.4(iii)]{Ekedahl2009}.
%
%
%By \autoref{proposition:transitive-action}, computing the class of $\reshilb R
%d$ amounts to computing the class of $\resgrp d/\stbgrp R d$.  In fact, since
%$\resgrp d$ is special, it follows from \cite[Prop.\ 1.4(i), Prop.\
%1.1(ix)]{Ekedahl2009} that $[\resgrp d/\stbgrp R d] = [\resgrp d] \cdot [B
%\stbgrp R d]$.  Next, by definition of $\resgrp d$, we have \begin{align*}
%	[\resgrp d] = [\gl_{d-1}][\mathfrak{gl}_{d-1}] = [\pgl_{d-1}] (\bl - 1)
%	\bl^{\dim \gl_{d-1}}.  \end{align*} Combining this with
%	\autoref{proposition:bh-class}, we find \begin{align*} [\resgrp
%		d/\stbgrp R d] = [\resgrp d] \cdot [B \stbgrp R d]
%		=\frac{[\pgl_{d-1}] (\bl - 1) \bl^{\dim \gl_{d-1}}
%		}{(\bl-1)\bl^{r(R)+1}} = [\pgl_{d-1}] \bl^{\dim \pgl_{d-1} -
%	r(R)}.  \end{align*}
Using
\autoref{lemma:dual-number-ce-quotient-stack} and
\autoref{remark:residual-gerbe-trivial},
\begin{align*}
	\{[\ressections R
	d/\aut^{D/k}_{\sce, \scf_\bullet}]\} = \{B
	(\res_{D/k}(\aut_{X_{(R)}/D}))\} = \gerbe R d.  
\end{align*}
Since
$\aut^{D/k}_{\sce, \scf_\bullet}$ is special and has invertible class in
$\grStacks$ by \autoref{lemma:special-auts}, 
\cite[Prop.\ 1.4(i), Prop.\
1.1(ix)]{Ekedahl2009}
implies that
\begin{align*} \{[\ressections R d/\aut^{D/k}_{\sce, \scf_\bullet}]\}  =
\frac{\{\ressections R d\}}{\{\aut^{D/k}_{\sce, \scf_\bullet}\}}.  \end{align*}
Then, by \autoref{theorem:local-aut-class},
\begin{equation} \{\ressections R d\} =
\{\aut^{D/k}_{\sce, \scf_\bullet}\} \cdot \{\gerbe R d \} =
\{\aut^{D/k}_{\sce, \scf_\bullet}\} \cdot \bl^{-r(R)}.  \qedhere \end{equation}
\end{proof}

\subsection{Computing the algebraic group $\res_{D/k}(\aut_{X_{(R)}/D})$}
\label{subsection:computing-h}

Let $X_{(R)}$ denote the scheme as defined in \eqref{equation:X}.  Our next goal is
to compute the group scheme $\res_{D/k}(\aut_{X_{(R)}/D})$,
which we will carry out in \autoref{corollary:curvilinear-auts}.
In order to do so, we
first deal with the case that $X_{(R)}$ is connected.

\begin{lemma} \label{lemma:single-component-auts} Let $d \in \bz_{\geq 1}$, and
	let $k$ be a field with $\chr(k) \nmid d!$. 
%	The scheme $W := \spec k[y,\varepsilon]/(\varepsilon^2,
%	\varepsilon-y^d)$ is curvilinear of relative degree $d$ over $D = \spec
%	k[\varepsilon]/(\varepsilon^2)$.
	Let $W := \spec k[y,\varepsilon]/(\varepsilon^2, \varepsilon-y^d)$.
	For $\aut_{W/D}$ the automorphism scheme of $W$ over $D$, we have
$\res_{D/k}(\aut_{W/D}) \simeq \bg_a^{d-1} \rtimes \mu_d$, explicitly given
	by $\alpha \in \mu_d$ sending $y \mapsto \alpha y$ and $(a_1, \ldots,
a_{d-1}) \in \bg_a^{d-1}$ sending $y \mapsto y + \sum_{i=1}^{d-1}
	a_iy^{d+i}$.  \end{lemma}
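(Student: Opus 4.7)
The plan is to compute the Weil restriction explicitly on $T$-points and verify that both the underlying set and the group structure agree with those of $\bg_a^{d-1} \rtimes \mu_d$. For an affine $k$-scheme $T = \spec R$, a $T$-point of $\res_{D/k}(\aut_{W/D})$ is an $R[\varepsilon]/(\varepsilon^2)$-algebra automorphism $\phi$ of $R[y]/(y^{2d})$, where $R[\varepsilon]/(\varepsilon^2)$ acts via $\varepsilon \mapsto y^d$. Such $\phi$ is determined by $z := \phi(y) = \sum_{i=0}^{2d-1} a_i y^i$, subject to (i) $z$ generates $R[y]/(y^{2d})$ as an $R$-algebra and (ii) $z^d = y^d$. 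The goal is to show these constraints force $z = \alpha y + \sum_{i=1}^{d-1} b_i y^{d+i}$ with $\alpha \in \mu_d(R)$ and $b_i \in R$ arbitrary.

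The main obstacle is controlling the low-order coefficients, in particular showing $a_0 = 0$ over an arbitrary $R$ rather than just a reduced one. The plan is to first reduce modulo $y^d$: since $\phi(y^d) = y^d$, the map $\phi$ descends to an $R$-algebra automorphism $\bar\phi$ of $R[y]/(y^d)$ with $\bar\phi(y) = \bar z = a_0 + a_1 y + \cdots + a_{d-1} y^{d-1}$, satisfying $\bar z^d = 0$. A residue-field check shows $a_1 \in R^\times$. For the vanishing of $a_0$, I will induct on $j = 0, 1, \ldots, d-1$ to prove $a_0^{d-j} = 0$. The coefficient of $y^j$ in $\bar z^d$ is the sum over multi-indices $(i_1, \ldots, i_d)$ with $\sum i_k = j$ of $a_{i_1}\cdots a_{i_d}$; the tuples with all nonzero entries equal to $1$ contribute $\binom{d}{j} a_0^{d-j} a_1^j$, while every tuple with some entry $\geq 2$ has at least $d-j+1$ zero entries and hence produces a factor $a_0^{d-j+1}$ that vanishes by the inductive hypothesis. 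Since $\binom{d}{j}$ and $a_1$ are units (using $\chr(k)\nmid d!$), this forces $a_0^{d-j} = 0$; setting $j = d-1$ yields $a_0 = 0$.

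With $a_0 = 0$ established, write $z = \bar z + y^d w$ and $\bar z = yv$ where $v = a_1 + a_2 y + \cdots + a_{d-1} y^{d-2}$. Because $(y^d w)^2 = 0$ in $R[y]/(y^{2d})$, one computes $z^d = y^d v^d + d\, y^{2d-1} a_1^{d-1} w_0$, so $z^d = y^d$ reduces to $y^d(v^d - 1) = -d\, y^{2d-1} a_1^{d-1} w_0$. Reading off coefficients of $y^d, y^{d+1}, \ldots, y^{2d-1}$ in order successively forces $a_1^d = 1$, then $a_2 = \cdots = a_{d-1} = 0$ (again using $\chr(k)\nmid d$ and $a_1 \in R^\times$), and finally $w_0 = 0$, while $w_1, \ldots, w_{d-1}$ are unconstrained. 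This produces the claimed parametrization, naturally in $T$.

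To conclude, I will verify the group structure by composing the ring automorphisms $\phi_\alpha: y \mapsto \alpha y$ and $\psi_a: y \mapsto y + \sum a_i y^{d+i}$. A short calculation using $\alpha^d = 1$ yields $\phi_\alpha \circ \psi_a \circ \phi_\alpha^{-1}(y) = y + \sum_i \alpha^{i-1} a_i y^{d+i}$, exhibiting $\bg_a^{d-1}$ as a normal subgroup on which $\mu_d$ acts by the displayed characters. Combined with the set-level bijection, this produces the $k$-group scheme isomorphism $\res_{D/k}(\aut_{W/D}) \simeq \bg_a^{d-1} \rtimes \mu_d$.
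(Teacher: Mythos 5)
Your proposal is correct and follows essentially the same route as the paper: identify $T$-points of the Weil restriction with $T[\varepsilon]/(\varepsilon^2)$-algebra automorphisms of $T[y]/(y^{2d})$, reduce to the single constraint $\phi(y)^d = y^d$, and read off coefficients to get $\phi(y) = \alpha y + \sum_{i=1}^{d-1} a_i y^{d+i}$ with $\alpha \in \mu_d$. The one point where you do more than the paper is the vanishing of the constant term of $\phi(y)$ over an arbitrary test ring $R$: the paper dismisses this with a one-line appeal to ``isomorphism on cotangent spaces'' (which is immediate only when $R$ is a field or reduced), whereas your induction showing $a_0^{d-j}=0$ for $j=0,\dots,d-1$ via the binomial coefficients $\binom{d}{j}$ being units is exactly the argument needed to handle nilpotents; this is a genuine strengthening of the write-up rather than a different method.
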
 \begin{proof}
%	To verify the first sentence, let $A :=
%	k[y,\varepsilon]/(\varepsilon^2, \varepsilon-y^d)$ so that $W = \spec
%	A$.  First, $W$ is curvilinear (when viewed as a $k$ scheme) as $A$ is
%	isomorphic to $k[y]/y^{2d}$. To see $W$ is flat over $D$, note that the
%	multiplication by $\varepsilon$ map on $A$ viewed as a
%	$k[\varepsilon]/\varepsilon^2$ modules induces an isomorphism
%	$A/\varepsilon A \rightarrow \varepsilon A$, which implies flatness of
%	$W$ over $D$ by the ideal theoretic criterion for flatness.  Finally,
%	the degree of $W$ over $d$ is $D$ because $\spec
%	k[y,\varepsilon]/(\varepsilon^2, \varepsilon-y^d)$ is $2d$ dimensional
%	as a $k$-vector space.  It remains to compute $\res_{D/k}(\aut_W)$.
	For $T$ a $k$ algebra, a functorial $T$ point of
	$\res_{D/k}(\aut_{W/D})$ corresponds (upon taking global sections) to
	an isomorphism of $T$ algebras \begin{align*} \phi:
		T[y,\varepsilon]/(\varepsilon^2, \varepsilon-y^d) \simeq T[y,
		\varepsilon]/(\varepsilon^2, \varepsilon-y^d).  \end{align*}
		over $T[\varepsilon]/(\varepsilon^2)$.  Such an automorphism is
		uniquely determined by where it sends $y$.  To conclude the
		proof, it suffices to verify that any such $\phi$ is of the
		form $y \mapsto \alpha y + \sum_{i=1}^{d-1} a_i y^{d+i}$ for
		$\alpha \in \mu_d(T)$ and $a_i \in \bg_a(T)$, and conversely
		that any map of this form determines an automorphism.

Let $\phi_{\alpha, a_1, \ldots, a_{d-1}}$ denote the map of $T$ algebras
sending $y \mapsto \alpha y + \sum_{i=1}^{d-1} a_i y^{d+i}$ as above.  Under
the isomorphism 
$T[y,\varepsilon]/(\varepsilon^2, \varepsilon-y^d) \simeq T[y]/y^{2d}$, 
any automorphism $\phi$ must induce an
isomorphism on cotangent spaces, and hence send $y$ to some polynomial
$p_\phi(y) = b_1 y + b_2 y^2 + \cdots + b_{2d-1}y^{2d-1}$, with $b_1 \neq 0$
and $b_i \in T$.  The condition that $\phi$ determines a map of 
$T[\varepsilon]/(\varepsilon^2)$ algebras precisely
corresponds to $y^d = p_\phi(y)^d$. Comparing the coefficients of $y^d$ in this
equation implies $b_1 \in \mu_d(T)$.  Since $\chr(k) \nmid  d!$, comparing the
coefficients of $y^{d+1}, \ldots, y^{2d-1}$ in the equation $y^d = p_\phi(y)^d$
implies $b_2 = b_3 = \cdots = b_d = 0$.  However, the coefficients $b_{d+1},
\ldots, b_{2d-1}$ can be arbitrary and $y^d = p_\phi(y)^d$ will be satisfied.
So, any automorphism $\phi$ must be of the form $\phi_{\alpha, a_1, \ldots,
a_{d-1}}$ (where we take $a_i = b_{d+i}$ in the above notation).

To see any map $\phi_{\alpha, a_1, \ldots, a_{d-1}}$ determines an automorphism
of $T$ algebras, note first that it is well defined, because $(\alpha y +
\sum_{i=1}^{d-1} a_i y^{d+i})^d = y^d$, using that $y^{2d} = 0$.  It is an
automorphism as its inverse is explicitly given by
$\phi_{\left( \alpha^{-1}, -\alpha^{-2} a_1, -\alpha^{-3} a_2, \ldots, - \alpha^{-d} a_{d-1}
\right)}$ \end{proof}

\begin{corollary} \label{corollary:curvilinear-auts}
%	For $d \in \bz_{\geq 1}$, and let $k$ be a field with $\chr(k) \nmid
%	d!$. 
	Choose a partition $(r_1^{t_1}, \ldots, r_n^{t_n})$ of $d$, i.e., $d =
	\sum_{i=1}^n t_i \cdot r_i$.  For $i = 1, \ldots, n$, let $W_i := \spec
	\prod_{j=1}^{t_i} k[y,\varepsilon]/(\varepsilon^2,
	\varepsilon-y^{r_i})$.  Let $W := \coprod_{i=1}^n W_i$,
	so that $W \simeq X_{(R)}$ when $R$ is the ramification profile associated to
	the above partition.
	%Let $d = \sum_{i=1}^n t_i \cdot r_i$.  Then $W$ is curvilinear and
	%flat of relative degree $d$ over $D = \spec
	%k[\varepsilon]/(\varepsilon^2)$.
	We have an isomorphism $\res_{D/k}(\aut_{W/D}) \simeq \prod_{i=1}^n
	\left( \bg_a^{r_i - 1} \rtimes \mu_{r_i} \right) \wr S_{t_i}$, where
	each $\bg_a^{r_i - 1} \rtimes \mu_{r_i}$ is explicitly realized acting
	on each component of $W_i$ as in \autoref{lemma:single-component-auts},
and the action of the wreath product with $S_{t_i}$ is obtained by permuting
the $t_i$ components of $W_i$.  \end{corollary}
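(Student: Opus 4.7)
The plan is to reduce everything to \autoref{lemma:single-component-auts} by analyzing how automorphisms of $W$ over $D$ permute its connected components. First I would write $W = \coprod_{(i,j)} V_i$, indexed by pairs $(i,j)$ with $1 \leq i \leq n$ and $1 \leq j \leq t_i$, where $V_i := \spec k[y,\varepsilon]/(\varepsilon^2, \varepsilon - y^{r_i})$. For any connected $k$-scheme $T$, the base change $W_T$ decomposes into $\sum_i t_i$ connected open-and-closed pieces, each of the form $V_i \times_k T$, and any $D_T$-automorphism of $W_T$ must permute them.

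The key rigidity input is that $V_i \times_k T$ and $V_{i'} \times_k T$ are non-isomorphic as $D_T$-schemes whenever $i \neq i'$, since as locally free $T[\varepsilon]/(\varepsilon^2)$-modules they have distinct ranks $r_i$ and $r_{i'}$ (use the presentation $V_i \simeq \spec T[y]/y^{2r_i}$ with the natural map from $T[\varepsilon]/(\varepsilon^2)$). Consequently, the induced permutation on components lies in $\prod_{i=1}^n S_{t_i}$, preserving the type $i$, and the automorphism functor decomposes as a disjoint union indexed by such permutations, with each stratum a product of copies of $\aut_{V_i/D}$.

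I would then assemble this into a product of wreath products. For each $i$, the $t_i$ isomorphic copies of $V_i$ contribute $\aut_{V_i/D}^{t_i}$ crossed with the $S_{t_i}$-action permuting them, i.e., $\aut_{V_i/D} \wr S_{t_i}$. Applying $\res_{D/k}$, which commutes with finite products and (by its definition on each connected component of the source) with finite disjoint unions of affine target schemes, and using \autoref{lemma:single-component-auts} to identify $\res_{D/k}(\aut_{V_i/D}) \simeq \bg_a^{r_i-1}\rtimes \mu_{r_i}$, yields the asserted isomorphism $\res_{D/k}(\aut_{W/D}) \simeq \prod_{i=1}^n \bigl((\bg_a^{r_i-1}\rtimes \mu_{r_i})\wr S_{t_i}\bigr)$. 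The explicit description of the action follows tautologically from the construction.

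The one technical point is that the appeal to ``permuting connected components'' formally requires a connected base. For arbitrary test schemes $T$, both sides are representable by finite type $k$-schemes, so it suffices to check the isomorphism on $T$-points with $T$ connected (and then apply fppf descent), which is exactly the case treated above. This is routine, and I do not anticipate any serious obstacle — the whole corollary is really a bookkeeping consequence of \autoref{lemma:single-component-auts} together with the rank-rigidity observation.
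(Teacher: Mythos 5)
Your proposal is correct and follows essentially the same route as the paper's proof: decompose $W$ into connected components, observe that automorphisms can only permute components of the same degree over $D$ (you justify this via the rank over $T[\varepsilon]/(\varepsilon^2)$, the paper simply says ``of a fixed degree''), and assemble the result as a product of wreath products $\res_{D/k}(\aut_{Z_i/D}) \wr S_{t_i}$ using \autoref{lemma:single-component-auts}. Your extra care with connected test schemes and the behaviour of $\res_{D/k}$ on disjoint unions (which works here because $D \to \spec k$ is a universal homeomorphism) is a harmless elaboration of the same argument.
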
 \begin{proof}
%	The claims that $W$ is curvilinear and flat of relative degree $d$ over
%	$D$ follows from the corresponding claim for each of its connected
%	components, established in \autoref{lemma:single-component-auts}.

	To compute the automorphism group of $W$, first observe that any
	automorphism must permute all connected components of a fixed degree,
	and therefore $\aut_{W/D} = \prod_{i=1}^n \aut_{W_i/D}$, and
	consequently \begin{align*} \res_{D/k} (\aut_{W/D}) =  \res_{D/k}
		\left(\prod_{i=1}^n \aut_{W_i/D}\right) = \prod_{i=1}^n
		\res_{D/k} (\aut_{W_i/D}).  \end{align*}

	It therefore suffices to show $\res_{D/k}(\aut_{W_i/D}) \simeq \left(
	\bg_a^{r_i - 1} \rtimes \mu_{r_i} \right) \wr S_{t_i}$. As all
	connected components of $W_i$ are isomorphic, any automorphism is
	realized as the composition of an automorphism preserving each
	connected component, followed by some permutation of the connected
	components.  Since there are $t_i$ connected components, the group of
	permutations of the components is the symmetric group $S_{t_i}$, while
	for $Z_i$ a connected component of $W_i$, we established
	$\res_{D/k}\aut_{Z_i/D} \simeq \bg_a^{r_i - 1} \rtimes \mu_{r_i}$ in
	\autoref{lemma:single-component-auts}.  It follows that
	\begin{equation*} 
		\res_{D/k}(\aut_{W_i/D}) = \left(
	\res_{D/k}(\aut_{Z_i/D}) \right) \wr S_{t_i} = \left( \bg_a^{r_i - 1}
\rtimes \mu_{r_i} \right) \wr S_{t_i}. \qedhere 
\end{equation*} 
\end{proof}

\subsection{Computing $\{B \res_{D/k}(\aut_{X_{(R)}/D})\}$} \label{subsection:computing-bh}

Our next goal is to prove \autoref{theorem:local-aut-class} by computing the
class of $\{B\res_{D/k}(\aut_{X_{(R)}/D})\}$ in
$\grStacks$, which we carry out at the end of this section in
\autoref{subsection:local-class-proof}.  Of course, we will use our computation of
$\res_{D/k}(\aut_{X_{(R)}/D})$ from \autoref{corollary:curvilinear-auts}.
In order to set up our computation we need the following
lemma.

\begin{lemma} \label{lemma:semidirect-class} For $x$ and $y$ positive integers,
	$\{B\left( \bg_a^x \rtimes \mu_y \right)\}= \{B(\bg_a^x)\} = \bl^{-x}$, where $\mu_y$
	acts on $\bg_a^x$ by the scaling action $(\alpha, (a_1, \ldots, a_x))
	\mapsto (\alpha a_1, \ldots, \alpha a_x)$.  \end{lemma}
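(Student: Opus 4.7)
The equality $\{B\bg_a^x\} = \bl^{-x}$ is immediate: $\bg_a$ is special with class $\bl$, so $\{B\bg_a\} = \bl^{-1}$ and therefore $\{B\bg_a^x\} = \bl^{-x}$. The interesting content is the first equality, which I will prove by realizing $K := \bg_a^x \rtimes \mu_y$ as the kernel of a homomorphism from a larger special group.

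The plan is to extend the $\mu_y$-scaling action to the full $\bg_m$-scaling action, producing $H := \bg_a^x \rtimes \bg_m$. Define $\phi : H \to \bg_m$ to be the composition of the projection $H \to \bg_m$ (onto the semidirect-product factor) with the $y$-th power map on $\bg_m$. Then $\ker \phi = K$, so we obtain a short exact sequence $1 \to K \to H \to \bg_m \to 1$. This yields a $2$-Cartesian square
\begin{equation*}
	\begin{tikzcd}
		BK \ar{r} \ar{d} & BH \ar{d} \\
		\mathrm{pt} \ar{r} & B\bg_m,
	\end{tikzcd}
\end{equation*}
in which the bottom map is the universal $\bg_m$-torsor. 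Hence $BK \to BH$ is itself a $\bg_m$-torsor, and because $\bg_m$ is special, $\{BK\} = \{\bg_m\} \cdot \{BH\}$.

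Next I would verify that $H$ is special. Given an $H$-torsor $P \to S$, the quotient $P/\bg_a^x$ is a $\bg_m$-torsor on $S$, hence a line bundle, hence Zariski-locally trivial; after restricting to such an open, the $\bg_a^x$-torsor $P \to P/\bg_a^x$ is also Zariski-locally trivial because $\bg_a^x$ is special. Thus $\{BH\} = 1/\{H\} = 1/(\bl^x(\bl - 1))$, and the calculation
\begin{equation*}
	\{BK\} = (\bl - 1) \cdot \frac{1}{\bl^x(\bl - 1)} = \bl^{-x}
\end{equation*}
completes the proof.

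The only step that requires care is the bookkeeping: justifying the $2$-Cartesian square attached to the short exact sequence $1 \to K \to H \to \bg_m \to 1$, and confirming the specialness of $H$ over the given base field. Both are standard but worth setting up cleanly, and no characteristic hypothesis beyond what is inherited from the surrounding context is needed.
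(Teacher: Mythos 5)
Your proposal is correct and follows essentially the same route as the paper: embed $\bg_a^x\rtimes\mu_y$ into the special group $\bg_a^x\rtimes\bg_m$, identify the quotient with $\bg_m$, and use specialness to conclude $\{BK\}=\{\bg_m\}\{BH\}=(\bl-1)\cdot\bl^{-x}(\bl-1)^{-1}=\bl^{-x}$. The only cosmetic difference is that you exhibit $K$ as the kernel of the explicit homomorphism $(a,\lambda)\mapsto\lambda^y$, whereas the paper identifies $L/K\simeq\bg_m/\mu_y\simeq\bg_m$ as a double quotient and cites Ekedahl's $\{BK\}=\{L/K\}\{BL\}$; both identifications are valid in all characteristics.
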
 \begin{proof}
		Indeed, we have an inclusion \begin{align*} \left( \bg_a^x
		\rtimes \mu_{y} \right) \hookrightarrow \left( \bg_a^{x}
	\rtimes \bg_m \right) \end{align*}
	where the semidirect product $\bg_a^{x} \rtimes \bg_m$ is 
	defined similarly to that in \autoref{lemma:single-component-auts}
	so that $\bg_m$ acts on $\bg_a^{x}$ by \begin{align*} \bg_m
		\times \bg_a^x & \rightarrow \bg_a^x\\ \left( \alpha, \left(
		a_1, \ldots, a_x \right) \right) & \mapsto \left( \alpha a_1,
		\ldots, \alpha a_x \right).  \end{align*} The natural inclusion
		$\mu_{y} \rightarrow \bg_m$ then respects the constructed group
		structures.
%	has $\bg_m$ acting on $y$ as in the statement of
%	\autoref{lemma:single-component-auts} by $y \mapsto \alpha y$ for
%	$\alpha \in \bg_m$.
	For simplicity of notation, temporarily define $K := \bg_a^{x} \rtimes
	\mu_{y}$ and $L :=\bg_a^{x} \rtimes \bg_m$.

	Since $L$ is special, and special groups are closed under extensions,
	it follows from \cite[Prop.\ 1.1(ix)]{Ekedahl2009} that $\{BK\} =
	\{L/K\}\{BL\}$.  However, since $\bg_a^{x}$ is a normal subgroup of both
	$L$ and $K$, the quotient $L/K$ is identified with \begin{align*} L/K
	\simeq \frac{ L/ \bg_a^{x}}{K/\bg_a^{x}} \simeq \bg_m/\mu_{y} \simeq
\bg_m.  \end{align*} Since $L$ is special, using \cite[Prop.\
1.4(i)]{Ekedahl2009} and \cite[Prop.\ 1.1(v)]{Ekedahl2009}, we obtain that
$\{BL\} = \frac{1}{\{L\}} = \bl^{-x} \frac{1}{\bl-1}$. Therefore, \begin{align*}
\{BK\} = \{L/K\}\{BL\} = (\bl-1)\bl^{-x} \frac{1}{\bl-1} = \bl^{-x} = \{B(\bg_a^x)\},
\end{align*} using again that $L = \bg_a^x \rtimes \bg_m$ is special.  \end{proof}

Using \autoref{lemma:semidirect-class}, we next compute the class of $B \res_{D/k}(\aut_{X_{(R)}/D})$ in the case that the partition $R$ has a single part. 
To continue our computation, we need the notion of stacky symmetric powers:
\begin{definition}
	\label{definition:stacky-symmetric-powers}
	For $\scx$ a
stack, define the {\em stacky symmetric power} $\rboxed{\symm^n \scx} :=
[\scx^n/S_n]$ (where $\rboxed{[\scx^n/S_n]}$ denotes the stack quotient for $S_n$
	acting on $\scx^n$ by permuting the factors). 
\end{definition}

The key input to our next computation will be
that taking the stacky symmetric powers is a well defined operation on the
Grothendieck ring of stacks by 
\cite[Prop.\
2.5]{ekedahlGeometricInvariantFinite2009}.

\begin{lemma} \label{lemma:single-part-bh} For integers $s$ and $t$,
\begin{align*} B\left(\left( \bg_a^{s - 1} \rtimes \mu_{s} \right) \wr S_{t}
\right) = \bl^{-(s-1) \cdot t} \in \grStacks.  \end{align*} \end{lemma}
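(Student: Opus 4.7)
The plan is to apply the key isomorphism of stacks $\symm^t(BG)\simeq B(G\wr S_t)$ (mentioned at the beginning of \autoref{subsection:computing-bh}) with $G:=\bg_a^{s-1}\rtimes\mu_s$, and then to reduce the computation to a class that can be handled by an embedding-into-a-special-group argument analogous to the proof of \autoref{lemma:semidirect-class}. After applying the key isomorphism, computing the desired class amounts to computing $\{\symm^t B(\bg_a^{s-1}\rtimes\mu_s)\}$ in $\grStacks$. By \autoref{lemma:semidirect-class}, $\{B(\bg_a^{s-1}\rtimes\mu_s)\}=\bl^{-(s-1)}=\{B\bg_a^{s-1}\}$. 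Since $\symm^t$ is a well-defined operation on $\grStacks$ by the Ekedahl result \cite[Prop.\ 2.5]{ekedahlGeometricInvariantFinite2009}, applying $\symm^t$ to these two equal classes yields equal results, and so $\{\symm^t B(\bg_a^{s-1}\rtimes\mu_s)\}=\{\symm^t B\bg_a^{s-1}\}=\{B(\bg_a^{s-1}\wr S_t)\}=\{B(\bg_a^{(s-1)t}\rtimes S_t)\}$, the second-to-last equality using the key isomorphism a second time (with $G=\bg_a^{s-1}$).

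It then remains to verify $\{B(\bg_a^{(s-1)t}\rtimes S_t)\}=\bl^{-(s-1)t}$, and I would do this by mimicking the embedding step of \autoref{lemma:semidirect-class}. The natural $S_t$-action on $\bg_a^{(s-1)t}$ (permuting the $t$ copies of $\bg_a^{s-1}$) extends to a $\gl_t$-action via the identification $\bg_a^{(s-1)t}\simeq\bg_a^t\otimes_k\bg_a^{s-1}$, with $\gl_t$ acting on the first factor and $S_t\subset\gl_t$ the subgroup of permutation matrices. This gives an inclusion $\bg_a^{(s-1)t}\rtimes S_t\hookrightarrow \bg_a^{(s-1)t}\rtimes\gl_t$ into a special group (an extension of the special groups $\gl_t$ and $\bg_a^{(s-1)t}$). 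Applying \cite[Prop.\ 1.1(ix)]{Ekedahl2009} to this inclusion gives $\{B(\bg_a^{(s-1)t}\rtimes S_t)\} = \{\gl_t/S_t\}\cdot\{B(\bg_a^{(s-1)t}\rtimes\gl_t)\} = \{\gl_t/S_t\}/(\bl^{(s-1)t}\{\gl_t\})$. Applying the same proposition once more to the inclusion $S_t\subset\gl_t$ yields $\{BS_t\}=\{\gl_t/S_t\}/\{\gl_t\}$; combined with Ekedahl's result $\{BS_t\}=1$, this gives $\{\gl_t/S_t\}=\{\gl_t\}$, whence $\{B(\bg_a^{(s-1)t}\rtimes S_t)\}=\bl^{-(s-1)t}$ as desired.

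The main subtle point will be justifying the use of \cite[Prop.\ 2.5]{ekedahlGeometricInvariantFinite2009}: we need that the stacky symmetric power genuinely descends to a well-defined operation on $\grStacks$, so that we may apply $\symm^t$ to the class equality $\{B(\bg_a^{s-1}\rtimes\mu_s)\}=\{B\bg_a^{s-1}\}$ from \autoref{lemma:semidirect-class} even though the two classifying stacks themselves are not isomorphic as stacks. Once this is granted, everything else is bookkeeping combined with the standard behavior of special groups under \cite[Prop.\ 1.1]{Ekedahl2009}.
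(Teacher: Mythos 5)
Your proposal is correct. The first half coincides with the paper's argument: both apply $\symm^t(BG)\simeq B(G\wr S_t)$, invoke \autoref{lemma:semidirect-class} to get $\{B(\bg_a^{s-1}\rtimes\mu_s)\}=\{B\bg_a^{s-1}\}$, and then use \cite[Prop.\ 2.5]{ekedahlGeometricInvariantFinite2009} (that $\{\symm^t\scx\}$ depends only on $\{\scx\}$) to replace one classifying stack by the other inside the symmetric power --- and you are right that this is the one genuinely delicate point, since the two stacks are not isomorphic. Where you diverge is the endgame. The paper stays in ``symmetric power land'': it uses Ekedahl's \cite[Lem.\ 2.4]{ekedahlGeometricInvariantFinite2009} to peel off the factor $\bl^{(s-1)t}$ via $\symm^t(\ba^{s-1}\times B\bg_a^{s-1})$, notes $\{\ba^{s-1}\times B\bg_a^{s-1}\}=1$, and reduces to $\{\symm^t(1)\}=\{BS_t\}=1$. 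You instead convert $\symm^t(B\bg_a^{s-1})$ back into $B(\bg_a^{(s-1)t}\rtimes S_t)$ and run the embedding-into-a-special-group argument of \autoref{lemma:semidirect-class} a second time, with $S_t\subset\gl_t$ as permutation matrices in place of $\mu_y\subset\bg_m$; the identity $\{\gl_t/S_t\}=\{\gl_t\}$ you extract from \cite[Prop.\ 1.1(ix)]{Ekedahl2009} and $\{BS_t\}=1$ then does the work of Lemma 2.4. Both routes ultimately rest on Ekedahl's theorem $\{BS_t\}=1$ and on $\symm^t$ descending to classes; yours has the mild advantage of reusing a technique already deployed in the paper rather than importing an additional lemma of Ekedahl, at the cost of needing to check that the wreath-product action really does extend to $\gl_t$ (which it does, via $\bg_a^{(s-1)t}\simeq \bg_a^{t}\otimes_k\bg_a^{s-1}$, as you say) and that $\{\gl_t\}$ is invertible in $\grStacks$ so the cancellation is licit (it is, by \autoref{remark:ekedahl-grstacks-equivalence}).
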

\begin{proof} 	By definition, \begin{align*} \{B(\left( \bg_a^{s - 1} \rtimes \mu_{s}
	\right) \wr S_{t})\} =  \{B\left( \bg_a^{s - 1} \rtimes \mu_{s} \right)
\wr B(S_{t}) \} = \{\symm^{t}(B\left( \bg_a^{s - 1} \rtimes \mu_{s} \right))\}.
\end{align*} Having computed $\{B\left( \bg_a^{s - 1} \rtimes \mu_{s} \right)\} =
\bl^{-s+1}$ in \autoref{lemma:semidirect-class}, we therefore wish to next
compute $\{\symm^{t}(B\left( \bg_a^{s - 1} \rtimes \mu_{s} \right))\}$.  Since
$\{\symm^n \scx\}$ only depends on $\{\scx\}$ by \cite[Prop.\
2.5]{ekedahlGeometricInvariantFinite2009}, and we have shown $\{B\left( \bg_a^{s
- 1} \rtimes \mu_{s} \right)\}= \{B(\bg_a^{s - 1})\}$ in
\autoref{lemma:semidirect-class} it follows that \begin{align*}
	\{\symm^{t}(B\left( \bg_a^{s - 1} \rtimes \mu_{s} \right))\} =
	\{\symm^{t}(B\bg_a^{s - 1})\}.  \end{align*} Next, by \cite[Lem.\
	2.4]{ekedahlGeometricInvariantFinite2009}, we have \begin{align*}
		\{\symm^{t}\left( \ba^{s-1} \times B \bg_a^{s-1} \right)\} &=
		\{\symm^{t}\left( B \bg_a^{s-1} \right) \times \ba^{(s-1)t}\} \\
	&= \{\symm^{t}\left( B \bg_a^{s-1} \right)\} \cdot \bl^{(s-1)t}.
\end{align*} However, since $\{\ba^{s-1} \times B \bg_a^{s-1} \} = 1$, and
$\{\symm^n \scx\}$ only depends on $\{\scx\}$ by \cite[Prop.\
2.5]{ekedahlGeometricInvariantFinite2009},
we obtain \begin{align*} \{\symm^{t}\left( B \bg_a^{s-1} \right)\} &=
	\{\symm^{t}\left( \ba^{s-1} \times B \bg_a^{s-1} \right)\} \bl^{-(s-1)t}
	\\ &= \{\symm^{t}\left( 1 \right)\} \bl^{-(s-1)t}  \\ &= \{BS_{t}\}
	\bl^{-(s-1)t} \\ &= \bl^{-(s-1)t}.  \end{align*} For the last step, we
	used \autoref{ekedahl}, which says that $\{BS_t\} = 1$.  
\end{proof}

\subsection{Completing the calculation of the local class}
\label{subsection:local-class-proof}

We now complete the proof of \autoref{theorem:local-aut-class}.  

\begin{proof}[Proof of \autoref{theorem:local-aut-class}]
	By \autoref{remark:residual-gerbe-trivial}
	$\{\gerbe R d\} = \{B(\res_{D/k}(\aut_{X_{(R)}/D}))\}$,
	and so we will now compute the latter.
	By
\autoref{corollary:curvilinear-auts} we equate
\begin{align*}
	\res_{D/k}(\aut_{X_{(R)}/D}) = \prod_{i=1}^n \left( \bg_a^{r_i - 1} \rtimes
	\mu_{r_i} \right) \wr S_{t_i}.
\end{align*}
Factoring this as a product, it suffices to
compute the class of $B\left( \bg_a^{r_i - 1} \rtimes \mu_{r_i} \right) \wr
S_{t_i}$.  Using that $\sum_{i=1}^n (r_i-1)t_i =r(R)$, the result follows from
\autoref{lemma:single-part-bh}.  \end{proof}

\section{Codimension bounds for the main result}
\label{section:codimension-bounds}

In this section, we establish various bounds on the codimension or certain bad
loci we will want to weed out when computing the class of Hurwitz stacks in the
Grothendieck ring.

\subsection{Weeding out the strata of unexpected codimension} 
\label{subsection:codimension-of-strata}

In order to compute the classes of Hurwitz stacks, we will stratify the Hurwitz
stacks by Casnati-Ekedahl strata.
The following lemma computes the codimension of these loci in the Hurwitz stack.
For the following statement, recall our notation for $\sch(\sce, \scf_\bullet)$
from \eqref{equation:associate-h}.

\begin{lemma} \label{lemma:codimension-computation} Fix some $d$, resolution
	data $(\sce, \scf_\bullet)$, and define $g$ by $\deg \det \sce = g + d - 1$.
	Letting $\rboxed{\sch} := \sch(\sce, \scf_\bullet)$, the codimension of
	$\rboxed{\ce }:= \ce(\sce, \scf_\bullet)$ in $\bighurg d g k$, assuming it
	is nonempty, is \begin{align*} \begin{cases} h^1(\bp^1_k, \End \sce) & \text{ if
	} d = 3 \\ h^1(\bp^1_k, \End \sce)+ h^1(\bp^1_k, \End
	\scf) - h^1(\bp^1_k, \sch) & \text{ if } d = 4 \text{ or
	} 5 \end{cases} \end{align*} \end{lemma}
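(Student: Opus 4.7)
The plan is to combine the quotient stack presentation from Proposition~\ref{proposition:ce-presentation} with Riemann--Roch on $\bp^1_k$. Taking $\mathcal R$ to be the collection of all partitions of $d$, so that $\bigrhur d g k {\mathcal R} = \bighurg d g k$, Proposition~\ref{proposition:ce-presentation} gives $\ce \simeq [\ramlocus \ce {\mathcal R}/\aut_\ce]$. Since $\ramlocus \ce {\mathcal R}$ is an open subscheme of the affine space $\spec \sym^\bullet H^0(\bp^1_k, \sch)^\vee$, its dimension (assuming $\ce$ is nonempty) equals $h^0(\bp^1_k, \sch)$. By Lemma~\ref{lemma:aut-as-sub}, $\aut_\ce$ has dimension $h^0(\bp^1_k, \End \sce)$ for $d = 3$, and $h^0(\bp^1_k, \End \sce) + h^0(\bp^1_k, \End \scf) - 1$ for $d \in \{4, 5\}$; the $-1$ arises from the single determinant relation cutting $\aut_\ce$ out of $\res_{\bp^1_k/k}(\aut_\sce) \times \res_{\bp^1_k/k}(\aut_\scf)$, together with the fact that $h^0(\bp^1_k, \sco_{\bp^1_k}) = 1$.

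Combining these with the standard computation $\dim \bighurg d g k = 2g - 2 + 2d$ (obtainable from Riemann--Hurwitz via the generically finite branch-divisor map to $\sym^{2g - 2 + 2d} \bp^1_k$), the codimension of $\ce$ in $\bighurg d g k$ is $(2g - 2 + 2d) - h^0(\bp^1_k, \sch) + \dim \aut_\ce$. Applying Riemann--Roch on $\bp^1_k$ in the form $h^0(\scv) - h^1(\scv) = \rk \scv + \deg \scv$, together with the explicit Euler characteristics $\chi(\End \sce) = (d-1)^2$ and $\chi(\End \scf) = \beta_1^2$ (using $\deg \End \scv = 0$), the degree formula $\deg \det \sce = g + d - 1$ from Lemma~\ref{lemma:degree-of-e}, and the relation $\det \scf \simeq (\det \sce)^{\otimes (d-3)}$ for $d \in \{4, 5\}$, a direct calculation gives $\chi(\sch) = 2g + 8$ when $d = 3$, $\chi(\sch) = 2g + 18$ when $d = 4$, and $\chi(\sch) = 2g + 48$ when $d = 5$. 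Substituting these Euler characteristics into the codimension expression collapses it to $h^1(\bp^1_k, \End \sce) - h^1(\bp^1_k, \sch)$ for $d = 3$, and to $h^1(\bp^1_k, \End \sce) + h^1(\bp^1_k, \End \scf) - h^1(\bp^1_k, \sch)$ for $d \in \{4, 5\}$.

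The remaining step is to show that for $d = 3$ the term $h^1(\bp^1_k, \sch)$ vanishes whenever the stratum is nonempty, so that the formula reduces to the clean expression $h^1(\bp^1_k, \End \sce)$ stated in the lemma. Writing $\sce \cong \sco_{\bp^1_k}(a) \oplus \sco_{\bp^1_k}(b)$ with $a \leq b$ and $a + b = g + 2$, the existence of a smooth trigonal cover with this splitting type forces the Maroni-type bound $2a \geq b$; consequently $\sch \cong \sco_{\bp^1_k}(2a - b) \oplus \sco_{\bp^1_k}(a) \oplus \sco_{\bp^1_k}(b) \oplus \sco_{\bp^1_k}(2b - a)$ has every summand of nonnegative degree, so $h^1(\bp^1_k, \sch) = 0$. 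I expect this nonemptiness analysis (and the related verification that $\ramlocus \ce {\mathcal R}$ is nonempty whenever $\ce$ is) to be the main conceptual ingredient; the Euler characteristic calculations, while requiring some care with the explicit descriptions of $\sch$ in each degree, are otherwise routine.
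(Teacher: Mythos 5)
Your proposal is correct, and the arithmetic checks out in all three cases ($\chi(\sch)=2g+8,\,2g+18,\,2g+48$ and the constant terms cancel exactly), but it is organized differently from the paper's argument. The paper never computes $\dim \bighurg d g k$ or any Euler characteristic explicitly: it writes the codimension as $\dim \ce^\circ - \dim \ce$, where $\ce^\circ$ is the dense balanced stratum, and then uses Riemann--Roch only to convert the two differences $h^0(\End \sce^\circ)-h^0(\End\sce)$, etc.\ and $h^0(\sch^\circ)-h^0(\sch)$ into $h^1$'s, using that all relevant $h^1$'s vanish for balanced bundles. Your route instead pins down the ambient dimension $2g-2+2d$ via the branch-divisor map and substitutes explicit values of $\chi(\sch)$, $\chi(\End\sce)$, $\chi(\End\scf)$; this is equivalent in content, trades the (unproved, in the paper) density of the balanced stratum for the (standard) quasi-finiteness of the branch map, and has the virtue of exhibiting the uniform formula $h^1(\End\sce)+h^1(\End\scf)-h^1(\sch)$ in all degrees. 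For $d=3$ the paper simply cites Patel's thesis, whereas you derive $h^1(\End\sce)-h^1(\sch)$ and then argue $h^1(\sch)=0$ via the Maroni bound; that is a genuinely self-contained improvement.

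One caveat on the last step: $\bighurg 3 g k$ as defined parameterizes smooth but not necessarily geometrically connected covers, so a nonempty stratum need not satisfy $2a\ge b$. For instance the stratum $\sce\simeq\sco\oplus\sco(g+2)$ is nonempty (it contains $\bp^1\sqcup H$ with $H$ hyperelliptic of genus $g+1$), has codimension $0$, and has $h^1(\End\sce)=h^1(\sch)=g+1$ --- so there the correct answer is your uncollapsed expression $h^1(\End\sce)-h^1(\sch)=0$, not $h^1(\End\sce)$. Your Maroni argument is valid exactly for strata containing a geometrically connected cover, which is the regime in which the lemma is actually applied (elsewhere the paper discards strata with $H^0(\sce^\vee)\neq 0$ or $H^1(\sch)\neq 0$ by separate codimension bounds). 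So this is an imprecision in the lemma's statement, shared by the paper's citation-based proof, rather than a defect of your argument; you should just state explicitly that the $d=3$ collapse to $h^1(\End\sce)$ is asserted only for strata meeting the connected locus.
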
 \begin{proof}
	The case $d = 3$ is proven in \cite[Prop. 1.4]{patel:thesis}.
	Therefore, for the remainder of the proof, we assume $d = 4$ or $d = 5$,
	in which case we simply write $(\sce, \scf)$ in place of $(\sce,
	\scf_\bullet)$.
		Let $\rboxed{\ce^{\circ}} := \ce(\sce^\circ,
		\scf^\circ)$ denote the dense open
		stratum, corresponding to vector bundles
		$\sce^\circ$ and $\scf^\circ$ which are
	balanced (see \autoref{subsection:vector-bundle-closure}),
		subject to the conditions that $\det \scf^\circ \simeq
		\det \sce^\circ$ when $d = 4$ and $\det
		\scf^\circ \simeq
	\det (\sce^\circ)^{\otimes 2}$ when $d = 5$, coming from
		coming from \autoref{theorem:4-structure} and
		\autoref{theorem:5-structure}.
		Similarly, let $\sch^\circ := \sch(\sce^\circ, \scf^\circ).$

		We are looking to compute the codimension of $\ce$ in $\bighurg d g	k$,
		or equivalently the difference of dimensions $\dim \ce^\circ -
		\dim \ce$.
		Using the description of
			$\ce$ from 
			\autoref{proposition:ce-presentation}
			as a quotient of an open in the
			affine space associated to $H^0(\bp^1_k,
			\sch)$ by
			$\aut_{\ce}$, it follows that the
			codimension of $\ce$ in $\bighurg d g k$ is
			$\dim \ce^{\circ} - \dim \ce$.  Since
			$\dim \ce = h^0(\bp^1_k, \sch) - \dim \aut_{\ce}$, we
			are looking to compute 
			\begin{equation}
			\begin{aligned}
				\label{equation:codimension-computation}
				& & 	(h^0(\bp^1_k,
				\sch^\circ) - \dim \aut_{\ce^\circ}) -
		(h^0(\bp^1_k, \sch) - \dim
		\aut_{\ce}) \\ & =&  (h^0(\bp^1_k,
		\sch^\circ) -
	h^0(\bp^1_k, \sch)) +(\dim
	\aut_{\ce} - \dim \aut_{\ce^\circ}).  
\end{aligned}
\end{equation}

	We will first identify $\dim \aut_{\ce} - \dim \aut_{\ce^\circ}$ with
	$h^1(\bp^1_k, \End \sce)+
	h^1(\bp^1_k, \End \scf)$.  Second, we will show
	$(h^0(\bp^1_k, \sch^\circ) - h^0(\bp^1_k,
	\sch))$ agrees with
	$h^1(\bp^1_k, \sch)$.  Combining these with
	\eqref{equation:codimension-computation} will complete the proof.

	To identify $\dim \aut_{\ce} - \dim \aut_{\ce^\circ}$, we may identify
	$\dim \aut_{\ce}$ with the dimension of the tangent space to
	$\aut_{\ce}$ at the identity, which is given by $H^0(\bp^1_k, \End \sce)
	\times H^0(\bp^1_k, \End \scf)$.  It is then enough to show that
	\begin{align*} h^0(\bp^1_k, \End \sce^\circ) - h^0(\bp^1_k, \End \sce) =
	h^1(\bp^1_k, \End \sce) \end{align*} and
	\begin{align*} h^0(\bp^1_k, \End \scf^\circ) - h^0(\bp^1_k, \End
		\scf)
	= h^1(\bp^1_k, \End \scf). \end{align*} We focus on the case of
		$\sce$, as the case of $\scf$ is completely analogous.  By
		Riemann Roch, since the degrees and ranks of $\sce$ and
		$\sce^\circ$ are the same, we find \begin{align*} h^0(\bp^1_k,
			\End \sce^\circ) - h^0(\bp^1_k, \End \sce) =
			h^1(\bp^1_k, \End \sce) - h^1(\bp^1_k, \End \sce^\circ).
		\end{align*} Because $\sce^\circ$ is balanced, we find
		$h^1(\bp^1_k, \End \sce^\circ) = 0$.

	To complete the proof, it only remains to show $(h^0(\bp^1_k,
	\sch^\circ) - h^0(\bp^1_k, \sch))$ agrees with $h^1(\bp^1_k, \sch)$.  
	Similarly to our computation above for $h^0(\bp^1_k, \End
	\sce^\circ) - h^0(\bp^1_k, \End \sce)$,
	we find
	\begin{align*}
	h^0(\bp^1_k, \sch^\circ) - h^0(\bp^1_k,
	\sch) = h^1(\bp^1_k, \sch) - h^1(\bp^1_k,
	\sch^\circ)
	\end{align*}
	by Riemann Roch.  To complete the proof, we only need verify
	$h^1(\bp^1_k, \sch^\circ) = 0$. 
	Indeed, by writing out $\sce^\circ$ and $\scf^\circ$ as sums of line
		bundles on $\bp^1_k$, and using the relation between $\det \sce$
		and $\det \scf$, the balancedness of $\sce^\circ$ and
		$\scf^\circ$ implies $h^1(\bp^1_k, \sch^\circ) = 0$.
%the closest case is d = 5 and g = 0 when E is O(1)^4, F is O(1)^2 + O(2)^3 and
%there is a line bundle summand of \wedge^2 F \otimes E \otimes \det E^\vee
%which is O(-1), but that still has no cohomology. When d = 5 and g = 2, there
%is also an O(-1) summand, but there is never an O(-2) summand
\end{proof}

With the above lemma in hand, we may note that the codimension of
the vector bundles $(\sce, \scf_\bullet)$ in the stack of vector bundles
is $H^1(\bp^1, \End(\sce)) + H^1(\bp^1, \End(\scf))$ (the latter interpreted as
$0$ when $d  =3$).

\begin{remark}
	\label{remark:expected-codimension}
	We will think of a Casnati-Ekedahl stratum as having the ``expected codimension''
when its codimension in the Hurwitz stack agrees with the corresponding
codimension of $(\sce, \scf_\bullet)$ in the stack of tuples of vector bundles.
Using \autoref{lemma:codimension-computation} and its proof, a stratum is of the expected
codimension precisely when $H^1(\bp^1, \sch(\sce, \scf_\bullet)) = 0$.
\end{remark}
The next lemma bounds the codimension of strata not having the expected
codimension.

\begin{lemma} \label{lemma:unexpected-codimension-bound} Suppose $3 \leq d \leq
	5$ and $\rboxed{\ce} := \ce(\sce, \scf_\bullet)$ is a Casnati-Ekedahl
	stratum containing a curve $C \to \bp^1$ which does not factor through
	some intermediate cover $C' \to \bp^1$ of positive degree.  If
$H^1(\bp^1, \sch(\sce, \scf_\bullet)) \neq 0$ or $H^0(\bp^1, \sce^\vee) \neq 0$,
$\codim_{\bighurg d g k} \ce \geq \frac{g + d  -1}{d} - 4^{d-3}.$ \end{lemma}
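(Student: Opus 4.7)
The plan is to apply \autoref{lemma:codimension-computation}, which identifies the codimension of $\ce = \ce(\sce,\scf_\bullet)$ in $\bighurg d g k$ with $h^1(\End\sce)$ (if $d=3$) or $h^1(\End\sce)+h^1(\End\scf)-h^1(\sch)$ (if $d\in\{4,5\}$). Write $\sce \simeq \bigoplus_{i=1}^{d-1}\sco_{\bp^1}(a_i)$ with $a_1\leq\cdots\leq a_{d-1}$ and $\sum a_i = g+d-1$, and analogously $\scf \simeq \bigoplus_j\sco_{\bp^1}(b_j)$ when $d\geq 4$ (where $\det\scf \simeq (\det\sce)^{d-3}$). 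The desired inequality then reduces to an explicit numerical estimate in the integers $a_i, b_j$.

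For $d=3$, the codimension equals $\max(0,a_2-a_1-1) = g-2a_1$ in the relevant range, and a direct computation gives $\sch = \sco(2a_1-a_2)\oplus\sco(a_1)\oplus\sco(a_2)\oplus\sco(2a_2-a_1)$. So $H^1(\sch)\neq 0$ forces $2a_1-a_2\leq -2$, i.e., $a_1\leq (g-1)/3$, yielding $h^1(\End\sce)\geq (g+2)/3$, while the stronger hypothesis $H^0(\sce^\vee)\neq 0$ forces $a_1\leq 0$ and gives $h^1(\End\sce)\geq g$. Both exceed $\frac{g+2}{3}-1 = \frac{g+d-1}{d}-4^{d-3}$, completing this case.

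For $d\in\{4,5\}$, the correction $-h^1(\sch)$ requires tracking how the imbalance of $\sce$ and $\scf$ propagates through $\sch = \scf^\vee\otimes\sym^2\sce$ (respectively $\sch = \wedge^2\scf\otimes\sce\otimes\det\sce^\vee$). The key observation is that any degree-$(\leq\!-2)$ summand of $\sch$ must involve either a small-degree summand of $\sce$ or, in the $d=4$ case, a large-degree summand of $\scf$ (since $\sch$ involves $\scf^\vee$); such small (or large) summands, combined with their complements forced by the degree constraints $\sum a_i = g+d-1$ and $\sum b_j = (d-3)(g+d-1)$, produce correspondingly many degree-$(\leq\!-2)$ summands in $\End\sce$ and $\End\scf$. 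A combinatorial accounting shows that the positive contributions to $h^1(\End\sce)+h^1(\End\scf)$ dominate $h^1(\sch)$, leaving a net codimension bounded below by a quantity linear in $\max(0, \tfrac{g+d-1}{d-1}-a_1)$ (and analogously in the $b_j$); the constraint $\sum a_i = g+d-1$ then forces the extremal value of this bound to be at least $\tfrac{g+d-1}{d}$, with the constant $4^{d-3}$ absorbing boundary-case slack.

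The main obstacle is organizing the case analysis for $d=5$, where $\sch$ has rank $40$ and many splitting-type combinations of $\sce$ and $\scf$ must be checked individually. The hypothesis that the cover does not factor through a positive-degree intermediate cover is needed to exclude splitting types (notably the $D_4$-strata in degree $4$) where $\sce$ or $\scf$ contains a summand forcing a non-primitive factorization; on these exceptional strata the imbalance estimate above would not suffice, and they must instead be treated via \autoref{lemma:no-transposition-high-codimension}.
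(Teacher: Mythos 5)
Your framework is right --- reduce to \autoref{lemma:codimension-computation} and estimate $h^1(\End\sce)+h^1(\End\scf)-h^1(\sch)$ in terms of splitting types --- and your $d=3$ analysis is essentially correct (modulo small arithmetic slips: $h^1(\End\sce)=g+1-2a_1$, not $g-2a_1$, and $H^1(\sch)\neq 0$ forces $a_1\leq g/3$, not $(g-1)/3$; neither affects the conclusion). The case $H^1(\sch)=0$ but $H^0(\sce^\vee)\neq 0$ is also handled correctly by your observation that $a_1\leq 0$ forces $h^1(\End\sce)\geq g+1$, which is exactly what the paper does.

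The genuine gap is the $d=4,5$ case with $H^1(\sch)\neq 0$. Your ``combinatorial accounting shows that the positive contributions to $h^1(\End\sce)+h^1(\End\scf)$ dominate $h^1(\sch)$'' is not an argument but a restatement of the claim to be proven; no inequality is actually established, and the asserted intermediate bound ``linear in $\max(0,\tfrac{g+d-1}{d-1}-a_1)$'' is neither derived nor reconciled with the target $\tfrac{g+d-1}{d}-4^{d-3}$. This estimate is the entire content of the lemma in degrees $4$ and $5$: the paper does not prove it either, but instead cites \cite[Lem.\ 5.8, Lem.\ 5.12]{canningL:tautological-classes-on-low-degree-hurwitz-spaces}, which are substantial results requiring a careful analysis of which splitting types of $(\sce,\scf)$ can support a section defining a primitive cover. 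Relatedly, your handling of the ``does not factor through an intermediate cover'' hypothesis is off: you cannot shunt the excluded splitting types to \autoref{lemma:no-transposition-high-codimension} (which bounds the dimension of the locus of covers with small Galois group, not the codimension of a Casnati--Ekedahl stratum); rather, the primitivity hypothesis must enter the combinatorial estimate itself, since without it the bound is false --- e.g.\ the $D_4$-strata in degree $4$ have $H^1(\sch)\neq 0$ but codimension as low as $2$. So the hard half of the lemma remains unproven in your write-up; you would need either to reproduce the Canning--Larson arguments or to cite them as the paper does.
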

\begin{proof} If $H^1(\bp^1, \sch(\sce, \scf_\bullet)) \neq 0$ then we have
	$\codim_{\bighurg d g k} \ce \geq \frac{g + d  -1}{d} - 4^{d-3}$ by
	\cite[Lem.\ 5.8]{canningL:tautological-classes-on-low-degree-hurwitz-spaces}
	when $d = 4$,
	\cite[Lem.\
	5.12]{canningL:tautological-classes-on-low-degree-hurwitz-spaces} when $d
	= 5$ and
	\cite[(6.2)]{miranda:triple-covers-in-algebraic-geometry},
	for the cases that $d=3$
	(see also \cite[Prop.\ 2.2]{bolognesiV:stacks-of-trigonal-curves}).  
	It therefore remains to
	show that if $H^1(\bp^1, \sch(\sce, \scf_\bullet)) = 0$ 
	but $H^0(\bp^1,
	\sce^\vee) \neq 0$, we will also have $\codim_{\bighurg d g k} \ce \geq
	\frac{g + d  -1}{d} - 4^{d-3}$.  In the case $H^1(\bp^1, \sch(\sce,
	\scf_\bullet)) = 0$, the codimension of $\ce$ in $\bighurg d g k$ is simply
	$h^1(\bp^1, \End(\sce)) + h^1(\bp^1, \End(\scf_\bullet))$, by
	\autoref{lemma:codimension-computation}.  We will only have $H^0(\bp^1,
	\sce^\vee) \neq 0$ when some summand of $\sce$ is non-positive.  
	Recall from \autoref{lemma:degree-of-e} 
	$\deg \sce = g + d- 1$.
	Therefore,
	if
	$\sce = \oplus_{i=1}^{d-1} \sco(e_i)$ with $e_1 \leq 0$, then
	$\sum_{i=2}^{d-1} (e_i - e_1) \geq g + d -1$ and hence \begin{align*}
h^1(\bp^1, \End(\sce)) &\geq h^1(\bp^1, \oplus_{i=2}^{d-1} \sco_{\bp^1}(e_1 -
e_i)) \\
&\geq g + d -1 - (d-2) \\
&= g + 1 \\
&> \frac{g + d  -1}{d} - 4^{d-3}. \qedhere \end{align*}
\end{proof}

\subsection{Weeding out covers with smaller Galois groups}
\label{subsection:bounding-smaller-galois-group}

In the next few results, culminating in 
\autoref{lemma:bounding-non-Sd-covers},
we establish bounds on the codimension of degree $d$
covers of $\bp^1$ whose
Galois closure has Galois group $G$ strictly contained in $S_d$.

For a group $G$ and a base $S$, with $\# G$ invertible on $S$, we use
$\rboxed{\grouphur G S}$ to denote the stack whose $T$-points are given by $(T,
X, h: X \to T, f: X \to \bp^1_T)$ where $X$ is a scheme, $h$ is a smooth proper
relative curve, $f$ is a finite locally
free map of degree $\# G$ so that $G$ acts on $X$ over $\bp^1_T$, together with
an isomorphism $G \simeq \aut f$.  Note that $\grouphur G S$ is an algebraic
stack with an \'etale map to the configuration space of points in $\bp^1$ given
by taking the branch divisor, as follows from \cite[Thm.\  4]{wewers:thesis},
(the key point of the construction being the algebraicity criterion in
\cite[Thm.\  1.3.3]{wewers:thesis}).  Upon specifying an embedding $G \subset
S_d$ for some $d$, there is a natural map $\grouphur G S \to \bighur d S$
sending a given cover $(T, X, h: X \to T, f: X \to \bp^1_T)$ to an associated
cover $\coprod_{h \in G \backslash S_d} (h X)/S_{d-1} \to \bp^1_T$ where we take
the disjoint union over cosets of $G \backslash S_d$ and then quotienting the
resulting $S_d$ cover by $S_{d-1}$. 
%We note that the quotient by the finite group $S_{d-1}$ is in fact a scheme,
%being an algebraic space with a finite map to a scheme.
The image of this map is a substack of $\bighur d S$ whose geometric points
parameterize degree $d$ covers whose Galois group is $G$ with the specified
embedding $G \subset S_d$.
We note that we could have alternatively constructed $\grouphur G S$ directly,
as mentioned in \autoref{remark:direct-group-hur}, without appealing to
\cite{wewers:thesis}.

\begin{lemma} \label{lemma:no-transposition-high-codimension} Suppose $G \subset
	S_d$ is a subgroup not containing a transposition. Then the closure of
	the image $(\grouphur G S \to \bighur d S) \cap \bighurg d g S$ has
	dimension at most $g - 1 + d$.  \end{lemma}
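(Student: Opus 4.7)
The plan is to combine Riemann–Hurwitz applied to the degree $d$ cover $X\to \bp^1$ with the étale map from $\grouphur G S$ to the configuration space of branch points on $\bp^1$. Concretely, let $(X \to \bp^1_T) \in \bighurg d g S$ be a geometric point coming from a $G$-cover $Y \to \bp^1_T$ via the construction described before the lemma, so that $X \simeq Y/S_{d-1}$ (where $S_{d-1} = \stab(1) \cap G$ intersected appropriately), and the branch locus of $X \to \bp^1_T$ is contained in the branch locus of $Y \to \bp^1_T$.

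First I would translate the no-transposition hypothesis into a lower bound on ramification. Over each branch point $p$ of $X \to \bp^1$, a generator $\sigma_p$ of the inertia (viewed as an element of $G \subset S_d$) has some cycle structure, and the local contribution to the ramification divisor of $X \to \bp^1$ is $d - c(\sigma_p)$ where $c(\sigma_p)$ counts the cycles of $\sigma_p$ on $\{1,\ldots,d\}$. If $\sigma_p$ were a transposition this contribution would be $1$; since $G$ contains no transpositions and $\sigma_p \neq \id$, the contribution is at least $2$. Riemann–Hurwitz then gives
\begin{align*}
2g - 2 + 2d \;=\; \sum_{p}\bigl(d - c(\sigma_p)\bigr) \;\geq\; 2 \cdot \#\{\text{branch points}\},
\end{align*}
so the number of branch points of $X \to \bp^1$ is at most $g - 1 + d$. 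The same bound then holds for the number of branch points of the $G$-cover $Y \to \bp^1$, since those are exactly the branch points of $X \to \bp^1$ (any point where $Y$ is branched gives nontrivial inertia in $G$, which acts nontrivially on $\{1,\dots,d\}$ because $G$ is transitive, and conversely).

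Next, I would use that $\grouphur G S$ is étale over the configuration space $\conf_n(\bp^1)$ of unordered $n$-tuples of branch points, via the branch divisor map (this is precisely the content of \cite[Thm.\ 4]{wewers:thesis} used to construct $\grouphur G S$). Stratifying $\grouphur G S$ by the cardinality $n$ of the branch locus, each stratum has dimension equal to $\dim \conf_n(\bp^1) = n$. By the bound above, the stratum that maps into $\bighurg d g S$ has $n \leq g - 1 + d$, so every irreducible component of the preimage of $\bighurg d g S$ in $\grouphur G S$ has dimension at most $g - 1 + d$. Since the map $\grouphur G S \to \bighur d S$ does not increase dimension, the image of this preimage has dimension at most $g-1+d$, and hence so does its closure. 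The only subtle point, which is the main thing requiring care, is verifying that the "branch point" count transfers correctly between $X$ and $Y$ (i.e.\ that the branch loci agree set-theoretically), but this follows from $G \subset S_d$ being a transitive subgroup acting nontrivially through every inertia element together with the fact that we have removed the case of trivial inertia from the count.
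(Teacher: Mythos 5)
Your proposal is correct and follows essentially the same route as the paper: bound the number of branch points by $g-1+d$ via Riemann--Hurwitz (using that the absence of transpositions forces each branch point to contribute at least $2$ to the ramification divisor), then use the \'etale branch-divisor map to the configuration space from Wewers' thesis to convert the branch-point count into a dimension bound. The only cosmetic differences are that you make explicit the cycle-count justification for the local bound and the agreement of the branch loci of $X$ and its $G$-cover, while the paper instead notes explicitly that tameness (from $\#G$ invertible) is what rules out wild inertia and that finiteness of the set of components follows from boundedness of the genus of the Galois closure.
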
 
	\begin{proof} By \cite[Thm.\
		4]{wewers:thesis}, if the image $(\grouphur G S \to \bighur d S)
		\cap \bighurg d g S$ parameterizes curves with $n$ branch
		points, it has dimension $n$.  
		We therefore use $n$ for the number of branch points.
		It is possible this image has multiple components, but because
		the Galois closure of a genus $g$ degree $d$ cover of
		$\bp^1_k$ is a curve of bounded genus, there can only be finitely many
		components. We now fix one of these components and wish to show
		$n \leq g - 1 + d$.

		Let $X \to \bp^1$ be a degree $d$ genus $g$ cover corresponding to a point
		on this component with $n$ branch points.
		If $G \subset S_d$ has no
		transpositions, the inertia at any point of $\bp^1$, which is
		tame by assumption, does not act as a transposition.
		Therefore the cover is not simply branched over that point, 
		i.e., the ramification partition is not
		$(1^d)$ or $(2,1^{d-2})$. 
		Hence, the fiber over that point has total ramification
		degree at least $2$.  It follows from Riemann-Hurwitz that 
		$2g - 2 \geq -2d + 2n$ so $n \leq g - 1 + d$.  \end{proof}
		\begin{corollary}
			\label{corollary:high-codimension-for-low-degree}
			Suppose $2
			\leq d \leq 5$ and $G \subset S_d$ acts transitively
			on $\{1, \ldots, d\}$ with $G$ not isomorphic to $D_4$.
			Then, the image $(\grouphur G S \to \bighur d S) \cap \bighurg
			d g S$ has dimension at most $g - 1 + d$.
		\end{corollary}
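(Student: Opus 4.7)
The plan is to apply Lemma~\ref{lemma:no-transposition-high-codimension}, which yields the desired bound $g - 1 + d$ whenever $G$ contains no transposition. As the context of Section~\ref{subsection:bounding-smaller-galois-group} makes clear, the corollary should be read with the implicit exclusion $G \neq S_d$; the bound fails plainly for $G = S_d$, whose image has dimension $2g - 2 + 2d$. Thus the proof reduces to verifying that every transitive subgroup $G \subsetneq S_d$ for $2 \leq d \leq 5$ other than $D_4$ contains no transposition.

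This is a finite case check using the classification of transitive subgroups of $S_d$ for small $d$. For $d = 2$ there are no proper transitive subgroups, so nothing needs to be checked. For $d = 3$ the only proper transitive subgroup is $A_3$, whose non-identity elements are $3$-cycles. For $d = 4$ the proper transitive subgroups up to conjugacy are $C_4$, $V_4$, $D_4$, and $A_4$; excluding $D_4$, the remaining candidates are $C_4$ (whose non-identity elements are two $4$-cycles and the double transposition $(13)(24)$), the Klein four-group $V_4 = \{e, (12)(34), (13)(24), (14)(23)\}$, and $A_4$, none of which contain a transposition. For $d = 5$ the proper transitive subgroups up to conjugacy are $C_5$, $D_5$, the Frobenius group $F_{20}$ of order $20$, and $A_5$; identifying $\{1,\ldots,5\}$ with $\mathbb{F}_5$, the first three embed into $\mathrm{AGL}_1(\mathbb{F}_5)$, whose nontrivial involutions (such as $x \mapsto -x$, i.e., $(14)(23)$) have cycle type $(2,2,1)$, and $A_5$ consists only of even permutations.

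The exclusion of $D_4$ is genuinely necessary: embedding $D_4 \subset S_4$ as the symmetry group of a square with cyclically labeled vertices $1, 2, 3, 4$, the two diagonal reflections give $(13), (24) \in D_4$, which are transpositions, so Lemma~\ref{lemma:no-transposition-high-codimension} cannot be invoked in that case. No serious obstacle arises: once the classification of transitive subgroups is recalled, the verification is by inspection of cycle types, and Lemma~\ref{lemma:no-transposition-high-codimension} does the substantive work via Riemann--Hurwitz.
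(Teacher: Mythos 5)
Your proof is correct and follows the same route as the paper: reduce to Lemma~\ref{lemma:no-transposition-high-codimension} by checking that no transitive proper subgroup of $S_d$ other than $D_4$ contains a transposition for $2 \leq d \leq 5$; the paper simply asserts this as "a straightforward check of all subgroups," whereas you carry out the classification explicitly and correctly. Your observation that the statement must implicitly exclude $G = S_d$ (whose image has dimension $2g-2+2d$) is also right and matches the paper's own phrasing "proper conjugacy class of subgroups" in its proof.
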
 \begin{proof} 
			If $2 \leq d \leq 5$, we claim the only proper conjugacy class of
			subgroups $G \subset S_d$ acting transitively
			on $\{1, \ldots, d\}$ and containing a transposition is
			$D_4 \subset S_4$, the dihedral group of order $8$. 
			Indeed, this claim follows by a
		straightforward check of all subgroups of $S_d$.	
		The corollary then follows from \autoref{lemma:no-transposition-high-codimension}
\end{proof}

The next lemma shows that in any Casnati-Ekedahl stratum having the 
expected codimension (see \autoref{remark:expected-codimension})
the locus of non $S_d$ covers has high codimension in the Hurwitz stack.

\begin{lemma} \label{lemma:bounding-non-Sd-covers} Suppose $3 \leq d \leq 5$ and
	$\ce(\sce,\scf_\bullet)$ is a Casnati-Ekedahl stratum with $H^1(\bp^1,
	\sch(\sce, \scf_\bullet)) = 0$. 
	Suppose further that $[\ramlocus {\ce(\sce,
	\scf_\bullet)}{}/\aut_{\ce(\sce, \scf_\bullet)}]$ contains some
	geometrically connected cover $X
	\to \bp^1_k$ whose Galois closure is not $S_d$.
	Then the codimension of this locus of covers in $\bighurg d g k$ is at
	least $\frac{g+3}{2}$.
\end{lemma}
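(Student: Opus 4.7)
The plan is to bound the dimension of the locus of geometrically connected non-$S_d$ covers by passing to Hurwitz stacks of proper transitive subgroups, and then translate this dimension bound into the desired codimension in $\bighurg d g k$.

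First, since $X \to \bp^1_k$ is geometrically connected, its Galois closure has Galois group $G$ a proper transitive subgroup of $S_d$. Hence $X$ lies in the image of the natural map $\grouphur G k \to \bighur d k$ intersected with $\bighurg d g k$, and the locus of non-$S_d$ covers in $\bighurg d g k$ is contained in the finite union of such images, one for each proper transitive subgroup $G \subset S_d$.

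For $G \not\simeq D_4$ (which is automatic when $d \in \{3, 5\}$, and covers all but one case when $d = 4$), I would invoke \autoref{corollary:high-codimension-for-low-degree} to conclude that the image of $\grouphur G k$ in $\bighurg d g k$ has dimension at most $g + d - 1$. Since $\dim \bighurg d g k = 2g + 2d - 2$, the codimension of the union of such images is at least $g + d - 1$. The inequality $g + d - 1 \geq (g+3)/2$ simplifies to $g \geq 5 - 2d$, which is automatic for $d \geq 3$ and $g \geq 0$, so the lemma follows in this case.

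The main obstacle is the case $d = 4$ and $G = D_4$, where \autoref{corollary:high-codimension-for-low-degree} gives no useful bound: $D_4$-Hurwitz strata in $\bighurg 4 g k$ can in principle have dimension as large as $2g + 4$ (codimension only $2$). Here the hypothesis $H^1(\bp^1, \sch(\sce, \scf_\bullet)) = 0$ must be used nontrivially. Any $D_4$ cover factors through an intermediate double cover $X \to Y \to \bp^1_k$, forcing the Tschirnhausen bundle $\sce^\vee$ to split as $\sco(-g_Y-1) \oplus V$ with $V$ of rank $2$, and the pencil $\scf \subset \sym^2 \sce$ to contain a distinguished singular member corresponding to the hyperelliptic quotient. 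Combining these structural constraints with the near-balancedness of $\sce$ and $\scf$ enforced by $H^1(\bp^1, \sch(\sce, \scf_\bullet)) = 0$ cuts down the stratum of $D_4$ covers enough to extract the desired codimension bound $(g+3)/2$; this is the delicate part of the argument.
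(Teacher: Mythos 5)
Your treatment of the non-$D_4$ case is exactly the paper's argument: decompose the non-$S_d$ locus into images of $\grouphur G k$ for proper transitive $G \subset S_d$, apply \autoref{corollary:high-codimension-for-low-degree} to bound the dimension by $g+d-1$, and compare with $\dim \bighurg d g k = 2g+2d-2$ to get codimension at least $g+d-1 > \frac{g+3}{2}$. That part is complete and correct.

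The problem is the case you yourself flag as "the delicate part": $d=4$ and $G = D_4$. As written, your proposal does not prove the lemma in that case --- it only asserts that combining the factorization $X \to Y \to \bp^1$ with the constraints imposed by $H^1(\bp^1, \sch(\sce,\scf_\bullet)) = 0$ "cuts down the stratum enough," without exhibiting the bound. This is not a minor omission: as the paper remarks immediately after the lemma, the full $D_4$ locus has codimension only $2$ in $\bighurg 4 g k$, so the entire content of the $D_4$ case is the quantitative interplay between the $D_4$ structure and the vanishing of $H^1$, and that computation is genuinely nontrivial. The paper itself does not carry it out internally either; it discharges this case by citing \cite[Lem.\ 5.5]{canningL:tautological-classes-on-low-degree-hurwitz-spaces}, which is precisely the statement that within Casnati--Ekedahl strata with $H^1(\bp^1, \sch(\sce,\scf_\bullet)) = 0$ the $D_4$ covers have codimension at least $\frac{g+3}{2}$. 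So your outline identifies the right structural inputs (the intermediate double cover and the distinguished singular quadric in the pencil $\scf \subset \sym^2\sce$), but to close the gap you must either reproduce the dimension count of that external lemma or cite it; the sketch alone does not suffice.
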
 
Note that the space of $D_4$ covers is typically of codimension $2$ in 
$\bighurg d g k$, but these covers will typically lie in a Casnati-Ekedahl
stratum with 
$H^1(\bp^1,\sch(\sce, \scf_\bullet)) \neq 0$. 
\begin{proof}
	The most difficult case is when $d = 4$ and the Galois closure is $D_4$, the dihedral
	group of order $8$, this was verified in 
	\cite[Lem.\
	5.5]{canningL:tautological-classes-on-low-degree-hurwitz-spaces}.
	Note here we are using that whenever the Galois group of a
	degree $4$ cover is $D_4$, $C\to \bp^1$ necessarily factors through an
	intermediate degree $2$ cover.  
	
	It remains to verify that if we have a
	smooth geometrically connected curve $C \to \bp^1$ whose Galois closure
	is not $D_4$, the codimension of such curves is at least
	$\frac{g + 3}{2}$.
	The geometric connectedness condition guarantees that the action of $G$
	on $\{1, \ldots, d\}$ is transitive.
	Note that the dimension of such a stratum is at most $g - 1 + d$ by
\autoref{corollary:high-codimension-for-low-degree}, and hence also
	codimension $g - 1 + d$ in the $2g+2d-2$ dimensional stack $\hur d g k$.  
	The lemma follows because $g - 1 + d > \frac{g + 3}{2}$.  \end{proof} 

\subsection{Weeding out the singular sections}
\label{subsection:codimension-singular-sections}

Our next goal is to show that for any given Casnati-Ekedahl stratum,
the sections defining smooth curves can be expressed in terms of a fairly simple motivic Euler
product, away from high codimension.
This is, in some sense, the key input to our approach, and draws heavily on the
work of \cite{biluH:motivic-euler-products-in-motivic-statistics} while
also making use of our computations of classes associated to sections with given
ramification profiles over the dual numbers from \autoref{section:local-class}.
It will turn out that this codimension is the dominant term, in the sense that
for large $g$, the codimension bound we obtain on these singular sections agrees
with the codimension bound we find in our main result
\autoref{theorem:hurwitz-class}.
At this point, it may be useful to recall notation for motivic Euler products
from \autoref{subsection:motivic-euler-products}.

\begin{proposition}
	\label{proposition:ce-strata-class} Let $3 \leq d \leq 5$, and let
	$\mathcal R$ be an allowable collection of ramification profiles of
	degree $d$.  Suppose $s \geq 0$ and $\rboxed{\ce} := \ce(\sce,
	\scf_\bullet)$ is a Casnati-Ekedahl stratum for which $\sch(\sce,
	\scf_\bullet)(-s)$ is globally generated and each entry of $\vec a^\sce$
	is positive.  Then,
	\begin{align} \label{equation:smooth-sieve} \{\ramlocus \ce {\mathcal R}\}
		\equiv \bl^{\dim \ramlocus \ce {\mathcal R}} \prod_{x \in
		\bp^1_k} \left. \left( 1- \left( 1 - \frac{\left( \sum_{R \in
			\mathcal R} \bl^{-r(R)} \right)
\{\aut_{\ce|_D}\}}{\bl^{h^0(D, \sch(\sce|_D, \scf_\bullet|_D))}  } \right) t
\right) \right\rvert_{t =1} \end{align} 
modulo codimension
$\lfloor
\frac{s + 1}{2} \rfloor$
in $\grcSpaces$.

In the above product, the restriction to
$D$ is understood to take place at any subscheme $D \subset \bp^1_k$, noting
that $h^0(D, \sch(\sce|_D, \scf_\bullet|_D))$ and 
$\{\aut_{\ce|_D}\}$
are independent of the choice
of such $D$.
\end{proposition}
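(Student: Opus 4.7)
The plan is to apply the motivic sieve of Bilu--Howe \cite[\S7]{biluH:motivic-euler-products-in-motivic-statistics} to the vector bundle $\sch := \sch(\sce,\scf_\bullet)$ on $\bp^1_k$. The sieve takes as input a vector bundle on a base together with a system of local conditions at each point, expressible in terms of restrictions to sufficiently small infinitesimal neighborhoods, and produces a motivic Euler product formula for the class of sections satisfying the global condition, valid modulo a codimension determined by the positivity of the bundle.

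First I would formulate the relevant local conditions. Given a section $\eta \in H^0(\bp^1_k,\sch)$, membership in $\ramlocus \ce {\mathcal R}$ is equivalent to the condition that $\Psi_d(\eta)$ is smooth with ramification profile in $\mathcal R$ at every closed point $x \in \bp^1_k$. Both smoothness of a Gorenstein cover at a point and its ramification profile there depend only on the restriction of the cover to a second-order neighborhood; equivalently (via the parametrization $\Psi_d$), these conditions depend only on $\eta|_{D_x}$ for $D_x$ the dual numbers at $x$. The hypothesis that each entry of $\vec a^\sce$ is positive ensures $h^0(\bp^1_k,\sce^\vee)=0$, so by \autoref{theorem:bertini-3-4-5} smooth sections give geometrically connected covers, letting us freely trade "smooth" for "smooth and geometrically connected".

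Next I would compute the local density. By \autoref{theorem:local-class}, for each ramification profile $R \vdash d$, the class of local sections $\ressections R d \subset H^0(D,\sch(\sce|_D,\scf_\bullet|_D))$ yielding a cover of profile $R$ equals $\{\aut^{D/k}_{\sce|_D,\scf_\bullet|_D}\}\bl^{-r(R)} = \{\aut_{\ce|_D}\}\bl^{-r(R)}$. Summing over $R \in \mathcal R$ and dividing by the total class $\bl^{h^0(D,\sch(\sce|_D,\scf_\bullet|_D))}$ of the ambient affine space of local sections gives the local density
\[
p := \frac{\bigl(\sum_{R \in \mathcal R}\bl^{-r(R)}\bigr)\{\aut_{\ce|_D}\}}{\bl^{h^0(D,\sch(\sce|_D,\scf_\bullet|_D))}},
\]
so the "bad" local class has density $1-p$, matching the factor appearing in \eqref{equation:smooth-sieve}.

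Finally I would apply the Bilu--Howe sieve. Their sieve, applied to $\sch$ with the above local conditions, gives
\[
\{\ramlocus \ce {\mathcal R}\} \equiv \bl^{\dim \ramlocus \ce {\mathcal R}}\prod_{x \in \bp^1_k}\bigl(1-(1-p)t\bigr)\Big|_{t=1}
\]
modulo a codimension which, when $\sch(-s)$ is globally generated, is at least $\lfloor (s+1)/2 \rfloor$ (this is the sieving codimension bound established in \cite[\S7]{biluH:motivic-euler-products-in-motivic-statistics}, whose improvement would yield the stronger codimension bound alluded to in \autoref{conjecture:second-order}). The main obstacle is the bookkeeping needed to verify that the local-condition formulation in our setting—which is phrased in terms of restriction to the scheme $D$ of dual numbers at $x$—fits into the jet-bundle framework of Bilu--Howe, and to confirm that the resulting error codimension is exactly $\lfloor (s+1)/2 \rfloor$ under the assumption that $\sch(-s)$ is globally generated. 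The statement that both $h^0(D,\sch|_D)$ and $\{\aut_{\ce|_D}\}$ are independent of the chosen $D \subset \bp^1_k$ follows because $\sch$ and $(\sce,\scf_\bullet)$ are locally free of constant rank on $\bp^1_k$, so both invariants are functions only of the ranks.
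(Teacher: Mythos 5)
Your overall strategy is the same as the paper's: apply the Bilu--Howe sieve \cite[Thm.\ 9.3.1]{biluH:motivic-euler-products-in-motivic-statistics} to $\sch(\sce,\scf_\bullet)$ with local conditions read off from restrictions to dual numbers, feed in the local classes from \autoref{theorem:local-class} to get the density $p$, and extract the codimension bound $\lfloor (s+1)/2\rfloor$ from the global generation of $\sch(-s)$ (the paper does this via \cite[Ex.\ 5.4.6]{biluH:motivic-euler-products-in-motivic-statistics} with $A=1$, $B=0$). Your identification of the local density and of the role of positivity of $\vec a^{\sce}$ (geometric connectedness via \autoref{theorem:bertini-3-4-5}) matches the intended argument.

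However, there is a substantive step you have waved off as ``bookkeeping'' that is in fact a hypothesis of the sieve and is where the allowability of $\mathcal R$ is actually used --- and you never invoke allowability anywhere. To apply \cite[Thm.\ 9.3.1]{biluH:motivic-euler-products-in-motivic-statistics} one must check that the Taylor conditions are \emph{admissible} in the sense of \cite[Def.\ 9.2.6]{biluH:motivic-euler-products-in-motivic-statistics}: the complement $T^c$ of the good locus inside each fiber of the first principal parts bundle $\mathcal P^1(\sch)$ must have codimension at least $1+\dim\bp^1_k = 2$. This requires two verifications: (i) the locus of sections $\eta$ over $D$ for which $\Psi_d(\eta)$ is not curvilinear has codimension at least $2$ (checked by an incidence correspondence), and (ii) the curvilinear sections whose ramification profile lies outside $\mathcal R$ have codimension at least $2$. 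Point (ii) follows from \autoref{theorem:local-class}, which shows the profile-$R$ locus has codimension $r(R)$, together with the observation that the only profiles with $r(R)\leq 1$ are $(1^d)$ and $(2,1^{d-2})$ --- and these are guaranteed to lie in $\mathcal R$ precisely by condition (1) of \autoref{definition:allowable}. Without this check the sieve does not apply, and indeed the conclusion in the stated form would fail for a collection $\mathcal R$ omitting $(2,1^{d-2})$, since then $T^c$ would have a codimension-$1$ component. You should make this verification explicit rather than deferring it.
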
 
\begin{proof} 
We first explain the final statement that $h^0(D, \sch(\sce|_D, \scf_\bullet|_D))$ and 
$\{\aut_{\ce|_D}\}$
are independent
of the choice of $D$.
Indeed, $h^0(D, \sch(\sce|_D, \scf_\bullet|_D))$ is independent of choice of $D \subset \mathbb P^1_k$
because it only
depends on the rank of $\sch(\sce, \scf_\bullet)$.
Similarly, $\{\aut_{\ce|_D}\}$ only depends on $d$ and the ranks of the sheaves $\sce,
\scf_\bullet$, and not the specific choice of $D \subset \mathbb P^1_k$.
We therefore focus on proving \eqref{equation:smooth-sieve}.

%First, observe that the geometric connectedness hypothesis follows
%from \autoref{theorem:bertini-3-4-5} and the assumption that each entry of
%$\vec a^\sce$ is positive, so $H^0(Y, \sce^\vee) = 0$.  
We will deduce \eqref{equation:smooth-sieve}
by applying \cite[Thm.\
9.3.1]{biluH:motivic-euler-products-in-motivic-statistics} with the local
condition determined by the ramification profile $R$, as
determined in \autoref{theorem:local-class}, as we next explain.  
In particular, in applying 
\cite[Thm.\
9.3.1]{biluH:motivic-euler-products-in-motivic-statistics},
we will take $r = 1, m = \lfloor \frac{s + 1}{2} \rfloor, M = 0$ in their
notation.

In some more
detail, we take $(f: X \to S, \mathcal F, \mathcal L, r, M)$ in
\cite[Thm.\  9.3.1]{biluH:motivic-euler-products-in-motivic-statistics} 
%(in the statement of their theorem, $\mathcal F$ should be a coherent sheaf on $X$) 
to be $(\bp^1_k \to \spec k, \sch(\sce, \scf_\bullet),\sco_{\bp^1}(1),1,0)$ and the
constructible Taylor conditions $T$ of \cite[Thm.\
9.3.1]{biluH:motivic-euler-products-in-motivic-statistics} which we will define
in the next paragraph.

For $\mathscr F$ a locally free sheaf on some scheme $X$, we use $\mathcal P^1(\mathscr F)$
to denote the $1$st order sheaf of principal parts. 
This is a locally free sheaf on $X$ whose fiber over $x \in X$ can be identified
with $H^0(X, \mathscr F \otimes \mathscr O_{X,x}/\mathfrak m_{X,x}^2)$.
So in the case $X = \mathbb P^1_k$ is a curve and $D$ is the copy of the dual numbers whose
closed point maps to $x$, the fiber of $\mathcal P^1(\mathscr F)$ at $x$ is
$H^0(\mathbb P^1_k, \mathscr F|_D)$.
For a definition and standard background on bundles of principal parts, see
\cite[\S7.2]{eisenbudH:3264-&-all-that}.
Let $T$ denote the constructible subset of 
$\spec \left( \sym^\bullet \mathcal P^1(\sch(\sce, \scf_\bullet)) \right)^\vee$
defined as follows:
upon identifying the fiber of 
$\spec \left( \sym^\bullet \mathcal P^1(\sch(\sce, \scf_\bullet)) \right)^\vee$
at $x$ with 
$H^0(\mathbb P^1_k, \sch(\sce, \scf_\bullet)|_D)$, we take the subset given by those
sections $\eta \in H^0(\mathbb P^1_k, \sch(\sce, \scf_\bullet)|_D)$
so that $\Psi_d(\eta)$ defines a curvilinear scheme over $D$ whose ramification
profile lies in $\mathcal R$.
Let $T^c$ denote the complement of $T$ in 
$\spec \left( \sym^\bullet \mathcal P^1(\sch(\sce, \scf_\bullet)) \right)^\vee$.
%i.e., over any geometric point mapping to $D$, the
%degrees of the connected component of the fiber determine a partition lying in
%$\mathcal R$.  

In order to apply 
\cite[Thm.\
9.3.1]{biluH:motivic-euler-products-in-motivic-statistics},
we need to verify the above conditions are indeed admissible in the sense of
\cite[Def.\ 9.2.6]{biluH:motivic-euler-products-in-motivic-statistics}.
Indeed, to see this, we need to check
the Taylor conditions imposed by being smooth with ramification profile lying in
$\mathcal R$
are the complement of a codimension $2 = 1 + \dim \bp^1_k$ subset of the fiber
of the first sheaf of principal parts associated to $\sch(\sce, \scf_\bullet)$
over a field valued point of $\bp^1_k$.  
%To verify this, we will use the first constraint in the definition of
%allowable \autoref{definition:allowable}
%to see that curvilinear sections of
%$\spec \sym^\bullet H^0(D, \sch(\sce|_D,
%\scf_\bullet|_D))$ with ramification profile in $\mathcal R$ has complement of
%codimension at least $2$.
First, one can directly verify (for example, by using an incidence
correspondence,) that those sections $\eta$ for which
$\Psi_d(\eta)$ are not curvilinear
form a locus of codimension at least $2$ in
$\spec \left( \sym^\bullet H^0(D, \sch(\sce|_D,
\scf_\bullet|_D))^\vee \right)$.
%this is essentially by setting up an incidence correspondence and linearizing
%the problem to compute the dimension of sections whose tangent space contains
%some 2-dimensional locus at some point in the special fiber
(Note that non-curvilinear sections also include sections with $\Psi_d(\eta)$ of positive dimension.)
It remains to show those curvilinear sections having ramification profile not
lying in $\mathcal R$ have codimension at least $2$. 
This follows from
knowledge of their class in the Grothendieck ring \autoref{theorem:local-class},
which in particular shows the codimension
of those sections having ramification profile $R$ is $r(R)$.
Since the only ramification profiles with $r(R) \leq 1$ are $(1^d)$ and $(2,
1^{d-2})$, the claim follows from the first constraint in the definition of
allowable, \autoref{definition:allowable}.

	We next use \cite[Ex.\
	5.4.6]{biluH:motivic-euler-products-in-motivic-statistics} to determine
	the value of $m$ appearing in \cite[Thm.\  9.3.1]{biluH:motivic-euler-products-in-motivic-statistics}.  In place of the value $D$
	used in \cite[Ex.\
	5.4.6]{biluH:motivic-euler-products-in-motivic-statistics}, we use
	$-s$,
	since we are reserving $D$ for the dual numbers.  Otherwise following
	the notation of \cite[Ex.\
	5.4.6]{biluH:motivic-euler-products-in-motivic-statistics}, since
	$\sco(1) = \mathcal L$ is very ample and $\mathcal L^{\otimes 0} \simeq
	\sco_{\bp^1}$ globally generated, we may take $A = 1$ and $B = 0$.  It
	follows that, (in their notation except that we use $\delta$ in place of
	$d$,) there is a surjection
	$\sco(s)^{N} \to \scf$, and so $H^0(X, \scf \otimes \sco(\delta))$ 
	(so, again, we are taking $\scf$ to be $\sch(\sce, \scf_\bullet)$)	
	is
	$1$-infinitesimally $m$-generating whenever
	$\delta \geq -s + 0 + 1(1 + (m-1)\cdot (1+1)) = -s + 1 + 2(m-1).$
	Therefore, taking $\delta = 0$, we find
	$H^0(X, \scf)$ is $1$-infinitesimally $m$-generating whenever
	$s \geq 1 + 2(m-1) = 2m - 1$.
	Therefore, we may take $m = \lfloor \frac{s + 1}{2} \rfloor$.  

	Using that $\rk \mathcal P^1( \sch(\sce,\scf_\bullet))=
	h^0(D, \sch(\sce|_D,\scf_\bullet|_D)),$
	we obtain from \cite[Thm.\
	9.3.1]{biluH:motivic-euler-products-in-motivic-statistics} the congruence 
	\begin{align} \{\ramlocus \ce {\mathcal R}\}
		&\equiv \bl^{\dim \ramlocus \ce {\mathcal R}} \prod_{x \in
		\bp^1_k} \left. \left( 1- \left( \frac{ \{T^c\}_x
					}{\bl^{\rk \mathcal P^1( \sch(\sce,
			\scf_\bullet))}}  \right) t \right)
					\right\rvert_{t =1} \\
					&
					\label{equation:smooth-sieve-T}\equiv
		\bl^{\dim \ramlocus \ce {\mathcal R}} \prod_{x \in
		\bp^1_k} \left. \left( 1- \left( 1 - \frac{ \{T\}_x
					}{\bl^{h^0(D, \sch(\sce|_D,
					\scf_\bullet|_D))}} \right) t \right)
				\right\rvert_{t =1}. \end{align} 
	In order to obtain
	\eqref{equation:smooth-sieve}, we need to identify 
	\eqref{equation:smooth-sieve-T} and the right hand side of \eqref{equation:smooth-sieve}.
	By working Zariski locally on $\mathbb P^1$ so as to trivialize the
	bundle $\sch(\sce, \scf_\bullet)$,
	it is enough to identify the fiber of $T$ over $x \in \mathbb P^1$ with the class
	$\sum_{R \in \mathcal R} \bl^{-r(R)} \{\aut_{\ce|_D}\}$.
	Indeed, this identification holds because we showed that the class of the subschemes of
	$\spec \left( \sym^\bullet H^0(D, \sch(\sce|_D,
	\scf_\bullet|_D))^\vee \right)$
	having ramification profile
	$R$ is $\bl^{-r(R)} \{\aut_{\ce|_D}\}$ when we computed the class of
$\ressections R d$ in \autoref{theorem:local-class}.  \end{proof}

In order to get a good bound on the codimension up to which
\autoref{proposition:ce-strata-class} holds,
we need to show that the value of $s$ defined there is high whenever
the codimension of the stratum is low.
We now establish this.

\begin{lemma} \label{lemma:smooth-sieve-codimension-bound} For any
	Casnati-Ekedahl stratum $\ce(\sce, \scf_\bullet)$ so that the minimum
	degree of a line bundle summand of $\sch(\sce, \scf_\bullet)$ is $s$,
	and $H^1(\bp^1, \sch(\sce, \scf_\bullet)) = 0$, we have
	$\codim_{\bighurg d
g k} \ce + \frac{s + 1}{2} \geq \frac{g+c_d}{\kappa_d}$, where $c_3 = 0, c_4=
-2, c_5 = -23, \kappa_3 = 4, \kappa_4 = 12, \kappa_5 = 40$.  \end{lemma}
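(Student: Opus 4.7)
The plan is to use Lemma~\ref{lemma:codimension-computation} together with the hypothesis $H^1(\bp^1, \sch(\sce, \scf_\bullet)) = 0$ to reduce the inequality to an explicit statement about splitting types, and then verify it by a direct combinatorial analysis. Under the hypothesis, the codimension formula collapses to $h^1(\bp^1, \End \sce)$ for $d=3$ and to $h^1(\bp^1, \End \sce) + h^1(\bp^1, \End \scf)$ for $d = 4, 5$. Writing $\sce = \bigoplus_{i=1}^{d-1}\sco(e_i)$ with $e_1 \leq \cdots \leq e_{d-1}$ and, when $d \in \{4,5\}$, $\scf = \bigoplus_{j=1}^r \sco(f_j)$ with $f_1 \leq \cdots \leq f_r$, the endomorphism bundles split as sums of $\sco(e_j - e_i)$ and $\sco(f_j - f_i)$, so
\begin{equation*}
h^1(\End \sce) = \sum_{i < j,\; e_j - e_i \geq 2}(e_j - e_i - 1),
\end{equation*}
and similarly for $\scf$.

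Next I would read off $s$ from the explicit form of $\sch(\sce,\scf_\bullet)$ in~\eqref{equation:associate-h}: the minimum-degree line bundle summand is $\sco(2e_1 - e_{d-1})$ for $d=3$, $\sco(2e_1 - f_r)$ for $d=4$, and $\sco(e_1 + f_1 + f_2 - \sum_k e_k)$ for $d=5$. I would then impose the constraint $\sum_i e_i = g + d - 1$ from \autoref{lemma:degree-of-e}, together with the determinant relations $\det\scf \simeq \det\sce^{\otimes(d-3)}$ furnished by Theorems~\ref{theorem:4-structure} and~\ref{theorem:5-structure}, which fix $\sum_j f_j$ in terms of $g$.

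With this combinatorial setup in place, the inequality becomes an optimization problem: minimize $\codim + \tfrac{s+1}{2}$ over integer splitting types satisfying the above constraints and the nonvanishing requirement that $\ce$ be a legitimate Casnati--Ekedahl stratum. For $d = 3$, setting $\Delta := e_2 - e_1$, a short calculation gives $\codim + (s+1)/2 = (g+\Delta)/4$ whenever $\Delta \geq 2$, with the boundary cases $\Delta \in \{0,1\}$ (where $h^1(\End\sce)=0$) yielding strictly better bounds; this produces exactly $g/4$. For $d = 4$ and $d = 5$, I would carry out the analogous optimization, splitting according to which of $\sce$ or $\scf$ contributes the dominant unbalancedness.

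The main obstacle will be handling the $d=4$ and $d=5$ cases, where the number of parameters is larger and the interplay between the unbalancedness of $\sce$ and that of $\scf$ must be tracked. The key reduction that keeps the case analysis manageable is the observation that near-extremal splittings contribute $h^1$-terms that grow linearly in the deviation from balanced, whereas $s$ decreases at most linearly in the same quantities; consequently, it suffices to evaluate the objective on the ``crossover'' splitting types where one summand of $\sce$ (respectively $\scf$) is as small (respectively as large) as possible subject to the constraint $H^1(\sch) = 0$. The explicit constants $c_d, \kappa_d$ then emerge from the coefficients in the optimum, with the large $\kappa_d$ for $d=5$ reflecting the $\wedge^2$ in~\eqref{equation:associate-h} and the resulting dilution of the contribution of $e_1$ to $s$.
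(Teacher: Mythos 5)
Your setup coincides with the paper's: reduce via Lemma~\ref{lemma:codimension-computation} and the hypothesis $H^1(\bp^1,\sch)=0$ to a combinatorial inequality in the splitting types, read off $s$ from \eqref{equation:associate-h} (your formulas $2e_1-e_{d-1}$, $2e_1-f_r$, and $e_1+f_1+f_2-\sum_k e_k$ are correct, as are the determinant constraints), and your $d=3$ computation is complete. The gap is in how you propose to finish $d=4,5$. Your own $d=3$ formula $\codim+\tfrac{s+1}{2}=\tfrac{g+\Delta}{4}$ already shows the objective is \emph{increasing} in the deviation from balanced, so the minimum sits at the balanced splitting type, not at the extremal ``crossover'' types you propose to evaluate. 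Indeed $h^1(\End\sce)=\sum_{i<j}\max(0,e_j-e_i-1)$ is convex in the splitting type and $-s$ is convex (since $s$ is a minimum of affine functions of the $e_i,f_j$), so the objective is convex and its minimum over the feasible region is generally attained at the balanced ``center,'' not at boundary vertices; checking only the most unbalanced feasible types therefore does not yield a lower bound over all strata.

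Two ingredients are actually needed, and your sketch supplies neither. First, the evaluation at the balanced stratum, where $\codim=0$ and one bounds $s$ directly: for $d=4$, $e_1\geq\tfrac{g+1}{3}$ and $f_r\leq\tfrac{g+4}{2}$ give $s\geq\tfrac{g-8}{6}$, hence $\tfrac{s+1}{2}\geq\tfrac{g-2}{12}$; the analogous computation for $d=5$ gives $s\geq\tfrac{g-43}{20}$. This is the only place the constants $c_d,\kappa_d$ come from, and they cannot ``emerge from the coefficients in the optimum'' without it. Second, a quantitative monotonicity statement to propagate from the balanced stratum to an arbitrary one: each elementary specialization (transferring one unit of degree between two summands of $\sce$, or of $\scf$, respecting the determinant relations) increases $h^1(\End\sce)+h^1(\End\scf)$ by at least $1$ while decreasing every summand of $\sch$ by at most $2$, hence decreasing $\tfrac{s+1}{2}$ by at most $1$. ``Grows linearly'' versus ``decreases at most linearly'' is not enough --- the slopes must be compared, and the factor of $2$ is delicate: for $d=4$ one must check that lowering a summand of $\sce$ costs at most $2$ in $s$ (it appears in $\sym^2\sce$) while raising a summand of $\scf$ costs at most $1$; for $d=5$ the roles reverse because of the $\wedge^2\scf$. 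This rate comparison, paired with the balanced base case, is exactly how the paper closes the argument.
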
 
\begin{proof} 
	We can verify this in the
	case that $d = 3$ directly.  Write $\sce = \sco(s) \oplus \sco(g + 2 -
	s)$ with $s \leq g + 2 -s$, so that $\codim_{\bighurg 3 g k}\ce = h^1(\bp^1, \End(\sce)) \geq (g +
	2) - 2s -1$.  Then, $\codim_{\bighurg 3 g k}\ce  + \frac{s+1}{2} \geq (g+2)
	- 2s - 1 + \frac{s+1}{2} = \frac{2g+3-3s}{2}$.  This is minimized when $s$ is
	maximized. Since we must have $s \leq \frac{g + 2}{2}$, when $s =
	\frac{g + 2}{2}$, we find $\frac{2g+3-3s}{2} = \frac{g}{4}$.

	We now concentrate on the cases $d =4$ and $d = 5$.	First, in the
	case that $\sce$ and $\scf$ are balanced, so that $\codim_{\bighurg d g
	k} \ce = 0$, we claim that $\frac{s + 1}{2} \geq
	\frac{g+c_d}{\kappa_d}$.
	
	When $d = 4$, and $\sce$ and $\scf$ are balanced, the minimum line bundle summand of $\sce$ has degree at
	least $\frac{g + 1}{3}$ while the maximum line bundle summand of $\scf$
	has degree at most $\frac{g+4}{2}$ 
	using \autoref{lemma:degree-of-e} and the isomorphism $\det \sce \simeq \det \scf$ from
	\autoref{theorem:4-structure}.
	Hence, the minimum line bundle
	summand of $\sch$ has degree $s \geq 2 \frac{g + 1}{3} - \frac{g + 4}{2}
	= \frac{g-8}{6}$.  Therefore, $\frac{s+1}{2} \geq \frac{g-2}{12}$.

	When $d = 5$, and $\sce$ and $\scf$ are balanced, the minimum line bundle summand of $\sce$ has degree at
	least $\frac{g + 1}{4}$ 
	by \autoref{lemma:degree-of-e}	
	and the minimum line bundle summand of $\scf$
	has degree at least $\frac{2(g+4)-4}{5}$
	as $\det \sce^{\otimes 2} = \det \scf$
	by \autoref{theorem:5-structure}.
% here, the degree of $\scf$ is $2(g + d -1)$ and so we'd expect each summand of
% a balanced bundle to be 2(g+4)/5, but we may have to round down, hence we
% subtract $4/5$.
	Therefore, the minimum degree of a line bundle summand of $\sch$ is $s
	\geq 2\frac{2(g+4)-4}{5} - (g+4) + \frac{g+1}{4} = \frac{g-43}{20}$ and
	$\frac{s + 1}{2} \geq \frac{g -23}{40}$.

	In the case $d = 4$ or $5$, it remains to see that 
$\codim_{\bighurg d g k} \ce + \frac{s + 1}{2} \geq \frac{g+c_d}{\kappa_d}$
remains true for non-general strata,
	supposing still that $H^1(\bp^1, \sch(\sce, \scf)) = 0$.  In
	this case, by \autoref{lemma:codimension-computation}, the codimension
	of $\ce(\sce, \scf)$ is $h^1(\bp^1, \End(\sce)) + h^1(\bp^1, \End(\scf))$.  
	This codimension is also the codimension of the point $[(\sce,\scf)]$ 
	in the moduli stack of vector bundles
	$\vect {\rk \sce} {\bp^1_k} \times \vect {\rk \scf} {\bp^1_k}$,
	see \cite[(3.1)]{larson:the-intersection-theory-of-the-moduli-stack}.
	Hence, we wish to verify
	\begin{align*}
	\codim_{\vect {\rk \sce} {\bp^1_k} \times \vect {\rk \scf} {\bp^1_k}}
	[(\sce, \scf)] + \frac{s + 1}{2} \geq \frac{g+c_d}{\kappa_d},
	\end{align*}
	granting that we have established this in the case that $\sce, \scf$ are
	both balanced, and so correspond to the generic point of
	$\vect {\rk \sce} {\bp^1_k} \times \vect {\rk \scf} {\bp^1_k}$,
	as explained in \autoref{subsection:vector-bundle-closure}.
	Following the discussion from
\autoref{subsection:vector-bundle-closure} where we describe when one vector
bundle on $\bp^1_k$, viewed as a point in the moduli stack of vector bundles,
lies in the closure of another, any
$(\sce, \scf)$ may be connected to a balanced pair by a sequence
	of $(\sce_i, \scf_i)$, each contained in the closure of the next.  Further, we can
	assume that for any two adjacent indices $i$ and $i+1$, one of the
	following two cases occurs:
	\begin{enumerate}
		\item $\sce_i \simeq \sce_{i+1}$ and $\scf_i$ differs from
			$\scf_{i+1}$ only in two line bundles summands by a single degree.
		\item $\scf_i \simeq \scf_{i+1}$ and $\sce_i$ differs from $\sce_{i+1}$
			only in two line bundle summands by a single degree.
	\end{enumerate}
	In order to show the claimed inequality
	holds for arbitrary strata, it suffices to show it remains true under
	such specializations.  Because each such stratum has codimension at least
	$1$ in the next, it suffices to show the value of $s$ under such
	specializations decreases by at most $2$.  When $d = 4$ this is the case
	because $\sch(\sce, \scf) = \sym^2 \sce \otimes \scf^\vee$ and
	increasing a summand of $\scf$ by $1$ only decreases all summands of
	$\sch(\sce, \scf)$ by at most $1$, while decreasing a summand of $\sce$
	decreases all summands of $\sch(\sce, \scf)$ by at most $2$.  Similarly,
	when $d = 5$, so $\sch(\sce, \scf) = \wedge^2 \scf \otimes \sce \otimes
	\det \sce^\vee$, and decreasing a summand of $\sce$ by $1$ while
	maintaining $\det \sce$ decreases all summands of $\sch(\sce, \scf)$ by
	at most $1$, while decreasing a summand of $\scf$ by $1$ decreases all
summands of $\sch(\sce, \scf)$ by at most $2$.  \end{proof}

\subsection{Putting the codimension bounds together}
\label{subsection:merging-codimension-bounds}
We now merge the bounds on codimension of various bad loci established
earlier in this section to obtain the following result.

\begin{proposition} \label{proposition:sieved-hur-class} For $3 \leq d \leq 5$,
	$k$ a field of characteristic not dividing $d!$, $\mathcal R$ an
	allowable collection of ramification profiles of degree $d$, let $c_d,
	\kappa_d$
	be as in \autoref{lemma:smooth-sieve-codimension-bound}.  Define
	$\rboxed{\systemdim d g} := \chi\left( \sch(\sce, \scf_\bullet )\right)$.
		Then, $\{\rhur d g k{\mathcal R}\}$ is equal to \begin{align}
			\label{equation:ce-fixed-sum} \sum_{\text{
			Casnati-Ekedahl strata }\ce} \frac{1}{\{\aut_{\ce}\}}
			\bl^{\systemdim d g} \prod_{x \in \bp^1_k} \frac{\left(
			\sum_{R \in \mathcal R} \bl^{-r(R)} \right)
		\{\aut_{\ce|_D}\}}{\bl^{h^0(D, \sch(\sce|_D, \scf_\bullet|_D))}}
	\end{align} modulo codimension $\codimbound d g := \min(\frac{g + c_d}{\kappa_d}, \frac{g +
d-1}{d} - 4^{d-3})$ in $\grcSpaces$.  \end{proposition}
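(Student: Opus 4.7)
The plan is to combine \autoref{proposition:union-of-ce-strata}, which gives $\{\rhur d g k{\mathcal R}\} = \sum_\ce \{\ramlocus\ce{\mathcal R, S_d}\}/\{\aut_\ce\}$ over Casnati-Ekedahl strata $\ce = \ce(\sce, \scf_\bullet)$, with the motivic sieve of \autoref{proposition:ce-strata-class} applied stratum-by-stratum. I partition the (finitely many contributing) strata into ``good'' strata, where $H^1(\bp^1, \sch(\sce, \scf_\bullet)) = 0$ and $H^0(\bp^1, \sce^\vee) = 0$, and the rest, which I call ``bad.'' The good case produces the claimed Euler product, while the bad case contributes terms whose dimensions are controlled by \autoref{lemma:unexpected-codimension-bound}.

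For a good stratum, I first replace $\ramlocus\ce{\mathcal R, S_d}$ by $\ramlocus\ce{\mathcal R}$; the difference parameterizes smooth connected covers whose Galois closure is strictly smaller than $S_d$, which by \autoref{lemma:bounding-non-Sd-covers} has codimension at least $\frac{g+3}{2}$ in $\bighurg d g k$, safely exceeding $\codimbound d g$. Next I apply \autoref{proposition:ce-strata-class} with $s$ the minimum degree of a line bundle summand of $\sch(\sce, \scf_\bullet)$: the positivity of $\vec a^\sce$ is exactly the hypothesis $H^0(\bp^1, \sce^\vee) = 0$, and $\sch(-s)$ is globally generated by construction. This yields
\[
\{\ramlocus\ce{\mathcal R}\} \equiv \bl^{\dim \ramlocus\ce{\mathcal R}} \prod_{x \in \bp^1_k} \frac{\left( \sum_{R \in \mathcal R} \bl^{-r(R)} \right) \{\aut_{\ce|_D}\}}{\bl^{h^0(D, \sch(\sce|_D, \scf_\bullet|_D))}}
\]
modulo codimension $\lfloor(s+1)/2\rfloor$ in $\grcSpaces$. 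For good strata, $H^1 = 0$ forces $\dim \ramlocus\ce{\mathcal R} = h^0(\bp^1, \sch) = \chi(\sch) = \systemdim d g$, matching the prefactor in the statement. Dividing by $\{\aut_\ce\}$ (special and invertible by \autoref{lemma:special-auts}) and invoking \autoref{lemma:smooth-sieve-codimension-bound} shows the error in each summand has codimension at least $\codim_{\bighurg d g k}\ce + \lfloor(s+1)/2\rfloor \geq \frac{g+c_d}{\kappa_d}$ in $\grcSpaces$.

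For a bad stratum, \autoref{lemma:unexpected-codimension-bound} gives $\codim_{\bighurg d g k}\ce \geq \frac{g+d-1}{d} - 4^{d-3}$, so the left hand side summand $\{\ramlocus\ce{\mathcal R, S_d}\}/\{\aut_\ce\}$ has dimension at most $\dim\ce \leq \dim \bighurg d g k - (\frac{g+d-1}{d} - 4^{d-3})$. The right hand side summand, with the Euler product factor having dimension $0$ as it does for good strata, has dimension $\systemdim d g - \dim\aut_\ce = \chi(\sch) - \dim\aut_\ce \leq h^0(\bp^1, \sch) - \dim\aut_\ce = \dim\ce$, so is bounded by the same quantity. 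Hence both sides agree on bad strata modulo codimension $\frac{g+d-1}{d} - 4^{d-3}$. Combining with the good-strata error bound gives the overall codimension $\codimbound d g$.

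The main obstacle I anticipate is the bookkeeping for the bad strata: verifying that the \emph{formal} motivic Euler product on the right hand side indeed has dimension $0$ regardless of whether $H^1$ vanishes requires noting that the local factor $(\sum_R \bl^{-r(R)})\{\aut_{\ce|_D}\}/\bl^{h^0(D, \sch|_D)}$ depends only on the rank data of $(\sce, \scf_\bullet)$, so admissibility for the Bilu--Howe sieve (and hence convergence at $t = 1$) is inherited uniformly across all strata. A secondary point is navigating between $\grcStacks$ and $\grcSpaces$, which is resolved by \autoref{lemma:spaces-ring-equals-stacks-ring}.
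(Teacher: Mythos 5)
Your overall strategy---decompose via \autoref{proposition:union-of-ce-strata}, sieve stratum by stratum with \autoref{proposition:ce-strata-class}, and kill everything else by codimension counts---is the paper's strategy, and your treatment of the ``good'' strata is essentially correct. The gap is in the complementary case, specifically for strata containing \emph{no} $S_d$ covers at all. For such a stratum the left-hand summand vanishes, but the corresponding summand of \eqref{equation:ce-fixed-sum} is still present and must be shown to be negligible, and neither of your two bounds accomplishes this. First, \autoref{lemma:unexpected-codimension-bound} carries the hypothesis that the stratum contains a curve not factoring through an intermediate cover; for a stratum consisting entirely of $D_4$ covers (when $d=4$) this hypothesis fails, and such strata can have codimension as small as $2$ in $\bighurg 4 g k$ (see the remark following \autoref{lemma:bounding-non-Sd-covers}). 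Second, even where a bound on $\codim \ce$ is available, your estimate that the right-hand summand has dimension at most $\dim \ce$ is too weak: you would need its codimension to exceed $\codimbound d g$, which grows like $g/\kappa_d$, not to exceed $2$.

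The correct accounting is that the summand of \eqref{equation:ce-fixed-sum} attached to a stratum has codimension $\codim_{\bighurg d g k}\ce + h^1(\bp^1, \sch(\sce,\scf_\bullet))$, i.e., the codimension of $(\sce,\scf_\bullet)$ in the moduli stack of tuples of bundles. Bounding this below for the $S_d$-empty strata requires input you do not invoke: \autoref{corollary:high-codimension-for-low-degree} disposes of all transitive Galois groups other than $D_4$, and for the $D_4$ strata the paper appeals to \cite[Lem.\ 5.4, 5.5]{canningL:tautological-classes-on-low-degree-hurwitz-spaces}, which show such strata either have codimension at least $\frac{g+3}{2}$ in the Hurwitz stack or have $H^1 \neq 0$ and codimension at least $\frac{g+3}{4}-4$ in the stack of bundles. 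Without this step the claimed congruence is unproven (and a priori could fail) already for $d=4$.
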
 \begin{proof} First, by
	\autoref{proposition:union-of-ce-strata}, it suffices to show
	\begin{align} 
		\label{equation:sd-strata-sum-reduction}	
		\sum_{\text{ Casnati-Ekedahl strata }\ce}
	\frac{\{\ramlocus \ce {\mathcal R, S_d}\}}{\{\aut_{\ce}\}} \end{align} agrees
	with \eqref{equation:ce-fixed-sum}.

	We next check \eqref{equation:sd-strata-sum-reduction} agrees with
	\begin{align} 
		\label{equation:strata-sum-reduction}	
		\sum_{\text{nonempty Casnati-Ekedahl strata }\ce}
	\frac{\{\ramlocus \ce {\mathcal R}\}}{\{\aut_{\ce}\}} \end{align} 
modulo codimension
$\min(\frac{g + c_d}{\kappa_d}, \frac{g + d-1}{d} - 4^{d-3})$.  
Since
we are working modulo codimension $\frac{g + d-1}{d} - 4^{d-3}$, we can assume
$\ce(\sce, \scf_\bullet)$ has $H^1(\bp^1, \sch(\sce, \scf_\bullet)) =0$ and
$H^0(\bp^1, \sce^\vee) = 0$, by \autoref{lemma:unexpected-codimension-bound}.
Note that the condition $H^0(\bp^1, \sce^\vee) = 0$ ensures all curves
defined by sections of $\ramlocus \ce {\mathcal R}$ are geometrically connected,
by \autoref{theorem:bertini-3-4-5}.  Since we have now restricted ourselves to
work with strata for which
$H^1(\bp^1, \sch(\sce, \scf_\bullet)) = 0$, it follows from
\autoref{lemma:smooth-sieve-codimension-bound}, with notation for $s$ as in
\autoref{lemma:smooth-sieve-codimension-bound}, that $\codim \ce(\sce,
\scf_\bullet) + \frac{s + 1}{2} \geq \frac{g + c_d}{\kappa_d}$.  We also
obtain from \autoref{lemma:bounding-non-Sd-covers} that the smooth geometrically
connected curves in $\ramlocus{\ce}{\mathcal R}$ which do not lie in $\hur d g
k$ (because they do not have Galois closure $S_d$) have codimension at least
$\frac{g + 3}{2}$ in $\hur d g k$.  Hence, as we are working modulo codimension
$\frac{g+3}{2}$, we can freely ignore these, and so
\eqref{equation:sd-strata-sum-reduction}
agrees with \eqref{equation:strata-sum-reduction}.

We next claim \eqref{equation:strata-sum-reduction} agrees with
	\begin{align} \label{equation:ce-sum} \sum_{\text{
		nonempty Casnati-Ekedahl strata }\ce} \frac{1}{\{\aut_{\ce}\}} \bl^{\dim
		\ramlocus \ce {\mathcal R}} \prod_{x \in \bp^1_k} \frac{\left(
		\sum_{R \in \mathcal R} \bl^{-r(R)} \right)
	\{\aut_{\ce|_D}\}}{\bl^{h^0(D, \sch(\sce|_D, \scf_\bullet|_D))}}.
\end{align} 
Indeed, this follows from \autoref{proposition:ce-strata-class} 
using the bounds on $s$ from \autoref{lemma:smooth-sieve-codimension-bound}.
		Next, we claim that for any $\ce(\sce, \scf_\bullet)$ as above, $\dim
	\ramlocus \ce {\mathcal R}$ is independent of $\ce$ whenever
	$\codim_{\bighurg d g k} \ce \leq \min(\frac{g + c_d}{\kappa_d}, \frac{g
	+ d-1}{d} - 4^{d-3})$.  Indeed, in this case, because $H^1(\bp^1,
	\sch(\sce, \scf_\bullet)) =0$, we find $\dim \ramlocus \ce {\mathcal R}
	= h^0(\bp^1, \sch(\sce, \scf_\bullet)) = \chi\left( \sch(\sce,
	\scf_\bullet) \right)$ and indeed this Euler characteristic only
		depends on the degrees and ranks of $\sce$ and $\scf$.  For
		notational convenience, we let $\rboxed{\systemdim d g}$ denote
		this dimension $\chi\left( \sch(\sce, \scf_\bullet )\right).$
			Then, up to codimension $\min(\frac{g +
			c_d}{\kappa_d}, \frac{g + d-1}{d} - 4^{d-3})$, in
			$\grcSpaces$, we can rewrite \eqref{equation:ce-sum} as
			\eqref{equation:ce-fixed-sum}.

	To conclude the proof, we wish to remove the word ``nonempty'' in
	\eqref{equation:ce-sum}.
	That is, there may be certain strata which contain no $S_d$ covers,
	and we wish to show they do not contribute to 
	\eqref{equation:ce-fixed-sum} in low codimension.
	The summand in 
	\eqref{equation:ce-fixed-sum}
	associated to such an empty stratum
	$\ce(\sce, \scf_\bullet)$ has codimension equal to the codimension
	of $(\sce, \scf_\bullet)$, considered as a point in the moduli stack of
	tuples of vector bundles on $\bp^1$.
	Using \autoref{corollary:high-codimension-for-low-degree}, this is only
	potentially an issue in the case $d = 4$, where we must deal with strata
	$\ce(\sce, \scf_\bullet)$ so that the generic members of $H^0(\bp^1,
	\sch(\sce, \scf_\bullet))$ define $D_4$ covers.
	In 
	\cite[Lem.\
	5.5]{canningL:tautological-classes-on-low-degree-hurwitz-spaces},
	it is shown that such strata are either codimension at least
	$\frac{g+3}{2}$ or else have $H^1(\bp^1, \sch(\sce, \scf_\bullet)) \neq
	0$. In the latter case, 
	by 
	\cite[Lem.\
	5.4]{canningL:tautological-classes-on-low-degree-hurwitz-spaces},
	such strata have codimension at least
	$\frac{g + 3}{4}-4$ in the stack of vector bundles on $\bp^1$. 
	In either case, 
	we may ignore these contributions up to our codimension bounds,
	and so 
	\eqref{equation:ce-sum} agrees with \eqref{equation:ce-fixed-sum}.
\end{proof}

\section{Proving the main result}
\label{section:main-proof}
 
In this section, we prove our main result \autoref{theorem:hurwitz-class} 
by massaging the formula for $\{\hur
d g k\}$ given in \autoref{proposition:sieved-hur-class}.
We then deduce some corollaries.

In order to prove our main result
we will need one of the
simplest cases of the ``motivic Tamagawa number conjecture'' \cite[Conj.\
3.4]{behrendD:on-the-motivic-class-of-the-stack-of-bundles}.  
To start this Tamagawa number formula, we employ the following notation.
\begin{notation} \label{notation:} For $\scg$ a vector bundle on a scheme $X$,
	let $\rboxed{\aut^{\sl,X}_{\scg}}$ denote the $\sl$ bundle over $X$
	associated to $\scg$ (i.e., the kernel of the determinant map of group
	schemes $\aut_{\scg} \to \mathbb G_m$).  We use $\aut^{\sl}_{\scg}$ as
	notation for the Weil restriction $\res_{X/\spec
	k}(\aut^{\sl,X}_{\scg})$.  For $(\sce, \scf_\bullet)$ 
	resolution data, we use $\rboxed{\aut^{\sl}(\scf_\bullet)} :=
	\prod_{i=1}^{\lfloor \frac{d-2}{2} \rfloor} \aut^{\sl}(\scf_i)$.  
%(Here
%we are only taking automorphisms of the first $\lfloor\frac{d-2}{2} \rfloor$
%bundles, as the automorphisms of the bundles will be determined via duality.)
\end{notation}

\begin{lemma} \label{lemma:sl-tamagawa} For any positive integer $n$,
	\begin{align*} \sum_{\substack{\text{rank $n$ vector bundles $\scv$ on
		$\bp^1$} \\ \det \scv = \sco_{\bp^1}}}
		\frac{1}{\{\aut^{\sl}_{\scv}\}}  \prod_{x \in
		\bp^1_k}\frac{\{\aut^{\sl}_{\scv|_D}\}}{\bl^{\dim
		\aut^{\sl}_{\scv|_D}}} &= \bl^{-\dim \sl_n} \in
		\grcSpaces.  \end{align*} \end{lemma}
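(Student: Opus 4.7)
The plan is to recognize the left-hand side as a product of two pieces: the motivic class of $\mathrm{Bun}_{\mathrm{SL}_n}(\bp^1)$, and a motivic Euler product of a constant. Once this is done, the assertion reduces to the Atiyah-Bott / Tamagawa number formula for $\mathrm{SL}_n$, which is known motivically over $\bp^1$ by work of Behrend-Dhillon.

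First I would observe that because $D$ is local, every locally free sheaf $\scv$ on $\bp^1$ restricts to a free $D$-module of rank $n$. Hence $\aut^{\sl}_{\scv|_D} \simeq \res_{D/k}(\mathrm{SL}_n)$, independent of $\scv$, and the inner motivic Euler product can be pulled outside the sum. Using the short exact sequence $1 \to \mathfrak{sl}_n \to \res_{D/k}(\mathrm{SL}_n) \to \mathrm{SL}_n \to 1$ (whose kernel is the vector group of the Lie algebra, of rank $\dim \sl_n$), one finds $\{\res_{D/k}(\mathrm{SL}_n)\} = \bl^{\dim \sl_n}\{\mathrm{SL}_n\}$, so the local factor equals $\{\mathrm{SL}_n\}/\bl^{\dim \sl_n}$, which a direct calculation identifies with $\prod_{j=2}^{n}(1-\bl^{-j})$. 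On the other hand, stratifying $\mathrm{Bun}_{\mathrm{SL}_n}(\bp^1)$ by isomorphism type of the underlying bundle, each stratum is the residual gerbe $B\aut^{\sl}_\scv$, so $\sum_{\scv}1/\{\aut^{\sl}_\scv\}$ is precisely the class of $\mathrm{Bun}_{\mathrm{SL}_n}(\bp^1)$ in $\grcSpaces$.

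Next I would invoke the motivic Atiyah-Bott / Behrend-Dhillon formula
\begin{align*}
\{\mathrm{Bun}_{\mathrm{SL}_n}(\bp^1)\} \;=\; \bl^{-\dim\sl_n}\prod_{i=2}^{n} Z(\bp^1,\bl^{-i})
\end{align*}
and combine it with the formal motivic Euler product identity $\prod_{x \in \bp^1}(1-\bl^{-i}) = Z(\bp^1,\bl^{-i})^{-1}$ in $\grcSpaces$, which is a standard consequence of the definition of motivic Euler products in Bilu's thesis (and converges for $i \geq 2$ since $1-\bl^{1-i}$ is a unit close to $1$ in the dimension filtration). Multiplying $\{\mathrm{Bun}_{\mathrm{SL}_n}(\bp^1)\}$ by the pulled-out factor $\prod_{x \in \bp^1}\prod_{j=2}^{n}(1-\bl^{-j})$, the zeta values cancel in pairs and one is left with $\bl^{-\dim\sl_n}$.

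The main obstacle is step three: applying the motivic Atiyah-Bott formula. Steps one and two are essentially bookkeeping once one notes that $D$ is local, and step four amounts to correctly identifying constant motivic Euler products with inverse zeta values, a formal manipulation in $\grcSpaces$. A slight subtlety in the final step is that one must ensure the motivic Euler product identity evaluated at $t=1$ is valid as a statement in $\grcSpaces$; this follows from \cite[Lemma 6.5.1]{biluH:motivic-euler-products-in-motivic-statistics} and the fact that the relevant series lies in the dimension-filtered completion.
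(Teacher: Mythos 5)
Your proposal is correct and follows essentially the same route as the paper: identify $\sum_{\scv} 1/\{\aut^{\sl}_{\scv}\}$ with $\{\mathfrak{Bun}_{\sl_n,\bp^1}\}$ via the residual-gerbe stratification, use triviality of $\scv|_D$ to compute the constant local factor $\{\sl_n\}/\bl^{\dim\sl_n}=\prod_{i=2}^n(1-\bl^{-i})$, apply the Behrend--Dhillon motivic formula, and cancel against $\prod_{x\in\bp^1}(1-\bl^{-i})=Z(\bp^1,\bl^{-i})^{-1}$. The only cosmetic difference is that you pull the ($\scv$-independent) Euler product outside the sum at the start rather than verifying the two identities separately.
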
 
\begin{proof} We will
		deduce this from the motivic Tamagawa number conjecture
		for $\sl_n$ over $\bp^1$ proven in
		\cite[\S7]{behrendD:on-the-motivic-class-of-the-stack-of-bundles}.
Let $\rboxed{\mathfrak{Bun}_{G,\bp^1}}$  denote the moduli stack of $G$-bundles
on $\bp^1$.  It is shown in
\cite[\S7]{behrendD:on-the-motivic-class-of-the-stack-of-bundles}, and also via
a different argument in
\cite[\S6]{behrendD:on-the-motivic-class-of-the-stack-of-bundles}, that, in
$\grcSpaces$ (and even without inverting universally bijective morphisms) we have
$\{\mathfrak{Bun}_{\sl_n,\bp^1}\} = \bl^{-\dim \sl_n} \prod_{i=2}^n Z(\bp^1,
\bl^{-i})$, where $\rboxed{Z(\bp^1, t)} := \sum_{i=0}^\infty \{\sym^i_{\bp^1}\} t^i = \frac{1}{1-t}\frac{1}{1-\bl t}$ is the motivic Zeta
function of $\bp^1$. 

Note that $Z(\bp^1, \bl^{-i}) =
\frac{1}{1-\bl^{-i+1}}\frac{1}{1-\bl^{-i}}$ is invertible in $\grcSpaces$, with
inverse equal to $(1-\bl^{-i+1})(1-\bl^{-i})$. 
To complete the proof, it is therefore enough to demonstrate the two
equalities \begin{align} \label{equation:bun-g-class}
	\{\mathfrak{Bun}_{\sl_n,\bp^1}\} &= \sum_{\substack{\text{rank $n$ vector
	bundles $\scv$ on $\bp^1$} \\ \det \scv = \sco_{\bp^1} }}
	\frac{1}{\{\aut^{\sl}_{\scv}\}} \\ \label{equation:zeta-function-class}
\left(\prod_{i=2}^n Z(\bp^1, \bl^{-i})\right)^{-1} &= \prod_{x \in
\bp^1_k}\frac{\{\aut^{\sl}_{\scv|_D}\}}{\bl^{\dim \aut^{\sl}_{\scv|_D}}},
\end{align} 
where we note that the right hand side of \eqref{equation:zeta-function-class}
turns out to be independent of $\scv$.

We first verify \eqref{equation:bun-g-class}.  Taking cohomology on
$\bp^1$ associated to exact sequence $\sl_n \to \gl_n \to \bg_m$ defining
$\sl_n$ shows that $\sl_n$ torsors over $\bp^1$ are in bijection with $\gl_n$
torsors of trivial determinant.  We can then stratify
$\mathfrak{Bun}_{\sl_n,\bp^1} = \coprod B \aut^{\sl}_{\scv}$ as a disjoint
union of locally closed substacks corresponding to residual gerbes, as is explained for general $G$ in place of
$\sl_n$ \cite[p. 636]{behrendD:on-the-motivic-class-of-the-stack-of-bundles}. 
(Much of this argument can be verified more simply and directly in the case $G = \sl_n$.)
Noting that $\aut^{\sl}_\scv$ is special with invertible class in the
Grothendieck ring by \autoref{lemma:special-auts}, we find
$\{B(\aut^{\sl}_{\scv})\} = \frac{1}{\{\aut^{\sl}_{\scv}\}}$ and
	\eqref{equation:bun-g-class} follows.

It remains only to prove \eqref{equation:zeta-function-class}.  First, note that
$\scv$ is trivial Zariski locally and hence trivial over $D$, so
$\aut^{\sl}_{\scv|_D}$ is simply $\res_{D/\spec k}(\sl_n)$ which is an extension
of $\sl_n$ by $\bg_a^{\dim \sl_n}$.  Therefore, for any vector bundle $\scv$, we
may re-express \begin{align*} \frac{\{\aut^{\sl}_{\scv|_D}\}}{\bl^{\dim
	\aut^{\sl}_{\scv|_D}}} = \frac{\{\sl_n\}}{\bl^{\dim \sl_n}} = \{\sl_n\}
	\bl^{-\dim \sl_n} = \left(\prod_{i=2}^n (\bl^i - 1 ) \right)  \bl^{-\dim
	\sl_n} = \prod_{i=2}^n (1-\bl^{-i}).  \end{align*} 
	Using
	multiplicativity of Euler products \autoref{lemma:evaluated-product},
	\begin{align*} \prod_{x \in
		\bp^1_k}\frac{\{\aut^{\sl}_{\scv|_D}\}}{\bl^{\dim
		\aut^{\sl}_{\scv|_D}}} &= \prod_{x \in \bp^1_k} \prod_{i=2}^n
		(1-\bl^{-i}) = \prod_{i=2}^n \prod_{x \in \bp^1_k}
	(1-\bl^{-i}).  \end{align*} Hence, to prove
	\eqref{equation:zeta-function-class}, we only need check $Z(\bp^1,
	\bl^{-i})^{-1} = \prod_{x \in \bp^1_k} (1-\bl^{-i})$ for $2 \leq i \leq n$.  The right hand side is
	by definition $\prod_{x \in \bp^1_k} (1-\bl^{-i}t)|_{t = 1}$.  By
	\cite[\S3.8, Property 4]{bilu:thesis}, we have $\prod_{x \in \bp^1_k}
	(1-\bl^{-i}t)|_{t = 1} = \prod_{x \in \bp^1_k} (1-t)|_{t = \bl^{-i}}$.
	(As a word of warning, it is important that the substitution we made here
		was via replacing $t$ by its product with a power of $\bl$, see
	\cite[Rem.\ 6.5.2 and
6.5.3]{biluH:motivic-euler-products-in-motivic-statistics}.) Finally, by
\cite[Ex.\ 6.1.12]{biluH:motivic-euler-products-in-motivic-statistics} and
multiplicativity of Euler products \cite[Prop.\ 3.9.2.4]{bilu:thesis}, $\prod_{x
\in \bp^1_k} (1-t)|_{t = \bl^{-i}} = Z(\bp^1, \bl^{-i})^{-1}$.  \end{proof}

%The next remark will not be needed in what follows.
%\begin{remark}
%	\label{remark:} We note that the computation of the motivic Tamagawa
%	number of $\sl_n$-bundles on
%	$\bp^1$, for which we cited
%	\cite[\S7\}{behrendD:on-the-motivic-class-of-the-stack-of-bundles},
%	one can actually be deduced from the usual Tamagawa number
%	formula over finite fields. Indeed, in this special case, one can demonstrate
%	that the motivic formula is obtained from from the usual formula over
%	finite fields and simply plugging in $\bl$ for $q$ everywhere.  
%\end{remark}

In fact, we will need a slight generalization of the above formula from
\autoref{lemma:sl-tamagawa}, where we replace bundles of degree $0$ with bundles
of arbitrary fixed degree.

\begin{lemma} \label{lemma:pgl-tamagawa} For any positive integer $n$, and any
	fixed integer integer $\delta$,
	\begin{align*} \sum_{\substack{\text{rank $n$ vector bundles $\scv$ on
		$\bp^1$} \\ \deg \scv = \delta}}
		\frac{1}{\{\aut^{\sl}_{\scv}\}}  \prod_{x \in
		\bp^1_k}\frac{\{\aut^{\sl}_{\scv|_D}\}}{\bl^{\dim
		\aut^{\sl}_{\scv|_D}}} &= \bl^{-\dim \sl_n} \in
		\grcSpaces.  \end{align*} \end{lemma}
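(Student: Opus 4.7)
The plan is to reduce \autoref{lemma:pgl-tamagawa} to \autoref{lemma:sl-tamagawa}. First observe that the local Euler factor $\prod_{x \in \bp^1_k} \{\aut^{\sl}_{\scv|_D}\}/\bl^{\dim \aut^{\sl}_{\scv|_D}}$ appearing in the statement depends only on $n$ (not on $\delta$): any rank $n$ bundle becomes trivial when restricted to the dual numbers $D$, so this factor equals $\prod_{i=2}^n (1 - \bl^{-i})^{\bp^1_k}$ as computed in the proof of \autoref{lemma:sl-tamagawa}. Moreover, since $\det: \aut_{\scv} \to \bg_m$ is surjective with kernel $\aut^{\sl}_{\scv}$, and $\bg_m$ is special, we have $\{\aut_{\scv}\} = (\bl-1)\{\aut^{\sl}_{\scv}\}$. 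Writing $\mathfrak{Bun}_{\gl_n,\bp^1}^\delta$ for the moduli stack of rank $n$ degree $\delta$ bundles on $\bp^1_k$, so that $\{\mathfrak{Bun}_{\gl_n,\bp^1}^\delta\} = \sum_{\deg \scv = \delta} 1/\{\aut_{\scv}\}$, it suffices to show that $\{\mathfrak{Bun}_{\gl_n,\bp^1}^\delta\}$ is independent of $\delta$ in $\grcSpaces$, as the case $\delta=0$ (equivalently $\det \scv \simeq \sco_{\bp^1}$) is \autoref{lemma:sl-tamagawa}.

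To prove this independence, I would use the Hecke modification correspondence at a fixed $k$-point $p \in \bp^1_k$. Let $\mathfrak{Z}^\delta$ be the stack of pairs $(\scv, L)$ with $\scv$ of rank $n$ and degree $\delta$ and $L \subset \scv|_p$ a line. The forgetful map $q_1: \mathfrak{Z}^\delta \to \mathfrak{Bun}_{\gl_n,\bp^1}^\delta$ realizes $\mathfrak{Z}^\delta$ as the projectivization of the evaluation vector bundle $\scv \mapsto \scv|_p$ at $p$, which is locally free of rank $n$. On the other hand, the upward Hecke modification $(\scv, L) \mapsto \scv'$, where $\scv' \supset \scv$ is the unique supersheaf in $\scv(p)$ such that $\scv'/\scv \simeq k_p$ corresponds to the line $L \subset \scv(p)/\scv \simeq \scv|_p$, defines a second morphism $q_2: \mathfrak{Z}^\delta \to \mathfrak{Bun}_{\gl_n,\bp^1}^{\delta+1}$. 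The fiber of $q_2$ over $\scv'$ is the space of lower modifications of $\scv'$ at $p$, which is the projectivization $\bp((\scv'|_p)^\vee) \simeq \bp^{n-1}$; so $q_2$ likewise realizes $\mathfrak{Z}^\delta$ as the projectivization of a rank $n$ locally free sheaf on $\mathfrak{Bun}_{\gl_n,\bp^1}^{\delta+1}$.

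Applying the projective bundle formula $\{\bp\sce\} = (1 + \bl + \cdots + \bl^{n-1})\{\scx\}$, which in $\grcStacks$ follows from relation (3) in \autoref{definition:grstacks} together with the specialness of $\bg_m$ (as in the derivation at the end of the proof of \autoref{lemma:sl-tamagawa}), gives
\begin{align*}
(1 + \bl + \cdots + \bl^{n-1})\{\mathfrak{Bun}_{\gl_n,\bp^1}^\delta\}
= \{\mathfrak{Z}^\delta\}
= (1 + \bl + \cdots + \bl^{n-1})\{\mathfrak{Bun}_{\gl_n,\bp^1}^{\delta+1}\}
\end{align*}
in $\grcSpaces$. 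Since $1 + \bl + \cdots + \bl^{n-1} = \bl^{n-1}(1 + \bl^{-1} + \cdots + \bl^{-(n-1)})$ is a unit in $\grcSpaces$ (the second factor being $1$ plus terms of negative dimension, hence invertible in the completion by a geometric series), I conclude $\{\mathfrak{Bun}_{\gl_n,\bp^1}^\delta\} = \{\mathfrak{Bun}_{\gl_n,\bp^1}^{\delta+1}\}$, and iterating yields the desired independence of $\delta$.

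The main technical obstacle is that $\mathfrak{Bun}_{\gl_n,\bp^1}^\delta$ is not of finite type (it stratifies into infinitely many Harder--Narasimhan strata, indexed by splitting type), so the projective bundle identity needs to be applied stratum by stratum and the resulting sum interpreted in the completion $\grcSpaces$. This is legitimate because the contributions of strata of increasingly unbalanced splitting type to the class have bounded dimension (compare \autoref{subsection:vector-bundle-closure} and \autoref{lemma:codimension-computation}), ensuring convergence in the dimension filtration and commuting of the Hecke correspondence with the stratification.
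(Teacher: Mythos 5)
Your proof is correct, but it takes a genuinely different route from the paper. The paper also reduces to \autoref{lemma:sl-tamagawa} by showing the left-hand side is independent of $\delta$, but it does so arithmetically: it observes that (after clearing a power of $\bl$) each side is a rational function $f_\delta(\bl)$, identifies $f_\delta(q)$ with the Tamagawa measure of the degree-$\delta$ component of $\pgl_n(K(\bp^1_{\F_q}))\backslash \pgl_n(\mathbb A)$ for every prime power $q$, and uses right-translation invariance of the Tamagawa measure (plus vanishing of $\Br(\bp^1_{\F_q})$ to get the Zariski-local triviality needed in the Gaitsgory--Lurie mass formula) to conclude $f_\delta(q)=f_{\delta'}(q)$ for infinitely many $q$, hence $f_\delta=f_{\delta'}$ as rational functions. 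Your Hecke-modification argument stays entirely in the Grothendieck ring and replaces the finite-field detour with the two projective-bundle structures $q_1,q_2$ on the correspondence $\mathfrak Z^\delta$, which is cleaner and more self-contained. The one point you should tighten is the final paragraph: the Hecke correspondence does \emph{not} commute with the stratification by splitting type (an upper modification of a fixed $\sco_{\bp^1}(\vec a)$ lands in several strata), so the right statement is local finiteness of the double stratification of $\mathfrak Z^\delta$ by the pair of splitting types $(\vec a,\vec b)$ of $(\scv,\scv')$: for fixed $\vec a$ only finitely many $\vec b$ occur (since $\scv\subset\scv'\subset\scv(p)$ pins down $\vec b$ up to finitely many choices) and symmetrically for fixed $\vec b$, while the dimensions of the pieces tend to $-\infty$ as either splitting type becomes unbalanced; this justifies computing $\{\mathfrak Z^\delta\}$ by summing in either order in $\grcSpaces$ and applying \autoref{definition:grstacks}(3) together with specialness of $\bg_m$ on each finite-type residual-gerbe stratum. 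With that adjustment the argument is complete, and the invertibility of $1+\bl+\cdots+\bl^{n-1}=\bl^{n-1}(1+\bl^{-1}+\cdots+\bl^{-(n-1)})$ in the completed ring is as you say.
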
 
\begin{proof}
	The case that $\delta = 0$ was precisely covered in
	\autoref{lemma:sl-tamagawa}. Therefore, it remains to show the left hand
	side of the statement of the lemma is independent of $\delta$.
	The left hand side is unchanged upon replacing $\delta$ by $\delta \pm
	n$ because tensoring with the line bundle $\sco_{\bp^1_k}(1)$ defines a
	bijection from degree $\delta$ vector bundles of rank $n$ to degree $\delta + n$
	vector bundles of rank $n$, which preserves automorphism groups.
	Therefore, it suffices to show that the left hand side is independent of
	which congruence class $\delta$ lies in $\bmod n$.

	Next, note that one can express
	\begin{align*}
	\mathbb L^{\dim \pgl_n} \cdot \sum_{\substack{\text{rank $n$ vector bundles $\scv$ on
	$\bp^1$} \\ \deg \scv = \delta}}
	\frac{1}{\{\aut^{\sl}_{\scv}\}}
	\prod_{x \in
		\bp^1_k}\frac{\{\aut^{\sl}_{\scv|_D}\}}{\bl^{\dim
		\aut^{\sl}_{\scv|_D}}},
	\end{align*}
	as $f_\delta(\mathbb L),$ for $f_\delta$ a rational function. 
	Hence, for $\delta, \delta'$ two distinct residue classes $\bmod
	n$, it is enough to show $f_\delta(q) =	f_{\delta'}(q)$ for
	infinitely many integers $q$, as then the two rational functions
	must agree.
	The reason for choosing the above expression is that $\sum_{\delta=1}^n
	f_\delta(q)$ can also be identified with the Tamagawa number of $\pgl_n$
	over the function field $\mathbb P^1_{\mathbb F_q}$. Here we are using
	that if one starts with a vector bundle $\scv$ on $\mathbb P^1_{F_q}$,
	$\# \aut \mathbb P\scv =	\frac{1}{q-1} \aut \scv$, and the same
	expression calculates the number of automorphisms of $\scv$ with trivial
	determinant.

	We now use the above description in terms of Tamagawa numbers to show
	$f_\delta(q) = f_{\delta'}(q)$ for $1 \leq \delta \leq \delta' \leq n$.
	For $x \in \bp^1_{\mathbb F_q}$ a closed point, let
	$\widehat{\sco}_{x,\mathbb P^1_{\mathbb F_q}}$
	denote the complete local ring at $x$.
	We use $K(\mathbb P^1_{\mathbb F_q})$ to denote the function field of
	$\mathbb P^1_{\mathbb F_q}$, and $\mathbb A := \prod_{v \in \mathbb P^1_{\mathbb
F_q} \text{ closed points}} \left( K(\mathbb P^1_{\mathbb F_q})_v,
\widehat{\mathscr O}_v
\right)$ to denote the ring of adeles for
this function field.
	Note that the Tamagawa number can be expressed as the Tamagawa measure
	of $\pgl_n(K(\mathbb P^1_{\mathbb F_q})) \backslash \pgl_n(\mathbb A)$.
 
	There is a projection map 
	\begin{align}
		\label{equation:tamagawa-quotient}
	\alpha: \pgl_n(K(\mathbb P^1_{\mathbb F_q})) \backslash \pgl_n(\mathbb A) 
	\to \pgl_n(K(\mathbb P^1_{\mathbb F_q})) \backslash
	\pgl_n(\mathbb A) / \prod_{\on{places } x \in \mathbb
	P^1_{\mathbb F_q}} \pgl_n(\widehat{\sco}_{x,\mathbb
	P^1_{\mathbb F_q}}).
	\end{align}
	We claim one can identify the target with the set of isomorphism classes
	of $\pgl_n$ bundles on $\mathbb P^1_{\mathbb F_q}$, and moreover, if
	$X$ is a $\pgl_n$ bundle, the
	Tamagawa measure satisfies 
	\begin{align}
		\label{equation:bundle-tamagawa-measure}
	\mu_{\on{Tam}}(\alpha^{-1}([X])) =
	\frac{\mu_{\on{Tam}}\left(\prod_{\on{places } x \in \mathbb P^1_{\mathbb
		F_q}} \pgl_n(\widehat{\sco}_{x,\mathbb P^1_{\mathbb
F_q}})\right)}{ \# \aut X}.
	\end{align}
		Our claim essentially follows from \cite[Proposition
	1.3.2.11]{gaitsgoryL:weils-conjecture}, except the statement there
	assumes the group $G$ is simply connected, which is not the case for
	$\pgl_n$. However, the only place in the proof 
	(see the proof of \cite[Proposition
	1.3.2.10]{gaitsgoryL:weils-conjecture})
	that the simply
	connected hypothesis was used was to show there is some dense open of
	$\mathbb P^1_{\mathbb F_q}$ on
	which any $\pgl_n$ bundle is trivial.
	We can instead verify this directly as follows. Note first that 
	the Brauer group of $\mathbb P^1_{\mathbb F_q}$ is trivial
	\cite[Remarques 2.5(b)]{grothendieck:brauer-iii}.
	Hence, any $\pgl_n$ bundle
	on $\mathbb P^1_{\mathbb F_q}$ is the projectivization of a $\gl_n$
	bundle.
	Since any $\gl_n$ bundle is Zariski-locally trivial, the same holds for
	any $\pgl_n$ bundle on $\mathbb P^1_{\mathbb F_q}$.

	There is natural map
	$\pi: \pgl_n(K(\mathbb P^1_{\mathbb F_q})) \backslash \pgl_n(\mathbb A) \to \bz/n\bz$
	which factors through the double quotient map
	\eqref{equation:tamagawa-quotient}
	parameterizing projective bundles on $\mathbb P^1$, and sends a
	projective bundle to its degree $\bmod n$. Note that the degree of the
	projectivization of a vector bundle is not well defined as an integer,
	but it is well defined $\bmod n$.
	In this setup, we obtain $\mu_{\on{Tam}}(\pi^{-1}(\delta)) = f_\delta(q)$, where 
	$\mu_{\on{Tam}}$ denotes the right-invariant Tamagawa measure,
	by summing \eqref{equation:bundle-tamagawa-measure}
	over all bundles of degree $\delta \bmod n$.

		Since the Tamagawa measure is translation invariant, we can right-translate by
	any element of $\pgl_n(\mathbb A)$
		in $\pi^{-1}(\delta' - \delta)$ and this sends $\pi^{-1}(\delta)$
		to $\pi^{-1}(\delta')$. Hence the Tamagawa measures of
		$\pi^{-1}(\delta)$ and $\pi^{-1}(\delta')$ agree, so
		$f_\delta(q) =f_{\delta'}(q)$, as desired.
\end{proof}

For our main theorem, we will also need the following elementary dimension
comparison.

\begin{lemma}
	\label{lemma:dimension-aut-comparison}
	For $3 \leq d \leq 5$ and $\systemdim d g$ as in
	\autoref{proposition:sieved-hur-class},
	$\systemdim d g - \dim \sl_{\rk \sce} - \sum_{i=1}^{\frac{\lfloor d - 2
	\rfloor}{2}} \dim \sl_{\rk \scf_i} = \dim \hur d g
k + 1$.  
\end{lemma}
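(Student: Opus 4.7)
The identity will be verified by a direct case-by-case computation, using the definitions of $\sch(\sce,\scf_\bullet)$ from \eqref{equation:associate-h} together with Riemann--Roch on $\bp^1$ and the standard relations on determinants and degrees.

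First, by Riemann--Hurwitz, $\dim\hur d g k = 2g+2d-2$, so the right-hand side equals $2g+2d-1$. For the left-hand side, recall $\systemdim d g = \chi(\sch(\sce,\scf_\bullet)) = \rk\sch + \deg\sch$. By \autoref{lemma:degree-of-e} we have $\deg\sce = g+d-1$, and by \autoref{theorem:4-structure} (resp.\ \autoref{theorem:5-structure}) we have $\det\scf\simeq\det\sce$ when $d=4$ (resp.\ $\det\scf\simeq(\det\sce)^{\otimes 2}$ when $d=5$), so $\deg\scf$ is determined by $\deg\sce$. Since both sides of the claimed equality are constant on a connected stratum, we may (for the computation of degrees and ranks only) pretend $\sce$ and $\scf$ split into line bundles and apply the standard identities
$\deg\sym^k V = \binom{n+k-1}{k-1}\deg V$,
$\deg\wedge^k V = \binom{n-1}{k-1}\deg V$, and
$\deg(V\otimes W) = \rk V\cdot\deg W + \rk W\cdot\deg V$,
where $n=\rk V$.

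The three cases reduce to checking:
\begin{itemize}
\item $d=3$: $\sch = \sym^3\sce\otimes\det\sce^\vee$ has rank $4$ and degree $2(g+2)$, giving $\systemdim 3 g = 2g+8$, while $\dim\sl_2 = 3$ and $\dim\hur 3 g k + 1 = 2g+5$.
\item $d=4$: $\sch = \scf^\vee\otimes\sym^2\sce$ has rank $12$ and degree $2(g+3)$, giving $\systemdim 4 g = 2g+18$, while $\dim\sl_3+\dim\sl_2 = 11$ and $\dim\hur 4 g k + 1 = 2g+7$.
\item $d=5$: $\sch = \wedge^2\scf\otimes\sce\otimes\det\sce^\vee$ has rank $40$ and degree $2(g+4)$, giving $\systemdim 5 g = 2g+48$, while $\dim\sl_4+\dim\sl_5 = 39$ and $\dim\hur 5 g k + 1 = 2g+9$.
\end{itemize}

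In each case the two sides agree, so the identity follows. There is no substantive obstacle here; the only nontrivial arithmetic is in the $d=5$ case, where one must carefully combine $\deg\wedge^2\scf = 4\deg\scf = 8(g+4)$ with $\deg(\sce\otimes\det\sce^\vee) = -3(g+4)$ to obtain $\deg\sch = 2(g+4)$, and likewise confirm $\rk\sch = 40$. These computations use nothing beyond the formulas cited above.
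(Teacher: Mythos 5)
Your proof is correct, and it is essentially the same argument as the paper's: a direct case-by-case computation of $\chi(\sch(\sce,\scf_\bullet))$ via its rank and degree, using $\deg\sce = g+d-1$ and the determinant relations between $\sce$ and $\scf$, then comparing with $\dim\hur d g k = 2g+2d-2$. The only difference is that the paper writes out just the $d=5$ case and leaves $d=3,4$ to the reader, whereas you verify all three; your arithmetic checks out in each case.
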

\begin{proof}
Indeed, this can be checked separately in the cases $d = 3, 4,$ and
$5$.

	We now check the most difficult case that $d = 5$, leaving the other
	cases to the reader.  In the case $d = 5$, one computes \begin{align*}
		\systemdim d g &= \chi(\wedge^2 \scf \otimes \sce \otimes
		\det \sce^\vee) \\ &= \rk \left(\wedge^2 \scf \otimes \sce \otimes
		\det \sce^\vee\right) + \deg \wedge^2 \scf \otimes \sce \otimes
		\det \sce^\vee \\ &= \binom{5}{2} \cdot 4 + 16 \deg \scf - 30 \deg
	\sce = 40 + 32 \deg \sce - 30 \deg \sce \\ &= 40 + 2 \deg \sce \\ &= 40
+ 2g + 2d - 2.  \end{align*} Furthermore, still in the $d = 5$ case, $\dim
\sl_{\rk \sce} = 15$ and $\dim \sl_{\rk \scf} = 24$.  Therefore, \begin{align*}
	\systemdim d g - \dim \sl_{\rk \sce} - \dim \sl_{\rk \scf} &= 40 + 2g + 2d -
2 - 15 - 24 \\ &= (2g+2d-2)+1 \\ &= \dim \hur d g k + 1 \end{align*} as claimed.   
\end{proof}

We are finally prepared to prove our main theorem.
For the statement of our main theorem, recall we defined
$\codimbound d g = \min(\frac{g + c_d}{\kappa_d}, \frac{g + d-1}{d} - 4^{d-3})$
in \autoref{proposition:sieved-hur-class}, with $c_3 = 0, c_4 = -2,$ and $c_5 =
-23$.
Note that for $g \gg 0$, $\codimbound d g$ is more than
$\frac{g}{\kappa_d}-1$.

\begin{theorem} \label{theorem:hurwitz-class} Let $2 \leq d \leq 5$, $k$ a field
	of characteristic not dividing $d!$, $\mathcal R$ an allowable
	collection of ramification profiles of degree $d$. Then, \begin{align}
		\label{equation:rhur-class} \{\rhur d g k{\mathcal R}\} \equiv
		\frac{\bl^{\dim{\hur d g k}}}{1-\bl^{-1}} \left( \prod_{x \in
		\bp^1_k} \left( \sum_{R \in \mathcal R} \bl^{-r(R)} \right)
	\left( 1-\bl^{-1} \right) \right) 
\end{align} 
modulo codimension
$\codimbound d g$ 
in
$\grcSpaces$.  
In the case $d = 2$, the left hand
side and right hand side of \eqref{equation:rhur-class} are actually equal in
$\grStacks$ (and not just equivalent in $\grcSpaces$ modulo terms of a certain
dimension).  \end{theorem}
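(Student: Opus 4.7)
The plan is to start from \autoref{proposition:sieved-hur-class}, which expresses $\{\rhur d g k {\mathcal R}\}$ modulo codimension $\codimbound d g$ as a sum over Casnati-Ekedahl strata of $\bl^{\systemdim d g}/\{\aut_\ce\}$ times a local motivic Euler product in which the factor $\sum_{R \in \mathcal R} \bl^{-r(R)}$ appears at each point. By multiplicativity of motivic Euler products (\autoref{lemma:evaluated-product}), the quantity $\prod_{x \in \bp^1_k} \sum_{R \in \mathcal R} \bl^{-r(R)}$ does not depend on $\ce$ and pulls out of the sum, matching the ramification contribution on the right-hand side of the theorem. It therefore suffices to prove
\begin{align*}
\sum_{\ce} \frac{1}{\{\aut_\ce\}} \bl^{\systemdim d g} \prod_{x \in \bp^1_k} \frac{\{\aut_{\ce|_D}\}}{\bl^{h^0(D, \sch(\sce|_D, \scf_\bullet|_D))}} \equiv \frac{\bl^{\dim \hur d g k}}{1-\bl^{-1}} \prod_{x \in \bp^1_k} (1 - \bl^{-1}).
\end{align*}

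Next I would decompose the automorphism classes into $\sl$-type pieces. Using \autoref{lemma:aut-as-sub} together with the determinant exact sequence $1 \to \aut^{\sl}_\scv \to \aut_\scv \to \bg_m \to 1$, one obtains $\{\aut_\ce\} = \{\aut^{\sl}_\sce\}\{\aut^{\sl}_{\scf_\bullet}\}(\bl - 1)$ (with the $\scf_\bullet$ factor absent when $d = 3$), and analogously $\{\aut_{\ce|_D}\} = \{\aut^{\sl}_{\sce|_D}\}\{\aut^{\sl}_{\scf_\bullet|_D}\} \bl(\bl-1)$ using that $\res_{D/k}(\bg_m)$ has class $\bl(\bl-1)$. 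A direct rank computation in each of the cases $d = 3, 4, 5$, based on the definition of $\sch(\sce,\scf_\bullet)$ in \autoref{notation:h-sheaf}, yields $h^0(D, \sch|_D) - \dim \aut^{\sl}_{\sce|_D} - \dim \aut^{\sl}_{\scf_\bullet|_D} = 2$. Applying \autoref{lemma:evaluated-product} once more, the local Euler product factors as
\begin{align*}
\prod_x \frac{\{\aut_{\ce|_D}\}}{\bl^{h^0(D, \sch|_D)}} = \prod_x \frac{\{\aut^{\sl}_{\sce|_D}\}}{\bl^{\dim \aut^{\sl}_{\sce|_D}}} \cdot \prod_x \frac{\{\aut^{\sl}_{\scf_\bullet|_D}\}}{\bl^{\dim \aut^{\sl}_{\scf_\bullet|_D}}} \cdot \prod_x (1 - \bl^{-1}).
\end{align*}

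The key step is that the residual sum now factors. A Casnati-Ekedahl stratum is specified by $(\sce, \scf_\bullet)$, where $\deg \det \sce = g + d - 1$ is fixed by \autoref{lemma:degree-of-e} and $\det \scf_\bullet$ is determined by $\det \sce$ via \autoref{theorem:4-structure} and \autoref{theorem:5-structure}. Since all bundles are trivial over $D$, the local Euler product factor $\prod_x \{\aut^{\sl}_{\sce|_D}\}/\bl^{\dim \aut^{\sl}_{\sce|_D}}$ depends only on the rank of $\sce$ and not on its splitting type, and likewise for $\scf_\bullet$. Consequently, the double sum over $(\sce, \scf_\bullet)$ separates into a product of two independent sums, each of which is evaluated by the motivic Tamagawa formula \autoref{lemma:pgl-tamagawa} to $\bl^{-\dim \sl_{d-1}}$ and $\bl^{-\dim \sl_{\beta_1}}$ respectively. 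Combining with the dimension identity $\systemdim d g - \dim \sl_{d-1} - \dim \sl_{\beta_1} = \dim \hur d g k + 1$ from \autoref{lemma:dimension-aut-comparison}, and using $\bl^{\dim \hur d g k + 1}/(\bl - 1) = \bl^{\dim \hur d g k}/(1 - \bl^{-1})$ to absorb the $(\bl - 1)$ arising from $\{\aut_\ce\}$, produces exactly the desired right-hand side.

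The case $d = 2$ is handled separately in \autoref{section:2} by a more direct argument: every genus $g$ degree $2$ cover of $\bp^1$ is hyperelliptic and determined by its branch divisor, so $\hur 2 g k$ admits an explicit presentation in terms of the configuration space $\conf_{2g+2}(\bp^1)$, whose class is given in closed form in $\grSpaces$ by \cite[Lem.\ 5.9]{Vakil2015}; this yields the formula as an actual equality in $\grStacks$ rather than just modulo a codimension. The main anticipated obstacles are (i) the careful bookkeeping of the central $\bg_m$ factors when decomposing $\aut_\ce$ and $\aut_{\ce|_D}$, particularly reconciling the quotient definition \autoref{definition:automorphism} with the kernel description from \autoref{lemma:aut-as-sub}; and (ii) justifying rigorously in $\grcSpaces$ the various factorizations of motivic Euler products, for which multiplicativity (\autoref{lemma:evaluated-product}) combined with the triviality of bundles over $D$ (which turns the local factors into honest constants) is essential.
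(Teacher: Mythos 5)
Your proposal is correct and follows essentially the same route as the paper: reduce via \autoref{proposition:sieved-hur-class}, decompose $\{\aut_{\ce}\}$ and $\{\aut_{\ce|_D}\}$ into $\sl$-pieces and central $\bg_m$ factors, split the local Euler product by multiplicativity using the rank identity $h^0(D,\sch|_D) = 2 + \dim\aut^{\sl}_{\sce|_D} + \dim\aut^{\sl}_{\scf_\bullet|_D}$, evaluate the separated sums over $\sce$ and $\scf_\bullet$ by the Tamagawa formula of \autoref{lemma:pgl-tamagawa}, and finish with \autoref{lemma:dimension-aut-comparison}; the $d=2$ case is likewise handled as in \autoref{section:2}. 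The only difference is the cosmetic one of extracting $\prod_x \sum_{R}\bl^{-r(R)}$ at the start rather than carrying it through, which is immaterial by \autoref{lemma:evaluated-product}.
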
 \begin{proof} The proof of the $d = 2$ case is of a
	different   nature and we defer it to the end of \autoref{section:2}.
	We now concentrate on the case $3 \leq d \leq 5$.  By
	\autoref{proposition:sieved-hur-class}, our goal reduces to showing
	\eqref{equation:ce-fixed-sum} agrees with the right hand side of
	\eqref{equation:rhur-class}.

Recall our notation for $\systemdim d g$ from
\autoref{proposition:sieved-hur-class}.
	First, we claim we can rewrite 
\eqref{equation:ce-fixed-sum} as
\begin{align}
	\label{equation:e-and-f-single-sum} \frac{1}{\bl - 1} \sum_{\sce,
	\scf_\bullet} \frac{1}{\{\aut^{\sl}_{\sce}\}}
	\frac{1}{\{\aut^{\sl}_{\scf_\bullet}\}} \bl^{\systemdim d g} \prod_{x \in
	\bp^1_k} \frac{\left( \sum_{R \in \mathcal R} \bl^{-r(R)} \right) (\bl
	-1)\bl\{\aut^{\sl}_{\sce|_D}\} \{\aut^{\sl}_{\scf_\bullet|_D}\}}{\bl^{h^0(D,
	\sch(\sce|_D, \scf_\bullet|_D))}}, \end{align} with the summation over
	$\sce, \scf_\bullet$ interpreted as follows: $\sce$ ranges over all
	$\bp^1$ bundles of rank $d - 1$ and degree $g + d -1$; when $d = 3$,
	$\scf_\bullet$ is interpreted as being empty (so all classes associated
	to it are $1$); when
	$d =4$, $\scf_\bullet = \scf$ has rank $2$ and degree $g + d -1$; 
	when $d = 5$, $\scf_\bullet = \scf$ has rank $5$ and degree $2(g + d -
	1)$.  
	To see this we proceed as follows. For $\ce = \ce(\sce, \scf_\bullet)$, using the formula for
	$\aut^{\bp^1/k}_{\sce, \scf_\bullet}$ from \autoref{lemma:aut-as-sub},
	we can rewrite \begin{align} \label{equation:aut-class-over-p1}
	\frac{1}{\{\aut_{\ce}\}} = \{\res_{\bp^1_k/k}(\bg_m)\}
		\frac{1}{\{\aut^{\bp^1/k}_{\sce}\}}
		\frac{1}{\{\aut^{\bp^1/k}_{\scf_\bullet}\}}= \frac{1}{\bl -1}
	\frac{1}{\{\aut^{\sl}_{\sce}\}} \frac{1}{\{\aut^{\sl}_{\scf_\bullet}\}},
\end{align} where we interpret $\{\aut^{\sl}_{\scf_\bullet}\} = 1$ when $d = 3$.
Similarly, \begin{align} \label{equation:aut-class-over-dual-numbers}
	\{\aut_{\ce|_D}\} = \{\res_{D/k}(\bg_m)\} \{\aut^{\sl}_{\sce|_D}\}
\{\aut^{\sl}_{\scf_\bullet|_D}\}
=
(\bl -1)\bl\{\aut^{\sl}_{\sce|_D}\}
\{\aut^{\sl}_{\scf_\bullet|_D}\}.  \end{align} 
Hence, using
\eqref{equation:aut-class-over-p1} and
\eqref{equation:aut-class-over-dual-numbers}, we can rewrite
\eqref{equation:ce-fixed-sum} as \eqref{equation:e-and-f-single-sum}.

We next make a sequence of simplifications of 
\eqref{equation:e-and-f-single-sum}.
Then, summing over the same pairs $(\sce,\scf_\bullet)$ as in
	\eqref{equation:e-and-f-single-sum}, we can rewrite it as 
	\begin{equation}
		\begin{aligned}
		\label{equation:e-and-f-single-sum-two} \frac{1}{\bl - 1} &\left(
		\sum_{\sce} \frac{1}{\{\aut^{\sl}_{\sce}\}} \right) \left(
		\sum_{\scf_\bullet} \frac{1}{\{\aut^{\sl}_{\scf_\bullet}\}}
	\right)   \bl^{\systemdim d g} \\
	&\cdot \prod_{x \in \bp^1_k} \frac{\left(
	\sum_{R \in \mathcal R} \bl^{-r(R)} \right) (\bl
-1)\bl\{\aut^{\sl}_{\sce|_D}\} \{\aut^{\sl}_{\scf_\bullet|_D}\}}{\bl^{h^0(D,
\sch(\sce|_D, \scf_\bullet|_D))}}, \end{aligned}
\end{equation}
where the parenthesized sum of
$\scf_\bullet$ is interpreted as $1$ in the case $d = 3$, in this line and in
the remainder of the proof.  

Next, observe that $2 + \dim \aut^{\sl}_{\sce|_D} + \dim \aut^{\sl}_{\scf_\bullet|_D} = h^0(D, \sch(\sce|_D,
\scf_\bullet|_D))$. Indeed, this can be checked separately in the cases $d = 3,
4$, and $5$. When $d = 3$, both sides equal $8$, when $d = 4$, both sides equal
$24$, and when $d = 5$, both sides equal $80$.  Therefore, we can rewrite
\eqref{equation:e-and-f-single-sum} as 
\begin{equation}
	\begin{aligned}
	\label{equation:e-and-f-single-sum-three} \frac{1}{\bl - 1} &\left(
	\sum_{\sce} \frac{1}{\{\aut^{\sl}_{\sce}\}} \right) \left(
	\sum_{\scf_\bullet} \frac{1}{\{\aut^{\sl}_{\scf_\bullet}\}} \right)
	\bl^{\systemdim d g} \\
	& \cdot\prod_{x \in \bp^1_k} \left( \sum_{R \in \mathcal
	R} \bl^{-r(R)} \right) \frac{(\bl -1)\bl}{\bl^2}
	\frac{\{\aut^{\sl}_{\sce|_D}\}}{\bl^{\dim \aut^{\sl}_{\sce|_D}}}
	\frac{\{\aut^{\sl}_{\scf_\bullet|_D}\}}{\bl^{\dim
	\aut^{\sl}_{\scf_\bullet|_D}}} \end{aligned} 
\end{equation}
	
	Using multiplicativity of
	Euler products, as proven in \autoref{lemma:evaluated-product}, this becomes
	\begin{equation} 		\begin{aligned} 
			\label{equation:e-and-f-sums-and-products}
&\frac{1}{\bl - 1} \left( \sum_{\sce}
			\frac{1}{\{\aut^{\sl}_{\sce}\}} \right) \left(
			\sum_{\scf_\bullet}
		\frac{1}{\{\aut^{\sl}_{\scf_\bullet}\}} \right) \bl^{\systemdim d
		g} \left( \prod_{x \in \bp^1_k} \left( \sum_{R \in \mathcal R}
		\bl^{-r(R)} \right) \left( 1-\bl^{-1} \right) \right)\\ &\cdot
		\left( \prod_{x \in
		\bp^1_k}\frac{\{\aut^{\sl}_{\sce|_D}\}}{\bl^{\dim
	\aut^{\sl}_{\sce|_D}}} \right) \cdot \left( \prod_{x \in
	\bp^1_k}\frac{\{\aut^{\sl}_{\scf_\bullet|_D}\}}{\bl^{\dim
\aut^{\sl}_{\scf_\bullet|_D}}} \right). \end{aligned} \end{equation} 

Then, by the Tamagawa number formula for $\sl_n$, and its slight generalization
from \autoref{lemma:pgl-tamagawa}, \begin{align}
	\label{equation:tamagawa-evaluation-e}
	\sum_{\sce} \frac{1}{\{\aut^{\sl}_{\sce}\}}\prod_{x \in
	\bp^1_k}\frac{\{\aut^{\sl}_{\sce|_D}\}}{\bl^{\dim \aut^{\sl}_{\sce|_D}}}
	&= \bl^{-\dim \sl_{\rk \sce}}, \\ 
	\label{equation:tamagawa-evaluation-f}
	\sum_{\scf_\bullet}
\frac{1}{\{\aut^{\sl}_{\scf_\bullet}\}}\prod_{x \in
\bp^1_k}\frac{\{\aut^{\sl}_{\scf_\bullet|_D}\}}{\bl^{\dim
\aut^{\sl}_{\scf_\bullet|_D}}} &= \bl^{-\dim \sl_{\rk \scf_\bullet}}, \end{align}
where $-\dim \sl_{\rk \scf_\bullet}$ is interpreted as $0$ in the case $d = 3$.
Again, in \eqref{equation:tamagawa-evaluation-e} and \eqref{equation:tamagawa-evaluation-f}, the bundles have degrees as described after
\eqref{equation:e-and-f-single-sum}.
Therefore, \eqref{equation:e-and-f-sums-and-products} simplifies to
\begin{align}
		\label{equation:simplified-hurwitz-class}
		\frac{1}{\bl - 1} \bl^{\systemdim d g - \dim \sl_{\rk \sce} -
			\dim \sl_{\scf_\bullet}} \prod_{x \in \bp^1_k} \left( \sum_{R \in \mathcal
R} \bl^{-r(R)} \right) \left( 1-\bl^{-1} \right).  \end{align} 

Hence, using \autoref{lemma:dimension-aut-comparison},
\eqref{equation:unsimplified-hurwitz-class} simplifies to
	\begin{align} \label{equation:unsimplified-hurwitz-class} \frac{1}{\bl -
		1} \bl^{\dim \hur d g k + 1} \prod_{x \in \bp^1_k} \left(
		\sum_{R \in \mathcal R} \bl^{-r(R)} \right) \left( 1-\bl^{-1}
		\right).  \end{align} which equals the right hand side of
	\eqref{equation:rhur-class}.  \end{proof}

	Specializing \autoref{theorem:hurwitz-class} to the simply branched case
	gives the following corollary.

\begin{corollary} \label{corollary:simply-branched-hurwitz-class} For $2 \leq d
	\leq 5$, and $k$ a field of characteristic not dividing $d!$, in the
	case $\mathcal R = \{(1^d), (2, 1^{d-2})\}$ corresponding to simply
	branched curves, we have \begin{align*} \{\rhur d g k{\mathcal R}\} \equiv
	\bl^{\dim{\hur d g k}}(1-\bl^{-2}).  \end{align*} in $\grcStacks$
	modulo codimension $\codimbound d g$
	if $d \neq 2$, and in $\grStacks$ when $d =2$.
\end{corollary}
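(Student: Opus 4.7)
The plan is to directly substitute $\mathcal R = \{(1^d), (2, 1^{d-2})\}$ into the formula of \autoref{theorem:hurwitz-class} and simplify. The ramification orders are $r((1^d)) = 0$ and $r((2, 1^{d-2})) = 1$, so the inner sum becomes
\begin{align*}
\sum_{R \in \mathcal R} \bl^{-r(R)} = 1 + \bl^{-1}.
\end{align*}
Then, for $3 \leq d \leq 5$, \autoref{theorem:hurwitz-class} gives that $\{\rhur d g k {\mathcal R}\}$ is congruent modulo codimension $\codimbound d g$ to
\begin{align*}
\frac{\bl^{\dim \hur d g k}}{1-\bl^{-1}} \prod_{x \in \bp^1_k} (1+\bl^{-1})(1-\bl^{-1}) = \frac{\bl^{\dim \hur d g k}}{1-\bl^{-1}} \prod_{x \in \bp^1_k} (1-\bl^{-2}).
\end{align*}

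Next I would evaluate the motivic Euler product $\prod_{x \in \bp^1_k}(1-\bl^{-2})$ using the same manipulation employed in the proof of \autoref{lemma:sl-tamagawa}: namely, by \cite[Prop.\ 3.8, Property 4]{bilu:thesis} together with \cite[Ex.\ 6.1.12]{biluH:motivic-euler-products-in-motivic-statistics} and multiplicativity of Euler products, we have $\prod_{x \in \bp^1_k}(1-t)|_{t = \bl^{-2}} = Z(\bp^1, \bl^{-2})^{-1}$. Using $Z(\bp^1, t) = \frac{1}{(1-t)(1-\bl t)}$, this equals $(1-\bl^{-2})(1-\bl^{-1})$. Substituting this back into the previous expression gives
\begin{align*}
\frac{\bl^{\dim \hur d g k}}{1-\bl^{-1}} \cdot (1-\bl^{-2})(1-\bl^{-1}) = \bl^{\dim \hur d g k}(1-\bl^{-2}),
\end{align*}
which is the claimed right hand side.

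For the case $d=2$, \autoref{theorem:hurwitz-class} asserts that the congruence is actually an equality in $\grStacks$, and for $d=2$ the only allowable collection is $\mathcal R = \{(1^2),(2)\}$, so $\rhur 2 g k {\mathcal R} = \hur 2 g k$. The same substitution applies, and the equality follows from \autoref{theorem:hurwitz-class} together with the identical Euler product evaluation. The actual verification in the $d=2$ case will be completed as part of \autoref{section:2}, where the $d=2$ portion of \autoref{theorem:hurwitz-class} itself is established.

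The only nontrivial step is the motivic Euler product evaluation, which is essentially routine given the results of \cite{bilu:thesis,biluH:motivic-euler-products-in-motivic-statistics} already invoked in \autoref{lemma:sl-tamagawa}; no genuine obstacle arises here since the corollary is a direct specialization of the main theorem.
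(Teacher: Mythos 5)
Your proposal is correct and follows essentially the same route as the paper: substitute $\mathcal R = \{(1^d),(2,1^{d-2})\}$ into \autoref{theorem:hurwitz-class}, combine the factors into $\prod_{x\in\bp^1_k}(1-\bl^{-2})$, and evaluate this motivic Euler product as $Z_{\bp^1_k}(\bl^{-2})^{-1}=(1-\bl^{-1})(1-\bl^{-2})$ to cancel the $\frac{1}{1-\bl^{-1}}$ prefactor. The treatment of the $d=2$ case by deferring to \autoref{section:2} also matches the paper.
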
 Note that in the case $d = 2$, this corollary is equivalent to
the statement of \autoref{theorem:hurwitz-class} and is really proven
in \autoref{section:2}.  \begin{proof} Simply plug in $\mathcal R = \{(1^d), (2,
	1^{d-2})\}$ into \autoref{theorem:hurwitz-class}.  Then, $\sum_{R \in
	\mathcal R} \bl^{-r(R)} = 1 + \bl^{-1}$ and so 
	\begin{align*} \prod_{x
		\in \bp^1_k} (1-\bl^{-1})\left( \sum_{R \in \mathcal R}
		\bl^{-r(R)} \right) 
		&= \prod_{x \in \bp^1_k} (1-\bl^{-2})  & \\ 
		&= \prod_{x \in \bp^1_k} (1-\bl^{-2}t)|_{t = 1} \\
		&= \prod_{x \in \bp^1_k} (1-t)|_{t = \bl^{-2}} &
		\text{by \cite[\S3.8, Property 4]{bilu:thesis}}\\
		&= \frac{1}{Z_{\bp^1_k}(\bl^{-2})}  & \text{by \cite[Ex.\
		6.1.12]{biluH:motivic-euler-products-in-motivic-statistics}}
		\\ &= \left( 1- \bl^{-1} \right)\left(
1-\bl^{-2} \right).  \end{align*} 
Therefore, modulo codimension $\codimbound d
g$
in $\grcSpaces$ when $d \neq
2$, (and in $\grStacks$ when $d = 2$,) \begin{align*} \{\rhur d g
	k{\mathcal R}\} &\equiv \frac{\bl^{\dim{\hur d g k}}}{(1-\bl^{-1})}
	\prod_{x \in \bp^1_k} (1-\bl^{-1})\left( \sum_{R \in \mathcal R}
	\bl^{-r(R)} \right) \\ &=  \frac{\bl^{\dim{\hur d g k}}}{1-\bl^{-1}}
	\left( 1- \bl^{-1} \right)\left( 1-\bl^{-2} \right) \\ &= \bl^{\dim{\hur
	d g k}} (1-\bl^{-2}).  \qedhere \end{align*} 
\end{proof}

	When we allow the ramification profile to be arbitrary in
	\autoref{theorem:hurwitz-class} we obtain the following corollary
	counting all degree $d$ $S_d$ Galois covers of $\bp^1$.
	In the cases $d = 4$ and $d = 5$, there does not seem to be any obvious
	simplification of the motivic Euler product.

\begin{corollary} \label{corollary:full-hurwitz-class} For $k$ a field of
	characteristic not dividing $d!$, \begin{align*} \hur d g k\equiv
		\begin{cases} \bl^{\dim{\hur 2 g k}}(1-\bl^{-2})	 &
			\text{ if } d = 2 \\ \bl^{\dim{\hur 3 g k}} (1+\bl^{-1})
			\left( 1 - \bl^{-3} \right) & \text{ if } d = 3 \\
			\frac{\bl^{\dim{\hur 4 g k }}}{(1-\bl^{-1})} \prod_{x
			\in \bp^1_k} \left(1+\bl^{-2} - \bl^{-3} - \bl^{-4}
			\right).  & \text{ if } d= 4 \\ \frac{\bl^{\dim{\hur 5 g
			k}}}{(1-\bl^{-1})} \prod_{x \in \bp^1_k}
			\left(1+\bl^{-2}- \bl^{-4} - \bl^{-5} \right).  & \text{
	if } d = 5 \\ \end{cases} \end{align*} in $\grcStacks$	modulo
	codimension $\codimbound d g$ if $d \neq 2$, and in $\grStacks$ when $d =2$.
\end{corollary}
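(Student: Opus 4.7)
The plan is to apply Theorem~\ref{theorem:hurwitz-class} with $\mathcal R$ taken to be the set of \emph{all} partitions of $d$, so that $\rhur d g k{\mathcal R} = \hur d g k$, and then simplify the resulting motivic Euler product by a direct combinatorial calculation of $\sum_{R\vdash d}\bl^{-r(R)}$.

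First I would note that the case $d=2$ is essentially immediate from Corollary~\ref{corollary:simply-branched-hurwitz-class}: when $d=2$ the only allowable collection $\mathcal R$ is $\{(1^2),(2)\}$ and this equals the set of all partitions of $2$, so $\hur 2 g k = \rhur 2 g k{\mathcal R}$ and the formula $\bl^{\dim\hur 2 g k}(1-\bl^{-2})$ is precisely the $d=2$ conclusion of that corollary (whose proof is deferred to \autoref{section:2}). For $3\le d\le 5$, Theorem~\ref{theorem:hurwitz-class} gives
\begin{align*}
\{\hur d g k\} \equiv \frac{\bl^{\dim\hur d g k}}{1-\bl^{-1}}\prod_{x\in\bp^1_k}\left(\sum_{R\vdash d}\bl^{-r(R)}\right)(1-\bl^{-1})
\end{align*}
modulo codimension $\codimbound d g$ in $\grcSpaces$.

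Next I would carry out the elementary partition computation. Listing partitions of $d$ together with the invariant $r(R)=\sum(r_i-1)t_i$ (which is $d$ minus the number of parts) yields:
\begin{align*}
\sum_{R\vdash 3}\bl^{-r(R)} &= 1+\bl^{-1}+\bl^{-2},\\
\sum_{R\vdash 4}\bl^{-r(R)} &= 1+\bl^{-1}+2\bl^{-2}+\bl^{-3},\\
\sum_{R\vdash 5}\bl^{-r(R)} &= 1+\bl^{-1}+2\bl^{-2}+2\bl^{-3}+\bl^{-4}.
\end{align*}
Multiplying by $(1-\bl^{-1})$ and telescoping produces
\begin{align*}
1-\bl^{-3},\qquad 1+\bl^{-2}-\bl^{-3}-\bl^{-4},\qquad 1+\bl^{-2}-\bl^{-4}-\bl^{-5},
\end{align*}
respectively for $d=3,4,5$. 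These are exactly the local factors appearing in the statement of the corollary, so for $d=4,5$ the formula is already in final form after plugging in.

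Finally, for $d=3$ the Euler product collapses further. I would apply the same zeta-function manipulation used in the proof of Corollary~\ref{corollary:simply-branched-hurwitz-class}: using \cite[\S3.8, Property 4]{bilu:thesis} and \cite[Ex.\ 6.1.12]{biluH:motivic-euler-products-in-motivic-statistics},
\begin{align*}
\prod_{x\in\bp^1_k}(1-\bl^{-3}) = \prod_{x\in\bp^1_k}(1-t)\big|_{t=\bl^{-3}} = Z_{\bp^1_k}(\bl^{-3})^{-1} = (1-\bl^{-2})(1-\bl^{-3}).
\end{align*}
Substituting and cancelling $(1-\bl^{-2})/(1-\bl^{-1}) = 1+\bl^{-1}$ gives $\bl^{\dim\hur 3 g k}(1+\bl^{-1})(1-\bl^{-3})$. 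There is no genuine obstacle here — the entire argument is an unwinding of Theorem~\ref{theorem:hurwitz-class} together with a bookkeeping of partitions; the only mildly subtle point is recognizing that for $d=3$ (and $d=2$) the local factor happens to be a polynomial in $\bl^{-1}$ of the form $1-\bl^{-d}$, which is what allows the motivic zeta function of $\bp^1_k$ to be used to evaluate the Euler product in closed form, whereas for $d=4,5$ no such closed form is available and the product must be left as stated.
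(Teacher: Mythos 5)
Your proposal is correct and follows essentially the same route as the paper: both take $\mathcal R$ to be all partitions of $d$ so that $\rhur d g k{\mathcal R} = \hur d g k$, plug into Theorem~\ref{theorem:hurwitz-class}, handle $d=2$ via Corollary~\ref{corollary:simply-branched-hurwitz-class}, and collapse the $d=3$ Euler product with the same motivic zeta function identity. The only difference is that you write out the partition sums and the multiplication by $(1-\bl^{-1})$ explicitly for $d=4,5$, which the paper leaves implicit; your arithmetic checks out.
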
 \begin{proof} The case $d = 2$ is already covered in
	\autoref{corollary:simply-branched-hurwitz-class}, since $\rhur 2 g
	k{\{(1^2), (2)\}} = \hur 2 g k$.  Taking 
	\begin{align*}
		\mathcal R = \{(1^4), (2,
	1^2), (3, 1), (2^2), (4)\}
	\end{align*}
the $d = 4$ case follows from plugging
	$\mathcal R$ into \autoref{theorem:hurwitz-class} and using $\rhur 4 g
	k{\mathcal R} = \hur 4 g k$.  Taking $\mathcal R = \{(1^5), (2, 1^3),
	(2^2, 1), (3, 1^2), (3, 2), (4, 1), (5)\}$ the $d = 5$ case follows from
	plugging $\mathcal R$ into \autoref{theorem:hurwitz-class} and using
	$\rhur 5 g k{\mathcal R} = \hur 5 g k$.  Finally, let us check the $d =
	3$ case.  Here, for $\mathcal R = \{(1^3), (2,1), (3)\}$, we have $\rhur
	3 g k{\mathcal R} = \hur 3 g k$.  So, by
	\autoref{theorem:hurwitz-class}, 
	using \cite[\S3.8, Property 4]{bilu:thesis} and 
	by \cite[Ex.\
		6.1.12]{biluH:motivic-euler-products-in-motivic-statistics}
		as in the proof of
		\autoref{corollary:simply-branched-hurwitz-class},
	\begin{align*} \{\rhur d g
		k{\mathcal R}\} &\equiv \frac{\bl^{\dim{\hur d g k}}}{1-\bl^{-1}}
		\prod_{x \in \bp^1_k} (1-\bl^{-1})\left( \sum_{R \in \mathcal R}
		\bl^{-r(R)} \right) \\ &= \frac{\bl^{\dim{\hur d g
	k}}}{1-\bl^{-1}} \prod_{x \in \bp^1_k} (1-\bl^{-1})\left( 1 + \bl^{-1} + \bl^{-2} \right) \\ &= \frac{\bl^{\dim{\hur d g
k}}}{1-\bl^{-1}} \prod_{x \in \bp^1_k} (1-\bl^{-3})\\ &= \frac{\bl^{\dim{\hur d
g k}}}{1-\bl^{-1}} \frac{1}{Z_{\bp^1_k}(\bl^{-3})}\\ &= \frac{\bl^{\dim{\hur d g
k}}}{1-\bl^{-1}} \left( 1-\bl^{-2} \right)\left( 1-\bl^{-3} \right)\\ &=
\bl^{\dim{\hur d g k}} (1+\bl^{-1}) \left( 1-\bl^{-3} \right), \end{align*} where
we work in $\grcStacks$ modulo codimension $\codimbound d g$.  \end{proof}

\section{Degree 2} \label{section:2}

Following the notation introduced in \cite{arsieV:stacks-cyclic-covers}, let
$\rboxed{\ba_{\sm}(1,n)} \subset \spec \left( \sym^\bullet H^0(\bp^1,
	\sco(n)^\vee) \right)$ 
denote the open
subscheme parameterizing those degree $n$ forms on $\bp^1$ whose associated
closed subscheme is reduced.

\begin{lemma} \label{lemma:hyperelliptic-isomorphism} For $k$ a field with $\chr
k \neq 2$, there is an isomorphism of stacks $\hur 2 g k \simeq
[\ba_{\sm}(1,2g+2)/\bg_m]$, for an appropriate action of $\bg_m$ on
$\ba_{\sm}(1,2g+2)$.  \end{lemma}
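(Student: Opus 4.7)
The plan is to exploit the classical fact that in characteristic not equal to $2$, a degree $2$ cover is determined by its ``discriminant'' section, via the trace splitting.

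Given a $T$-point of $\hur 2 g k$, that is, a finite locally free degree $2$ map $\rho\colon X \to \bp^1_T$ with $X \to T$ a smooth proper relative curve of genus $g$, I will set $\sce := (\coker \rho^{\#})^{\vee}$. Since $\chr k \neq 2$, the map $\tfrac{1}{2}\tr\colon \rho_{*}\sco_{X} \to \sco_{\bp^1_T}$ is a splitting of the inclusion, giving a canonical decomposition $\rho_{*}\sco_{X} \simeq \sco_{\bp^1_T} \oplus \sce^{\vee}$ as $\sco_{\bp^1_T}$-modules. The $\sco_{\bp^1_T}$-algebra structure on the right-hand side is then completely determined by the multiplication $\sce^{\vee} \otimes \sce^{\vee} \to \sco_{\bp^1_T}$, equivalently a global section $s \in H^{0}(\bp^1_T, \sce^{\otimes 2})$. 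A local computation (locally $X$ is cut out by $y^{2} = s$ in the total space of $\sce^{\vee}$) shows that $X \to T$ is smooth if and only if the zero locus of $s$ is geometrically reduced on each fiber of $\bp^1_T \to T$, and then automatically defines a relative divisor of degree $2g+2$.

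Next, I will pin down $\sce$. By \autoref{lemma:degree-of-e} applied fiberwise, $\sce$ has degree $g+1$ on every geometric fiber of $\bp^1_T \to T$. Standard rigidity for line bundles on $\bp^1$ (i.e.\ pushing forward $\sce \otimes p_{\bp^1}^{*}\sco(-g-1)$, which is fiberwise $\sco_{\bp^1}$, to the line bundle $L := (p_{T})_{*}(\sce(-g-1))$ on $T$) then gives a canonical isomorphism $\sce \simeq p_{T}^{*}L \otimes p_{\bp^1}^{*}\sco(g+1)$. The $\bg_{m}$-torsor $P := \isom_{\bp^1_T}(p_{\bp^1}^{*}\sco(g+1), \sce)$, which coincides with the frame torsor of $L$ on $T$, together with the $\bg_{m}$-equivariant map $P \to \ba_{\sm}(1,2g+2)$ sending a trivialization to the resulting section of $\sco(2g+2)$, packages $(\sce,s)$ as a $T$-point of $[\ba_{\sm}(1,2g+2)/\bg_{m}]$, where $\bg_{m}$ acts via $\lambda \cdot s = \lambda^{2}s$ (since scaling $\sce$ by $\lambda$ scales $\sce^{\otimes 2}$ by $\lambda^{2}$).

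For the inverse, I will send a $\bg_{m}$-equivariant datum $(P, s)$ to the cover $X := \spec_{\bp^1_T}(\sco_{\bp^1_T} \oplus \sce^{\vee})$, where $\sce = p_{T}^{*}L \otimes p_{\bp^1}^{*}\sco(g+1)$ for $L$ the line bundle associated to $P$, with algebra structure given by $s$. Checking that the two constructions are mutually inverse and functorial reduces to the compatibilities already built in: the trace splitting is natural in $T$ (and exists globally because $\tfrac{1}{2} \in k$), and the formation of $\sce$ commutes with base change. The bookkeeping on automorphisms matches on both sides: an automorphism of $(\sce,s)$ is a $\lambda \in \bg_{m}$ with $\lambda^{2}s = s$, so the nontrivial stabilizer $\mu_{2}$ corresponds exactly to the hyperelliptic involution. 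The main subtlety is the $\bg_{m}$-action being by $\lambda^{2}$ rather than $\lambda$; beyond this, each step is a routine descent argument, and there is no analog of the codimension bookkeeping needed in higher degree, which is why the equality holds in $\grStacks$ on the nose rather than only modulo a filtered piece.
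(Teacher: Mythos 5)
Your proposal is correct and follows essentially the same route as the paper: both decompose $\rho_*\sco_X$ using $\chr k \neq 2$ (your trace splitting is the $\mu_2$-eigenspace decomposition the paper uses) to reduce a double cover to a degree $g+1$ line bundle together with a section of its square, then rigidify by trivializing the line bundle and quotient by $\bg_m$. The only cosmetic differences are that the paper presents the rigidification as a stackification of triples $(\scl,\phi,i)$ and cites Arsie--Vistoli for the final identification with $\ba_{\sm}(1,2g+2)$, whereas you build the frame torsor directly.
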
 \begin{remark} \label{remark:} This can be
	deduced from the proofs of \cite[Thm.\  4.1, Cor.\
	4.7]{arsieV:stacks-cyclic-covers}, though there the authors work with a
	further quotient by the $\pgl_2$ action on the base $\bp^1$.  The
	$\bg_m$ action on $\ba_{\sm}(1,n)$ in \autoref{lemma:hyperelliptic-isomorphism}
	is explicitly given by $\alpha \cdot f(x) =
\alpha^{-2} f(x)$, though we will not need this in what follows.  \end{remark}
\begin{proof} First, we verify that $\hur 2 g k$ is equivalent to the fibered category whose $S$-points
	parameterize pairs $(\scl, i: \scl^{\otimes 2} \ra \sco_{\bp^1_S}),$ for
	$\scl$ a degree $-g - 1$ invertible sheaf on $\bp^1_S$, and $i$ an
	injective homomorphism of sheaves. 	Indeed, to connect this to our
	given definition of $\hur 2 g k$, we follow \cite[Rem.\ 3.3 and Prop.\
	3.1]{arsieV:stacks-cyclic-covers}: given a cover $\rho: H \ra \bp^1_S$,
	we have a natural action of $\mu_2$ on $H$ over $\bp^1$. 
	This comes from the isomorphism $\mu_2 \simeq \bz/2\bz$ as we are
	assuming $\chr(k) \neq 2$.	
	From this action,
	we obtain an isomorphism $\rho_* \sco_H \simeq \sco_{\bp^1_S} \oplus
	\scl$, for $\scl$ the subsheaf on which $\mu_2$ acts by $(t,s) \mapsto
	t \cdot s$, i.e., $\scl$ is the weight $1$ eigenspace of $\mu_2$,
	and $\sco_{\bp^1_S}$ is the weight $0$ eigenspace.  The description of
	$\scl$ as the $1$ eigenspace for the $\mu_2$ action yields a map $i: \scl \otimes
	\scl \ra \sco$.  In the other direction, given $(\scl, i: \scl^{\otimes
	2} \ra \sco_{\bp^1_S})$, we can recover $H =
	\spec_{\sco_{\bp^1_S}}(\sco_{\bp^1_S} \oplus \scl).$
	The given maps respect automorphisms over $\bp^1$, as the only
	nontrivial automorphism in both cases is given by the hyperelliptic
	involution. Hence, they define an equivalence of algebraic stacks.

	Next, consider the cover $\widetilde{\hur 2 g k}$ of $\hur 2 g k$ given
	as the stackification of the fibered category whose
	$S$ points parameterize triples $(\scl, \phi: \scl \simeq \sco(-g-1), i:
	\scl^{\otimes 2} \ra \sco)$, with $i$ injective.  
	Note that $\widetilde{\hur 2 g k} \to \hur 2 g k$
	is indeed surjective because $\scl \simeq \sce^\vee$ is a degree $-g-1$
	line bundle on $\bp^1$ by \autoref{lemma:degree-of-e}.
	Observe that
	$\widetilde{\hur 2 g k}$ has a natural action of $\bg_m$ acting by
	automorphisms of $\scl$, so that $\hur d g k = [\widetilde{\hur d g
k}/\bg_m]$.  Said another way, quotienting by $\bg_m$ forgets the data of
	the isomorphism $\phi$.

It remains to identify $\widetilde{\hur 2 g k}$ with $\ba_{\sm}(1,2g+2)$.
Indeed, this was done in the course of the proof of \cite[Thm.\
4.1]{arsieV:stacks-cyclic-covers}.  Briefly, given an $S$-point $(\scl, \phi,
i)$, associate the map $i \circ (\phi^{-1})^{\otimes 2} :
\sco_{\bp^1_S}(-2g-2) \ra \sco_{\bp^1_S}$ corresponding to a section of
$H^0(\bp^1_S, \sco(2g+2))$.  Conversely, given a section $f \in H^0(\bp^1_S,
\sco(2g+2))$, associate the triple $(\sco(-g-1), \id: \sco(-g-1) \ra
	\sco(-g-1), f: \sco(-g-1)^{\otimes 2} \ra \sco)$.  \end{proof}

We are now ready to prove \autoref{theorem:hurwitz-class} in the case $d = 2$.
\subsection{Proof of $d=2$ case of \autoref{theorem:hurwitz-class}}
\label{subsection:d=2-proof}

Note that the only
	allowable collection of ramification profiles
	is $\mathcal R = \{ (2), (1,1)\}$.  Since
$[\hur 2 g k \simeq \ba_{\sm}(1,2g+2)/\bg_m]$ by
	\autoref{lemma:hyperelliptic-isomorphism}, and $\bg_m$ is special, we
	have $\{\hur 2 g k\}\{\bg_m\} = \ba_{\sm}(1,2g+2)$.  Since \begin{align*}
		\{\bg_m\} \frac{\bl^{\dim{\hur 2 g k}}}{1-\bl^{-1}} \left(
		\prod_{x \in \bp^1_k} \left( \sum_{R \in \mathcal R} \bl^{-r(R)}
	\right) \left( 1-\bl^{-1} \right) \right) &= \frac{\bl-1}{1-\bl^{-1}}
	\cdot \bl^{2g+2} \prod_{x \in \bp^1_k} \left( 1-\bl^{-2} \right) \\ &=
\bl^{2g+3} \frac{1}{Z_{\bp^1_k}(\bl^{-2})} \\ &= \bl^{2g+3} \left( 1-\bl^{-1}
\right)\left( 1-\bl^{-2} \right), \end{align*} 
	(by \cite[Ex.\ 6.1.12]{biluH:motivic-euler-products-in-motivic-statistics}
	and \cite[\S3.8, Property 4]{bilu:thesis}, as in the proof of
\autoref{corollary:simply-branched-hurwitz-class})
it suffices to verify
\begin{align*} \ba_{\sm}(1,2g+2)&= \bl^{2g+3} \left( 1-\bl^{-1} \right)\left(
	1-\bl^{-2} \right).  \end{align*} 
	Indeed, this follows from \cite[Lem.\
	5.9(a)]{Vakil2015}.  In a bit more detail, taking $a =2$ in \cite[Lem.\
	5.9(a)]{Vakil2015}, the expression $K_{<2}(t)$ there is the generating
	function for which the coefficient of $t^n$ is the class of $w_{1^n}$ in
	the notation of \cite[(5.1)]{Vakil2015}. Here, $w_{1^a}$ is the class of
	the space of degree $n$ reduced divisors on $\bp^1$. Therefore, $w_{1^n}
= \{[\ba_{\sm}(1, n)/\bg_m]\}$, and so we only need check $\{w_{1^n}\} = \bl^n
	- \bl^{n-2}$. But indeed, this is the coefficient of $t^n$ in the
	expansion of \begin{equation*} \frac{Z_{\bp^1}(t)}{Z_{\bp^1}(t^2)} =
		\frac{(1-t^2 \bl)(1-t^2)}{(1-t\bl)(1-t)} = (1-t^2
		\bl)(1+t)\left(\sum_{i=0}^\infty (t\bl)^i\right).\qedhere
\end{equation*} 

\begin{remark} \label{remark:} The construction
	above used to compute the class of $\hur 2 g k$ is admittedly fairly ad
	hoc in the context of this paper.  A similar construction, more in line
	with the themes of this paper could be obtained by realizing a given
	hyperelliptic curve $\rho: H \ra \bp^1$ as a subscheme of $\bp (
	\left(\rho_* \sco_H \right)^\vee)$.  One can verify that $\bp (\rho_*
	\sco_H^\vee)$ is fppf locally isomorphic to $\bp\left( \sco_{\bp^1}
	\oplus \sco_{\bp^1}(g+1) \right)$, and use this to deduce that $\hur 2 g
	k$ is the quotient of the smooth members of a certain linear series on
	$\bp\left( \sco_{\bp^1} \oplus \sco_{\bp^1}(g+1) \right)$ by the
	automorphisms of $\bp\left( \sco_{\bp^1_k} \oplus \sco_{\bp^1_k}(g+1)
	\right)$ preserving the projection to $\bp^1_k$, and then use this
	description to compute $\{\hur 2 g k\}$, obtaining a formula similar to
	that of
	\autoref{theorem:hurwitz-class}.
	However, such a proof would only calculate the class in $\grcStacks$
	modulo a certain codimension, as opposed to the proof we give here,
	which actually calculates the class in $\grStacks$.
\end{remark}

\appendix
\section{A proof of a Theorem of Ekedahl \\ By Aaron Landesman and Federico Scavia}

	\maketitle
	The main result of this appendix is a proof of the following Theorem of T. Ekedahl. 
	We retain the notation for the Grothendieck ring of stacks described in
	\autoref{subsection:notation}.
%	see \cite[Thm.\ 4.3]{ekedahlGeometricInvariantFinite2009}.
	\begin{theorem}[Ekedahl]\label{ekedahl}~\protect{\cite[Thm.\
		4.3]{ekedahlGeometricInvariantFinite2009}}
		Let $k$ be a field. Then, for all integers $n \geq 1$,
		$\{BS_n\}=1$ in $\grStacks$.
	\end{theorem}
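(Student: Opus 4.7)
Plan: Proceed by strong induction on $n$; the case $n=1$ is immediate since $S_1$ is trivial. For the inductive step, fix $n \geq 2$ and assume $\{BS_k\} = 1$ for all $1 \leq k < n$. The idea is to compute $\{[\ba^n/S_n]\}$, where $S_n$ acts on $\ba^n$ by coordinate permutation, in two different ways and solve for $\{BS_n\}$. Since $\ba^n$ is the total space of the rank-$n$ permutation representation viewed as a vector bundle on $BS_n$, relation (3) of \autoref{definition:grstacks} immediately yields $\{[\ba^n/S_n]\} = \bl^n \cdot \{BS_n\}$.

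On the other side, stratify $[\ba^n/S_n]$ by orbit type: for each partition $\lambda = (1^{m_1} 2^{m_2} \cdots) \vdash n$ the corresponding locally closed substack $\mathcal{S}_\lambda$ parameterizes unordered $n$-tuples of multiplicity type $\lambda$, with generic stabilizer $\Gamma_\lambda = \prod_i(S_i \wr S_{m_i})$ and trivially-acting normal subgroup $N_\lambda := \prod_i S_i^{m_i}$. Writing $V_\lambda \subset \sym^n \ba^1$ for the corresponding scheme-theoretic stratum, the natural map $\mathcal{S}_\lambda \to V_\lambda$ is a neutral $BN_\lambda$-gerbe, with neutrality coming from the semidirect-product splitting of $\Gamma_\lambda$. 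The key algebraic claim, which I would establish next, is that $\{\mathcal{S}_\lambda\} = \{V_\lambda\} \cdot \{BN_\lambda\}$ in $\grStacks$. Granting this, the identity $B(S_i \wr S_{m_i}) \simeq \symm^{m_i}(BS_i)$, combined with \cite[Prop.\ 2.5]{ekedahlGeometricInvariantFinite2009} (which asserts that $\{\symm^j(X)\}$ depends only on $\{X\}$), gives $\{B(S_i \wr S_{m_i})\} = \{\symm^{m_i}(BS_i)\} = \{\symm^{m_i}(\text{pt})\} = \{BS_{m_i}\}$ whenever $i < n$, which further equals $1$ whenever $m_i < n$ by the inductive hypothesis. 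Consequently $\{BN_\lambda\} = 1$ for every $\lambda \neq (n)$ (the case $\lambda = (1^n)$ is harmless since $N_{(1^n)}$ is trivial), whereas for $\lambda = (n)$ we have $V_{(n)} \simeq \ba^1$ and $N_{(n)} = S_n$, contributing $\bl \cdot \{BS_n\}$.

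Since $\sum_\lambda \{V_\lambda\} = \{\sym^n \ba^1\} = \bl^n$, summing the strata yields $\bl^n \{BS_n\} = \bl \{BS_n\} + (\bl^n - \bl)$, so $(\bl^n - \bl)\{BS_n\} = \bl^n - \bl$; dividing by $\bl^n - \bl = \bl(\bl^{n-1}-1)$, which is invertible in $\grStacks$ by \autoref{remark:ekedahl-grstacks-equivalence}, completes the induction. The hard part will be justifying the multiplicativity $\{\mathcal{S}_\lambda\} = \{V_\lambda\}\{BN_\lambda\}$: the gerbe $\mathcal{S}_\lambda \to V_\lambda$ is banded a priori by a potentially nontrivial étale form of $N_\lambda$, twisted by the monodromy of the free $\prod_i S_{m_i}$-torsor $C_k \to V_\lambda$, where $C_k$ is the configuration space of $k = \sum_i m_i$ ordered distinct points on $\ba^1$. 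Proving the needed invariance of $\{B(-)\}$ under such étale twists of a finite group scheme reduces, via the product decomposition of $N_\lambda$ and the inductive hypothesis, to checking the statement componentwise for each $S_i$-factor, which is the main technical crux.
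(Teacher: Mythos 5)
Your strategy is a genuine reorganization of Ekedahl's argument: instead of the inclusion--exclusion over stabilizer flags (\autoref{formula}) followed by the appeal to \cite[Lem.\ 3.5]{ekedahlGeometricInvariantFinite2009} (which is what converts ``$(\bl^n-\bl)\{BS_n\}$ is a polynomial in $\bl$'' into ``$\{BS_n\}=1$''), you compute $\{[\ba^n/S_n]\}=\bl^n\{BS_n\}$ directly against the orbit-type stratification and solve a linear equation. The stratification itself is the same one underlying the paper's proof (the stabilizer subgroups of the permutation representation are the Young subgroups $N_\lambda=\prod_i S_i^{m_i}$, with normalizers $\Gamma_\lambda=\prod_i S_i\wr S_{m_i}$), your identification of the $\lambda=(n)$ stratum as contributing $\bl\{BS_n\}$ is correct, and the final division by $\bl^n-\bl$ is legitimate. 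If the per-stratum classes were available, this would indeed be a cleaner route that never needs Lemma 3.5.

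However, the claim $\{\mathcal S_\lambda\}=\{V_\lambda\}\{BN_\lambda\}$ is precisely where all of the difficulty lives, and your proposed reduction does not close it. Writing $\mathcal S_\lambda\simeq[(\conf\times\prod_i(BS_i)^{m_i})/\prod_i S_{m_i}]$ with the diagonal action, the band of the gerbe $\mathcal S_\lambda\to V_\lambda$ is the twist of $N_\lambda$ by the monodromy of the covering $\conf\to V_\lambda$, and this monodromy \emph{permutes} the $m_i$ factors of $S_i^{m_i}$; the band is a product of Weil restrictions $\res_{V_i'/V_\lambda}(S_i)$ along degree-$m_i$ covers rather than a product of forms of the individual $S_i$'s, so a ``componentwise check for each $S_i$-factor'' is not the right reduction. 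Neutrality of the gerbe (from the splitting $W_\lambda\to\Gamma_\lambda$) does not help either: it identifies $\mathcal S_\lambda$ with $B_{V_\lambda}$ of this twisted band, but the class of $B$ of a nontrivial \'etale form of a finite group scheme is not known to be insensitive to the twist --- that kind of insensitivity is essentially the content of the theorem. What you actually need is a version of \cite[Prop.\ 2.5]{ekedahlGeometricInvariantFinite2009} \emph{relative to a base} (that $\{[(Y\times\mathcal X^{m})/S_m]\}$ depends only on the $S_m$-scheme $Y$ and on $\{\mathcal X\}$), which is not among the tools you cite; the absolute Prop.\ 2.5 and relation (3) of \autoref{definition:grstacks} (which requires a linear action on an affine bundle) do not give it. This is exactly the issue that \autoref{formula} is engineered to sidestep: the inclusion--exclusion over stabilizer flags re-expresses every locally closed stratum through quotients $[V^{H_f}/N_G(f)]$ of \emph{linear} subspaces by linear actions, where relation (3) evaluates the class as $\bl^{d_f}\{BN_G(f)\}$, so no twisted gerbe over a configuration space ever has to be computed. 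To complete your argument you would need to either prove the relative symmetric-power statement or fall back on the flag formalism.
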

	
	Unfortunately, Ekedahl passed away prior to publishing \cite{ekedahlGeometricInvariantFinite2009}, and so the article was never refereed. 
	There are a number of typos and errors appearing in the proof of \cite[Thm.\ 4.3]{ekedahlGeometricInvariantFinite2009}. The objective of this appendix is to point out the fixes necessary.
	
	\smallskip
	 Let $k$ be a field, let $G$ be a finite group, and let $V$ be a
	 $G$-representation of dimension $d\geq 0$ over $k$. If $H$ is a
	 subgroup of $G$, we denote by $V^H$ the subscheme of $V$ fixed by $H$,
	 and by $V_H$ the locally closed subscheme parameterizing the locus whose stabilizer is exactly $H$. 
	 If there is a point of $V$ whose stabilizer is exactly $H$, we call $H$ a
	 \emph{stabilizer subgroup} of $G$. 
	 The normalizer $N_G(H)$ of $H$ acts on $V^H$ and $V_H$, and $V_H$ is an open subscheme of $V^H$. By definition, a \emph{stabilizer flag} of length $n$ is a sequence 
	\[f = (\{e\}=:H_0 \subset H_1  \subset \cdots \subset H_n)\] 
	%of subgroups of $G$ such that, for all $0\leq i\leq n-1$, $H_{i+1}$ is a stabilizer subgroup of the action of $N_i:=\cap_{j\leq i} N_G (H_j)$ on $V^{H_i}$. We set $n_f:=n$, $H_f:=H_n$, $N_f:=N_n$ and $d_f:=\dim V^{H_f}$.
	of subgroups of $G$ such that, for all $0\leq i\leq n-1$, $H_{i+1}$ is a stabilizer subgroup of the $G$-action on $V$. We say that $f$ is \emph{strict} if $H_i\subsetneq H_{i+1}$ for all $i$. We set $n_f:=n$, $H_f:=H_n$, $d_f:=\dim V^{H_f}$ and $N_G(f):=\cap_{0\leq i\leq n} N_G(H_i)$.
	
	\begin{remark}
	Our definition of stabilizer flag differs from the one used by Ekedahl
	\cite[p. 10]{ekedahlGeometricInvariantFinite2009}, as he required that $H_{i+1}$ be a stabilizer subgroup of the action of $\cap_{j\leq i}N_G(H_i)$ on $V^{H_i}$. In particular, in our definition it is not necessarily true that $H_f\subset N_G(f)$.
	\end{remark}
	
	The conjugation action of $G$ on itself induces a $G$-action on the
	collection of all stabilizer flags. We say that two stabilizer flags are
	\emph{conjugate} to each other if they belong to the same orbit under this action.

\begin{proposition}\label{formula}
 Let $K\subset G$ be the kernel of the $G$-action on $V$. We have:
 \begin{align}
	 \label{equation:inclusion-exclusion-stacky-class}
	\{BG\}\L^d=\{[V_K/G]\}-\sum_f (-1)^{n_f}\{BN_G(f)\}\L^{d_f},
 \end{align}
where $f$ runs over a set of representatives of conjugacy classes of strict stabilizer flags of length $n_f\geq 1$.
\end{proposition}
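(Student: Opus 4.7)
The plan is to combine the scissor relations in $\grStacks$ with an equivariant M\"obius inversion on the poset of stabilizer subgroups.  First, I invoke the vector bundle axiom in the definition of $\grStacks$ to rewrite $\{BG\}\L^d=\{[V/G]\}$, viewing $V$ as a rank-$d$ vector bundle over $BG$.  Next, I stratify $V$ into $G$-invariant locally closed pieces indexed by $G$-conjugacy classes of stabilizer subgroups: $V=\coprod_{[H]} G\cdot V_H$, with $[G\cdot V_H/G]\cong [V_H/N_G(H)]$ since $N_G(H)$ is the $G$-stabilizer of the subscheme $V_H\subset V$.  Combining these yields $\{BG\}\L^d = \sum_{[H]}\{[V_H/N_G(H)]\}$, and isolating the $[K]$ term (using $N_G(K)=G$) reduces the proposition to the identity
\[\sum_{[H]\neq [K]}\{[V_H/N_G(H)]\} = \sum_{f}(-1)^{n_f+1}\{BN_G(f)\}\L^{d_f},\]
with the right-hand sum ranging over $G$-conjugacy classes of strict stabilizer flags of length $n_f\geq 1$.

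For each $[H]\neq [K]$, I would then expand $\{[V_H/N_G(H)]\}$ via M\"obius inversion on the poset $P_H$ of stabilizer subgroups containing $H$, starting from the stratification $V^H=\coprod_{H'\supset H,\,\text{stab}} V_{H'}$ and invoking Philip Hall's formula, which writes $\mu_{P_H}(H,H')$ as an alternating sum over strict chains $H=J_0\subsetneq J_1\subsetneq \cdots \subsetneq J_m = H'$.  Promoted equivariantly under the residual $N_G(H)$-action, each $N_G(H)$-conjugacy class of such a chain contributes $(-1)^m\{[V^{J_m}/\bigcap_{i} N_G(J_i)]\}$, which a second application of the vector bundle axiom rewrites as $(-1)^m\{B(\bigcap_{i} N_G(J_i))\}\L^{\dim V^{J_m}}$.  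The combinatorial link to flags is the bijection between $G$-conjugacy classes of strict stabilizer flags $(\{e\}=H_0\subsetneq H_1\subsetneq \cdots\subsetneq H_n)$ and pairs consisting of a $G$-conjugacy class $[H_1]$ together with an $N_G(H_1)$-conjugacy class of chain $(H_1\subsetneq H_2\subsetneq \cdots\subsetneq H_n)$ in $P_{H_1}$; under this bijection $N_G(f)=\bigcap_i N_G(H_i)$ matches the chain stabilizer, $d_f=\dim V^{H_n}$ matches $\dim V^{J_m}$, and the M\"obius sign $(-1)^m$ for $m=n-1$ agrees with $(-1)^{n_f+1}$.

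The main obstacle I anticipate is the equivariant M\"obius step---lifting the scalar identity $\{V_H\}=\sum_{\text{chains}}(\pm)\{V^{J_m}\}$ in the Grothendieck ring of spaces to an equality in $\grStacks$ for the quotient stacks $[V_H/N_G(H)]$.  Carefully tracking how $N_G(H)$ permutes chains in $P_H$ and verifying that each orbit contributes the correct class $\{[V^{J_m}/\text{chain stabilizer}]\}$ (with the vector bundle axiom converting this into $\{B(\text{chain stabilizer})\}\L^{\dim V^{J_m}}$) is the delicate ingredient.  Once this equivariant M\"obius identity is in hand, the remaining combinatorics---summing over $[H]\neq [K]$, regrouping via the bijection above, and matching signs---are routine bookkeeping and yield the proposition.
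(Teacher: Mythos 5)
Your proposal is correct and is essentially the paper's argument in different packaging: your initial stratification $\{BG\}\L^d=\sum_{[H]}\{[V_H/N_G(H)]\}$ is the paper's base case $m=1$ of its induction on flag length, and your ``equivariant M\"obius'' expansion of each $\{[V_H/N_G(H)]\}$ over $N_G(H)$-orbits of chains is exactly what one gets by iterating the paper's one-step identity $\{[V_{H_f}/N_G(f)]\}=\{BN_G(f)\}\L^{d_f}-\sum_{f'}\{[V_{H_{f'}}/N_G(f')]\}$. The delicate point you correctly isolate---that the scalar Hall/M\"obius formula must be lifted orbit-by-orbit, with each orbit of chains contributing $\{[V^{J_m}/\bigcap_i N_G(J_i)]\}=\{B(\bigcap_i N_G(J_i))\}\L^{\dim V^{J_m}}$---is precisely the content of that one-step identity, so the two proofs coincide up to the order in which the telescoping sum over chains is organized.
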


\autoref{formula} corrects \cite[Thm.\
3.4]{ekedahlGeometricInvariantFinite2009}. The formula there looks the same as
ours (up to signs), but it is wrong as it is claimed with a different definition
of stabilizer flag. The error there stems from the falsity of \cite[Lem.\
3.3(iv)]{ekedahlGeometricInvariantFinite2009},
as illustrated by \autoref{example:ekedahl-counterexample} below.

We note that the proof of \autoref{formula} follows similar lines to that of \cite[Thm.
3.4]{ekedahlGeometricInvariantFinite2009}.
In particular, it uses results from
\cite[Lem.\ 3.3(i), (ii), and (iii)]{ekedahlGeometricInvariantFinite2009}
even though
\cite[Lem.\ 3.3(iv)]{ekedahlGeometricInvariantFinite2009}
is incorrect. It may be helpful for the reader to consult these statements.

\begin{example}
	\label{example:ekedahl-counterexample}
	The result \cite[Lem.\
	3.3(iv)]{ekedahlGeometricInvariantFinite2009} claims that
	$V_H = (V^H)_H$,
	where $V^H$ is considered as an $N_G(H)$ representation. 
	However, when $G = S_3$ and $H$ is the subgroup generated by
	$(12)$, and $G$ acts as the 3-dimensional permutation representation,
	then $V_H = \{(a,a,b) : a \neq b\}$, while $N_G(H) = H$ and $V^H =
	\{(a,a,b)\}$. So here, when $V^H$ is considered as an $N_G(H) = H$
	representation, we have that $H$ acts trivially and $(V^H)_H = V^H \neq
	V_H$.  

	Since \cite[Lem.\ 3.3(iv)]{ekedahlGeometricInvariantFinite2009}
	is implicitly used in the proof of \cite[Thm.\
	3.4]{ekedahlGeometricInvariantFinite2009}, \cite[Thm.\
	3.4]{ekedahlGeometricInvariantFinite2009} is also incorrect.  To
	produce a counterexample to the statement of
	\cite[Thm.\
	3.4]{ekedahlGeometricInvariantFinite2009} (even after correcting the $+$
	sign appearing in the statement there to the $-$ sign of
\eqref{equation:inclusion-exclusion-stacky-class}), we can again take $G = S_3$.
	Then, the only strict stabilizer flags in the sense of
	\cite[p. 10]{ekedahlGeometricInvariantFinite2009} (which are defined in a slightly different way than 
		in this appendix)
	up to conjugacy are $\{e\},\{e\}
	\subset S_2, \{e\} \subset S_3.$ In this case, with our knowledge that
	$\{B S_3\} = 1$, the formula of \cite[Thm.\
	3.4]{ekedahlGeometricInvariantFinite2009} claims $\bl^3 = (\bl^3 -
	\bl^2) + (\bl^2) + (\bl)$. Of course, what is missing from this formula
	is that we should subtract off a term $\bl$ coming from the sequence of
	subgroups $\{e\} \subset S_2 \subset S_3$, which is a stabilizer flag in
	the sense of this appendix, but not in the sense of \cite[p.
	10]{ekedahlGeometricInvariantFinite2009}.
\end{example}

\begin{proof}[Proof of \autoref{formula}]
	Let $f$ be a strict stabilizer flag. 
	We have
	\begin{align*}
		[V^{H_f}/N_G(f)] - [V_{H_f}/N_G(f)] =
[\coprod_{\substack{H \subset G \\ g\in N_G(f)/(N_G(f)\cap
	N_G(H))}} V_{gHg^{-1}}/N_G(f)]
	\end{align*}
	where, on the right hand side, 
	where $H$ runs among a set of representatives of $N_G(f)$-conjugacy classes of
	subgroups of $G$ acting on $V$ and properly containing $H_f$. 

	For any fixed $H \subset G$, we have
	\begin{align*}
		[\coprod_{g\in N_G(f)/(N_G(f)\cap
	N_G(H))}V_{gHg^{-1}}/N_G(f)] = 
	[V_{H}/N_G(f)\cap N_G(H)].
	\end{align*}
	For any
	such $H$, construct a strict stabilizer flag $f'$ by appending $H$ at
	the end of $f$. Then \[N_G(f)\cap N_G(H)=N_G(f').\]

	We conclude
	\begin{equation}\label{flag}\{[V_{H_f}/N_G(f)]\}=\{BN_G(f)\}\L^{d_f}-\sum_{f'}\{[V_{H_{f'}}/N_G(f')]\},\end{equation} 
	where $f'$ runs over a set of representatives of conjugacy classes of strict stabilizer flags of length $n_f+1$ and starting with $f$.
	
	We now wish to prove by induction on $m\geq 1$ that 
	\begin{equation}\label{induct}
	\{BG\}\L^d=\{[V_K/G]\}-\sum_{0< n_f< m}(-1)^{n_f}\{BN_G(f)\}\L^{d_f}-(-1)^m\sum_{n_f=m}\{[V_{H_f}/N_G(f)]\},
	\end{equation}
	where $f$ runs among a set of representatives of conjugacy classes of strict stabilizer flags.
	When $m=1$, \eqref{induct} coincides with \eqref{flag} for
	$f=(K)$. Assume now that \eqref{induct} holds for some $m>1$. One
	obtains the formula for $m+1$, by starting from the formula for $m$ and
	applying \eqref{flag} to every flag $f$ of length $m$. 
	
	Since $G$ is finite, there are only finitely many strict stabilizer flags. The conclusion follows by choosing $m$ to be larger than the length of every strict stabilizer flag.
\end{proof}
	
	Having replaced \cite[Thm.\ 3.4]{ekedahlGeometricInvariantFinite2009}
	by \autoref{formula}, the proof of \autoref{ekedahl}	can be completed
	as in \cite{ekedahlGeometricInvariantFinite2009}. From now on, let
	$G=S_n$ be the group of permutations of $\Sigma:=\{1,2,\dots,n\}$, and
	let $V$ be the $n$-dimensional permutation representation of $S_n$.
	
	 A \emph{flag} is a pair $(S,R)$, where $S$ is a finite set, and $R$ is a sequence $R_1\subset R_2\subset\dots\subset R_n$ of equivalence relations $R_i\subset S\times S$ on $S$. An isomorphism of flags $(S',R')\to (S,R)$  is a bijection $S'\xrightarrow{\sim} S$ sending $R'_i$ to $R_i$ for all $i$. We denote by $N_R(S)$ the automorphism group of $(S,R)$. 

\begin{lemma}\label{wreath}
	Assume that $G=S_n$ and that $V$ is the standard $n$-dimensional
	representation of $S_n$. Let $f$ be a strict stabilizer flag, and denote by $H_i$ the stabilizer subgroups appearing in $f$. For every $i$, let $R_i$ be the equivalence relation determined by the orbit partition of the $H_i$-action on $\Sigma$, and let $R$ be the flag on $\Sigma$ given by the $R_i$. 
	
	(a) We have $N_{S_n}(f)=N_R(\Sigma)$.
		
	(b) If $N_{S_n}(f)=S_n$, then either $f=(\{e\})$ or $f=(\{e\}\subset S_n)$.
		
	(c) Assume that $\{BS_m\}=1$ for all $m<n$ and that $N_{S_n}(f)\neq S_n$. Then $\{BN_{S_n}(f)\}=1$. 
\end{lemma}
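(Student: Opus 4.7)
For (a), the key observation is that every point-stabilizer in the permutation representation $V$ is a Young subgroup: the stabilizer of a vector $(v_1,\dots,v_n) \in V$ is $\prod_C S_C$ where the product ranges over the level sets $C$ of the vector. Thus each $H_i$ has the form $\prod_{C \in \Sigma/R_i} S_C$. Since conjugation by $\sigma \in S_n$ sends $S_C$ to $S_{\sigma(C)}$, the normalizer $N_{S_n}(H_i)$ is exactly the group of permutations of $\Sigma$ preserving the partition $R_i$. Intersecting over $i$ yields $N_{S_n}(f) = N_R(\Sigma)$. For (b), if $N_R(\Sigma) = S_n$ then every $R_i$ is an $S_n$-invariant equivalence relation on $\{1,\dots,n\}$, so is either discrete (giving $H_i = \{e\}$) or indiscrete (giving $H_i = S_n$); strictness combined with $H_0 = \{e\}$ then forces $f = (\{e\})$ or $f = (\{e\} \subset S_n)$.

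For (c), the plan is to run a simultaneous induction on $n$, proving Theorem \ref{ekedahl} and Lemma \ref{wreath} together, so I may assume both statements for all $m < n$. I first reduce to the case where the coarsest relation in $R$ has more than one class: if the top subgroup equals $S_n$, then removing it from $f$ does not change $N_{S_n}(f)$, and strictness ensures the new top is a proper subgroup. I may then assume the coarsest relation partitions $\Sigma$ into blocks $C_1,\dots,C_m$ of sizes strictly less than $n$. Moreover $m < n$ must hold, since $m = n$ would force all blocks to be singletons, hence all $R_i$ discrete, hence $n_f = 0$ and $N_{S_n}(f) = S_n$, contrary to assumption. Each $C_j$ carries a restricted sub-flag, and any automorphism of $(\Sigma,R)$ permutes the $C_j$ within their sub-flag isomorphism classes, producing an isomorphism $N_R(\Sigma) \cong \prod_T \aut(T) \wr S_{m_T}$, where $T$ ranges over the sub-flag isomorphism types appearing among the $C_j$ and $m_T$ is the multiplicity of type $T$.

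To compute the class in $\grStacks$, I will use the isomorphism $B(H \wr S_m) \cong \symm^m(BH)$ together with the fact \cite[Prop.\ 2.5]{ekedahlGeometricInvariantFinite2009} that $\{\symm^m(X)\}$ depends only on $\{X\}$. If $\{B\aut(T)\} = 1$, then $\{B(\aut(T) \wr S_{m_T})\} = \{\symm^{m_T}(\text{pt})\} = \{BS_{m_T}\}$, which equals $1$ by the inductive hypothesis on Theorem \ref{ekedahl} since $m_T \le m < n$. It remains to verify $\{B\aut(T)\} = 1$: the group $\aut(T)$ is the automorphism group of a flag on a set of size $|C_j| < n$, and it equals $S_{|C_j|}$ exactly when the sub-flag on $C_j$ is trivial; in that case we invoke the inductive hypothesis on Theorem \ref{ekedahl}, while otherwise we invoke the inductive hypothesis on Lemma \ref{wreath}(c) applied at the smaller size $|C_j|$. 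Multiplicativity of the class in $\grStacks$ then gives $\{BN_R(\Sigma)\} = 1$. The main obstacle is the bookkeeping of the simultaneous induction and careful justification of the structural isomorphism $N_R(\Sigma) \cong \prod_T \aut(T) \wr S_{m_T}$, but both are essentially formal once the reduction to the case of a non-indiscrete coarsest relation is in place.
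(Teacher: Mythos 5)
Your argument is correct and follows essentially the same route as the paper: parts (a) and (b) are identical in substance, and for (c) both proofs rest on decomposing $N_R(\Sigma)$ as a product of wreath products $\aut(T)\wr S_{m_T}$ of automorphism groups of smaller flags, the identity $\{B(N'\wr S_r)\}=\{\symm^r(BN')\}$, and the fact that $\{\symm^r(-)\}$ depends only on the class of its argument (so that $\{\symm^{m_T}(B\aut(T))\}=\{BS_{m_T}\}=1$ once $\{B\aut(T)\}=1$). The only differences are organizational: you supply a direct proof of the structural decomposition that the paper imports from \cite[Prop.\ 4.2]{ekedahlGeometricInvariantFinite2009}, and you run the induction on $n$ (simultaneously with \autoref{ekedahl}) rather than on the flag length $n_f$; both reductions are sound.
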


\begin{proof}
	(a) This follows from the fact that, for every $i$, a bijection $\sigma$ of $\Sigma$ respects $R_i$ if and only if it normalizes $H_i$.
	
	(b) If $N_{S_n}(f)=S_n$, then for every $i$, $R_i$ is respected by every bijection of $\Sigma$. It follows that either $R_i$ is the diagonal in $\Sigma\times \Sigma$ or $R_i=\Sigma\times \Sigma$. Now (b) follows from (a).
	
	(c) We may assume that $\{BN_{S_n}(f')\}=1$ for all flags such that
	$n_{f'}<n_f$. By \cite[Prop.\ 4.2]{ekedahlGeometricInvariantFinite2009},
	$N_{S_n}(f)$ is a direct product of wreath products $N'\wr S_r:=(N')^r\rtimes S_r$, where $N'$ is the normalizer of a flag of smaller length, and $S_r$ acts by permutation of the $r$ factors $N'$. 

	In what follows, we use the symbol $\symm$ for the stacky symmetric power as introduced in \cite[p.
	5]{ekedahlGeometricInvariantFinite2009}.
	We also use the symbol $\wr$ for wreath product. This was introduced and
	notated $\int$ in \cite[p. 5]{ekedahlGeometricInvariantFinite2009},
	but we use $\wr$ instead of $\int$ in order to keep our notation
	consistent with the rest of the paper. 

	Because, for $G$ and $H$ finite groups, $B(G \times H) \simeq BG \times
	BH$, it suffices to show $\{ B(N' \wr S_r)\} = 1$.
We have $B(N' \wr S_r)\simeq BN'\wr BS_r \simeq \symm^r(BN')$,
as explained in \cite[p. 5]{ekedahlGeometricInvariantFinite2009},
By inductive assumption, $\{B(N'\wr S_r)\}=\sigma_s^t(\{BN'\})=\sigma_s^t(1)=1$. For the symbol $\sigma_s^t$, see \cite[Prop.\ 2.5]{ekedahlGeometricInvariantFinite2009}. 
\end{proof}

\begin{proof}[Proof of \autoref{ekedahl}]
Let $V$ be the $n$-dimensional permutation representation of $S_n$ over $k$, and let
$U:=V_{\{e\}}\subset V$ be the free locus of the $S_n$-action. By
\autoref{formula}, 
\[\{BS_n\}\L^n=\{U/S_n\}-\sum_f(-1)^{n_f}\{BN_{S_n}(f)\}\L^{d_f},\]
where $f$ runs among conjugacy classes of strict stabilizer flags. By \autoref{wreath}(b), we may rewrite this as
\[\{BS_n\}(\L^n-\L)=\{U/S_n\}-\sum_f(-1)^{n_f}\{BN_{S_n}(f)\}\L^{d_f},\]	
where now $f$ runs among conjugacy classes of strict stabilizer flags such that $N_{S_n}(f)\neq S_n$.	By \autoref{wreath}(c), we have $\{BN_{S_n}(f)\}=1$ for all such $f$.

We claim that $\{U/S_n\}$ is a polynomial in $\L$ with integer coefficients. The
stacks
$V_H/N_{S_n}(H)$ are isomorphic to parts of a locally closed stratification of $V/S_n$. This is well known from general principles when $\on{char}k=0$ or when $\on{char}k>0$ does not divide $n$, but Ekedahl gave a proof in arbitrary characteristic in \cite[Prop.\ 1.1(ii)]{ekedahlGeometricInvariantFinite2009}.

To show $\{U/S_n\}$ is a polynomial in $\bl$, let $f$ be a strict stabilizer flag. Then, as in the proof of \autoref{formula}, we have
\[\{V_{H_f}/N_{S_n}(f)\}=\{V^{H_f}/N_{S_n}(f)\}-\sum_{f'}\{V_{H_{f'}}/N_{S_n}(f')\},\]
where $f'$ runs among conjugacy classes of strict stabilizer flags starting with $f$ and of length $n_f+1$.

Applying the previous formula iteratively, we obtain
\[\{V/S_n\}=\{U/S_n\}-\sum_f (-1)^{n_f}\{V^{H_f}/N_{S_n}(f)\},\]
where $f$ runs among conjugacy classes of strict stabilizer flags of positive length.
For every flag $f$, we claim that the quotient $W_f:=N_{S_n}(f)/(H_f\cap N_{S_n}(f))$ is a product of symmetric groups, and $V^{H_f}$ is a permutation representation of $W_f$. 
To see this, note that $N_{S_n}(f)$ can be identified with $N_{R_f}(\Sigma)$ via
\autoref{wreath} for a sequence of equivalence relations $R_f$ given as $R_1 \subset R_2
\subset \cdots \subset R_{n_f}$.
Under this identification,
$H_f$ is identified with the subgroup of permutations acting trivially on
the equivalence classes defined by $R_{n_f}$.
Therefore, the action of $W_f$ on $V^{H_f}$ is generated by permutations
switching two equivalence classes of
$R_{n_f}$ 
for which there exists an isomorphism of those two classes respecting $R$.
Therefore, $W_f$ 
is a product of symmetric groups acting by a permutation representation on
$V^{H_f}$.
%N_{S_n}(f) is the
%subgroup of permutations that respect all the equivalence relations of the
%flag corresponding to f. There is a homomorphism from N_{S_n}(f) to the
%automorphism group of the equivalence classes of the 'last' equivalence
%relation (this group is a product of symmetric groups). By definition H_f
%is the subgroup of permutations which act trivially on the eq. classes of
%the 'last' equivalence relation, and so the kernel is exactly H_f \cap
%N_{S_n}(f). Finally, the action on V^{H_f} is the permutation
%representation induced by the action on the equivalence classes, and so the
%quotient is affine space.
Hence, by the fundamental theorem for symmetric polynomials,
$V^{H_f}/N_{S_n}(f)=V^{H_f}/W_f$ is an affine space over $k$. Since $V/S_n$ is also isomorphic to affine space, we deduce that $\{U/S_n\}$ is a polynomial in $\L$, as claimed.
We conclude that $\{BS_n\}$ can be written as a rational function in $\L$ with integer coefficients, and with denominator $\L^n-\L$. By \cite[Lem.\ 3.5]{ekedahlGeometricInvariantFinite2009}, this implies that $\{BS_n\}=1$.
\end{proof}

%\begin{thebibliography}{1}
%	
%	\bibitem{ekedahl2009geometric}
%	Torsten Ekedahl.
%	\newblock A geometric invariant of a finite group.
%	\newblock {\em arXiv preprint arXiv:0903.3148v1}, 2009.
%	
%\end{thebibliography}

\bibliographystyle{alpha} \bibliography{MyLibrary,Pardini}

\end{document}